\newtheorem{thm}{Theorem}[section]
\newtheorem{cor}[thm]{Corollary}
\newtheorem{claim}[thm]{Claim}
\newtheorem{fact}[thm]{Fact}
\newtheorem{lemma}[thm]{Lemma}
\newtheorem{prop}[thm]{Proposition}
\theoremstyle{definition}
\newtheorem{definition}[thm]{Definition}
\newtheorem{ex}[thm]{Example}
\newtheorem{remark}[thm]{Remark}
\newtheorem{question}[thm]{Question}
\newtheorem{conj}[thm]{Conjecture}
\newtheorem*{theorem*}{Theorem}
\newcommand{\cut}{\textup{cut}^{\delta}}
\newcommand{\eps}{\epsilon}
\newcommand{\dl}{\textup{DL}}
\newcommand{\ori}{\partial_{0}}
\newcommand{\ter}{\partial_{1}}
\def\dashint{\,\ThisStyle{\ensurestackMath{%
  \stackinset{c}{.2\LMpt}{c}{.5\LMpt}{\SavedStyle-}{\SavedStyle\phantom{\int}}}%
  \setbox0=\hbox{$\SavedStyle\int\,$}\kern-\wd0}\int}
\def\rquotient#1#2{%
	\makeatletter
	\raise.3ex\hbox{$#1$}/\lower.3ex\hbox{$#2$}%
	\makeatother
}	
\newcommand{\subjclass}[2][2010]{%
	\let\@oldtitle\@title%
	\gdef\@title{\@oldtitle\footnotetext{#1 \emph{Mathematics subject classification.} #2}}%
}
\newcommand{\keywords}[1]{%
	\let\@@oldtitle\@title%
	\gdef\@title{\@@oldtitle\footnotetext{\emph{Key words and phrases.} #1.}}%
}
\newcommand{\Address}{{
		\bigskip
		\small
		
		\textsc{Max Planck Institute for Mathematics\\ 
Vivatsgasse 7\\
53111 Bonn (Germany)}\par\nopagebreak
		\textit{E-mail address}: \texttt{bensaid@mpim-bonn.mpg.de}
  
  	\bigskip
		\small
		
		\textsc{University of Montpellier\\ 
Institut Math\'ematiques Alexander Grothendieck\\
Place Eug\`ene Bataillon\\
34090 Montpellier (France)}\par\nopagebreak
		\textit{E-mail address}: \texttt{anthony.genevois@umontpellier.fr}
  
    \bigskip
		\small
		
		\textsc{University of Paris-Cit\'e\\ 
Institut de Math\'ematiques de Jussieu-Paris Rive Gauche\\
Place Aur\'elie Nemours\\
75013 Paris (France)}\par\nopagebreak
		\textit{E-mail address}: \texttt{romain.tessera@imj-prg.fr}

}}
\title{Coarse separation and large-scale geometry of wreath products}
\date{\today}
\author{Oussama Bensaid, Anthony Genevois, and Romain Tessera}
\subjclass{Primary 20F65. Secondary 20F69.}
\keywords{Large-scale geometry, coarse separation, growth, wreath products}
\begin{document}

\maketitle

\begin{abstract}
In this article, we introduce and study a natural notion of coarse separation for metric spaces, with an emphasis on coarse separation by subspaces of polynomial or subexponential growth. For instance, we show that symmetric spaces of non-compact type different from $\mathbb{H}_\mathbb{R}^2$ and thick Euclidean buildings of rank $\geq 2$ cannot be coarsely separated by subspaces of subexponential growth; and that a connected nilpotent Lie group of growth degree $D \geq 2$ cannot be coarsely separated by a subspace of polynomial degree $\leq D-2$. We apply these results to the large-scale geometry of amalgamated free products and wreath products. The latter application is based on an Embedding Theorem that generalises previous work of the last two authors, and which is of independent interest. We also discuss some further applications to the distortion of coarse embeddings between certain metric spaces.
\end{abstract}

\tableofcontents

\section{Introduction}

We introduce a natural notion of coarse separation for metric spaces, and we apply it to study the large-scale geometry of wreath products (see \S \ref{secIntro:wreath}). We also discuss some further applications to amalgamated free products and to the distortion of coarse embeddings between certain metric spaces.

\subsection{Coarse separation}

\noindent
Recall that a connected graph is one-ended if the complement of any bounded subset of vertices admits  exactly one unbounded connected 
component. In other words, such a graph is one-ended if it cannot be separated in some suitable sense by a bounded subset. The first goal of this article is to generalise this property, or rather a \emph{uniform} version of it, to a notion of coarse separation  by certain (infinite) subsets. 

\medskip \noindent
For the sake of simplicity, we shall restrict in this introduction to graphs with bounded degree, we refer the reader to our preliminary section \S \ref{sec:Prelim} for definitions in a more general setting. The following is a simplified definition, we refer to Definition~\ref{weak separation} for a precise one. Given a subset $A\subset X$ in a metric space $X$, we shall denote $A^{+R}$ the $R$-neighbourhood of $A$, i.e.\ the set of points at distance at most $R$ from $A$.

\begin{definition}\label{defIntro:coarseSep} 
Let $X$ be a graph with bounded degree and let $\mathcal{S}$ be a family of subsets of $X$. We say that $\mathcal{S}$ \emph{coarsely separates $X$} if there exists $L \geq 0$ such that, for all $D \geq 0$, there is $S\in \mathcal{S}$ such that $X \backslash S^{+L}$ contains at least two connected components with points at distance $\geq D$ from $S$. 
\end{definition}

\noindent
Note that a vertex-transitive graph $X$ is one-ended if and only if it is not coarsely separated by the family of its vertices (or equivalently by a single vertex). In the absence of some form of homogeneity, the situation is different as seen by the following example: a half-line is coarsely separated by the family of its vertices, although it is one-ended. In other words, the property of being not coarsely separated by the family of its vertices can be seen as a \emph{uniform} version of one-endedness. 

\medskip \noindent
In this paper, we shall focus on a special case of families of subsets: those with subexponential growth. Given a graph $X$, we denote by $B_X(x,R)$ the ball of radius $R$ centred at the vertex $x.$

\begin{definition}
    Given a bounded degree graph $X$ and a family of subsets $\mathcal{S}$, the volume growth of $\mathcal{S}$ is defined as 
    \[V_{\mathcal{S}}(R)=\sup_{s\in S,\ S\in \mathcal S}|B_X(s,R)\cap S|.\]
\end{definition}

\noindent
We shall denote $\mathfrak{M}_{exp}$ the class of bounded degree graphs such that any coarsely separating family of subsets must have exponential growth.
The class $\mathfrak{M}_{exp}$ is clearly invariant under quasi-isometry. This provides a cheap way to extend it to arbitrary metric spaces by requiring that such a metric space belongs to $\mathfrak{M}_{exp}$ if it is quasi-isometric to a graph in $\mathfrak{M}_{exp}$.

\medskip \noindent
Our first main contribution is to provide a wide family of examples of spaces in $\mathfrak{M}_{exp}$.

\begin{thm}\label{thmIntro:Symmetric/buildings}
The following spaces belong to $\mathfrak{M}_{exp}$, as well as any direct product with an arbitrary bounded degree graph:
\begin{itemize}
    \item symmetric spaces of non-compact type different from  $\mathbb{H}_{\mathbb{R}}^2$;
    \item thick Euclidean buildings with cocompact affine Weyl group, of rank $\geq 2$;
    \item Bourdon's hyperbolic buildings $I_{p,q}$, with $p \geq 5$ and $q\geq 3$.    
\end{itemize}
\end{thm}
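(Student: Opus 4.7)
The plan is to verify $X \in \mathfrak{M}_{exp}$ separately for each of the three families and then handle products. The common template is as follows: assuming toward a contradiction that a family $\mathcal{S}$ of subsets with subexponential profile $V_\mathcal{S}(R) = e^{o(R)}$ coarsely separates $X$ with parameter $L$, I pick, for each $D \to \infty$, a set $S \in \mathcal{S}$ and points $p_1, p_2$ lying in distinct components of $X \setminus S^{+L}$ at distance $\geq D$ from $S$. The deep balls $B(p_i, D-L-1)$ then lie in distinct components, and each carries at least $e^{cD}$ points since $X$ is non-amenable and of exponential growth. Any geodesic from $p_1$ to $p_2$ meets $S^{+L}$ at some point $x_0$, and working inside the ball $B = B(x_0, 3D)$ one obtains two regions of exponential volume separated by $S^{+L} \cap B$. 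The contradiction is then driven by an isoperimetric/capacitary inequality forcing the ``cutting part'' $S^{+L} \cap B$ to have volume at least $e^{\Omega(D)}$, against the subexponential bound $|S \cap B| \leq V_\mathcal{S}(3D) = e^{o(D)}$.

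\textbf{Case-by-case inputs.}
For $\mathbb{H}_\mathbb{R}^n$ with $n\ge 3$, the remaining rank-one symmetric spaces, and the Bourdon buildings $I_{p,q}$ with $p\ge 5, q\ge 3$, the required cutting estimate comes from a non-trivial $\ell^p$-cohomology class in degree $1$, equivalently from a Loewner-type lower bound on the capacity between two far-apart balls in the boundary at infinity (Pansu for the rank-one symmetric spaces, Bourdon--Kleiner for $I_{p,q}$). This forces any subset bisecting two large regions of comparable exponential measure inside $B$ to have exponential volume. For thick Euclidean buildings of rank $\geq 2$, the Cheeger inequality is too weak since apartments are amenable flats; instead one exploits the abundance of apartments through a basepoint, arguing that the trace of $S$ on each flat meeting the deep ball must coarsely separate that flat and hence contain a codimension-$1$ slab (polynomial in $D$ in that flat), while the building contains exponentially many essentially distinct flats through $B(x_0, D)$. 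Summing these contributions recovers the exponential lower bound.

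\textbf{Products.}
For stability under direct product with a bounded-degree graph $Y$, a subexponential separating family $\mathcal{S}'$ of $X \times Y$ with parameter $L$ induces, by restricting each $S' \in \mathcal{S}'$ to horizontal slices $X \times \{y\}$ and projecting, a family of subsets of $X$. Its growth profile is bounded by $V_{\mathcal{S}'}$ times $|B_Y(\cdot, L)|$, hence remains subexponential thanks to the bounded degree of $Y$. A component of $(X \times Y) \setminus (S')^{+L}$ containing a deep point projects to a component of $X \setminus$(slice) that is still deep up to a loss controlled by $L$. This contradicts the already-established $X \in \mathfrak{M}_{exp}$.

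\textbf{Main obstacle.}
The decisive case is the rank-$\ge 2$ Euclidean building, where no linear isoperimetric inequality is available at the required scale and one cannot simply appeal to a Cheeger-type argument. The crux will be to quantify precisely how many essentially disjoint apartments of dimension $\geq 2$ must be met by a coarse separator that isolates two deep points, and to combine this building-theoretic counting with the fact that, inside each apartment, a coarse separator of a Euclidean space has positive codimension-$1$ polynomial trace. Making these two inputs compatible — so that the counts genuinely add up rather than overlap wastefully — is where the real work lies. The hyperbolic and rank-one cases, by contrast, reduce to now-classical $\ell^p$-Poincar\'e/capacity inequalities on the boundary.
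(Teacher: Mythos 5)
Your proposal is a plan rather than a proof, and two of its three pillars have genuine gaps. The most serious is the higher-rank Euclidean building case, which you yourself flag as "where the real work lies": the flat-counting scheme does not obviously close. It is not true that the trace of the separator on each apartment through $B(x_0,D)$ must coarsely separate that apartment (an apartment can lie entirely on one side, or meet $S^{+L}$ in a bounded set), and even granting a polynomial codimension-one trace in each of exponentially many apartments, those apartments overlap heavily near $x_0$, so the contributions do not add up without a genuinely new idea. The paper circumvents this entirely: it first computes, by an explicit path-family/Poincar\'e argument, that annuli in a product of $k$ binary trees form a persistent family with $\mathrm{cut}^{\delta}\gtrsim 2^{k}$ (Proposition \ref{persistent family of product trees}), and then invokes the result of \cite{bensaidnguyen,fisher2018quasi} that any two points of a higher-rank symmetric space or thick Euclidean building lie in the $M$-neighbourhood of a uniform quasi-isometric embedding of $T_3\times\cdots\times T_3$; Theorem \ref{separating by embedding} then transfers the cut estimate. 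No apartment counting is needed.

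The product case is also gapped as written. Restricting a separator of $X\times Y$ to a horizontal slice $X\times\{y\}$ need not separate that slice: two deep components of $(X\times Y)\setminus (S')^{+L}$ can meet the same slice in sets that reconnect through other slices, so "projects to a component of $X\setminus(\text{slice})$ that is still deep" is unjustified. This is exactly why the paper stresses that stability of $\mathfrak{M}_{exp}$ under products is \emph{not} a priori clear and proves it only as a by-product of the method: Theorem \ref{invariance by taking product}(ii) uses the persistent family on $X$ (not mere membership in $\mathfrak{M}_{exp}$) and a two-case analysis — either some slice already witnesses a $\delta$-cut of $A_x(r)\times\{y\}$, or every slice has a dominant component and translating $A_x(r)$ between two adjacent slices with different dominant components forces $|B\cap S|\gtrsim |A(r)|$. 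Your rank-one/Bourdon argument is closest to the paper's (both ultimately rest on the $1$-Poincar\'e/modulus inequality on the boundary), but the step "two regions of exponential volume inside $B(x_0,3D)$ force the cutter to be exponential" needs care: a deep component's boundary includes its trace on $\partial B(x_0,3D)$, which in a hyperbolic space can be comparable to the whole ball, so a naive Cheeger inequality on the big ball does not apply. The paper avoids this by working with the \emph{relative} Cheeger constant of a persistent family of thickened spheres $S(x,r)^{+3}$ and pulling the boundary Poincar\'e inequality back to them (Lemma \ref{persistent in rank 1}, Proposition \ref{Cut in rank 1}).
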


\noindent
Note that the condition of cocompact affine Weyl group is satisfied for the Bruhat-Tits buildings. The restriction of higher rank for Euclidean buildings and the exclusion of $\mathbb{H}_{\mathbb{R}}^2$ are necessary: the hyperbolic plane is separated by any bi-infinite geodesic, while an infinite tree of valence $\geq 2$ is obviously not one-ended. 

\medskip \noindent
Note that it is not clear a priori that belonging to $\mathfrak{M}_{exp}$ is stable under direct product with any bounded degree graph. The fact that it does in our case is a consequence of our method of proof (see Theorem \ref{invariance by taking product}). 

\medskip \noindent
To the best of our knowledge, the mere fact that any of these spaces cannot be coarsely separated by a family of subsets of polynomial growth  is new. An interesting example of euclidean building of rank $2$ to which this theorem applies is the direct product of two $3$-regular trees.

\begin{thm}\label{thmIntro:horocyclic products}
    For $i=1,2$, let $Y_i$ be either a $(k+1)$-regular tree with $k \geq 2$ or a rank one symmetric space of non-compact type. The horocyclic product $Y_1 \bowtie Y_2$ belongs to~$\mathfrak{M}_{exp}$, as well as any direct product with any bounded degree graph.
\end{thm}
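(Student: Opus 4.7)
The plan is to bootstrap from Theorem~\ref{thmIntro:Symmetric/buildings} applied to $Y_1 \times Y_2$, using a quasi-isometry between $Y_1 \times Y_2$ and $(Y_1 \bowtie Y_2) \times \mathbb{R}$.

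For any bounded degree graph $\Gamma$, the space $Y_1 \times Y_2 \times \Gamma$ already belongs to $\mathfrak{M}_{exp}$. Indeed, if both $Y_i$ are rank one symmetric spaces, then $Y_1 \times Y_2$ is a symmetric space of non-compact type of rank at least $2$; if one $Y_i$ is a rank one symmetric space and the other is a tree, then $Y_1 \times Y_2$ is the product of a space in $\mathfrak{M}_{exp}$ with the bounded degree graph $Y_{3-i}$; and if both $Y_i$ are trees, then $Y_1 \times Y_2$ is the standard product of two trees, which is a thick Euclidean building of rank $2$ with cocompact affine Weyl group. In all three cases the conclusion follows from Theorem~\ref{thmIntro:Symmetric/buildings} together with its stability under direct product with an arbitrary bounded degree graph.

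The geometric core of the proof is the quasi-isometry $Y_1 \times Y_2 \simeq (Y_1 \bowtie Y_2) \times \mathbb{R}$. Let $\xi_i$ be the boundary point of $Y_i$ and $\beta_i$ the associated Busemann function. Given $(y_1,y_2) \in Y_1 \times Y_2$, set $c = (\beta_1(y_1) + \beta_2(y_2))/2$ and let $y_i'$ be the image of $y_i$ under the time-$(-c)$ geodesic flow towards $\xi_i$; then $(y_1',y_2') \in Y_1 \bowtie Y_2$, and the assignment $(y_1,y_2) \mapsto ((y_1',y_2'),c)$ is a quasi-isometry. In the symmetric space case this is a bona fide geodesic flow, while in the tree case flowing away from $\xi_i$ requires a bounded choice of descendant at each step, but any measurable section suffices, with distortion controlled by the branching degree. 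Multiplying by $\Gamma$ preserves this, so $(Y_1 \bowtie Y_2) \times \Gamma \times \mathbb{R}$ is quasi-isometric to $Y_1 \times Y_2 \times \Gamma$.

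To conclude, suppose for contradiction that $(Y_1 \bowtie Y_2) \times \Gamma$ is coarsely separated by a family $\mathcal{S}$ of subexponential growth. Then $\mathcal{S}' := \{ S \times \mathbb{R} : S \in \mathcal{S} \}$ coarsely separates $(Y_1 \bowtie Y_2) \times \Gamma \times \mathbb{R}$ (taking the $\mathbb{R}$-cylinder of a thickening that splits the base into two deep components produces one that splits the whole product) and still has subexponential growth, since $V_{\mathcal{S}'}(R) \leq (2R+1) V_{\mathcal{S}}(R)$. Transporting $\mathcal{S}'$ through the quasi-isometry of the previous paragraph yields a subexponential coarsely separating family on $Y_1 \times Y_2 \times \Gamma$, contradicting the first step. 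The main technical obstacle is the quasi-isometry itself: one has to control the distortion uniformly despite the multi-valued nature of the downward flow in the tree case; once this is settled, the remainder of the argument is essentially formal.
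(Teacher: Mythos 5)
Your argument hinges on the claim that $Y_1 \times Y_2$ is quasi-isometric to $(Y_1 \bowtie Y_2) \times \mathbb{R}$, and this claim is false. The obstruction you anticipated (multivaluedness of the downward flow) is not the real problem; the real problem is that the \emph{upward} flow toward $\xi_i$ contracts horospheres exponentially, so your map $\Phi$ is not even a coarse embedding. Concretely, in $T_3 \times T_3$ take $y_1,z_1$ with $b(y_1)=b(z_1)=-n$ whose common ancestor sits at height $0$, so $d(y_1,z_1)=2n$, and fix $y_2=z_2$ with $b(y_2)=-n$ as second coordinate; then $c=-n$ for both points, and flowing each first coordinate up by $n$ sends $y_1$ and $z_1$ to their common ancestor, so $\Phi$ collapses two points at distance $2n$ to the same point. (In $\mathbb{H}^2$ the analogous pair is $(0,e^{-n})$ and $(1,e^{-n})$, at hyperbolic distance $\sim 2n$, sent to $(0,1)$ and $(1,1)$, at bounded distance.) That no quasi-isometry of any kind can exist is forced by invariants: $\mathrm{DL}(2,2)=T_3\bowtie T_3$ is quasi-isometric to the lamplighter group $\mathbb{Z}_2 \wr \mathbb{Z}$, so $\mathrm{DL}(2,2)\times\mathbb{R}$ is metrically amenable, while $T_3\times T_3$ is non-amenable; likewise $\mathrm{SOL}\times\mathbb{R}$ is amenable while $\mathbb{H}^2\times\mathbb{H}^2$ is not. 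So the reduction to Theorem~\ref{thmIntro:Symmetric/buildings} collapses entirely. (A secondary gap: in the mixed case your first step invokes Theorem~\ref{thmIntro:Symmetric/buildings} with a rank one factor that may be $\mathbb{H}^2_{\mathbb{R}}$, which is explicitly excluded there, even though $\mathbb{H}^2_{\mathbb{R}}$ is allowed as a factor of the horocyclic product, e.g.\ $T_{n+1}\bowtie\mathbb{H}^2_{\mathbb{R}}\simeq\mathrm{BS}(1,n)$.)

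The formal outer layer of your argument (cylinders $S\times\mathbb{R}$ preserve coarse separation and subexponential growth, and coarse separation transports along quasi-isometries) is fine, but it cannot be salvaged without a genuinely different geometric input. What the paper does instead is intrinsic to the horocyclic product: it exhibits an explicit persistent family $\{A_x(r)\}$ in $\mathrm{DL}(2,2)$ (the sets $V_{(o_1,o_2)}(r)$ of Proposition~\ref{persistent family of dl}) whose $\delta$-cuts grow like $2^r$, deduces $\mathrm{DL}(2,2)\in\mathfrak{M}_{exp}$ from Theorem~\ref{invariance by taking product}, and then handles general $Y_1\bowtie Y_2$ by joining any two points via the images of three uniformly biLipschitz embeddings of $\mathrm{DL}(2,2)$ (Proposition~\ref{HMT}) and sliding the persistent family along this chain of embeddings.
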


\noindent
This theorem provides examples of solvable groups that belong to $\mathfrak{M}_{exp}$: indeed, lamplighter groups, solvable Baumslag-Solitar groups, and the group $\mathrm{\mathrm{SOL}}$ (and its lattices) are respectively quasi-isometric to a horocyclic product of two trees, of a tree with  $\mathbb{H}_{\mathbb{R}}^2$, and of two copies of  $\mathbb{H}_{\mathbb{R}}^2$.

\medskip \noindent
We now turn to wider classes: for each $d>0$, we let $\mathfrak{M}_d$ the class of metric spaces which are quasi-isometric to a bounded degree graph such that any coarsely separating family of subsets must have growth at least $r^d$. Note that, for all $d\leq d'$, we have $\mathfrak{M}_{exp} \subset \mathfrak{M}_{d'}\subset \mathfrak{M}_d$.

\medskip \noindent
Our main result regarding the classes $\mathfrak{M}_d$ is the following.

\begin{thm}\label{thmIntro:nilpot}
Let $G$ be a connected nilpotent Lie group of growth degree $D\geq 2$. Then $G\in \mathfrak{M}_{D-1}$.  More generally, $G\times X\in \mathfrak{M}_{D-1}$ for any bounded degree connected graph~$X$.
\end{thm}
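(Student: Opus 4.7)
The plan is to reduce to the case of a finitely generated nilpotent group and then derive a contradiction from a local isoperimetric inequality in balls. Since $\mathfrak{M}_{D-1}$ is a quasi-isometry invariant class, and every connected nilpotent Lie group of growth degree $D\geq 2$ is quasi-isometric to a torsion-free finitely generated nilpotent group of the same growth degree (via the Mal'cev rational form, after quotienting by the maximal compact subgroup), we may assume $G=\Gamma$ is finitely generated nilpotent of growth degree $D$, equipped with a Cayley graph of bounded degree.

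The key analytic input is the \emph{relative isoperimetric inequality} in balls of $\Gamma$: there exists $c>0$ such that for every ball $B=B_{\Gamma}(s,R)$ and every subset $A\subset B$,
\[|\partial_{B} A|\;\geq\; c\cdot \min(|A|,|B\setminus A|)^{(D-1)/D},\]
where $\partial_{B} A=\{y\in B\setminus A: y\sim x\text{ for some }x\in A\}$ is the vertex boundary of $A$ inside $B$. This follows from the Coulhon--Saloff-Coste scaled Poincar\'e inequality combined with the $D$-doubling property $|B|\sim R^{D}$ of $\Gamma$.

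Assume for contradiction that $\Gamma$ is coarsely separated by a family $\mathcal{S}$ with $V_{\mathcal{S}}(r)\leq Cr^{d}$ for some $d<D-1$, and let $L$ be the parameter of Definition~\ref{defIntro:coarseSep}. For each large $D'$, obtain $S\in\mathcal{S}$, distinct components $C_{1},C_{2}$ of $\Gamma\setminus S^{+L}$, and points $p_{i}\in C_{i}$ at distance $\geq D'$ from $S$. A geodesic from $p_{1}$ to $p_{2}$ must cross $S^{+L}$ at some point $q$; fix $s\in S$ with $d(s,q)\leq L$. Replacing $p_{i}$ by the point of this geodesic at distance $D'-L$ from $q$ on the appropriate side, we may assume $d(p_{1},p_{2})\sim D'$ and that $B(p_{i},r_{0})\subset C_{i}$ for some $r_{0}\sim D'$. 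Choosing $R\sim D'$ so that $B:=B_{\Gamma}(s,R)\supset B(p_{1},r_{0})\cup B(p_{2},r_{0})$, doubling yields $|C_{i}\cap B|\gtrsim R^{D}\sim |B|$ for $i=1,2$. Hence $A:=C_{2}\cap B$ satisfies $|A|,|B\setminus A|\gtrsim |B|$, and the relative isoperimetric inequality gives $|\partial_{B}A|\gtrsim R^{D-1}$.

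On the other hand, any $y\in\partial_{B}A$ is adjacent to some $x\in C_{2}$ with $y\in B\setminus C_{2}$; since no edge of $\Gamma\setminus S^{+L}$ joins $C_{1}$ to $C_{2}$, we must have $y\in S^{+L}\cap B$. Therefore $|\partial_{B}A|\leq |S^{+L}\cap B|\leq |B_{\Gamma}(e,L)|\cdot |S\cap B_{\Gamma}(s,R+L)|\lesssim R^{d}$. Comparing the two bounds yields $R^{D-1}\lesssim R^{d}$, which for $R\sim D'\to\infty$ contradicts $d<D-1$. For the product $G\times X$, one runs the same argument inside a $G$-slice $B_{\Gamma}(g_{0},R)\times\{x_{0}\}$, which is isometric to a ball in $\Gamma$ and therefore inherits the isoperimetric inequality, while the growth estimate $|S^{+L}\cap(B_{\Gamma}(g_{0},R)\times\{x_{0}\})|\lesssim R^{d}$ is directly inherited from $V_{\mathcal{S}}$. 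The main technical obstacle, in both cases, is ensuring the \emph{balanced configuration} $|C_{i}\cap B|\gtrsim |B|$ for $i=1,2$: in the pure $\Gamma$ case this is automatic once $s$ is placed near the midpoint of a $p_{1}$-$p_{2}$ geodesic, but for $G\times X$ the geodesic between the deep points $P_{1},P_{2}$ may drift in the $X$-direction, and one must carefully choose the slice $x_{0}$---possibly after replacing $P_{1},P_{2}$ with deep points sharing a common $X$-coordinate---so that both components meet the chosen $G$-slice with macroscopic volume.
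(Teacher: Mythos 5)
Your analytic core (a relative isoperimetric inequality in balls, compared against the polynomial bound $V_{\mathcal S}(r)\lesssim r^{d}$ on the separating family) is exactly the engine the paper uses, via \cite[Proposition 8.10]{hume2020poincare}. But two steps of your reduction do not hold as written. First, the opening reduction is false: not every connected (even simply connected) nilpotent Lie group admits a lattice or a Mal'cev rational form --- by Mal'cev's criterion this requires rational structure constants, and there are nilpotent Lie algebras (already in dimension $7$) admitting none, which by Pansu's theorem are not quasi-isometric to any finitely generated group. This is why the paper proves the statement directly for compactly generated locally compact groups with a Haar measure (Theorem~\ref{thm cglc poly growth}) rather than passing to a discrete group.

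Second, and more seriously, the ``balanced configuration'' is not automatic in the pure $\Gamma$ case, contrary to your claim. The points $p_i$ guaranteed by coarse separation are deep in their components, but $d(p_1,p_2)$ is completely uncontrolled relative to their depth $D'$; and after you slide $p_i$ to the points of the geodesic at distance $D'-L$ from the crossing point $q$, there is no reason that $B(p_i',r_0)\subset C_i$ for $r_0\sim D'$ --- a point on a geodesic between two deep points need not itself be far from $S^{+L}$ (the set $S$ can hug the geodesic), and the portion beyond the first crossing need not even lie in $C_2$. So neither the estimate $|C_i\cap B|\gtrsim|B|$ nor even $|C_i\cap B|\gtrsim (D')^{D}$ with $R\sim D'$ is justified, and without it the comparison $R^{D-1}\lesssim R^{d}$ collapses. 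The missing idea is the paper's \emph{persistence} argument (Proposition~\ref{main}): for each scale $r$ one takes two $4r$-balls in distinct components and slides a ball of radius $r$ along a path between their centres; since consecutive balls overlap in a definite proportion of their measure (Proposition~\ref{persistent in poly}), some intermediate ball $B(z,r)$ meets every component in at most $\tfrac{3}{5}$ of its measure, and Lemma~\ref{poly 1/4} then extracts a genuinely balanced subset at scale $r$, with $S$ meeting $B(z,O(r))$ so that $V_{\mathcal S}(O(r))$ controls the boundary. The same issue undermines your treatment of $G\times X$, which you acknowledge but do not resolve; the paper's product argument (cf.\ Theorem~\ref{invariance by taking product}) requires a separate two-case analysis --- either some slice is unbalanced and the sliding argument applies within it, or every slice has a dominant component and a translation between adjacent slices forces $S$ to be large.
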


\noindent
Note that the case $D=2$ is essentially trivial, given that the growth of an infinite coarsely connected subset is at least linear. 
Although not really surprising, the theorem seems to be new even in the case of $\mathbb{R}^3$.

\medskip \noindent
Let us end this paragraph with a comparison with closely related existing notions of coarse separation.
For instance the one defined in \cite[\S 4.2]{Farb1996} coincides with ours when $\mathcal{S}$ is reduced to a single subspace. Definition \ref{defIntro:coarseSep} is also  related to one implicitly used by Papasoglu in \cite[Proposition 1.4.1.]{MR2153400}, where he defines separation with respect to an auxiliary space $Z$: his definition of ``$Z$ coarsely separates $X$'' being equivalent to the fact that, for all increasing proper functions $(\rho_1,\rho_2)$, $X$ is coarsely separated by the set of images of $(\rho_1,\rho_2)$-coarse embeddings from $Z$ to $X$. The point of his definition is that it is well suited to his study of separation of $X$ by quasi-lines. Our point of view is quite different, as we are interested in separating families that have in common a bound on their volume growth, but which are not bound to satisfy any coarse topological property.

\subsection{Large-scale geometry of wreath products of groups}\label{secIntro:wreath}

This paper originally arises from an effort to understand the large scale geometry of wreath products $K\wr H$ of finitely generated groups. These groups are generally called \emph{lamplighter groups}, and $K$ is called the \emph{lamp group} while $H$ is called the \emph{base group}. The case where the lamp group is finite has been the object of a previous article by the two last authors. To explain and motivate our results, let us recall (a simplified version of) the central result that they had obtained under this assumption. 

\begin{thm}[\cite{GTbig}]\label{thm:AnthRom}
    Let $G$ be a finitely presented one-ended group and let $\rho:G\to K\wr H$ be a coarse embedding, where $K$ is a finite group and $H$ is an arbitrary finitely generated group. Then $\rho(G)$ lies in a bounded neighbourhood of a coset of $H$.
\end{thm}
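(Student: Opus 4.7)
The plan is to study the natural ``lamp projection'' $p : K \wr H \to \bigoplus_H K$ defined by $(f,h) \mapsto f$, whose fibers are precisely the left cosets of $H$. Endow $\bigoplus_H K$ with the quotient pseudo-metric inherited from the word metric on $K \wr H$. Since a bounded $p \circ \rho(G)$ corresponds (up to the coarse embedding property of $\rho$) to $\rho(G)$ lying in a bounded neighborhood of a finite union of cosets of $H$, the theorem reduces to showing that $p \circ \rho(G)$ is bounded, and then to narrowing this to a single coset using one-endedness.

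The key geometric input is that the induced pseudo-metric on $\bigoplus_H K$ has a \emph{tree-like} character: any path in $K \wr H$ connecting two points with lamp configurations $f_1$ and $f_2$ must visit each $h \in \mathrm{supp}(f_1 - f_2)$ and flip the corresponding lamp, so increasing the support of a configuration creates irreversible coarse branching with no shortcut between distinct lamp choices. I would then argue by contradiction: assuming $p \circ \rho(G)$ is unbounded, choose $g_n \in G$ with $p\circ\rho(g_n)$ escaping to infinity in the lamp space. Since $G$ is finitely presented, it is coarsely simply connected: every combinatorial loop of bounded length is filled by a coarse disc of bounded area, with the area controlled by the Dehn function of $G$. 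Composing with $\rho$ and projecting by $p$ should yield a coarse filling in $\bigoplus_H K$ of a ``tree-like'' loop, which is impossible since tree-like loops admit no such controlled fillings. One-endedness of $G$ is then invoked to exclude the remaining scenario in which $\rho(G)$ straddles two cosets of $H$ via a ``bridge'' that avoids forming nontrivial loops: such a configuration would split $G$ into two unbounded coarse components, contradicting one-endedness.

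The main obstacle, and the heart of the argument, is to quantify the cost of filling loops in $K \wr H$ whose $p$-projection is nontrivial in the lamp space. One must show that such a filling requires area growing faster than the Dehn function of $G$ allows, in the combinatorial size of the lamp-loop (morally, exponentially in the diameter of its support in $H$). This is closely related to the exponential separation phenomena studied in the rest of the paper, since the tree-like rigidity of the lamp space parallels the coarse separation obstructions for lamplighter-like spaces---$K \wr \mathbb{Z}$ being quasi-isometric to a horocyclic product of two regular trees, which belongs to $\mathfrak{M}_{exp}$ by Theorem~\ref{thmIntro:horocyclic products}. A secondary subtlety is the final reduction from ``$\rho(G)$ in a bounded neighbourhood of finitely many cosets'' to ``$\rho(G)$ in a bounded neighbourhood of a single coset,'' which again uses one-endedness.
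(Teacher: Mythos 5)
Your plan captures the right intuition (wreath products are ``tree-like'' in the lamp direction, and finite presentability enters through coarse simple connectedness), but the step you yourself flag as ``the heart of the argument'' is exactly the step that is missing, and the mechanism you propose for it cannot work as stated. You want to show that filling a loop whose lamp-projection is nontrivial ``requires area growing faster than the Dehn function of $G$ allows''. Since $G$ is an \emph{arbitrary} finitely presented one-ended group, its Dehn function can dominate any prescribed function, so no quantitative comparison of filling areas against $\delta_G$ can close the argument. What coarse simple connectedness actually buys (Corollary~\ref{cor:ApproxWreath}) is qualitative: $\rho$ factors through a finitely presented truncation $K \square_\Gamma H$ of $K \wr H$, obtained by keeping only the commutations $[K_b,K_{bs}]$ for $s$ in a finite generating set of $H$. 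In that truncation the relevant subsets genuinely separate: its Cayley graph is a graph of pointed cliques in a quasi-median graph, the bulkheads separate the zones they delimit (Lemma~\ref{lem:BulkSep}), and they project onto the sets $\mathcal{B}(E,c)$, which are \emph{uniformly bounded} when $K$ is finite. One-endedness is then the central hypothesis rather than a final cleanup: it says precisely that $\rho(G)$ cannot be coarsely separated by uniformly bounded subsets, so Lemma~\ref{lem:SepButNotCoarse} orients every wall consistently, and Lemma~\ref{lem:InterSectors} produces a single vertex of the quasi-median graph whose associated $H$-coset contains $\rho(G)$ in a controlled neighbourhood. Your plan inverts this logic, using one-endedness only to pass from finitely many cosets to one.

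There is also a concrete error in your opening reduction. Boundedness of $p\circ\rho(G)$ in the quotient pseudo-metric on $\bigoplus_H K$ does \emph{not} imply that $\rho(G)$ lies near finitely many $H$-cosets: a configuration $f$ at quotient-distance $\leq C$ from $f_0$ merely has $\mathrm{supp}(ff_0^{-1})$ contained in \emph{some} ball of radius $\leq C$, located anywhere in $H$, while the walker coordinate of the corresponding point of $\rho(G)$ may be arbitrarily far from that ball; such a point is then far from $(f_0,1)H$ and close only to a coset $(f,1)H$ that varies with the point, so no finite union of cosets suffices. The parenthetical appeal to ``the coarse embedding property of $\rho$'' does not obviously repair this, and in any case it is not proved. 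So the proposal is a plausible heuristic outline whose two load-bearing steps (the filling obstruction and the coset reduction) are respectively misconceived and unjustified; the actual proof runs through the truncation/wallspace argument of Theorem~\ref{thm:FullEmbeddingThm} specialised to finite lamp groups.
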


\noindent
This result leads to a complete classification up to quasi-isometry of lamplighter groups over finitely presented one-ended group with finite lamp groups (see \cite{GTbig}). In the above theorem, the assumption that $G$ is one-ended is directly linked to the assumption that $K$ is finite. Indeed, roughly speaking, the proof of this theorem consists in showing that, if the conclusion is not satisfied, then $\rho(G)$ is coarsely separated by its intersection with a coset of a subgroup of the form $\bigoplus_S K$, where $S$ is a finite subset of $H$. Such a subset is bounded, hence this contradicts the one-endedness of $G$. 

\medskip \noindent
In this article, we prove the following far reaching abstract generalisation of Theorem~\ref{thm:AnthRom}, in which lamp groups are no longer assumed to be finite (see Theorem~\ref{thm:FullEmbeddingThm} for a more precise statement).

\begin{thm}\label{thmIntro:FullEmbeddingThm}
Let $A,B$ be two finitely generated groups. There exist families $\mathcal{B}_1, \mathcal{B}_2, \ldots$ of uniformly quasi-isometrically embedded subspaces of powers of $A$ in $A \wr B$ such that the following holds. For all finitely presented group $G$ and coarse embedding $\rho : G \to A \wr B$, if $\rho(Z)$ is neither contained in a subset of $\bigcup_{k \geq 1} \mathcal{B}_k$ nor coarsely separated by $\mathcal{B}_k$ for some $k \geq 1$, then the image of $\rho$ must lie in the neighbourhood of some $B$-coset. Moreover, the size of this neighbourhood only depends on $A$, $B$, $G$, and the parameters of $\rho$.
\end{thm}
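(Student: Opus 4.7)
The plan is to prove the contrapositive: assuming that the image $\rho(G)$ is not contained in any fixed neighbourhood of a $B$-coset, I would show that either $\rho(G)$ sits inside a single element of $\bigcup_k \mathcal{B}_k$, or else it is coarsely separated by one of the families $\mathcal{B}_k$. I would begin by writing an element of $A\wr B = \bigl(\bigoplus_B A\bigr) \rtimes B$ as a pair $(f, b)$ with $f : B \to A$ finitely supported, and by introducing the ``lamp support'' map $g \mapsto \mathrm{supp}(f_{\rho(g)}) \subset B$. The natural candidate for $\mathcal{B}_k$ is the collection of cosets in $A\wr B$ of subgroups of the form $\bigoplus_{S} A$, where $S$ ranges over finite subsets of $B$ of cardinality at most $k$ (with diameter controlled by $k$); a short computation in the Cayley graph shows these cosets are uniformly quasi-isometrically embedded copies of $A^{|S|}$.

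The argument rests on a ``gate'' property of wreath products: to travel from $(f_1, b_1)$ to $(f_2, b_2)$ in $A \wr B$, the cursor must visit every $b \in B$ where $f_1$ and $f_2$ disagree. Hence, freezing the lamp values to equal a given profile $f_0 \in A^S$ on a finite subset $S \subset B$ produces a subset separating the configurations outside $S$ into cells indexed by the possible restrictions to $S$. The heart of the proof is then a dichotomy: either the supports $\mathrm{supp}(f_{\rho(g)})$ for $g \in G$ all fit into a common finite set $S$ of bounded size, in which case $\rho(G)$ is contained in an element of $\mathcal{B}_{|S|}$; or the supports genuinely spread along $G$, in which case one can locate $S \subset B$ and a profile $f_0 \in A^S$ such that the hypersurface $\{(f,b) : f|_S = f_0\}$ cleaves $\rho(G)$ into two pieces both containing points arbitrarily far from the hypersurface.

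The third step converts this cleavage into genuine coarse separation by $\mathcal{B}_{|S|}$, and it is here that the finite presentation of $G$ is essential. Every long word in the kernel of the presentation decomposes into conjugates of relators of bounded length, so a coarse path in $\rho(G)$ between putative ``sides'' of the hypersurface can be refined into a genuine path in $A \wr B$ whose crossings of the separating coset are controlled. A routine coarse-topology argument then forces at least one such path to pass through a uniformly bounded neighbourhood of an element of $\mathcal{B}_{|S|}$, which is precisely the coarse separation sought, contradicting the hypothesis.

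The step I expect to be the main obstacle is the construction of the separating profile $f_0$ in the dichotomy. In the finite-lamp setting of Theorem~\ref{thm:AnthRom} one can simply pick any lamp value distinct from the one seen at a point $b \in B$, but when $A$ is infinite the lamp values attained along $\rho(G)$ may drift to infinity in $A$, and one must instead argue that this drift is compatible with coarse separation by a uniformly $A^k$-like subspace, rather than trivially collapsing the separation. A secondary technical nuisance is the uniformity of constants: the radius of the final neighbourhood in the conclusion must depend only on $A$, $B$, $G$ and the parameters of $\rho$, which requires careful bookkeeping of all the constants arising through the dichotomy and the path-refinement step.
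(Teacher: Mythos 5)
There is a genuine gap, and it sits exactly where you say you expect trouble. The entire content of the theorem when $A$ is infinite is the step you defer: producing, from the failure of confinement to a $B$-coset, a subspace of the specific form $\mathcal{B}(E,c)=\{(f,x)\mid x\in E,\ f\in cA^{(E)}\}$ that coarsely separates the image. Your ``freeze the lamps to a profile $f_0$ on $S$'' hypersurface does not separate $A\wr B$ (one can change a lamp on $S$ by routing the cursor through $S$ without ever matching the profile $f_0$ elsewhere), and the correct separating sets are not level sets of the lamp configuration but the sets $\mathcal{B}(E,c)$, which fix the lamps \emph{outside} $E$, let them vary freely \emph{on} $E$, and confine the cursor to $E$. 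The paper obtains these, together with the fact that each one is crossed coherently, by factoring $\rho$ through a truncated presentation $A\,\square_\Gamma B=\Gamma A\rtimes B$ (this is where finite presentability enters, once and for all, rather than via your path-refinement), realising that truncation as a graph of pointed cliques in the quasi-median graph $\mathrm{QM}(\Gamma,A)$ equipped with a coherent system of metrics, and then running an orientation-and-intersection argument on the bulkheads associated to hyperplane fibres (Claims~\ref{claim:Orientation} and~\ref{claim:Vertex}, Lemma~\ref{lem:InterSectors}). None of that machinery, or a substitute for it, appears in your outline; acknowledging the obstacle is not the same as overcoming it.

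A secondary but real error is the first branch of your dichotomy. If all supports $\mathrm{supp}(f_{\rho(g)})$ lie in a common finite set $S$, it does \emph{not} follow that $\rho(G)$ is contained in an element of $\mathcal{B}_{|S|}$: membership in $\mathcal{B}(S,c)$ also constrains the cursor to lie in $S$, whereas under your hypothesis the cursor may range over all of $B$. Worse, the desired conclusion itself (image in a neighbourhood of a $B$-coset) is a configuration in which all supports lie in a common finite set up to bounded error, so your dichotomy does not even cleanly segregate the conclusion from the two excluded alternatives. The residual case --- supports confined to $S$, cursor unbounded, lamp values on $S$ unbounded --- is precisely one that must be shown to be coarsely separated by $\mathcal{B}_{|S|}$, and your sketch does not address it.
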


\noindent
Here is a corollary which can be fed with our Theorems \ref{thmIntro:Symmetric/buildings} or \ref{thmIntro:horocyclic products} to get concrete applications.

\begin{cor}\label{cor:EmbeddingThm}
Let $A,B$ be two finitely generated groups such that $A$ has subexponential growth. Let $G$ be a finitely presented group with exponential growth that belongs to $\mathfrak{M}_{exp}$. Then, for every coarse embedding $\rho : G \to A \wr B$, $\rho(G)$ must lie in a bounded neighbourhood of some $B$-coset. Moreover, the size of this neighbourhood only depends on $A$, $B$, $G$, and the parameters of $\rho$.
\end{cor}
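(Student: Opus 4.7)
The plan is to apply the Embedding Theorem \ref{thmIntro:FullEmbeddingThm} to $\rho$ and to rule out, by growth considerations, the two undesired alternatives it produces: either $\rho(G)$ is contained in some $S \in \bigcup_{k \geq 1} \mathcal{B}_k$, or $\rho(G)$ is coarsely separated by some $\mathcal{B}_k$. The underlying observation used in both cases is that since $A$ has subexponential growth so does every power $A^k$, and hence every $S \in \mathcal{B}_k$ has subexponential growth with a bound that is \emph{uniform} over $\mathcal{B}_k$, thanks to the uniform quasi-isometric embedding of elements of $\mathcal{B}_k$ into $A^k$.

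\textbf{Ruling out the first alternative.} Suppose $\rho(G) \subseteq S$ for some $S \in \mathcal{B}_k$. Since $G$ is finitely generated, one may assume $\rho$ is Lipschitz, so that $\rho(B_G(1,R)) \subseteq B_{A\wr B}(\rho(1), CR)$ for some constant $C$. The fact that $\rho$ is a coarse embedding forces fibres of bounded size $K$, hence $|S \cap B_{A\wr B}(\rho(1), CR)| \geq |B_G(1,R)|/K$. Since $G$ has exponential growth, the right-hand side grows exponentially in $R$, contradicting the subexponential growth of $S$.

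\textbf{Ruling out the second alternative.} Suppose $\mathcal{B}_k$ coarsely separates $\rho(G)$ with separation parameter $L$ as in Definition \ref{defIntro:coarseSep}. I would transport the separation back through $\rho$ by defining, for a suitably enlarged $L' \geq L$ absorbing the coarse parameters of $\rho$, the pull-back family $\widetilde{\mathcal{B}}_k = \{\rho^{-1}(S^{+L'}) : S \in \mathcal{B}_k\}$ of subsets of $G$. A routine verification shows that $\widetilde{\mathcal{B}}_k$ coarsely separates $G$ with adjusted constants. The uniform subexponential growth of $\mathcal{B}_k$, combined with the bounded-to-one property of $\rho$, its Lipschitz control, and the bounded geometry of $A \wr B$, transfers to a uniform subexponential growth bound on $\widetilde{\mathcal{B}}_k$, contradicting the hypothesis $G \in \mathfrak{M}_{exp}$.

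Both alternatives being excluded, Theorem \ref{thmIntro:FullEmbeddingThm} forces $\rho(G)$ into a bounded neighbourhood of some $B$-coset, with size controlled as required by the last clause of that theorem. The main technical obstacle is the second step: one must carefully check that coarse separation of $\rho(G)$ inside the ambient space $A \wr B$ can be pulled back to a coarse separation of $G$ while preserving the subexponential growth of the separating family, uniformly enough that the final dependence of the neighbourhood size on $A$, $B$, $G$ and the parameters of $\rho$ is retained.
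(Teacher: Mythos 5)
Your argument is correct and is exactly the intended derivation of Corollary \ref{cor:EmbeddingThm} from Theorem \ref{thmIntro:FullEmbeddingThm} (the paper states the corollary without writing out a proof): rule out containment via the exponential-vs-subexponential growth clash, and rule out separation by pulling the family back through $\rho$ and contradicting $G \in \mathfrak{M}_{exp}$. The "routine verification" you defer in the second step is precisely the content of Lemma \ref{lem:CoarseSep} and Fact \ref{fact:CoarseSup} together with the observation that $|B_G(x,r)\cap\rho^{-1}(S^{+L'})|\leq K\,V_{\mathcal{B}_k}(\rho_+(r)+L')\cdot\beta_{A\wr B}(L')$, so subexponential growth is preserved.
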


\noindent
Note that any finitely generated group $G$ acting properly cocompactly on a space appearing in Theorems \ref{thmIntro:Symmetric/buildings} or \ref{thmIntro:horocyclic products} satisfies the two conditions of the previous theorem: finite presentation and exponential growth. 

\medskip \noindent
We now present applications to quasi-isometric classification of wreath products up to quasi-isometry. 
We start with a general result, which we deduce (with some substancial work) from Theorem \ref{thm:FullEmbeddingThm}.
\begin{thm}\label{thm:AppliQI}
Let $A_1,A_2,B_1,B_2$ be four finitely generated groups. Assume that 
\begin{itemize}
	\item $B_1$ (resp.\ $B_2$) does not quasi-isometrically embed in a power of $A_2$ (resp. $A_1$);
	\item for every quasi-isometrically embedded subspace $S \subset B_1$ (resp. $S \subset B_2$) that also quasi-isometrically embeds into a power of $A_2$ (resp. $A_1$), $S$ does not coarsely separate $B_1$ (resp. $B_2$).
\end{itemize}
Then every quasi-isometry $A_1\wr B_1 \to A_2 \wr B_2$ induces a quasi-isometry $B_1 \to B_2$, and there are quasi-isometric embeddings $A_1 \hookrightarrow A_2^p$ and $A_2 \hookrightarrow A_1^q$ for some $p,q \geq 1$. 
\end{thm}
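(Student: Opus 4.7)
The plan is to apply the Embedding Theorem~\ref{thm:FullEmbeddingThm} to the restriction of a quasi-isometry $\varphi : A_1 \wr B_1 \to A_2 \wr B_2$ to each $B_1$-coset of the source, and then to pass to the quotient by $B_i$ on both sides in order to extract information on the lamp groups.

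Each coset $xB_1 \subseteq A_1 \wr B_1$ is an isometric copy of $B_1$, so $\varphi|_{xB_1}$ is a quasi-isometric embedding $B_1 \hookrightarrow A_2 \wr B_2$. The Embedding Theorem leaves three alternatives: either $\varphi(xB_1)$ is contained in some element of $\mathcal{B}_k$, or it is coarsely separated by $\mathcal{B}_k$ for some $k$, or it lies within bounded Hausdorff distance of a $B_2$-coset. The first alternative would give a quasi-isometric embedding of $B_1$ into a power of $A_2$, contradicting the first hypothesis. The second, pulled back via $\varphi^{-1}$, produces a quasi-isometrically embedded subspace of $B_1$ that both quasi-isometrically embeds into a power of $A_2$ and coarsely separates $B_1$, contradicting the second hypothesis. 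Hence each $\varphi(xB_1)$ sits in a bounded neighbourhood of a unique $B_2$-coset $\Phi(xB_1)$, with bound uniform in $x$. Running the symmetric argument for $\varphi^{-1}$ yields the inverse assignment, so $\Phi$ is a bijection between the two coset spaces modulo bounded distortion, and the restriction $\varphi|_{xB_1} : xB_1 \to \Phi(xB_1)$ is a quasi-isometry with uniform constants, yielding the asserted quasi-isometry $B_1 \to B_2$ after identifying $xB_1 \simeq B_1$ and $\Phi(xB_1) \simeq B_2$.

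For the lamp-group embeddings, collapse each $B_i$-coset to a point: this yields the lamp-configuration space $\bigoplus_{B_i} A_i$ with its quotient metric, which takes a lamplighter form combining a travelling-salesman cost on the support with the sum of individual lamp-lengths. The assignment $\Phi$ descends to a quasi-isometry $\overline{\Phi}$ between these two quotient spaces. The subgroup of $A_1 \wr B_1$ consisting of single lamps at the identity position embeds isometrically into the first quotient, so $\overline{\Phi}$ furnishes a quasi-isometric embedding of $A_1$ into the quotient of $A_2 \wr B_2$. The promised embedding $A_1 \hookrightarrow A_2^p$ is then obtained by showing that this image essentially lies in a subspace quasi-isometric to a finite power $A_2^p$; the embedding $A_2 \hookrightarrow A_1^q$ is produced by the symmetric argument.

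The principal obstacle lies in this last step: a priori, a quasi-isometric embedding into the infinite-dimensional lamplighter quotient $\bigoplus_{B_2} A_2$ need not take values in any finite-power subspace $A_2^p$. Ruling this out requires exploiting both the homogeneity of $A_1$ as a group and the global bijective character of $\Phi$ to control how the supports of images spread, and is the source of the ``substancial work'' alluded to in the statement.
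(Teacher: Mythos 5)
Your first half follows the paper's route: apply Theorem~\ref{thm:FullEmbeddingThm} to the restriction of $\varphi$ to $B_1$-cosets (and symmetrically to the quasi-inverse), use the two hypotheses to kill the ``contained in $\mathcal{B}$'' and ``coarsely separated by $\mathcal{B}_k$'' alternatives, and conclude that $B_1$-cosets go to bounded neighbourhoods of $B_2$-cosets, whence a quasi-isometry $B_1 \to B_2$. That part is sound (modulo the coarse simple connectedness of $B_1$ needed to invoke the Embedding Theorem, which you should flag).

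The second half has a genuine gap, and you say so yourself: you never prove that the image of the canonical copy of $A_1$ lands in a subspace quasi-isometric to a \emph{finite} power $A_2^p$, which is precisely the content of the claim. Your quotient construction does not supply the missing control: collapsing $B_2$-cosets only records the lamp configuration, and a priori the configurations in the image of $A_1$ could have supports spreading over arbitrarily large subsets of $B_2$. The paper closes this gap in two steps that your argument lacks. First, Theorem~\ref{thm:QIaptolic} (resting on the aptolicity machinery of \cite{GTbig,GTlike}) upgrades the coset-preservation property to the statement that $\varphi$ is at finite distance from an \emph{aptolic} quasi-isometry $(a,b) \mapsto (\alpha(a),\beta(b))$ with $\alpha : \bigoplus_{B_1}A_1 \to \bigoplus_{B_2}A_2$ bijective. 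Second, Lemma~\ref{lem:AptolicEmbPowerLamp} observes that the image of $\{(c,1) : \mathrm{supp}(c)\subset\{1\}\}$ under an aptolic map has constant second coordinate $\beta(1)$, and a coarsely connected subset of $A_2 \wr B_2$ with constant cursor position cannot contain configurations whose supports leave a fixed ball $B(\beta(1),R)$: moving a lamp at position $p$ requires the cursor to travel to $p$. Hence the image lies in $\{(c,p) : \mathrm{supp}(c)\cup\{p\} \subset B(\beta(1),R)\}$, which is quasi-isometric to $A_2^{N}$ with $N = |B(\beta(1),R)|$. Without the aptolic product structure (your $\Phi$ only controls where cosets go, not how lamps transform), the coarse-connectivity argument has no fixed cursor coordinate to anchor on, and the ``homogeneity of $A_1$'' you invoke does not substitute for it.
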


\noindent
This theorem applies to the cases where 
$A_1$ and $A_2$ have subexponential growth and $B_1$ and $B_2$ belong to $\mathfrak{M}_{exp}$. Exploiting the existence of quasi-isometries $A_1 \hookrightarrow A_2^p$ and $A_2 \hookrightarrow A_1^q$, we obtain the following corollary. 

\begin{cor}\label{cor:AppliQI}
Let $A_1,A_2,B_1,B_2$ be four finitely generated groups such that $A_1$ is torsion-free step-$k$ nilpotent and such that $B_1$ and $B_2$ belong to $\mathfrak{M}_{exp}$.
If $A_1\wr B_1$ and  $A_2 \wr B_2$ are quasi-isometric, then so are $B_1$ and $B_2$, and $A_2$ has a finite-index torsion-free step-$k$ nilpotent subgroup. 
\end{cor}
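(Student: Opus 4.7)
The plan is to apply Theorem~\ref{thm:AppliQI} to the assumed quasi-isometry $A_1 \wr B_1 \to A_2 \wr B_2$, and then to decode the nilpotent structure of $A_2$ from the resulting quasi-isometric embeddings $A_1 \hookrightarrow A_2^p$ and $A_2 \hookrightarrow A_1^q$.

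First I would verify the two hypotheses of Theorem~\ref{thm:AppliQI}. Since $A_1$ is torsion-free nilpotent, every power $A_1^n$ has polynomial growth, and a fortiori any qi-embedded subspace thereof has subexponential growth. The $\mathfrak{M}_{exp}$ assumption on $B_2$ then simultaneously rules out a qi-embedding $B_2 \hookrightarrow A_1^n$ (growth mismatch) and the existence of a qi-embedded subspace of $B_2$ that also qi-embeds in some $A_1^n$ and coarsely separates $B_2$, settling the half of the hypotheses that involves $A_1$. The symmetric half is more delicate, since nothing is assumed on $A_2$ directly; I would handle it by applying Theorem~\ref{thm:FullEmbeddingThm} to the composition $B_1 \hookrightarrow A_1 \wr B_1 \xrightarrow{\sim} A_2 \wr B_2$, using the $\mathfrak{M}_{exp}$ property of $B_1$ to exclude the coarse-separation alternative and to force the relevant pieces inside the $A_2$-powers to have small enough growth so that the same argument runs on this side as well.

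Once Theorem~\ref{thm:AppliQI} is applied, we obtain the claimed quasi-isometry $B_1 \to B_2$, along with quasi-isometric embeddings $\varphi: A_1 \hookrightarrow A_2^p$ and $\psi: A_2 \hookrightarrow A_1^q$. Since $A_1^q$ is nilpotent of polynomial growth, $\psi$ forces $A_2$ to have polynomial growth; Gromov's polynomial growth theorem then produces a finite-index nilpotent subgroup $N \leq A_2$, which we may further take to be torsion-free by a standard fact on finitely generated nilpotent groups.

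It remains to identify the step. As $N$ is finite-index in $A_2$, $N^p$ is quasi-isometric to $A_2^p$, so $\varphi$ yields a qi-embedding $A_1 \hookrightarrow N^p$, and $\psi$ restricts to a qi-embedding $N \hookrightarrow A_1^q$. By the Pansu analysis of asymptotic cones of finitely generated torsion-free nilpotent groups (which are Carnot groups of the same step, with qi-embeddings inducing biLipschitz embeddings whose Pansu differentials are graded Lie algebra homomorphisms), the nilpotency class is monotone under qi-embeddings; therefore $k = \mathrm{step}(A_1) \leq \mathrm{step}(N^p) = \mathrm{step}(N)$ from $\varphi$, and $\mathrm{step}(N) \leq \mathrm{step}(A_1^q) = k$ from $\psi$, whence $\mathrm{step}(N) = k$. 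The main obstacle is the verification of the $A_2$-side hypotheses of Theorem~\ref{thm:AppliQI} in the absence of any a priori information on $A_2$; this is where the Embedding Theorem~\ref{thm:FullEmbeddingThm} must do the heavy lifting.
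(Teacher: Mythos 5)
Your back end --- extracting the conclusion from the two quasi-isometric embeddings $A_1 \hookrightarrow A_2^p$ and $A_2 \hookrightarrow A_1^q$ --- is correct and is essentially the paper's own argument: monotonicity of the nilpotency step under quasi-isometric embeddings between torsion-free nilpotent groups (via Pansu's theorem on asymptotic cones and the rigidity of biLipschitz embeddings of Carnot groups), together with invariance of the step under taking powers. Your insertion of Gromov's polynomial growth theorem to first manufacture the finite-index torsion-free nilpotent subgroup $N \leq A_2$ is exactly the extra step the stated corollary requires, since $A_2$ is not assumed nilpotent, and it is carried out correctly (a quasi-isometric embedding $A_2 \hookrightarrow A_1^q$ does force polynomial growth of $A_2$).

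The gap is in the front end, namely in verifying the hypotheses of Theorem~\ref{thm:AppliQI} that involve $A_2$. Your proposed fix --- apply Theorem~\ref{thm:FullEmbeddingThm} to the composition $B_1 \hookrightarrow A_1 \wr B_1 \to A_2 \wr B_2$ and use $B_1 \in \mathfrak{M}_{exp}$ to kill the separation alternative --- does not work: the potentially separating families $\mathcal{B}_k$ are uniform quasi-isometric copies of powers of $A_2$, and membership in $\mathfrak{M}_{exp}$ only excludes coarse separation by families of \emph{subexponential} growth. At that stage of the argument nothing whatsoever is known about the growth of $A_2$, so neither the alternative ``$\rho(B_1)$ is coarsely separated by $\mathcal{B}_k$'' nor the alternative ``$\rho(B_1)$ is contained in a member of $\mathcal{B}$'' (i.e.\ $B_1$ quasi-isometrically embeds in a power of $A_2$) can be ruled out; the phrase ``force the relevant pieces inside the $A_2$-powers to have small enough growth'' has no mechanism behind it, and one cannot bootstrap from the yet-to-be-proved conclusion that $A_2$ is virtually nilpotent. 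This matters because the aptolicity input (Theorem~\ref{thm:QIaptolic}) requires that \emph{both} the quasi-isometry and its inverse send lamp-base cosets near lamp-base cosets, and only the $B_2$-to-$B_1$ direction can be certified from the stated hypotheses (there the relevant families are powers of the polynomial-growth group $A_1$). The paper itself only proves the quasi-isometry theorem under the assumption that both $A_1$ and $A_2$ have subexponential growth, and its in-text version of this corollary assumes both lamp groups nilpotent; to prove the corollary exactly as stated you must either import that assumption on $A_2$ or supply a genuinely new argument showing that $\varphi$ sends $B_1$-cosets near $B_2$-cosets without prior control on $A_2$.
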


\noindent
Let us conclude this section with a couple of explicit examples.

\begin{ex}
For every $k \geq 1$, let $N_k$ be the free step $k$ nilpotent group on two generators. The solvable groups $\mathrm{N}_p \wr \mathrm{BS}(1,n)$ and $\mathrm{N}_q \wr \mathrm{BS}(1,m)$ are quasi-isometric if and only if $p=q$ and $n,m$ are powers of a common number. 
\end{ex}

\begin{ex}
For all $n \geq 3$ and $m \geq 2$, let $B(n,m)$ denote the graph product over an $n$-cycle all whose vertex-groups are $\mathbb{Z}/m\mathbb{Z}$. For every uniform lattice $H$ in $\mathbb{H}^p$, the groups $\mathbb{Z} \wr B(n,m)$ and $\mathbb{Z} \wr H$ are quasi-isometric if and only if $p=m=2$ and $n \geq 5$. 
\end{ex}

\subsection{Large-scale geometry of amalgamated free products}\label{secIntro:trees}

Our notion of coarse separation is designed to provide obstruction to the existence of coarse embeddings to free products with amalgamations.
In particular, we have the following easy fact (see Theorem \ref{sep in graphs of groups general} for a more general statement).
Given a finitely generated group $G$ and a subgroup $H$, we define the relative growth of $H$ to be its growth with respect to the distance induced from a word distance in $G$.

\begin{prop}\label{sep in graphs of groups}
Let $G$ be the fundamental group of a finite graph of groups whose edge-groups have subexponential relative growth (resp.\ relative growth at most $r^d$), then any coarse embedding from a bounded degree graph in $\mathfrak{M}_{exp}$ (resp.\ in $\mathfrak{M}_{d+1}$)  to $G$ must land in a bounded neighbourhood of a left coset of some vertex-group.
\end{prop}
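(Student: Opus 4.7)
The plan is to exploit the action of $G$ on its Bass-Serre tree $T$ and to use (neighbourhoods of) edge-group cosets as walls that coarsely separate $G$, pulled back to $X$ via $\rho$. If $\rho(X)$ fails to land near a single vertex-group coset, this will produce a family of subexponentially (resp.\ polynomial-of-degree-$\leq d$) growing subsets that coarsely separates $X$, contradicting $X \in \mathfrak{M}_{exp}$ (resp.\ $\mathfrak{M}_d$).

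Concretely, I fix a basepoint $v_0 \in T$ and consider the orbit map $\phi \colon G \to T$, $g \mapsto g v_0$, which is coarsely Lipschitz with some constant $C$ for a word metric on $G$. For each edge $e \in T$, let $W_e \subset G$ denote the corresponding edge-stabiliser coset. Three ingredients will be used: the family $\{W_e\}_{e \in T}$ splits into finitely many $G$-orbits; each $W_e$ has subexponential (resp.\ polynomial of degree $\leq d$) growth in $G$ by hypothesis on the edge-groups; and for $L_0$ large enough, $G \setminus W_e^{+L_0}$ has exactly two connected components in the Cayley graph of $G$, in bijection with the two half-trees of $T \setminus \{e\}$. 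Enlarging $L_0$ so that in addition $L_0 \geq \rho_+(1)$ ensures that any $X$-path whose endpoints are mapped to opposite sides of $W_e^{+L_0}$ must pass through $\rho^{-1}(W_e^{+L_0})$: consecutive vertices in an $X$-path have $G$-distance at most $\rho_+(1) \leq L_0$, too short to straddle a wall of width $L_0$. Hence the family $\mathcal{F} := \{\rho^{-1}(W_e^{+L_0}) : e \text{ edge of } T\}$ has uniformly subexponential (resp.\ polynomial-$d$) growth, by the coarse embedding property together with the standard fact that coarse embeddings between bounded-degree graphs have uniformly bounded multiplicity.

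Assuming for contradiction that $\rho(X)$ lies in no bounded neighbourhood of any vertex-group coset, I verify the coarse separation by exhibiting, for every $D>0$, an edge $e$ and two points $x, y \in X$ on opposite sides of $W_e^{+L_0}$ at $X$-distance $\geq D$ from $\rho^{-1}(W_e^{+L_0})$. If $\phi \circ \rho(X)$ is unbounded in $T$, I pick $x,y$ whose $\phi\rho$-images are very far apart in $T$ and take $e$ near the middle of the connecting geodesic: the Lipschitz estimate $d_G(\cdot, W_e) \geq d_T(\phi(\cdot), e)/C$ delivers the required lower bounds on $d_G(\rho(x), W_e)$ and $d_G(\rho(y), W_e)$, which transfer to the analogous $X$-distances via $\rho_+$. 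If $\phi \circ \rho(X)$ is bounded, $\rho(X)$ sits in a finite union $\bigsqcup_{i=1}^n g_i G_{v_i}$, and a pigeonhole argument shows that at least two of the intersections $\rho(X) \cap g_i G_{v_i}$ must be unbounded in $G$ (otherwise $\rho(X)$ would lie in a bounded neighbourhood of a single unbounded coset joined with finitely many bounded pieces); picking $e$ between the vertices associated to two such cosets and invoking the hypothesis to rule out either unbounded piece being trapped in a neighbourhood of $W_e$, one obtains the required configuration.

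The main technical obstacle lies in the second case: one must ensure that the $G$-unboundedness of $\rho(X) \cap g_i G_{v_i}$ implies unbounded $G$-distance from a suitably chosen $W_e$. The key observation is that $W_e \subset g G_v$ for each of the two endpoints $v$ of $e$, so a subset trapped in $W_e^{+R}$ is automatically trapped in a bounded neighbourhood of \emph{each} of the two incident vertex-cosets. This allows an iterative reduction: whenever an unbounded piece of $\rho(X)$ fails to escape from some $W_e$, it collapses into a neighbourhood of an adjacent vertex coset, strictly reducing the number of vertex-cosets carrying distinct unbounded contributions to $\rho(X)$; the process terminates either with a valid separating edge or with $\rho(X)$ confined to a single vertex coset, yielding the desired contradiction.
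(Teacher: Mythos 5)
Your overall strategy is the same as the paper's: project to the Bass--Serre tree, use edge-group cosets (equivalently, edge-spaces) as coarsely separating walls of controlled growth, and split into the two cases according to whether the image of $X$ has bounded or unbounded projection to the tree. Your first case, and your transfer of the growth bound through the coarse embedding (finitely many orbits of edge-spaces, affine $\rho_+$, bounded multiplicity), are fine and match the paper's argument.

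The gap is in your second case. You assert that if $p\circ\rho(X)$ is bounded in $T$ then $\rho(X)$ sits in a \emph{finite} union $\bigsqcup_{i=1}^n g_iG_{v_i}$, and both your pigeonhole step and the termination of your ``iterative reduction'' (which strictly decreases the number of vertex-cosets carrying unbounded contributions) depend on this finiteness. But the Bass--Serre tree of a finite graph of groups is not locally finite unless the edge-groups have finite index in the vertex-groups, which is not assumed: a vertex $gG_v$ has one neighbour for each coset of each incident edge-group in $G_v$. Hence a subset of $G$ whose projection to $T$ has diameter $1$ can already meet infinitely many vertex-group cosets, and your induction need not terminate. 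The paper's proof of the corresponding statement (Theorem~\ref{sep in graphs of groups general}) avoids this: for each edge of the bounded (but possibly infinite) projected subtree, non-separation forces at most one side to contain points of $\rho(X)$ escaping to infinity, one orients each edge towards that side, and since each vertex has at most one outgoing edge the orientation flows to a unique sink $v$ even when the subtree is infinite; the confinement of $\rho(X)$ near $p^{-1}(v)$ is then proved by noting that each of the (possibly infinitely many) edges incident to $v$ points towards $v$, so only finitely many of the hypothetical escaping points can exit through any given incident edge, which again produces a coarse separation and a contradiction. Your ``collapse onto an adjacent vertex coset'' observation is the right local ingredient, but you need to replace the counting argument by this orientation/sink argument (or otherwise justify that only finitely many vertex-spaces are relevant) for the case to close. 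A secondary, minor imprecision: $G\setminus W_e^{+L_0}$ need not have \emph{exactly} two components; what you actually use (and what holds, by \cite[Lemma 2.2]{papasoglu2012splittings}) is that the preimages of the two half-trees lie in distinct components.
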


\noindent
This applies in particular to free products with amalgamation over non-distorted subgroups of subexponential growth. 

\medskip \noindent
\begin{cor}
Let $X$ be one of spaces listed in Theorems \ref{thmIntro:Symmetric/buildings} and \ref{thmIntro:horocyclic products}. Then any coarse embedding from $X$ to a free product $A_1\ast_{B_1} A_2\ldots \ast_{B_k} A_k$, where the $B_i$ are non-distorted and have subexponential growth, must land in a bounded neighbourhood of a coset of some~$A_j$. 
\end{cor}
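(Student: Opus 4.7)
The plan is to apply Proposition \ref{sep in graphs of groups} directly to $G = A_1 \ast_{B_1} A_2 \ast \cdots \ast_{B_k} A_k$ viewed as the fundamental group of a finite (tree-shaped) graph of groups whose vertex-groups are the $A_j$ and whose edge-groups are the $B_i$. To invoke the proposition, two conditions must be checked: that the space $X$ is (up to quasi-isometry) a bounded degree graph belonging to $\mathfrak{M}_{exp}$, and that every edge-group $B_i$ has subexponential \emph{relative} growth in $G$.

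The first condition is given by Theorems \ref{thmIntro:Symmetric/buildings} and \ref{thmIntro:horocyclic products}: each listed $X$ is by definition quasi-isometric to a bounded degree graph $Y \in \mathfrak{M}_{exp}$. Given a coarse embedding $\rho : X \to G$ and a quasi-isometry $\phi : Y \to X$, the composition $\rho \circ \phi : Y \to G$ is again a coarse embedding, and it suffices to show that its image lies in a bounded neighbourhood of some $A_j$-coset, since bounded neighbourhoods are preserved under postcomposing with the quasi-inverse $\phi^{-1}$.

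For the second condition, the assumption that each $B_i$ is non-distorted in $G$ means precisely that the inclusion $(B_i, d_{B_i}) \hookrightarrow (G, d_G)$ is a quasi-isometric embedding for any choice of word metrics. Consequently, balls in $G$ intersected with $B_i$ are, up to a bi-Lipschitz rescaling of the radius, comparable to balls in $B_i$ for its intrinsic word metric. Since $B_i$ has subexponential growth by hypothesis, the relative growth function $R \mapsto |B_G(1,R) \cap B_i|$ is also subexponential.

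With both hypotheses verified, Proposition \ref{sep in graphs of groups} applies to the coarse embedding $\rho \circ \phi : Y \to G$ and yields that its image lies in a bounded neighbourhood of a left coset of some vertex-group, i.e.\ of some $A_j$. Translating back through $\phi$, the same conclusion holds for $\rho(X)$. There is no real obstacle here beyond setting up the graph-of-groups structure and observing that non-distortion plus subexponential intrinsic growth gives subexponential relative growth; the corollary is essentially a packaging of Proposition \ref{sep in graphs of groups} with the previously established membership of $X$ in $\mathfrak{M}_{exp}$.
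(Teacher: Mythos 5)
Your proposal is correct and follows exactly the route intended by the paper: the corollary is stated as an immediate consequence of Proposition \ref{sep in graphs of groups}, with the two hypotheses verified precisely as you do (membership of $X$ in $\mathfrak{M}_{exp}$ from Theorems \ref{thmIntro:Symmetric/buildings} and \ref{thmIntro:horocyclic products}, and subexponential relative growth of the $B_i$ from non-distortion plus subexponential intrinsic growth, as recorded in the remark following Definition \ref{relative growth}). Nothing further is needed.
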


\noindent
Similarly, applying Theorem \ref{thmIntro:nilpot} yields the following corollary:

\begin{cor}
Assume $G$ is finitely generated group (or a connected Lie group) of polynomial growth of degree at least $d$. Then any coarse embedding from $G$ to an amalgamated free product $A_1\ast_{B_1} A_2\ldots \ast_{B_{k-1}} A_k$, where the $B_i$ are non-distorted and of polynomial growth of degree at most $d-2$, must land in a bounded neighbourhood of a coset of some~$A_j$. 
\end{cor}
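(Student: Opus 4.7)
My approach is to combine Theorem~\ref{thmIntro:nilpot} with Proposition~\ref{sep in graphs of groups}. The first step is to reduce to the case where the source is a connected simply connected nilpotent Lie group. If $G$ is finitely generated of polynomial growth, Gromov's theorem provides a finite-index torsion-free nilpotent subgroup whose Malcev completion is a connected simply connected nilpotent Lie group quasi-isometric to $G$ with the same growth degree. For the Lie group case, the classical results of Guivarc'h, Jenkins and Losert yield a quasi-isometry between $G$ and a connected simply connected nilpotent Lie group of the same growth degree. Since coarse embeddings and the conclusion of the statement are preserved by quasi-isometry of the source (up to changing the parameters and the size of the neighbourhood), we may assume $G$ is itself such a group, of exact growth degree $D \leq d$.

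The trivial cases $D \in \{0,1\}$ are handled separately: $G$ is then quasi-isometric to a point or to $\mathbb{Z}$, and any coarse embedding into the amalgamated free product lands in a neighbourhood of a vertex-group coset by a direct Bass--Serre tree argument (the image meets finitely many edges of the Bass--Serre tree). So we may assume $D \geq 2$ and apply Theorem~\ref{thmIntro:nilpot}, which gives $G \in \mathfrak{M}_{D-1}$: any coarsely separating family of subsets of $G$ must have volume growth at least $r^{D-1}$.

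Next, I would view the amalgamated free product $A_1 \ast_{B_1} A_2 \ldots \ast_{B_k} A_k$ as the fundamental group of a finite graph of groups with vertex-groups the $A_j$ and edge-groups the $B_i$. By hypothesis the $B_i$ are non-distorted, so their relative growth in $A_1 \ast_{B_1} \cdots \ast_{B_k} A_k$ is polynomial of degree at most $d-1$. Taking $D$ as the exact growth degree of $G$ and replacing $d$ by $D$ in Proposition~\ref{sep in graphs of groups} (which is legitimate since $D \leq d$ forces the $B_i$ to have relative growth at most $r^{D-1}$, otherwise $G$ itself would have growth degree larger than $D$ when one restricts the source, contradicting the assumption that $G$ is contained in the amalgamated free product with edge groups of strictly smaller relative growth), we conclude that the image of any coarse embedding $\rho : G \to A_1 \ast_{B_1} \cdots \ast_{B_k} A_k$ must lie in a bounded neighbourhood of a left coset of some vertex-group $A_j$.

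The main obstacle I anticipate is the bookkeeping of the growth degrees: one has to ensure that the degree hypotheses align in such a way that the class $\mathfrak{M}_{D-1}$ furnished by Theorem~\ref{thmIntro:nilpot} is compatible with the degree bound on the edge-groups required by Proposition~\ref{sep in graphs of groups}. The cleanest way around this is to work with the exact growth degree $D$ of $G$ rather than the upper bound $d$, and to observe that the corollary is really a statement about the gap between the growth of $G$ and the growth of the edge-groups, with the inequality $D \geq (d-1)+1$ being the significant hypothesis.
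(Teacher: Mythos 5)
Your route is the paper's: place $G$ in $\mathfrak{M}_{D-1}$ via Theorem~\ref{thmIntro:nilpot} (in fact Theorem~\ref{thm cglc poly growth} applies directly to finitely generated groups and to connected Lie groups of polynomial growth, so the detour through Malcev completions is unnecessary once Gromov's and Guivarc'h's theorems guarantee $\beta_G(r)\simeq r^{D}$ for an integer $D$), then view the amalgam as a finite graph of groups with undistorted edge groups and invoke Proposition~\ref{sep in graphs of groups}.

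The gap is in the degree bookkeeping. The assertion that ``$D\le d$ forces the $B_i$ to have relative growth at most $r^{D-1}$'' is false, and the justification you give for it is circular: the growth of the edge groups of the target has nothing to do with the growth of the source $G$, and no hypothesis says the edge groups grow strictly slower than $G$ --- which is exactly what you would need. When $D<d$ the conclusion genuinely fails: take $G=\mathbb{Z}^2$ (degree $2\le 3=d$) and target $\mathbb{Z}^3\ast_{\mathbb{Z}^2}\mathbb{Z}^3$ with undistorted edge group $\mathbb{Z}^2$ of degree $2=d-1$; gluing a half-plane of one factor to a half-plane of the other along the edge group gives a quasi-isometric embedding of $\mathbb{Z}^2$ whose image lies in no bounded neighbourhood of a vertex coset. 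So $D=d$ cannot be derived; the statement must simply be read with $d$ the exact growth degree of $G$ (and $d\ge 2$), which is how the paper deduces it from Theorem~\ref{thmIntro:nilpot}, and under that reading the two cited results combine immediately with no further argument. Your treatment of $D=1$ has the same defect: a coarsely embedded line can cross infinitely many edge spaces of the Bass--Serre tree (a geodesic in $F_2$ already does), so ``the image meets finitely many edges'' is not true, and that case is excluded by the hypotheses rather than proved.
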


\noindent
Here is a concrete situation where the last result applies:

\begin{cor}\label{corIntro: chordal}
Let $\Gamma$ be a finite chordal graph and $X$ a bounded degree graph not coarsely separable by a family of subspaces of growth $O(r^{\mathrm{clique}(\Gamma)})$ (e.g.\ $X\in \mathfrak{M}_{d}$ for $d<\mathrm{clique}(\Gamma)$). Then there does not exist any coarse embedding of $X$ in the right-angled Artin group $A(\Gamma)$. 
\end{cor}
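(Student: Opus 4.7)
The plan is to combine the classical graph-of-groups decomposition of the right-angled Artin group $A(\Gamma)$ over a chordal graph with Proposition~\ref{sep in graphs of groups}, and then to rule out coarse embeddings into $\mathbb{Z}^{c}$ for $c \leq \mathrm{clique}(\Gamma)$ by pulling back the obvious coarse separation of $\mathbb{Z}^{c}$ along affine hyperplanes.

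First I would invoke the classical structure theorem for chordal graphs: the maximal cliques $C_{1}, \dots, C_{m}$ of $\Gamma$ can be arranged in a \emph{clique tree} $T$ with the property that, for every vertex $v$ of $\Gamma$, the cliques containing $v$ form a subtree of $T$. This is obtained by iteratively removing simplicial vertices. It yields a presentation of $A(\Gamma)$ as the fundamental group of a finite graph of groups over $T$ whose vertex groups are $A(C_{i}) \cong \mathbb{Z}^{|C_{i}|}$ and whose edge groups are $A(C_{i} \cap C_{j}) \cong \mathbb{Z}^{|C_{i} \cap C_{j}|}$. Since distinct maximal cliques cannot contain one another, every edge subgroup has rank at most $\mathrm{clique}(\Gamma) - 1$; being parabolic subgroups of a right-angled Artin group they are moreover isometrically embedded, hence non-distorted. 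The edge subgroups thus have polynomial relative growth of degree at most $\mathrm{clique}(\Gamma) - 1$, so Proposition~\ref{sep in graphs of groups} applies and forces the image of any coarse embedding $\rho : X \to A(\Gamma)$ to lie in a bounded neighbourhood of a coset $g A(C_{i})$ for some $i$. The problem is then reduced to showing that $X$ admits no coarse embedding into $\mathbb{Z}^{c}$ for any $c \leq \mathrm{clique}(\Gamma)$.

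This final step is where I expect the main difficulty, and I would handle it by pulling back the standard coarse separation of $\mathbb{Z}^{c}$ by a family of parallel affine hyperplanes. Since $X$ has bounded degree, any coarse embedding $\rho : X \to \mathbb{Z}^{c}$ is automatically Lipschitz on the large scale, and the preimage in $X$ of a bounded thickening of an affine hyperplane $H_{n} = \{z_{1} = n\} \subset \mathbb{Z}^{c}$ has growth $O(r^{c-1})$: a ball of radius $R$ in $X$ lands in a ball of linear radius $O(R)$ in $\mathbb{Z}^{c}$, its intersection with the hyperplane neighbourhood has $O(R^{c-1})$ points, and preimages of points have uniformly bounded cardinality because $\rho$ is coarsely injective and $X$ has bounded degree. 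Provided $\rho(X)$ is unbounded in both directions along the $z_{1}$-axis, the corresponding family of preimages coarsely separates $X$ with growth $O(r^{c-1}) \subset O(r^{\mathrm{clique}(\Gamma)})$, contradicting the hypothesis. Otherwise $\rho(X)$ sits in a bounded neighbourhood of a sub-$\mathbb{Z}^{c-1}$, and one iterates on $c$; the iteration must terminate (since $X$ is unbounded, the corollary being vacuous otherwise), producing the required contradiction.
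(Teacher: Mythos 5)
Your reduction to a single clique subgroup is essentially the paper's argument. The paper proves the stronger Theorem~\ref{thm chordal} (for arbitrary graph products over chordal graphs) by induction on $|V(\Gamma)|$: a chordal graph that is not complete has a separating clique $\Lambda$, giving $A(\Gamma)=\langle\Gamma_1\rangle\ast_{\langle\Lambda\rangle}\langle\Gamma_2\rangle$, and Theorem~\ref{sep in graphs of groups general} forces $\rho(X)$ into a neighbourhood of a coset of $\langle\Gamma_1\rangle$ or $\langle\Gamma_2\rangle$; iterating this is exactly your clique-tree decomposition done one edge at a time, and your observation that the edge groups are undistorted of rank at most $\mathrm{clique}(\Gamma)-1$ matches the paper's remark that $\langle\Lambda\rangle$ is a retract. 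One bookkeeping point: Proposition~\ref{sep in graphs of groups} is stated for $X\in\mathfrak{M}_d$, which is not literally your hypothesis, so you should invoke Theorem~\ref{sep in graphs of groups general} directly and note that if $\rho(X)$ were coarsely separated by the edge-spaces then, since a coarse embedding from a connected bounded-degree graph is coarsely Lipschitz with uniformly finite fibres, $X$ would be coarsely separated by a family of growth $O(r^{\mathrm{clique}(\Gamma)-1})$, contradicting the hypothesis.

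The genuine issue is in your final step. The negation of ``$\rho(X)$ is unbounded in both directions along the $z_1$-axis'' is not ``$\rho(X)$ lies in a bounded neighbourhood of a coset of $\mathbb{Z}^{c-1}$'': a half-space $\{z_1\leq 0\}$ is unbounded in only one direction and is not within bounded distance of any hyperplane, so as written your iteration does not get started in that case. The repair is immediate once you use that Definition~\ref{weak separation} allows a different separator for each $D$: if the $z_1$-coordinate is unbounded in even one direction on $\rho(X)$, then for each $D$ the hyperplane $\{z_1=n\}$ with $n=z_1(p)+D$ (for a fixed basepoint $p\in\rho(X)$) leaves points of $\rho(X)$ at depth $\geq D$ on both sides, so the family $\{z_1=n\}_n$ already coarsely separates $\rho(X)$. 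The correct dichotomy is thus ``some coordinate is unbounded on $\rho(X)$'' versus ``$\rho(X)$ is bounded'', and the latter contradicts the (implicit, but necessary) unboundedness of $X$. Alternatively you can avoid hyperplanes and the iteration altogether: a coarse embedding $X\to\mathbb{Z}^c$ forces $\beta_X(r)=O(r^c)$ with $c\leq\mathrm{clique}(\Gamma)$, so \emph{every} family of subspaces of $X$ has growth $O(r^{\mathrm{clique}(\Gamma)})$, and the family of spheres $\{S_X(x_0,n)\}_n$, which coarsely separates any unbounded connected bounded-degree graph, already contradicts the hypothesis. The paper leaves this final step implicit in deducing the corollary from Theorem~\ref{thm chordal}, so modulo the slip your proposal is actually the more explicit of the two here.
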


\noindent
For instance, $\mathbb{H}^3$ does not coarsely embed into $A(\Gamma)$. It is worth mentioning that it is proved in \cite{MR2422070} that $A(\Gamma)$ does not contain the fundamental group of a closed surface of genus $\geq 2$ as a subgroup, which suggests that the previous observation might also hold for $\mathbb{H}^2$. 

\medskip \noindent
Proposition \ref{sep in graphs of groups} has the following consequence for quasi-isometries.

\begin{cor}\label{corIntro:qi between graphs of groups}
Let $G_1$ and $G_2$ be finitely generated groups admitting graph of groups decompositions. Assume that there exists $n \in \mathbb{N^*}$ such that 
    \begin{itemize}
        \item all vertex-groups lie in $\mathfrak{M}_d$ (resp.\ $\mathfrak{M}_{exp}$),
        \item  all edge-groups  are finitely generated with relative growth $=o (r^{d-1})$ (resp.\ have subexponential relative growth),
    \end{itemize}
    then any quasi-isometry between $G_1$ and $G_2$ quasi-preserves the vertex-groups.
\end{cor}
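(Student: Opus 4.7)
The plan is to apply Proposition~\ref{sep in graphs of groups} one left coset at a time. Fix a quasi-isometry $\phi : G_1 \to G_2$ with a quasi-inverse $\psi : G_2 \to G_1$, and consider a left coset $gV$ of some vertex-group $V$ of $G_1$. A standard Bass--Serre argument, valid because all edge-groups are finitely generated, shows that each vertex-group of $G_1$ is undistorted. Hence the map $v \mapsto gv$ is a quasi-isometric embedding $V \hookrightarrow G_1$ whose parameters are uniform in $g$ and across the (finitely many) vertex-groups. Composing with $\phi$ yields a coarse embedding $V \to G_2$ with uniformly controlled parameters.

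By hypothesis $V \in \mathfrak{M}_d$, and since $\mathfrak{M}_d \subset \mathfrak{M}_{d-1}$ we may view $V$ as a bounded-degree graph in $\mathfrak{M}_{d-1}$. The edge-groups of $G_2$ have relative growth $o(r^{d-1})$, a fortiori bounded by $r^{d-1}$, so the hypothesis of Proposition~\ref{sep in graphs of groups} applies to $G_2$. This produces a constant $N$, depending only on $\phi$ and on the graph-of-groups data, such that for every $g$ and every $V$ the image $\phi(gV)$ lies in the $N$-neighbourhood of some left coset $h V'$ of a vertex-group of $G_2$. A symmetric argument with $\psi$ in place of $\phi$, applied to $h V'$, produces a left coset $g' V''$ of a vertex-group of $G_1$ with $\psi(h V') \subset (g' V'')^{+N}$; combining this with the previous one via $\psi \phi \approx \mathrm{Id}$ gives the inclusion $gV \subset (g' V'')^{+K}$ for a uniform constant $K$. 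The $\mathfrak{M}_{exp}$ variant is formally identical, replacing $\mathfrak{M}_{d-1}$ by $\mathfrak{M}_{exp}$ throughout.

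The main obstacle is turning these two one-sided inclusions into a genuine Hausdorff-bounded matching between cosets. The key fact one needs is: if an infinite coset $gV$ of a vertex-group is contained in a bounded neighbourhood of another coset $g' V''$ of a vertex-group in the same graph of groups, then in fact $gV$ and $g' V''$ are at bounded Hausdorff distance. This follows from the geometry of the Bass--Serre tree and the infinite-index phenomenon for edge-groups: a bounded neighbourhood of $g' V''$ meets only finitely many cosets of $g V$ (projections in the tree), while the infinite coset $gV$ cannot be absorbed by boundedly many proper cosets of it without forcing coincidence. Feeding this back gives the two-sided conclusion, namely that $\phi$ sends each left coset of a vertex-group of $G_1$ to within bounded Hausdorff distance of a left coset of a vertex-group of $G_2$ (and conversely for $\psi$), which is the precise meaning of ``quasi-preserves the vertex-groups''.
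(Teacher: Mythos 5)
Your route is the same as the paper's: restrict the quasi-isometry to a vertex coset, invoke the non-separation result (Theorem~\ref{sep in graphs of groups general}, i.e.\ Proposition~\ref{sep in graphs of groups}) in both directions, and finish with a Papasoglu--Whyte type matching argument. However, two of your steps do not hold up as written. The first is minor: finite generation of the edge-groups does \emph{not} make the vertex-groups undistorted --- in $\mathrm{BS}(1,2)=\mathbb{Z}\ast_{\mathbb{Z}}$ the vertex-group $\langle a\rangle\cong\mathbb{Z}$ is exponentially distorted although the edge-group is $\mathbb{Z}$. This error is harmless, because the inclusion of a finitely generated subgroup into a finitely generated group is always a coarse embedding, and a coarse embedding is all that Proposition~\ref{sep in graphs of groups} requires; this is exactly how the paper argues.

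The genuine gap is your ``key fact''. As a statement about arbitrary graphs of groups it is false: in $\mathbb{Z}\ast_{\mathbb{Z}}\mathbb{Z}^2\cong\mathbb{Z}^2$, with the edge-group equal to the first vertex-group and a direct factor of the second, the first vertex space is a line contained in (a $0$-neighbourhood of) the second vertex space, yet the two are at infinite Hausdorff distance. So the fact cannot ``follow from the geometry of the Bass--Serre tree'' alone; the growth hypotheses must enter, and your sketch never uses them at this stage. The correct argument runs as follows: if $gV\subset(g'V'')^{+K}$ with the two cosets lying over distinct vertices of the Bass--Serre tree, then every point of $gV$ is within bounded distance of the edge space separating those two vertices, so $V$ coarsely embeds into a bounded neighbourhood of an edge space and $\beta_V(r)=o(r^{d-1})$; on the other hand, an unbounded group in $\mathfrak{M}_d$ is coarsely separated by the family of its (thickened) spheres, which forces $\beta_V(r)\gtrsim r^d$ --- a contradiction. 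Hence the two cosets lie over the same tree vertex and therefore coincide, which yields the Hausdorff-closeness you want. This is precisely the step the paper delegates to \cite[Theorem 3.1]{papasoglu2002quasi}; your proposal asserts it but does not supply it.
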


\noindent
This should be compared to  \cite[Theorem 3.1]{papasoglu2002quasi}, \cite{papasoglu2007group}, and \cite[Theorem 1.6]{mosher2011quasi}. 
In those results, the conclusion is (much) stronger as the quasi-isometry quasi-preserves the edges groups as well. 
However, combining this simple corollary with Theorems~\ref{thmIntro:Symmetric/buildings} and~\ref{thmIntro:horocyclic products} yields the following concrete application, which is not covered by those quoted results: let $H_1,L_1,H_2,L_2$ be finitely generated groups, each one acting properly cocompactly on any of the spaces listed in those two theorems, and let $K_1$ and $K_2$ be two groups with subexponential growth. Corollary \ref{corIntro:qi between graphs of groups} applies to the pair of groups 
$G_1=(H_1\times K_1)\ast_{K_1}(L_1\times K_1)$  and $G_2=(H_2\times K_2)\ast_{K_2}(L_2\times K_2)$. Note that, for $i=1,2$, we have $G_i\simeq(H_i\ast L_i)\times K_i$, ensuring that $K_i$ is not distorted. 

\subsection{Applications to distortion of coarse embeddings}

Combining Theorems \ref{thmIntro:nilpot} with a coarse version of the Jordan separation theorem, we obtain:

\begin{thm}\label{thmIntro:lower bound distortionNilp}
Let $H$ and $G$ be connected simply connected nilpotent Lie groups of dimension $k$ and $k+1$ respectively. Suppose that their respective growth functions satisfy $\beta_H(r) \simeq r^p$, $\beta_G(r) \simeq r^n$, with $p \leq n-1$, and let $f : H \to G$ be a coarse embedding with control functions $\rho_-$ and $\rho_+$. Then $\rho_-(r) \lesssim r^{\frac{p}{n-1}}$. In particular, $f$ cannot be a quasi-isometric embedding if $p < n-1$.  
\end{thm}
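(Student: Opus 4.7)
The plan is to combine Theorem \ref{thmIntro:nilpot} with a coarse Jordan--Brouwer separation result. Theorem \ref{thmIntro:nilpot} states that $G \in \mathfrak{M}_{n-1}$, so any family of subsets that coarsely separates $G$ must have growth at least $r^{n-1}$. The main geometric input will be that the image $f(H)$ coarsely separates $G$; assuming this, we obtain a lower bound on the growth of $f(H)$, which we then contrast with the upper bound coming from the coarse embedding $f$.

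The first step, and the most delicate one, is to establish the coarse Jordan--Brouwer separation: the image $f(H)$ coarsely separates $G$. Since $H$ and $G$ are simply connected nilpotent Lie groups of dimensions $k$ and $k+1$, they are diffeomorphic via the exponential map to $\mathbb{R}^k$ and $\mathbb{R}^{k+1}$. After replacing $f$ by a continuous proper approximation (for example via a net map extended piecewise linearly over a triangulation), the classical Jordan--Brouwer theorem applied in $\mathbb{R}^{k+1}$ yields that the image topologically separates $\mathbb{R}^{k+1}$ into several components. A coarse cohomology argument in the style of Kapovich--Kleiner's coarse Alexander duality then promotes this topological fact to a coarse statement: there exists $L \geq 0$ such that, for every $D \geq 0$, $G \setminus f(H)^{+L}$ has at least two connected components containing points at distance $\geq D$ from $f(H)$.

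Granting this, the remainder of the proof is a computation. By Theorem \ref{thmIntro:nilpot}, the coarsely separating family $\{f(H)\}$ must satisfy $V_{\{f(H)\}}(R) \gtrsim R^{n-1}$. Conversely, a coarse embedding $f$ has fibres of uniformly bounded size (since $\rho_-$ is proper), so for any $y = f(x) \in f(H)$,
\[
|B_G(y, R) \cap f(H)| \;\lesssim\; |B_H(x, \rho_-^{-1}(R))| \;\lesssim\; (\rho_-^{-1}(R))^{p},
\]
using $\beta_H(r) \simeq r^p$. Combining these inequalities yields $(\rho_-^{-1}(R))^p \gtrsim R^{n-1}$, i.e.\ $\rho_-^{-1}(R) \gtrsim R^{(n-1)/p}$, which inverts to $\rho_-(r) \lesssim r^{p/(n-1)}$. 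When $p < n-1$, this growth rate is strictly sublinear, so $f$ cannot be a quasi-isometric embedding.

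The main obstacle is the coarse Jordan--Brouwer step. One must handle the fact that $f$ is only defined up to coarse equivalence, that the ambient metric on $G$ is not Euclidean (only the underlying topology is, which is quite different in the non-abelian case), and that the separation must be robust under taking thickenings so as to produce \emph{unbounded} components with points arbitrarily far from $f(H)$. A clean route is through coarse/\v{C}ech cohomology along the lines of Kapovich--Kleiner; alternatively, one can argue directly, exhibiting two distinct ``sides'' of $f(H)$ via a degree-theoretic argument applied to a continuous proper approximation of $f$, and then invoking properness of $f$ together with the lower control $\rho_-$ to produce unbounded components on both sides.
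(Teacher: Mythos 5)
Your proposal is correct and follows essentially the same route as the paper: one first invokes a coarse Jordan separation theorem (the paper cites the version in Dru\c{t}u--Kapovich, after replacing $f$ by a continuous simplicial map between uniformly contractible bounded-geometry complexes homeomorphic to $\mathbb{R}^k$ and $\mathbb{R}^{k+1}$) to see that $f(H)$ coarsely separates $G$, then applies Theorem~\ref{thmIntro:nilpot} to get $|B_G(f(h_r),r)\cap f(H)|\gtrsim r^{n-1}$, and finally compares with $\beta_H(r)\simeq r^p$ using the fact that coarse embeddings coarsely preserve volume. The paper carries out your last inversion step by exhibiting $h_r'$ with $d_H(h_r,h_r')>\delta r^{(n-1)/p}$ yet $d_G(f(h_r),f(h_r'))\leq r$, which gives $\rho_-(\delta r^{(n-1)/p})\leq r$, exactly as you argue.
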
  

\noindent
The mere non-existence of a quasi-isometric embedding can alternatively be deduced from a famous result of Pansu, which implies that a quasi-isometric embedding between connected simply connected nilpotent Lie groups $f : H \to G$ gives rise to a continuous bi-Lipschitz morphism between their Carnot graded associated groups. His result actually provides a much more general obstruction to the existence of quasi-isometric embedding between nilpotent groups. However, our approach is more elementary and provides those quantitative estimates for coarse embeddings, which are new. Using instead Theorems~\ref{thmIntro:Symmetric/buildings} or~\ref{thmIntro:horocyclic products}, we obtain:

\begin{thm}\label{thmIntro:lower bound distortionExp}
Let $G$ be a connected simply connected nilpotent Lie group of dimension $k$ and $X$ a symmetric space of non-compact type distinct from $\mathbb{H}_{\mathbb{R}}^2$, or a horocyclic product of rank one symmetric spaces. If $f : G \to X$ is a coarse embedding with control functions $\rho_-$ and $\rho_+$, and $\dim X = k+1$, then $\rho_-(r) \lesssim \log(r)$. In particular, $f$ cannot be a quasi-isometric embedding.
\end{thm}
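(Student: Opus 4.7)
The plan is to follow verbatim the template of Theorem~\ref{thmIntro:lower bound distortionNilp}, replacing the use of Theorem~\ref{thmIntro:nilpot} by that of Theorem~\ref{thmIntro:Symmetric/buildings} or Theorem~\ref{thmIntro:horocyclic products}. The argument proceeds in three steps.

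First, I would apply the same coarse Jordan separation result that underpins the nilpotent case to conclude that the image $f(G)$ coarsely separates $X$. This is where the dimension hypothesis $\dim X = \dim G + 1$ enters: $G$ is simply connected nilpotent, hence diffeomorphic to $\mathbb{R}^k$, while $X$ is either a symmetric space of non-compact type, diffeomorphic to $\mathbb{R}^{k+1}$ by the Cartan decomposition, or a horocyclic product of rank one symmetric spaces, which is a contractible smooth codimension one submanifold of the product of two such symmetric spaces, again of dimension $k+1$. The coarse Jordan separation theorem should therefore still apply to the coarse embedding of a $k$-manifold into this ambient $(k+1)$-manifold.

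Second, I would estimate the growth of $f(G)$ inside $X$: for any $g_0 \in G$ and $R \geq 0$,
\[
|f(G) \cap B_X(f(g_0), R)| \leq |B_G(g_0, \rho_-^{-1}(R))| \lesssim (\rho_-^{-1}(R))^{D(G)},
\]
where $D(G)$ denotes the polynomial growth degree of $G$, finite by the Bass--Guivarc'h formula. Third, since Theorem~\ref{thmIntro:Symmetric/buildings} (or Theorem~\ref{thmIntro:horocyclic products}) places $X$ in $\mathfrak{M}_{exp}$, the coarsely separating subset $f(G)$ must have at least exponential growth in $X$. Hence there exists $c > 0$ with $e^{cR} \lesssim (\rho_-^{-1}(R))^{D(G)}$; inverting this relation yields $\rho_-(r) \lesssim \log(r)$. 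Since any quasi-isometric embedding would force $\rho_-(r) \gtrsim r$, this rules out the existence of such an embedding.

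The main obstacle is verifying that the coarse Jordan separation theorem invoked for Theorem~\ref{thmIntro:lower bound distortionNilp} still applies when the target is a symmetric space or a horocyclic product rather than a nilpotent Lie group. The essential inputs---the target being a contractible manifold of dimension exactly $\dim G + 1$ and $f$ being uniformly proper---are preserved in the present setting, so the same proof should transfer without modification. Once this step is secured, the remaining two steps are essentially bookkeeping: an upper bound coming from coarse embedding inequalities and a direct application of the definition of $\mathfrak{M}_{exp}$.
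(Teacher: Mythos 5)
Your proposal is correct and follows essentially the same route as the paper: the authors prove Theorem~\ref{thmIntro:lower bound distortionNilp} in detail (coarse Jordan separation to get that $f(G)$ coarsely separates $X$, then playing the growth lower bound forced by membership in the class $\mathfrak{M}$ against the polynomial growth of the source via the control function $\rho_-$) and then state that the present theorem is obtained by the identical argument with Theorem~\ref{thmIntro:nilpot} replaced by Theorems~\ref{thmIntro:Symmetric/buildings} and~\ref{thmIntro:horocyclic products}, which is exactly what you do. The only cosmetic difference is that you push the volume comparison forward (bounding $|f(G)\cap B_X(f(g_0),R)|$ above by $\beta_G(\rho_-^{-1}(R))$) whereas the paper pulls it back to $G$ via the coarse preservation of volume; the two bookkeeping schemes are equivalent.
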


\noindent
A particular instance of this result is the case of a coarse embedding of $\mathbb R^2$ to $\mathrm{SOL}=\mathbb R^2\rtimes \mathbb R$. The fact that no quasi-isometric embedding from $\mathbb R^2$ to $\mathrm{SOL}$ exists can be deduced from the analysis of their asymptotic cones (the topological dimension of the asymptotic cone of $\mathrm{SOL}$ being $1$ by \cite{cornulierCones}). However the upper bound on $\rho_-$ is new, and sharp as the normal subgroup isomorphic to $\mathbb R^2$ of $\mathrm{SOL}$ is exponentially distorted.
Another interesting instance is the case of a coarse embedding\footnote{Once again, the fact that no quasi-isometry exists follows from an analysis of the asymptotic cones: indeed the asymptotic cone of $\mathbb{H}_{\mathbb{R}}^2\times \mathbb{H}_{\mathbb{R}}^2$ being a product of two real trees, it has topological dimension $2$.} from $\mathbb R^3$ to $\mathbb{H}_{\mathbb{R}}^2\times \mathbb{H}_{\mathbb{R}}^2$. Note that it is an open question whether such a coarse embedding exists. If it does, it cannot send any copy of $\mathbb{R}^2$ quasi-isometrically \cite[Theorem 1.6]{bensaid2022coarse}.

\subsection*{Overview of the proofs}
\subsubsection*{Coarse separation and persistent families}
To show that a graph of bounded degree $X$ belongs to $\mathfrak{M}_{exp}$ or $\mathfrak{M}_{D}$, the general idea is to look for a ``well connected'' family of subsets, which we ``move'' along a path from one connected component to the other until we get a big intersection with the separating set. More precisely, we first look for a family $\left\{ A_x(r) \right\}_{x\in X, r \geq 0}$ such that, for any $x,y \in X$ and any $r \geq 0$: $A_x(r) \subset B_X(x,4r)$, $| A_x(r)| = |A_y(r)|$, and, if $x$ and $y$ are adjacent vertices, then $| A_x(r) \cap A_y(r)| \geq \alpha |A(r)| $, where $\alpha >0$ is a fixed constant. $\left\{ A_x(r) \right\}_{x\in X, r \geq 0}$ is called an \emph{$\alpha$-persistent family}, see Definition \ref{persistent}.

\medskip \noindent
If a family $\mathcal{S}$ coarsely separates $X$, there exists $L \geq 0$ such that, for any $r \geq 0$, there exist $S \in \mathcal{S}$ and two balls $B(x,4r),B(y,4r)$ in different components of $X \backslash S^{+L}$. Following a path from $x$ to $y$, the intersection property guarantees the existence of a point $z$ such that any connected component of $A_z(r) \backslash S^{+L}$ has size $\leq (1-\frac{\alpha}{2}) | A_z(r)| $, see Figure \ref{fig:sketchseparation}. 
\begin{figure}[!ht]
    \centering
    \includegraphics[width=0.8\linewidth]{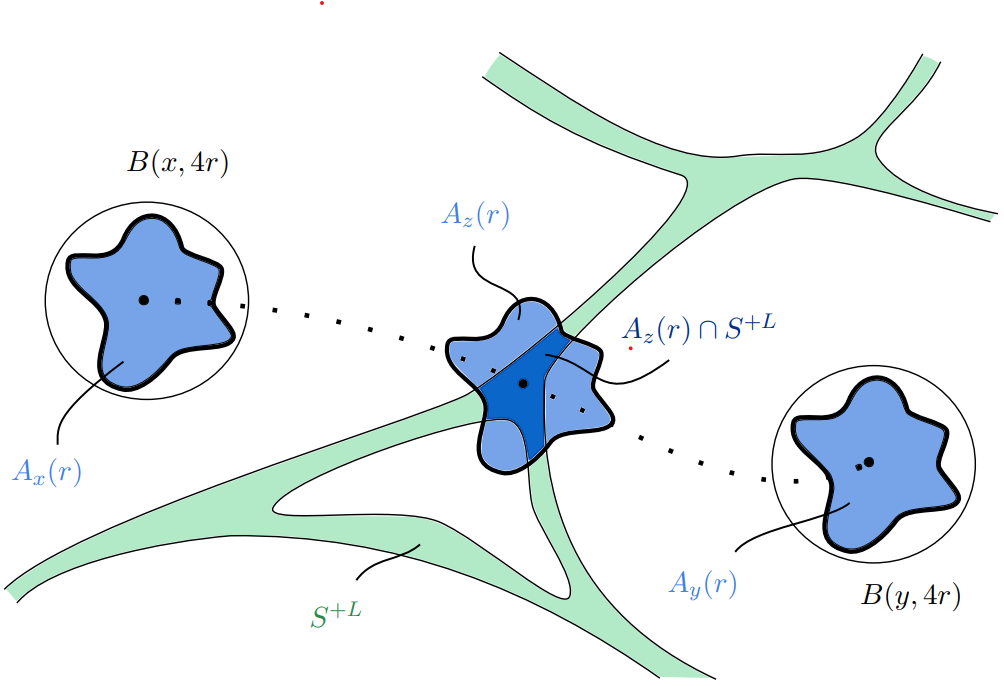}
    \caption{Persistent family and separation}
    \label{fig:sketchseparation}
\end{figure}
\\This is where the connectivity of the persistent family comes into play. If $K$ is a bounded subset, we denote by $\cut (K)$ the minimum size of a subset of $K$ that needs to be removed in order to cut it into smaller pieces of size $\leq \delta |K|$. Therefore, by the previous argument, the existence of an $\alpha$-persistent family $\left\{ A_x(r) \right\}_{x\in X, r \geq 0}$ implies that, for any $r>0$, there exists $z \in X$ such that
$$V_{\mathcal{S}}(r) \gtrsim \cut (A_z(r)), $$
where $\delta = 1-\frac{\alpha}{2}$. If moreover $\cut (A_z(r)) \gtrsim e^r$ (resp. $\cut (A_z(r)) \gtrsim r^D$), then $X \in \mathfrak{M}_{exp}$ (resp. $X \in \mathfrak{M}_{D}$). 

\medskip \noindent
In most cases, the persistent families are either balls, i.e.\ for groups of polynomial growth, or a constant thickening of spheres. Explicit persistent families with exponentially growing cuts are given in the following cases. For rank one symmetric spaces and Bourdon's hyperbolic buildings, and using techniques from \cite{hume2020poincare}, we pull-back the $1$-Poincar\'e inequality, satisfied in the boundary, to a constant thickening of spheres. This gives a lower bounded on their Cheeger constant, which in turn gives a lower bound on their cut, see Lemma \ref{persistent in rank 1} and Proposition \ref{Cut in rank 1}.  By a curve counting argument, we compute a lower bound of the Cheeger constant of spheres (more precisely of annuli) in products of trees, see Proposition \ref{persistent family of product trees}. For Diestel--Leader graphs, a lower bound of the Cheeger constant has already been computed for a family of subsets in \cite{hume2022poincare}, which turns out to be persistent, see Proposition \ref{persistent family of dl}. For higher rank symmetric spaces and thick Euclidean buildings, we exploit the fact that they admit nice embeddings of products of trees: any pair of points is contained in the image of a quasi-isometric embedding of a product of trees (with uniform constants) \cite{bensaidnguyen},\cite{fisher2018quasi}. Finally, the case of general horocyclic products $X \bowtie Y$ is dealt with in a similar way: we show that any pair of points can be joined by the images of three quasi-isometric embeddings (with uniform constants) of the Diestel--Leader graph. 
\subsubsection*{Applications}
The applications to graphs of groups follow from the main observation that the fundamental group of a graph of groups is coarsely separated by the cosets of its edge subgroups.

\medskip \noindent
The applications to wreath products are more involved. They require an Embedding Theorem as given by Theorem~\ref{thmIntro:FullEmbeddingThm}, which, roughly speaking, states that, given a coarse embedding $\rho$ from some finitely presented group $G$ to a wreath product $A \wr B$ of finitely generated groups, then either $\rho(G)$ is contained in a neighbourhood of a coset of $B$, or it is contained in a neighbourhood of a power of $A$, or it is separated by a family of subspaces of powers of $A$. Thus, given a coarse embedding or a quasi-isometry between two wreath products $\varphi : A_1 \wr B_1 \to A_2 \wr B_2$, if $B_1,B_2$ are finitely presented and if they cannot be contained in nor separated by subspaces of powers of $A_1,A_2$ respectively, then we know that $\varphi$ sends $B_1$-cosets at finite Hausdorff distance from $B_2$-cosets. Using the machinery of aptolicity introduced in \cite{GTbig} and developped further in \cite{GTlike}, we can then deduce precise information on the structure of our coarse embeddings and quasi-isometries.

\medskip \noindent
So Theorem~\ref{thmIntro:FullEmbeddingThm} is really our main contribution to the large-scale geometry of wreath products. The strategy is the following. Given a wreath product $A \wr B$, we can find truncated presentations of the form 
$$(\text{graph product over $\Gamma_n$ of copies of $A$}) \rtimes B$$
with a graph $\Gamma_n$ that looks more and more like a complete graph as $n \to + \infty$. Then every coarse embedding to $A \wr B$ from a coarsely simply connected metric space $Z$ (e.g.\ a finitely presented group) factors through a coarse embedding to one of these truncations. Geometric models can be constructed for these truncations, which turn out to admit natural wallspace structures. Then, if the image of $Z$ in $A \wr B$ cannot be separated by specific families of subspaces, then it cannot cross walls in an essential way, and so must be confined in some area, namely a $B$-coset. 

\medskip \noindent
Our proof follows the strategy used in \cite{GTbig}. The major difference comes from the geometric model for our truncations. Because our lamp groups may be infinite, our model is no longer a \emph{graph of pointed cliques in a quasi-median graph}, but a \emph{graph of pointed cliques in a quasi-median graph endowed with a coherent system of metrics} (using notions introduced in \cite{Qm}). This introduces technical difficulties which we overcome.

\subsection*{Organisation of the paper}

After a preliminary section \S \ref{sec:Prelim} devoted to our notion of coarse separation, our main theorems are proved in \S \ref{section:CoarseSep} (first \S \ref{sebsec:tools} presents the general tools and methods, while the remaining subsections are devoted to the proofs of the various cases). The two following relatively short sections \S \ref{section graph of groups} and \ref{sec:distortion} are respectively dedicated to the applications to graphs of groups and to the distortion of coarse embeddings. The important and relatively long section \S \ref{section:Wreath} is concerned with our results on wreath products of groups. Finally, the paper ends with \S \ref{section:questions} where we address a few open problems. 

\subsection*{Acknowledgements}
The first author is grateful to the Max-Planck Institute for Mathematics in Bonn for its financial support.

\section{Preliminaries}\label{sec:Prelim}
Let us first introduce the notations and definitions that will be used throughout the rest of the paper.
\subsection{Coarse separation}
Let $(X,d)$ be a metric space, $x \in X$, and $r \geq 0$. We denote by $B(x,r)$ the closed ball of radius $r$ centred at $x$ and by $S(x,r)$ the sphere of radius $r$ centred at $x$. Given $\alpha \geq 0$ and $A \subset X$, we denote the $\alpha$-neighbourhood of $A$ as 
$$A^{+ \alpha}: = \{x \in X \textup{ such that } d(x,A) \leq \alpha\}.$$
\begin{definition}\label{coarse connected}
Let $X$ be a metric space and $Y \subset X$ a subspace. Fix a constant $k>0$.
\begin{itemize}
	\item A \emph{$k$-path} (\emph{of length $n$}) in $Y$ is a sequence of points $x_0,\ldots, x_n \in Y$ satisfying $d(x_i,x_{i+1}) \leq k$ for every $0 \leq i \leq n-1$. 
	\item If any two points in $Y$ are connected by a $k$-path in $Y$, then $Y$ is \emph{$k$-coarsely connected}. 
	\item If $Y$ is a maximal $k$-coarsely connected subspace of $X$, then $Y$ is a $k$-coarsely connected component of $X$. Equivalently, $Y$ is an equivalent class with respect to the relation ``being connected by a $k$-path''. 
    \item $Y$ is \emph{$k$-coarsely geodesic} if there exists a map $\sigma : [0,+ \infty) \to [0,+ \infty)$ such that any two points $a,b \in Y$ are connected by a $k$-path in $Y$ of length $\leq \sigma(d(a,b))$. 
\end{itemize}
\end{definition}

\noindent
The following definition of \emph{coarse separation} is the central concept of the article.

\begin{definition}\label{weak separation}
Let $X$ be a metric space and $\mathcal{Z}$ a collection of subspaces. A coarsely connected subspace $Y \subset X$ is \emph{coarsely separated by $\mathcal{Z}$} if there exist $k,L \geq 0$ such that $Y$ is $k$-connected and such that, for every $D \geq 0$, there is some $Z \in \mathcal{Z}$ such that $Y \backslash Z^{+L}$ has at least two $k$-coarsely connected components with points at distance $\geq D$ from $Z$. We will call $L$ the \emph{thickening constant} of $\mathcal{Z}$.
\end{definition}

\noindent
The picture to keep in mind is when $X$ is a geodesic metric space and $Y=X$. Then $\mathcal{Z}$ coarsely separates $X$ if one can find an $L \geq 0$ such that, whatever large $D$ is, there is some $Z \in \mathcal{Z}$ such that $X\backslash Z^{+L}$ contains at least two connected components with points at least $D$ apart from $Z$. 

\medskip \noindent
For instance, a finitely generated group $G$ is one-ended if and only if its Cayley graphs (constructed from finite generating set) are not separated by their vertex-sets. More generally, $G$ is separated by a subgroup $H$ (more precisely, $\{H\}$ separates $G$) if and only if $(G,H)$ is multi-ended (see \cite[Theorem~2.3]{MR1347406}).

\medskip \noindent
As expected, being coarsely separated is naturally preserved by coarse equivalence. Namely:

\begin{lemma}\label{lem:CoarseSep}
Let $\rho : X \to M$ be a coarse equivalence between two metric spaces. Let $Y \subset X$ be a coarsely geodesic subspace and $\mathcal{Z}$ a collection of subspaces of $X$. If $\mathcal{Z}$ coarsely separates $Y$ in $X$, then $\rho(\mathcal{Z})$ coarsely separates $\rho(Y)$ in $M$. 
\end{lemma}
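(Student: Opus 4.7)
The proof is a constants-tracking argument, using the coarse geodesicity of $Y$ to lift and refine paths from $M$ back to $X$. Let $\rho_-,\rho_+$ be non-decreasing control functions with $\rho_-(r)\to\infty$ witnessing the coarse embedding $\rho$, let $(k,L)$ be the parameters witnessing that $\mathcal{Z}$ coarsely separates $Y$ in $X$, and let $(k,\sigma)$ witness coarse geodesicity of $Y$ (up to replacing $k$ by a larger value, we may use the same $k$ for both purposes, at worst enlarging $\sigma$). My target parameters in $M$ are
\[
k':=\rho_+(k),\qquad L':=\rho_+\bigl(L+k\,\sigma(\rho_-^{-1}(k'))\bigr)+1,
\]
where $\rho_-^{-1}(s):=\sup\{r:\rho_-(r)\le s\}$ and similarly for $\rho_+^{-1}$.

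First I note that $\rho(Y)$ is $k'$-coarsely connected, since the $\rho$-image of any $k$-path in $Y$ is a $k'$-path in $\rho(Y)$. Given any threshold $D'\ge 0$, I pick $D$ with $\rho_-(D)\ge\max(D',L')+1$; the separation hypothesis then supplies some $Z\in\mathcal{Z}$ and points $x_1,x_2\in Y\setminus Z^{+L}$ lying in distinct $k$-coarsely connected components of $Y\setminus Z^{+L}$, with $d_X(x_i,Z)\ge D$. The lower control gives $d_M(\rho(x_i),\rho(Z))\ge\rho_-(D)$, hence $\rho(x_i)\notin\rho(Z)^{+L'}$ and $d_M(\rho(x_i),\rho(Z))\ge D'$.

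The heart of the argument is to show that $\rho(x_1)$ and $\rho(x_2)$ lie in distinct $k'$-coarsely connected components of $\rho(Y)\setminus\rho(Z)^{+L'}$. Suppose otherwise: then there is a $k'$-path $\rho(a_0)=\rho(x_1),\rho(a_1),\ldots,\rho(a_n)=\rho(x_2)$ in $\rho(Y)\setminus\rho(Z)^{+L'}$, with each $a_i\in Y$. The lower control gives $d_X(a_i,a_{i+1})\le\rho_-^{-1}(k')$, and the coarse geodesicity of $Y$ lets me interpolate each pair $(a_i,a_{i+1})$ by a $k$-path in $Y$ of length at most $\sigma(\rho_-^{-1}(k'))$. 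Concatenating produces a $k$-path in $Y$ joining $x_1$ to $x_2$; to derive a contradiction I only need this path to stay in $Y\setminus Z^{+L}$. For any interpolated point $b$ inserted between $a_i$ and $a_{i+1}$ I have $d_X(b,a_i)\le k\,\sigma(\rho_-^{-1}(k'))$; meanwhile the upper control together with $\rho(a_i)\notin\rho(Z)^{+L'}$ gives $d_X(a_i,Z)>\rho_+^{-1}(L')\ge L+k\,\sigma(\rho_-^{-1}(k'))$, so the triangle inequality yields $d_X(b,Z)>L$, as needed. This contradicts the choice of $x_1,x_2$ in distinct $k$-coarsely connected components of $Y\setminus Z^{+L}$.

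The only subtlety is the constant-chasing in the previous paragraph: $L'$ must be chosen large enough that lifting a $k'$-path from $\rho(Y)\setminus\rho(Z)^{+L'}$ and filling it in by coarse-geodesic $k$-paths in $Y$ still avoids the thickening $Z^{+L}$. Once the formula for $L'$ above is written down, the coarse surjectivity of $\rho$ plays no role; the lemma in fact holds for any coarse embedding defined on $Y$.
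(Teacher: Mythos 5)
Your proof is correct, and it follows the same contrapositive strategy as the paper's — assume a path in $\rho(Y)\setminus\rho(Z)^{+L'}$ joins the two images, transport it back to $Y$, refine it into a genuine $k$-path using coarse geodesicity, and check via the choice of $L'$ that the refined path avoids $Z^{+L}$ — but the transport step is done differently. The paper fixes a coarse inverse $\eta:M\to X$ and pulls the path back pointwise via $\eta$, having first invoked Fact~\ref{fact:CoarseSup} to enlarge the connectivity constant on the source side so that the pulled-back coarse path is admissible. You instead observe that every vertex of the path already lies in $\rho(Y)$ and therefore has a preimage in $Y$ itself, and you perform explicitly the interpolation that the paper packages inside Fact~\ref{fact:CoarseSup}. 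The payoff is exactly the one you record at the end: coarse surjectivity of $\rho$ is never used, so your argument proves the slightly stronger statement that any coarse embedding transports coarse separation of a coarsely geodesic subspace — and it also keeps the lifted path honestly inside $Y$, whereas the points $\eta(x_i)$ in the paper's proof are only within $Q$ of $Y$. One cosmetic point: since $Z^{+L}$ is a closed neighbourhood and $\rho_+^{-1}$ is defined by a supremum, your triangle inequality only yields $d_X(b,Z)\geq L$ rather than $>L$ in the worst case; replacing $L$ by $L+1$ inside $\rho_+$ in the formula for $L'$ removes the issue.
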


\begin{fact}\label{fact:CoarseSup}
Let $X$ be a metric space, $Y \subset X$ a coarsely geodesic subspace, and $\mathcal{Z}$ a collection of subspaces of $X$. If $\mathcal{Z}$ coarsely separates $Y$, then there exists $k_0>0$ such that, for every $k \geq k_0$, the following holds. First, $Y$ is $k$-coarsely geodesic. Next, there exists some $L_0 \geq 0$ such that, for every $D \geq 0$, there is some $Z \in \mathcal{Z}$ such that at least two $k$-coarsely connected components of $Y \backslash Z^{+L_0}$ contain points at distance $\geq D$ from~$Z$. 
\end{fact}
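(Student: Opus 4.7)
The plan is to exploit the coarse geodesicity of $Y$ in order to refine any hypothetical $k$-path into a shorter-step path, and then derive a contradiction from the coarse separation hypothesis. Let $\kappa, \lambda$ be the constants guaranteed by the hypothesis that $\mathcal{Z}$ coarsely separates $Y$, and let $\kappa_0, \sigma$ witness the coarse geodesicity of $Y$. First, I would normalise the constants, assuming (after enlarging if necessary) that $\kappa \geq \kappa_0$; this ensures $Y$ is $\kappa$-coarsely geodesic with the same control function $\sigma$, and I set $k_0 := \kappa$. Then for every $k \geq k_0$, any $\kappa$-path is automatically a $k$-path, so $Y$ is $k$-coarsely geodesic with control function $\sigma$, giving the first conclusion.

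For the second conclusion, for each fixed $k \geq k_0$, my plan is to set
\[ L_0 := \lambda + \kappa\, \sigma(k) \]
and show this works. Given $D \geq 0$, I would apply the coarse separation hypothesis with parameter $D' := \max(D, L_0) + 1$ to extract some $Z \in \mathcal{Z}$ together with points $p_1, p_2$ lying in distinct $\kappa$-coarsely connected components $C_1, C_2$ of $Y \setminus Z^{+\lambda}$, with $d(p_i, Z) \geq D'$. Observe that $p_i \in Y \setminus Z^{+L_0}$ (since $D' > L_0$) and $d(p_i, Z) \geq D$, so it only remains to show that $p_1$ and $p_2$ belong to distinct $k$-coarsely connected components of $Y \setminus Z^{+L_0}$.

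This final verification is the main obstacle and the core of the argument. I would proceed by contradiction: suppose $x_0 = p_1, x_1, \ldots, x_n = p_2$ is a $k$-path entirely contained in $Y \setminus Z^{+L_0}$. For each consecutive pair $(x_i, x_{i+1})$, since $d(x_i, x_{i+1}) \leq k$, the $\kappa$-coarse geodesicity of $Y$ supplies a $\kappa$-path in $Y$ from $x_i$ to $x_{i+1}$ of length at most $\sigma(k)$; in particular every one of its vertices lies within distance $\kappa \sigma(k)$ of $x_i$. Because $d(x_i, Z) > L_0 = \lambda + \kappa \sigma(k)$, each such vertex is at distance strictly greater than $\lambda$ from $Z$, hence lies in $Y \setminus Z^{+\lambda}$. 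Concatenating these refined sub-paths for $i = 0, \ldots, n-1$ yields a $\kappa$-path from $p_1$ to $p_2$ lying entirely in $Y \setminus Z^{+\lambda}$, contradicting the fact that $p_1 \in C_1$ and $p_2 \in C_2$ sit in distinct $\kappa$-coarsely connected components.

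The one delicate point throughout is the initial alignment between the constant $\kappa$ coming from the separation hypothesis and the constant $\kappa_0$ coming from coarse geodesicity; this is a bookkeeping matter rather than a conceptual difficulty. The genuine content of the argument lies in the choice $L_0 - \lambda = \kappa \sigma(k)$, which is precisely the safety margin needed so that the refinement of each $k$-step via coarse geodesicity cannot stray into the thickening $Z^{+\lambda}$.
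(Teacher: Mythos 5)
Your proof is correct and follows essentially the same route as the paper's: you set $L_0$ equal to the original thickening constant plus the "refinement margin" $\kappa\,\sigma(k)$ needed to turn a $k$-path into a $\kappa$-path, and observe that a $k$-path avoiding $Z^{+L_0}$ would refine to a $\kappa$-path avoiding $Z^{+\lambda}$ (the paper phrases this contrapositively, concluding that some vertex $x_i$ of the $k$-path must land in $Z^{+L_0}$). Your extra care in taking $D' = \max(D, L_0)+1$ is a harmless bookkeeping refinement.
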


\begin{proof}
Because $Y$ is coarsely geodesic and $\mathcal{Z}$ coarsely separates $Y$, we know that there exist $k_0>0$ and $L\geq 0$ such that $Y$ is $k_0$-coarsely geodesic and, for every $D \geq 0$, there is some $Z \in \mathcal{Z}$ such that at least two $k_0$-coarsely connected components of $Y \backslash Z^{+L}$ contain points at distance $\geq D$ from $Z$. Now, fix a $k \geq k_0$ and set $L_0:=L+k_0 \ell$ where $\ell$ is such that any two points of $Y$ at distance $\leq k$ can be connected by a $k_0$-path of length $\leq \ell$. 

\medskip \noindent
Because $k \geq k_0$, $Y$ is clearly $k$-coarsely geodesic. Next, let $D\geq 0$. We know that there exist $a,b \in Y$ at distance $\geq D$ from $Z$ and lying in distinct $k_0$-coarsely connected components of $Y \backslash Z^{+L}$. Consider a $k$-path $x_0, \ldots, x_n$ in $Y$ connecting $a$ and $b$. We can extend this path to a $k_0$-path by connecting any two consecutive point by a $k_0$-path of length $\leq \ell$. Such an extended path must intersect $Z^{+L}$, say at some point $p$. Let $0 \leq i \leq n-1$ be the index such that $p$ belongs to the $k_0$-path connecting $x_i$ and $x_{i+1}$. Then $x_i$ necessarily belongs to $Z^{+L+k_0 \ell} = Z^{+L_0}$. Thus, $a$ and $b$ belong to distinct $k$-coarsely connected components of $Y\backslash Y^{+L_0}$, as desired.
\end{proof}

\begin{proof}[Proof of Lemma~\ref{lem:CoarseSep}.]
Fix a coarse inverse $\eta : M \to X$ of $\rho$. Say that $\eta \circ \rho$ lies at distance $\leq Q$ from the identity and that $\rho,\eta$ are $(\sigma_-,\sigma_+)-$, $(\tau_-,\tau_+)$-coarse maps for some continuous increasing maps $\sigma_\pm, \tau_\pm$. Let $k_0>0$ and $L_0 \geq 0$ denote the constants given by Fact~\ref{fact:CoarseSup} for the coarse separation of $Y$ by $\mathcal{Z}$. Let us fix various auxiliary constants that will be useful for us:
\begin{itemize}
	\item Set $k:= \max(\sigma_+(k_0),k_0)$ and $k':= \max (Q,\tau_+(k),k_0)$.
	\item Let $L'$ denote the constant given by Fact~\ref{fact:CoarseSup} for $k'\geq k_0$ and the coarse separation of $Y$ by $\mathcal{Z}$.
	\item Set $L:= Q+\tau_+(L')$.
\end{itemize}
Now, we are ready to verify that $\rho(\mathcal{Z})$ coarsely separates $\rho(Y)$. First, notice that $Y$ is $k$-coarsely geodesic since $k \geq k_0$. Next, let $D \geq 0$ be a constant. Fix some $D'$ sufficiently large so that $\sigma_-(D') \geq D$. We know that there exist $Z \in \mathcal{Z}$ and two points $a,b \in Y$ such that $d(a,Z),d(a,Z) \geq D'$ and such that $a,b$ belongs to distinct $k'$-coarsely connected components of $Y\backslash Z^{+L'}$. We want to deduce from this that $\rho(a),\rho(b)$ lie at distance $\geq D$ from $\rho(Z)^{+L}$ and in distinct $k$-coarsely connected components of $\rho(Y) \backslash \rho(Z)^{+L}$.

\medskip \noindent
The first assertion is clear since
$$d(\rho(a),\rho(Z)) \geq \sigma_-(d(a,Z)) \geq \sigma_-(D') \geq D,$$
and similarly for $\rho(b)$. Next, consider a $k$-path $x_0,\ldots, x_n$ connecting $\rho(a)$ and $\rho(b)$. Then $a,\eta(x_0),\ldots, \eta(x_n),b$ is a $\max(Q,\tau_+(k))$-path, and a fortiori a $k'$-path. Necessarily, such a path has to intersect $Z^{+L'}$, say $\eta(x_i) \in Z^{+L'}$ for some $0 \leq i \leq n$. Then
$$\rho(\eta(x_i)) \in \rho(Z^{+L'}) \subset \rho(Z)^{\tau_+(L')}, \text{ hence } x_i \in \rho(Z)^{+Q+\tau_+(L')} \subset \rho(Z)^{+L}.$$
This concludes the proof of our lemma.
\end{proof}

\noindent
Finally, we end this subsection by recording the following observation, which will be useful in Section~\ref{section:EmbeddingTheorem}.

\begin{lemma}\label{lem:SepButNotCoarse}
Let $X$ be a graph, $Y \subset X$ a coarsely geodesic subspace, and $\mathcal{Z}$ a collection of subspaces. If every subspace in $\mathcal{Z}$ separates $X$ but $\mathcal{Z}$ does not coarsely separate $Y$, then there exist a constant $L \geq 0$ such that, for every $Z \in \mathcal{Z}$, at most one connected component $C$ of $X \backslash Z$ satisfies $C \cap Y \nsubseteq Y^{+L}$. 
\end{lemma}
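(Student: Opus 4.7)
The plan is to translate ordinary graph-theoretic separation by $Z$ into $k$-coarse separation of $Y$ by $Z^{+L}$, and then contradict the hypothesis that $\mathcal{Z}$ does not coarsely separate $Y$. (We read the conclusion as $C \cap Y \nsubseteq Z^{+L}$, in accordance with the notion of coarse separation introduced earlier.)

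First, I would fix some $k > 0$ large enough that $Y$ is $k$-coarsely geodesic, which exists by Fact~\ref{fact:CoarseSup} applied to the fact that $Y$ is coarsely geodesic. The key elementary observation is the following: since $X$ is a graph and $Z$ is a vertex set separating $X$, any edge-path in $X$ joining two points in distinct connected components of $X \setminus Z$ must pass through $Z$; in particular, if $x,x' \in Y$ are two consecutive vertices of a $k$-path in $Y$ lying in distinct connected components of $X \setminus Z$, then $x,x' \in Z^{+k}$, because they are joined by an edge-path of length at most $k$ and that path must meet $Z$.

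Next, I would use the hypothesis. Since $\mathcal{Z}$ does not coarsely separate $Y$, the negation of Definition~\ref{weak separation} applied to the pair of constants $(k,k)$ furnishes a constant $D \geq 0$ such that, for every $Z \in \mathcal{Z}$, the set $Y \setminus Z^{+k}$ has at most one $k$-coarsely connected component containing points at distance $\geq D$ from $Z$. Set $L := \max(k,D)$; this is the constant I claim satisfies the conclusion.

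Finally, I would argue by contradiction. Suppose some $Z \in \mathcal{Z}$ has two distinct connected components $C_1, C_2$ of $X \setminus Z$ with $C_i \cap Y \nsubseteq Z^{+L}$ for $i=1,2$. Pick $x_i \in C_i \cap Y$ with $d(x_i,Z) > L \geq k$, so $x_i \in Y \setminus Z^{+k}$. By the key observation, any $k$-path in $Y$ from $x_1$ to $x_2$ must contain a vertex in $Z^{+k}$, so $x_1$ and $x_2$ lie in distinct $k$-coarsely connected components of $Y \setminus Z^{+k}$. Since both have $d(x_i,Z) > L \geq D$, this exhibits two such components with points at distance $\geq D$ from $Z$, contradicting the choice of $D$. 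Hence at most one component $C$ of $X \setminus Z$ has $C \cap Y$ unbounded away from $Z^{+L}$, which is the desired conclusion. The only nontrivial step is the key observation, but this is immediate from the definitions; the rest is a clean application of the negation of the coarse separation property.
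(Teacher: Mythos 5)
Your proof is correct and follows essentially the same route as the paper's: both hinge on the observation that a $k$-path in $Y$ joining two distinct components of $X \setminus Z$ must meet $Z^{+k}$ (by filling it in with paths in the graph $X$), and both conclude by playing this off against the negation of coarse separation — you merely organize it as a direct extraction of the constant $L=\max(k,D)$, whereas the paper runs a contradiction over a sequence $Z_i$ witnessing failure of every $L=i$. One cosmetic remark: the constant $k$ with $Y$ $k$-coarsely geodesic comes directly from the hypothesis and Definition~\ref{coarse connected}, not from Fact~\ref{fact:CoarseSup}, whose hypothesis is that $\mathcal{Z}$ \emph{does} coarsely separate $Y$.
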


\begin{proof}
Suppose that the conclusion of our lemma does not hold, i.e.\ for every $i \geq 1$ there exist $Z_i \in \mathcal{Z}$, two connected components $A,B$ of $X \backslash Z_i$, and vertices $a_1,a_2,\ldots \in A$, $b_1,b_2, \ldots \in B$ satisfying $d(a_i,Z),d(b_i,Z) \geq i$. We want to prove that $\mathcal{Z}$ then coarsely separates $Y$.

\medskip \noindent
Fix an arbitrary $k >0$ such that $Y$ is $k$-coarsely connected, and fix some $D \geq 0$. For every $i>D+k$, the points $a_i$ and $b_i$ clearly lie at distance $>D$ from $Z_i^{+k}$. We claim that $a_i$ and $b_i$ belong to distinct $k$-coarsely connected components of $Y \backslash Z_i^{+k}$. So fix an arbitrary $k$-path between $a_i$ and $b_i$. Extend it to a path in $X$ by connecting any two consecutive vertices with a geodesic in $X$. Because $Z_i$ separates $X$, necessarily our extended path must intersect $Z_i$, say at some vertex $v$. Along our extended path, $v$ lies between two vertices, say $p$ and $q$, of our $k$-path. Then $d(p,Z) \leq d(p,v) \leq d(p,q) \leq k$. Thus, our $k$-path intersects $Z_i^{+k}$. This concludes the proof. 
\end{proof}

\subsection{Volume growth}
\begin{definition}\label{boundedgeom and volume}
    A metric space $X$ has \emph{bounded geometry at scale $s$} if, for all $r\geq 0$, there exists $n\geq 0$ such that every ball of radius $r$ in $X$ can be covered by $n$ balls of radius $s$.
    \\ If $X$ has bounded geometry at scale $s$ and $A \subset X$ is a bounded subset, we define its \emph{$s$-volume}, denoted $|A|_s$, as the minimal number of closed balls of radius $s$ needed to cover $A$. 
\end{definition}

\noindent
If $\Gamma$ is a graph, we endow it with the shortest path metric. Therefore, a connected graph is a $1$-coarsely connected metric space. A graph $\Gamma$ with bounded valency is a metric space with bounded geometry at all scales, and we will always take the radii of the balls in the covering to be $s=1/5$, so that every finite subset $A \subset \Gamma$ satisfies $|A|_s = \#A $.
\begin{remark}\label{r s}
    To simplify notation in the following, unless otherwise stated, the coarse connectivity constant $k$ will always be taken $\geq 1$, and we will consider spaces of bounded geometry at scale $s=1/5$, that we will just refer to as spaces of bounded geometry. The $\frac{1}{5}$-volume of a subset $A$ will be called its volume, and denoted $|A|$. For the general case, we can always take $k$ and $s$ such that $k \geq 5s$. This choice is motivated by the fact that, if $A,B$ are bounded subsets in $X$ satisfying $d(A,B) \geq 1$, then $|A \cup B| = |A|+|B| $, because a minimal set of balls of radii $1/5$ covering $A$ is disjoint from any minimal set of balls of radii $1/5$ covering $B$.
 \end{remark}
 \begin{lemma}\label{neighb of neighb}
 The following assertions hold.
     \begin{itemize}
         \item[(i)] Let $Z$ be a metric space of bounded geometry, $A$ a bounded subset, and $\alpha \geq 0$. Then 
         $$  | A^{+ \alpha}| \leq \sup_{z\in Z}|B(z,\alpha) | \cdot |A|.                  $$
         \item[(ii)] Let $X$ be a $k$-coarsely geodesic metric space. For every $\alpha \geq \beta \geq k $, there exists $t \geq 0$ such that, for any bounded subset $A$,
     $$    A^{+ \alpha} \subset (A^{+ \beta})^{+ t}.            $$
     If moreover $X$ has bounded geometry, then
     $$     | A^{+ \alpha}| \leq \sup_{x\in X}|B(x,t) | \cdot |A^{+ \beta}|.               $$
     \end{itemize}
 \end{lemma}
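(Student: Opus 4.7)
The plan is to prove both parts by unwinding the definition of the scale-$1/5$ volume and combining it with the bounded geometry hypothesis.

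For (i), I would argue by a direct covering argument. Take a minimal cover of $A$ by $|A|$ closed balls of radius $1/5$ centered at points $z_1,\ldots,z_{|A|} \in Z$. Since the $\alpha$-neighbourhood of any ball of radius $1/5$ is contained in a ball of radius $\alpha+1/5$ with the same centre,
\[
A^{+\alpha} \;\subset\; \bigcup_{i=1}^{|A|} B(z_i,\alpha+1/5).
\]
By bounded geometry at scale $1/5$, each ball on the right is in turn covered by at most $\sup_{z \in Z}|B(z,\alpha+1/5)|$ closed balls of radius $1/5$, so summing yields $|A^{+\alpha}| \leq |A| \cdot \sup_z |B(z,\alpha+1/5)|$. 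The harmless $+1/5$ in the radius is absorbed in the statement (one may either replace the centres by nearby points of $A$ at the cost of a bounded constant, or simply use the monotonicity of $\alpha \mapsto \sup_z|B(z,\alpha)|$).

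For the first assertion of (ii), the idea is to use the $k$-coarsely geodesic hypothesis to interpolate between $x \in A^{+\alpha}$ and its (near-)projection onto $A$. Given such an $x$, pick $y \in A$ with $d(x,y) \leq \alpha$ and, invoking the coarse geodesic gauge $\sigma$ of $X$, a $k$-path $y=x_0,x_1,\ldots,x_n=x$ with $n \leq \sigma(\alpha)$. The function $j \mapsto d(x_j,A)$ is $k$-Lipschitz in the sense that consecutive values differ by at most $k$, starts at $0$ and ends at $\leq \alpha$. Walking backward from $x_n$, the smallest index $j_0$ satisfying $d(x_{n-j_0},A)\leq \beta$ obeys $j_0 \leq \lceil (\alpha-\beta)/k\rceil$, so the corresponding point $x_{n-j_0}$ lies in $A^{+\beta}$ at distance at most $j_0 k \leq (\alpha-\beta)+k$ from $x$. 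This gives $A^{+\alpha} \subset (A^{+\beta})^{+t}$ with $t := (\alpha-\beta)+k$.

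The volume estimate in (ii) then follows immediately by applying (i) to $A^{+\beta}$ with thickening parameter $t$: one gets $|(A^{+\beta})^{+t}| \leq \sup_x|B(x,t)| \cdot |A^{+\beta}|$, which combined with the inclusion above yields the stated bound. There is no real obstacle here; the proof is essentially bookkeeping. The only two points to watch are the $+1/5$ in (i), handled by monotonicity, and the reversed-path perspective in (ii), which is what makes the coarse geodesic assumption genuinely useful (without it one would only obtain the trivial choice $t=\alpha$, defeating the purpose of the lemma for later volume estimates).
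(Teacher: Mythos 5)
Part (i) is correct and is essentially the paper's own argument (the paper is even terser about the extra $1/5$, which, as you note, is harmless), and the volume estimate in (ii) does reduce correctly to (i) once the inclusion is known. The gap is in your proof of the inclusion in (ii). You assert that the smallest $j_0$ with $d(x_{n-j_0},A)\leq\beta$ satisfies $j_0\leq\lceil(\alpha-\beta)/k\rceil$, but this does not follow from the stated properties of $f(j):=d(x_j,A)$ (namely that $f$ is $k$-Lipschitz, $f(0)=0$, and $f(n)\leq\alpha$): nothing prevents $f$ from staying above $\beta$ for all but the first few indices, e.g.\ $f(j)=\min(jk,\alpha)$ with $\beta<\alpha$, in which case $j_0=n-\lfloor\beta/k\rfloor$ can be as large as $n-1$. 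You are implicitly assuming that the $k$-path decreases the distance to $A$ at definite speed as one walks from $x$ towards $a$, the way an arc-length parametrized geodesic would; the definition of $k$-coarsely geodesic gives no such control, and the resulting value $t=(\alpha-\beta)+k$ is genuinely false. For a concrete counterexample, let $X\subset\mathbb{R}^2$, with the \emph{restricted} Euclidean metric, be the ``staple'' $\left(\{0\}\times[0,H]\right)\cup\left([0,20]\times\{H\}\right)\cup\left(\{20\}\times[0,H]\right)$ with $H$ large; this is $1$-coarsely geodesic (with $\sigma$ depending on $H$) and has bounded geometry. Taking $A=\{(0,0)\}$, $x=(20,0)$, $\alpha=20$, $\beta=10$, $k=1$, one gets $A^{+\beta}=\{0\}\times[0,10]$ and $d(x,A^{+\beta})=20$, whereas $(\alpha-\beta)+k=11$.

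The statement is of course still true and the repair is immediate. The paper takes the \emph{penultimate} point of a $k$-path $x=x_0,\ldots,x_p=a$ with $a\in A$, $d(x,a)\leq\alpha$ and $p\leq\sigma(\alpha)$: it satisfies $d(x_{p-1},a)\leq k\leq\beta$, hence lies in $A^{+\beta}$, and is at distance at most $(p-1)k\leq(\sigma(\alpha)-1)k$ from $x$, so $t=(\sigma(\alpha)-1)k$ works. (In fact, since $A\subseteq A^{+\beta}$, the choice $t=\alpha$ works with no argument at all and already yields the volume bound via (i); so your closing remark that this trivial choice would ``defeat the purpose of the lemma'' is not accurate --- the later applications only require the existence of some finite $t$.)
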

 \begin{proof}
$(i)$ We have $ A^{+ \alpha}=\cup_{z \in A } B_Z(a,\alpha) $ so $| A^{+ \alpha}| \leq \sup_{z\in Z}|B(z,\alpha) | \cdot |A|$.

\medskip \noindent
$(ii)$ Let $\sigma$ be the map as in definition \ref{coarse connected} and let $x \in A^{+ \alpha}$. Let $a \in A$ such that $d(x,a) \leq \alpha$. There exists a $k$-path $x_0 = x, \dots , x_p = a$ of length $p = \sigma(\alpha)$ from $x$ to $a$. We have $d(x_{p-1},a) \leq k \leq \beta$, so $x_{p-1} \in A^{+ \beta}$. We have $d(x,x_{p-1}) \leq t $, so $x \in \left(A^{+ \beta}\right)^{(p-1)k}$. If moreover $X$ has bounded geometry, the result follows from $(i)$.
 \end{proof}
\begin{definition}\label{growth function of a subset}
    Let $X$ be a metric space with bounded geometry. We define its \emph{growth rate} $\beta_X$ as
    \begin{center}
        $\forall r>0$, $\beta_X(r) = \sup_{x \in X}|B_X(x,r)|$.
    \end{center}
    \begin{itemize}
        \item If $S \subset X$ is a subset, we define its \emph{growth rate} $\beta_S$ as
    \begin{center}
        $\forall r>0$, $\beta_S(r) = \sup_{s \in S}|B_X(s,r) \cap S|$.
    \end{center}
    \item If $\mathcal{S}$ is a family of subsets of $X$, we define its growth as 
    \[V_{\mathcal{S}}(r)=\sup_{s\in S;S\in \mathcal S}|B_X(s,r)\cap S|.\]
     We say that $\mathcal{S}$ has \emph{subexponential growth} if $\limsup_{r\rightarrow\infty} \frac{1}{r}\log(V_{\mathcal{S}}(r)) = 0$.
    \end{itemize}
\end{definition}

\noindent
Similarly, we define the growth rate of finitely generated groups and the relative growth rate of a finitely generated subgroup as follows.

\begin{definition}\label{relative growth}
Let $G$ be a group generated by a finite set $S$ satisfying $S^{-1}=S$, and let $d$ denote the word metric on $G$ with respect to $S$. The \emph{growth rate} of $G$ is the growth rate of the metric space $(G,d)$.

\medskip \noindent
If $H$ is a finitely generated subgroup of $G$, we define its \emph{relative growth rate} $\beta_H^G : \mathbb{N} \to \mathbb{N}$ as the number of elements $h\in H$ such that $d(e,h) \leq r$. We say that $H$ has \emph{subexponential relative growth} if $\limsup_{r\rightarrow\infty} \frac{1}{r}\log\left(\beta_H^G(r)\right) = 0$.
\end{definition}

\noindent
The growth rates do not depend on the choice of the generating set of $G$, up to the following equivalence relation $\simeq$.

\medskip \noindent
We denote by $f \gtrsim g $ for functions $f,g : \mathbb{R}_{\geq 0} \to \mathbb{R}_{\geq 0}$ if there exists a constant $C >0$ such that, for all $t\geq 0$, $f(t) \geq C g(Ct+C) +C$. And we write $f \simeq g$ if $f \gtrsim g $ and $g \gtrsim f$.
\begin{remark}
Note that, if $H$ is a finitely generated subgroup of a finitely generated group $G$, then $\beta_H^G \gtrsim \beta_H$ with $\beta_H^G \simeq \beta_H$ if and only if $H$ is undistorted in $G$.
\end{remark}
\begin{definition}
Consider the following families of spaces.
    \begin{itemize}
        \item We denote by $\mathfrak{M}_{exp}$ the class of metric spaces with bounded geometry such that any coarsely separating family of subsets $\mathcal{S}$ must have exponential growth, i.e.\ $V_{\mathcal{S}}(r) \gtrsim e^{r}$.
        \item Let $d>0$. We denote by $\mathfrak{M}_d$ the class of metric spaces with bounded geometry such that any coarsely separating family of subsets $\mathcal{S}$ must have growth at least $r^d$, i.e.\ $V_{\mathcal{S}}(r) \gtrsim r^d$.
    \end{itemize}
\end{definition}

\noindent
It is easy to see that, if $d\leq d'$, then $\mathfrak{M}_{exp} \subset \mathfrak{M}_{d'}\subset \mathfrak{M}_d$. Moreover, belonging to $\mathfrak{M}_{exp}$ or $\mathfrak{M}_d$ is invariant under quasi-isometry between coarsely geodesic metric spaces by Lemma \ref{lem:CoarseSep}.
\begin{definition}\label{persistent}
     Let $X$ be a $k$-coarsely connected metric space of bounded geometry, $r_0 \geq 0$, and $\alpha >0$. A family $\left\{ A_x(r) \right\}_{x\in X, r \geq r_0}$ of subsets of $X$ is called \emph{$\alpha$-persistent} if, for every $ r\geq r_0$:
     \begin{itemize}
         \item $\forall x\in X$, $A_x(r) \subset B_X(x,4r)$;
         \item $\forall x,y \in X$, $| A_x(r)| = |A_y(r)|$, which we denote by $|A(r)|$;
         \item for all $x,y \in X$ satisfying $d(x,y) \leq k$, $| A_x(r) \cap A_y(r)| \geq \alpha |A(r)| $. 
     \end{itemize}

\end{definition}
\begin{ex}\label{exple persist}
Let $X$ be a connected vertex-transitive graph of bounded valency $d$.
\begin{itemize} 
    \item[(i)] The family of balls $B(x,r)$, where $x \in X$ and $r>0$, is $\frac{1}{d+1}$-persistent. Indeed, $X$ is 1-coarsely connected, and the second condition follows from the vertex-transitivity. Let $r>0$ and $x,y \in X$ satisfying $d(x,y)=1$. We have $$| B(x,r) \cap B(y,r)| \geq |B(x,r-1)| \geq \frac{1}{d+1}  |B(x,r)|. $$ 
    \item[(ii)] Another example is given by $A_x(r) = S(x,r)^{+ t} \subset B_X(x, 4r)$, where $t \geq 1$ is fixed and $r\geq t$. Again by vertex-transitivity, we have the second condition, and, if $d(x,y)=1$, then $S(x,r) \subset S(x,r)^{+ t} \cap S(y,r)^{+ t}$. We have $S(x,r)^{+ t} = \bigcup_{z \in S(x,r)}B(z,t) $, so $|S(x,r)^{+ t}| \leq |S(x,r)| |B(t)|$ where $|B(t)|$ denotes the size of a (any) ball of radius $t$. Therefore $| S(x,r)^{+ t} \cap S(y,r)^{+ t}  | \geq \frac{1}{|B(t)|} |S(x,r)^{+ t}| $, and the set $\left\{ A_x(r) \right\}_{x\in X, r \geq t}$ is $\frac{1}{|B(t)|}$-persistent.
\end{itemize}
\end{ex}

\section{Coarse separation problem}\label{section:CoarseSep}
\subsection{General setting and main tools}\label{sebsec:tools}
\subsubsection{Cutsets and main results}
We start by recalling the definition of the cut of a subset, due to Benjamini--Schramm--Tim{\'a}r, which quantifies its connectivity. 
\begin{definition}[\cite{benjamini2012separation}]
    Let $X$ be a $k$-coarsely connected metric space of bounded geometry and let $\delta \in (0,1)$. Let $S \subset A$ be bounded subsets of $X$ and $r \geq k$. Then $S$ is an \emph{$(r,\delta)$-cut} of $A$ if any $r$-coarsely connected component of $A \backslash S$ has size $\leq \delta |A|$. We denote by $\textup{cut}^{\delta}_r(A)$ the minimal size of a $(r,\delta)$-cut of $A$.
\end{definition}
\begin{remark}
    When $r$ is equal to the coarse connectivity constant $k$ of $X$, we will just write $\cut$ instead of $\cut_r$, in particular for connected graphs where $k=1$. 
\end{remark}
\begin{prop}\label{main}
    Let $X$ be a $k$-coarsely connected metric space of bounded geometry and $\left\{ A_x(r) \right\}_{x\in X, r\geq r_0}$ an $\alpha$-persistent family. Set $\delta = 1-\frac{\alpha}{2}$ and let $S \subset X$ be a subset and $(X_i)_{i \in I}$ a partition of $ X \backslash S$. Assume that there exist $r \geq r_0$, $x,y \in X$ and $i_0 \in I$ such that
    \begin{itemize}
        \item $| A_x(r) \cap X_{i_0}| \geq \delta |A(r)|$;
        \item $| A_y(r) \cap X_{i_0}| < \delta |A(r)|$.
    \end{itemize}
    Then there exists $z\in X$ such that $|A_{z}(r) \cap X_i| \leq \delta  |A|  $ for any $i\in I$. In particular, it is the case when $A_x(r)$ and $A_y(r)$ are contained in different components of the partition. Moreover, if the partition $(X_i)_{i \in I}$ correspond to the $k$-coarsely connected components of $X \backslash S$, then $A_z(r) \cap S$ is a $\delta$-cut of $A_z(r)$.
\end{prop}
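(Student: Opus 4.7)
The plan is a discrete intermediate-value argument along a $k$-coarse path from $x$ to $y$, whose jump point is pinned down by the overlap built into the $\alpha$-persistent family. Since $X$ is $k$-coarsely connected, I pick a $k$-path $x=x_0,x_1,\dots,x_n=y$ and track the function $f(j):=|A_{x_j}(r)\cap X_{i_0}|$. By hypothesis $f(0)\geq \delta|A(r)|$ and $f(n)<\delta|A(r)|$, so there is some index $j\geq 1$ with $f(j-1)\geq \delta|A(r)|>f(j)$; I will take $z:=x_j$.

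The key step is to bound both $|A_z(r)\cap X_{i_0}|$ and $|A_z(r)\setminus X_{i_0}|$ from above by $\delta|A(r)|$. Applying $\alpha$-persistence to the neighbouring pair $x_{j-1},z$ gives $|A_{x_{j-1}}(r)\cap A_z(r)|\geq \alpha|A(r)|$. Since $f(j-1)\geq \delta|A(r)|$, one has $|A_{x_{j-1}}(r)\setminus X_{i_0}|\leq (1-\delta)|A(r)|=(\alpha/2)|A(r)|$, and a short inclusion--exclusion estimate yields
\[
|A_z(r)\cap X_{i_0}|\;\geq\; |A_{x_{j-1}}(r)\cap A_z(r)|-|A_{x_{j-1}}(r)\setminus X_{i_0}|\;\geq\; (\alpha/2)|A(r)|\;=\;(1-\delta)|A(r)|.
\]
Combined with $f(j)<\delta|A(r)|$, this traps $|A_z(r)\cap X_{i_0}|$ in the window $[(1-\delta)|A(r)|,\,\delta|A(r)|)$, so $|A_z(r)\setminus X_{i_0}|=|A(r)|-|A_z(r)\cap X_{i_0}|\leq \delta|A(r)|$. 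In particular $|A_z(r)\cap X_i|\leq \delta|A(r)|$ for \emph{every} $i\in I$.

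It remains to translate these uniform part-by-part volume bounds into the cut statement. A $k$-coarsely connected component $C$ of $A_z(r)\setminus S$ is $k$-coarsely connected inside $X\setminus S$; in the natural setting of the statement, where the $X_i$'s are the $k$-coarsely connected components of $X\setminus S$, no $k$-path in $X\setminus S$ can cross from one part to another, so $C\subset X_i$ for a single $i$. Therefore $|C|\leq |A_z(r)\cap X_i|\leq \delta|A(r)|=\delta|A_z(r)|$, which is precisely the $\delta$-cut property. The ``in particular'' clause follows immediately: if $A_x(r)\subset X_{i_0}$ and $A_y(r)$ lies in a different part, then $f(0)=|A(r)|\geq \delta|A(r)|$ while $f(n)=0<\delta|A(r)|$.

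I expect the only delicate point to be the last step, which tacitly uses that each $k$-coarsely connected component of $A_z(r)\setminus S$ sits in a single $X_i$; this is exactly why the partition is meant to be (or refine) the one into $k$-coarsely connected components of $X\setminus S$. Everything else is straightforward bookkeeping with the persistence inequality and the identity $1-\delta=\alpha/2$.
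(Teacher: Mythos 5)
Your proposal is correct and follows essentially the same argument as the paper: a discrete intermediate-value choice of the first index $j$ where $|A_{x_j}(r)\cap X_{i_0}|$ drops below $\delta|A(r)|$, the persistence overlap $|A_{x_{j-1}}(r)\cap A_{x_j}(r)|\geq\alpha|A(r)|$ to trap $|A_{x_j}(r)\cap X_{i_0}|$ between $(1-\delta)|A(r)|$ and $\delta|A(r)|$, and hence the bound $|A_{x_j}(r)\cap X_i|\leq\delta|A(r)|$ for every $i$. Your closing remark about each coarsely connected component of $A_z(r)\setminus S$ lying in a single $X_i$ is the right reading of how the proposition is applied (the paper leaves this implicit), so no gap.
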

\begin{proof}
    Let $x=x_0, \dots,x_m = y$ be a $k$-path.
    To simplify notation, we write $A_p = A_{x_p}(r)$ for $p\in [0,m]$, and $|A|=|A(r)|$. Let $s = \min \left\{ p\in [1,m] \textup{ such that } | A_p \cap X_{i_0}| < (1-\frac{\alpha}{2}) |A|   \right\}$. So 
    \begin{equation}\label{*}
    \left\{
\begin{split}
     | A_{s-1} \cap X_{i_0}| \geq \left(1-\frac{\alpha}{2}\right) |A|,
    \\  | A_s \cap X_{i_0}| < \left(1-\frac{\alpha}{2}\right) |A|.
\end{split}
\right.
\end{equation}
The inclusion $A_{s-1} \cap X_{i_0} \subset (A_{s} \cap X_{i_0}) \sqcup (A_{s-1} \backslash A_s)$ implies 
\begin{equation}\label{**}
    |A_{s-1} \cap X_{i_0}| \leq |(A_{s} \cap X_{i_0})| + |(A_{s-1} \backslash A_s)|.
\end{equation}
On the other hand, $|A_{s-1}\cap A_s| \geq \alpha |A| $. So $|(A_{s-1} \backslash A_s)| \leq (1-\alpha) |A| $. Therefore the first inequality in (\ref{*}) and (\ref{**}) imply
$$     \frac{\alpha}{2}  |A| \leq   |(A_{s-1} \cap X_{i_0})| - (1-\alpha)   |A|     \leq  |(A_{s} \cap X_{i_0})|  . $$
Since $|A_s \cap X_i| + |A_s \cap X_{i_0}| \leq |A| $ for any $i \in I \backslash\{i_0\}$, we have $|A_s \cap X_i| \leq (1-\frac{\alpha}{2})  |A|  $. Combined with the second inequality in (\ref{*}), we conclude that $|A_{x_s}(r) \cap X_i| \leq \delta  |A|  $ for any $i\in I$.
\end{proof}

\begin{cor}\label{cor of main}
    Let $X$ be a $k$-coarsely connected metric space of bounded geometry, and let $\left\{ A_x(r) \right\}_{x\in X, r\geq r_0}$ be an $\alpha$-persistent family. If $\mathcal{S}$ is a family of subsets that coarsely separates $X$, then there exists a constant $C'$ such that, for any $r$ big enough, there exist $S \in \mathcal{S}$ and $z \in X$ such that $|B_X(z,5r)\cap S | \geq C' \, \cut(A_{z}(r)) $, where $\delta = 1 - \frac{\alpha}{2}$.
\end{cor}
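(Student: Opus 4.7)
The strategy is to use the coarse separation hypothesis to produce, for each sufficiently large $r$, a set $S \in \mathcal{S}$ and two points $x,y$ deep in distinct coarsely connected components of $X \setminus S^{+L}$, then feed these to Proposition~\ref{main} and finally translate the resulting bound on $|A_z(r) \cap S^{+L}|$ into a bound on $|B_X(z,5r) \cap S|$.

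Let $L \geq 0$ be a thickening constant witnessing that $\mathcal{S}$ coarsely separates $X$, and assume $r \geq r_0$ with $r \geq L$ (this is what ``$r$ big enough'' will mean). Apply Definition~\ref{weak separation} with $D$ chosen sufficiently large compared to $4r$, $L$ and $k$ (taking $D \geq 4r + L + 1$ in the graph case; one extra additive constant encoding the coarsely geodesic modulus in the general setting). This produces $S \in \mathcal{S}$, two distinct $k$-coarsely connected components $X_{i_0}, X_{i_1}$ of $X \setminus S^{+L}$, and points $x \in X_{i_0}$, $y \in X_{i_1}$ with $d(x,S), d(y,S) \geq D$.

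Next I claim that $A_x(r) \subset X_{i_0}$ and $A_y(r) \subset X_{i_1}$. Indeed, by the persistence axiom $A_x(r) \subset B_X(x, 4r)$, and since $d(x, S^{+L}) \geq D - L > 4r$ the ball $B_X(x, 4r)$ is disjoint from $S^{+L}$. Because $X$ is $k$-coarsely geodesic (in fact a graph in the main cases of interest), any point of $B_X(x,4r)$ is joined to $x$ by a $k$-path which, up to enlarging $D$ by the modulus of coarse geodesicity, stays outside $S^{+L}$; hence $B_X(x,4r)$ lies entirely in $X_{i_0}$, and similarly for $y$. In particular $|A_x(r) \cap X_{i_0}| = |A(r)| \geq \delta |A(r)|$ and $|A_y(r) \cap X_{i_0}| = 0 < \delta |A(r)|$. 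Applying Proposition~\ref{main} with ``$S$'' chosen to be $S^{+L}$ and the partition given by the $k$-coarsely connected components of $X \setminus S^{+L}$, I obtain $z \in X$ such that $A_z(r) \cap S^{+L}$ is a $\delta$-cut of $A_z(r)$, so
\[
|A_z(r) \cap S^{+L}| \geq \cut(A_z(r)).
\]

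Finally, I translate this into an estimate in terms of $S$ itself. Every $p \in A_z(r) \cap S^{+L}$ lies in $B_X(z,4r)$ and has some $q \in S$ with $d(p,q) \leq L \leq r$; then $q \in B_X(z,5r) \cap S$ and $p \in \{q\}^{+L}$. Therefore $A_z(r) \cap S^{+L} \subset (B_X(z,5r) \cap S)^{+L}$, and Lemma~\ref{neighb of neighb}(i) gives
\[
|A_z(r) \cap S^{+L}| \leq \beta_X(L) \cdot |B_X(z,5r) \cap S|.
\]
Combining the two displayed inequalities yields $|B_X(z,5r) \cap S| \geq C' \cdot \cut(A_z(r))$ with $C' = 1/\beta_X(L)$, as required.

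\textbf{Main obstacle.} The only nontrivial point is the localization step, namely showing that $A_x(r)$ really sits inside a single coarsely connected component of $X \setminus S^{+L}$, rather than merely being disjoint from $S^{+L}$. In the graph setting this is automatic because balls are connected; in the general metric setting it forces the use of coarse geodesicity and an additional additive constant in the choice of $D$. Once this is granted, Proposition~\ref{main} applies verbatim and the bounded geometry of $X$ takes care of converting $S^{+L}$ into $S$.
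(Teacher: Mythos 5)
Your proof is correct and follows essentially the same route as the paper's: invoke coarse separation at scale $4r$ to place $A_x(r)$ and $A_y(r)$ in different $k$-coarsely connected components of $X \setminus S^{+L}$, apply Proposition~\ref{main} with $S^{+L}$ and that partition, and then convert $|A_z(r)\cap S^{+L}|$ into $|B_X(z,5r)\cap S|$ via Lemma~\ref{neighb of neighb} and bounded geometry. You are in fact slightly more explicit than the paper about the localization step (that the whole ball $B_X(x,4r)$ sits in a single component), which the paper asserts without comment.
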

\begin{proof}
    Let $D\geq 0$ be the thickening constant of $\mathcal{S}$. For $4r$, there exist $S \in S$ and $x,y\in X$ such that $B(x,4r)$ and $B(y,4r)$ are in different $k$-coarsely connected components of $X \backslash S^{+ D}$. Therefore $A_{x}(r)$ and $A_{y}(r)$ are in different $k$-coarsely connected components. By Proposition \ref{main}, where $S$ is replaced by $S^{+ D}$ and the partition is given by the $k$-coarsely connected components of $X \backslash S^{+ D}$, there exists $z \in X$ such that $|A_{z}(r)\cap S^{+ D}| \geq \cut(A_{z}(r))$. Note that 
    $$B(z,4r) \cap S^{+ D} \subset (B(z,4r+D) \cap S)^{+D} , $$ 
    so, by lemma \ref{neighb of neighb},
    $$   |A_{z}(r)\cap S^{+ D}| \leq    | B(z,4r) \cap S^{+ D} | \leq |(B(z,4r+D)\cap S)^{+D}| \leq  \beta_X(D) \cdot |B(z,4r+D)\cap S|  .$$
    We conclude that, for $C' = \frac{1}{\beta_X(D)} $ and $r\geq \max(r_0,D)$, $|B(z,5r)\cap S| \geq C' \, \cut(A_{z}(z)) $.
\end{proof}

\noindent
We show that, if a metric space $X$ has a persistent family with exponentially growing cuts, then it belongs to $\mathfrak{M}_{exp}$. Moreover, it holds for $X \times Y$ where $Y$ is any coarsely connected metric space. (All product spaces will be endowed with the $L^1$ product metric.)

\begin{thm}\label{invariance by taking product}
    Let $X$ be a $k$-coarsely connected metric space of bounded geometry and let $\left\{ A_x(r) \right\}$ be an $\alpha$-persistent family. Assume that there exist constants $C,\beta >0$ such that, for all $x\in X$ and $r$ big enough, $\cut(A_x(r)) \geq C e^{\beta r}$ where $\delta = (1-\frac{\alpha}{2})$. Then
    \begin{itemize}
        \item[(i)] $X \in \mathfrak{M}_{exp}$.
        \item[(ii)] If moreover $Y$ is a $k$-coarsely connected metric space, then $X \times Y \in \mathfrak{M}_{exp}$.
    \end{itemize}
\end{thm}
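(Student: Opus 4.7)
Part (i) is a direct application of Corollary~\ref{cor of main}: if a family $\mathcal{S}$ coarsely separates $X$, the corollary furnishes, for every sufficiently large $r$, some $S \in \mathcal{S}$ and $z \in X$ with $|B_X(z, 5r) \cap S| \geq C' \cdot \cut(A_z(r)) \geq C' C e^{\beta r}$. Picking any $s \in B_X(z, 5r) \cap S$ and using $B_X(z, 5r) \subset B_X(s, 10r)$ yields $V_{\mathcal{S}}(10r) \geq C' C e^{\beta r}$, so $V_{\mathcal{S}}$ grows at least exponentially and hence $X \in \mathfrak{M}_{exp}$.

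For (ii), the plan is to lift $\{A_x(r)\}$ to $X \times Y$ via
$$\widehat{A}_{(x,y)}(r) := A_x(r) \times \{y\}.$$
This family satisfies $|\widehat{A}_{(x,y)}(r)| = |A(r)|$ and $\widehat{A}_{(x,y)}(r) \subset B_{X \times Y}((x,y), 4r)$, and is $\alpha$-persistent \emph{within each slice}: if $(x,y)$ and $(x',y)$ share the same $y$-level and $d_X(x, x') \leq k$, then $|\widehat{A}_{(x,y)}(r) \cap \widehat{A}_{(x',y)}(r)| \geq \alpha |A(r)|$. Suppose $\mathcal{S}$ coarsely separates $X \times Y$ with thickening $L$; by Fact~\ref{fact:CoarseSup} we may assume that the connectivity constant equals the one of the underlying product graph, so any path can be decomposed into single $X$- and $Y$-edges. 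For each $r$ we pick $D > 4r + L$, obtaining $S \in \mathcal{S}$ and $p, q$ at distance $\geq D$ from $S$ in distinct components $X_{i_0}$ (of $p$) and $X_{i_1}$ of $(X \times Y) \setminus S^{+L}$. Fix a path $w_0 = p, w_1, \ldots, w_m = q$ of single $X$- or $Y$-edges and set $h_i := |\widehat{A}_{w_i}(r) \cap X_{i_0}| / |A(r)|$, so that $h_0 = 1$ and $h_m = 0$.

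Two observations then drive the analysis. At an $X$-edge $w_{i-1} \to w_i$, slice persistence gives $|h_{i-1} - h_i| \leq 1 - \alpha$, so if $h$ first drops below $1 - \alpha/2$ at an $X$-edge the argument of Proposition~\ref{main} applies within the slice to exhibit a $(1-\alpha/2)$-cut of $\widehat{A}_{w_s}(r)$ contained in $S^{+L}$, of size $\geq \cut(A_{x_s}(r)) \geq C e^{\beta r}$. At a $Y$-edge, $\widehat{A}_{w_{i-1}}$ and $\widehat{A}_{w_i}$ are disjoint but lie in adjacent slices: any $a \in A_{x_i}(r)$ with $(a, y_{i-1}) \in X_{i_0}$ and $(a, y_i) \notin X_{i_0}$ must satisfy $(a, y_i) \in S^{+L}$ by adjacency, whence
$$|S^{+L} \cap \widehat{A}_{w_i}(r)| \geq (h_{i-1} - h_i)|A(r)|.$$
We split by cases on how $h$ transitions from $1$ to $0$: either the first drop below $1 - \alpha/2$ happens at an $X$-edge (cut case), or at a $Y$-edge with drop $\geq \alpha/2$ (giving intersection $\geq (\alpha/2)Ce^{\beta r}$ inside a ball of radius $4r$), or else every $Y$-step drop is small. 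In the last case one aggregates the $Y$-step contributions along a rectangular path $p \to (x_q, y_p) \to q$: the disjoint slice contributions sum to $\geq \delta |A(r)| \geq \delta C e^{\beta r}$, all contained in a tube of the form $A_{x_q}(r) \times (Y\text{-interval})$. Passing from $S^{+L}$ back to $S$ via Lemma~\ref{neighb of neighb} gives $V_{\mathcal{S}}(O(r)) \geq C'' e^{\beta r}$, so that $X \times Y \in \mathfrak{M}_{exp}$. The most delicate point is the aggregation case: one must ensure that the $Y$-length of the rectangular path (hence the radius of the ambient ball) is controlled linearly in $r$, which is achieved by choosing $(p, q)$ to minimise $d(p, q) \sim D$, exploiting that $S^{+L}$ is a tube of controlled width around $S$.
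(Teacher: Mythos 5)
Part (i) of your proposal is correct and coincides with the paper's argument. Part (ii) contains a genuine gap in the aggregation case.

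The problem is case (c). The contributions you aggregate, namely $|S^{+L} \cap (A_{x_q}(r) \times \{y_i\})| \geq t_i |A(r)|$ for the successive $Y$-steps $y_i$ of the second leg, live in pairwise disjoint slices spread along the whole segment from $y_p$ to $y_q$ in $Y$. Their sum being $\geq \delta\, |A(r)|$ only bounds from below the amount of $S$ inside a tube of the form $B_X(x_q,4r) \times [y_p,y_q]$, whose $Y$-length is $d_Y(y_p,y_q) \leq d(p,q)$ and is not controlled by $r$. Since $V_{\mathcal{S}}$ is a supremum over balls of a fixed radius, nothing prevents $S$ from meeting each slice in a quantity $\delta |A(r)|/m$ with $m \gg r$, so that no ball of radius $O(r)$ captures more than a negligible fraction. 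Your proposed remedy — choosing $p,q$ with $d(p,q) \sim D$ — is not available: the definition of coarse separation only provides, for each $D$, \emph{some} pair of points at distance $\geq D$ from $S$ in distinct components, and one cannot force them to be at mutual distance $O(D)$ (the nearest point of the second component lying at distance $\geq D$ from $S$ may be arbitrarily far away; $S$ itself can be arbitrarily thick, so ``$S^{+L}$ is a tube of controlled width'' is not a usable property). A secondary issue is that your three cases are not exhaustive as stated: the negation of (a) and (b) is ``the first drop below $\delta$ along the chosen path occurs at a $Y$-edge with drop $<\alpha/2$'', which does not imply that \emph{every} $Y$-step drop is small.

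The paper's proof avoids aggregation altogether by a different dichotomy, which is the idea you are missing. Either there is some level $y$ at which the $\delta$-supermajority component of $A_x(r) \times \{y\}$ genuinely depends on $x$ — then Proposition~\ref{main} applies inside the single slice $X \times \{y\}$ and one gets a $\delta$-cut of size $\geq \cut(A_x(r))$ in a ball of radius $4r$ — or, for every $y$, there is a single component $Z_{i_y}$ with $|(A_x(r)\times\{y\}) \cap Z_{i_y}| \geq \delta |A(r)|$ for \emph{all} $x$. In the latter case the map $y \mapsto i_y$ must change value across some single $Y$-edge $\{y,y'\}$ with $d_Y(y,y') \leq k$ (because the two far-apart balls lie in different components), and then for \emph{any} $x$ the slice $A_x(r)\times\{y\}$ has a $\geq \delta$ fraction in $Z_{i_y}$ while $A_x(r)\times\{y'\}$ has a $\leq 1-\delta$ fraction there; comparing the two via the translation $(a,y)\mapsto(a,y')$ forces $|B((x,y'),4r) \cap S^{+D}| \geq (2\delta-1)|A(r)| = (1-\alpha)|A(r)| \geq (1-\alpha)\,C e^{\beta r}$, concentrated in a single ball of radius $4r$. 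The point is that tracking which component holds a $\geq\delta$ supermajority (rather than tracking the value of $h_i$ against the threshold $\delta$ along one path) guarantees that when the answer changes across a $Y$-edge the drop is at least $2\delta-1$, so no small drops ever need to be summed.
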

\begin{proof}
   $(i)$ Let $\mathcal{S}$ be a family of subsets that coarsely separates $X$. The previous corollary implies that, for any $r$ big enough, there exist $S \in \mathcal{S}$ and $z \in X$ such that $|B_X(z,5r)\cap S | \geq C' \, \cut(A_{z}(r)) $. By taking $s \in B_X(z,5r)\cap S $, we have 
   $$|B_X(s,10r)\cap S | \geq |B_X(z,5r)\cap S | \geq C' \, \cut(A_{z}(r))     \geq  C'C e^{\beta r} .$$
   Therefore 
   $$      V_\mathcal{S}(10r) \geq    C'C e^{\beta r}  .$$
   $(ii)$ Let $Y$ be a $k$-coarsely connected metric space, denote $Z = X \times Y$, and let $\mathcal{S}$ be a family of subsets that coarsely separates $Z$, with thickening constant $D$. Let $r>0$ be big enough and $S \in \mathcal{S}$ such that $ Z \backslash S^{+ D}$ contains two balls of radius $4r$ in different $k$-connected components. Let $(Z_i)_{i \in I}$ be the $k$-coarsely connected components of $ Z \backslash S^{+ D}$. Up to replacing $D$ with a bigger thickening constant, we can assume that, for all $i\ne j \in I$, $d_Z(Z_i,Z_j) >k$. We distinguish two cases. 
   
\medskip \noindent 
\emph{First case:} If there exist $y \in Y$, $x_1,x_2 \in X$, and $i_1,i_2 \in I$ such that  
   \begin{equation*}
   \left\{
       \begin{split}
           | \left( A_{x_1}(r)\times \{y\}\right) \cap Z_{i_1}|   \geq \delta |A(r)|
        \\ |  \left( A_{x_2}(r)\times \{y\}\right) \cap Z_{i_2}| < \delta |A(r)|
       \end{split}
       \right..
   \end{equation*}
    Then, by Proposition \ref{main}, there exists $x\in X$ such that $\left( A_{x}(r)\times \{y\}\right) \cap S $ is a $\delta$-cut of $A_{x}(r)\times \{y\}$. Hence, we have on the one hand
    $$  | \left( A_{x}(r)\times \{y\}\right) \cap S |  \geq  \cut( A_{x}(r)\times \{y\}) = \cut( A_{x}(r))    \geq      C e^{\beta r}  ;          $$
    and on the other hand 
    $$ A_{x}(r)\times \{y\} \subset B_X(x,4r)\times \{y\} \subset    B_Z(\left(x,y),4r\right).        $$
   Therefore 
   $$  | B_Z(\left(x,y),4r\right)  \cap S |   \geq   C e^{\beta r}  .     $$
   \emph{Second case:} For every $y\in Y$, there exists $i_y \in I$ such that 
   \begin{center}
       $\forall x \in X$, $ | \left( A_{x}(r)\times \{y\}\right) \cap Z_{i_y}|   \geq \delta |A(r)|.        $
   \end{center}
   Since $ Z \backslash S^{+ D}$ contains two balls of radius $4r$ in different $k$-connected components, there must exist $y,y' \in Y$ such that $i_y \ne i_{y'}$. Moreover, $Y$ is $k$-coarsely connected, so by considering a $k$-path in $Y$ from $y$ to $y'$, we can suppose that $d_Y(y,y')\leq k$. To simplify notation, we write $i_y = 1$ and $i_{y'}=2$. Let $x\in X$, we have 
   \begin{equation*}
   \left\{
       \begin{split}
           | \left( A_{x}(r)\times \{y\}\right) \cap Z_{1}|   \geq \delta |A(r)|
        \\ |  \left( A_{x}(r)\times \{y'\}\right) \cap Z_{2}| \geq \delta |A(r)|
       \end{split}
       \right..
   \end{equation*}
   Note also that 
   $$ |A_x(r)| = | A_{x}(r)\times \{y'\}   | \geq  |  \left( A_{x}(r)\times \{y'\}\right) \cap Z_{1}| + |  \left( A_{x}(r)\times \{y'\}\right) \cap Z_{2}|.         $$
   Therefore
   \begin{equation*}
   \left\{
       \begin{split}
           &| \left( A_{x}(r)\times \{y\}\right) \cap Z_{1}|   \geq \delta |A(r)|
        \\ &|  \left( A_{x}(r)\times \{y'\}\right) \cap Z_{1}| \leq (1-\delta) |A(r)|
       \end{split}
       \right..
   \end{equation*}
   Let $\phi : X \times \{y\} \to X \times \{y'\} $ be the translation sending $(a,y)$ to $(a,y')$. For all $i \in I \backslash \{1\}$, $d_Z(Z_1,Z_i)>k$; so, for every $z \in \left( A_{x}(r)\times \{y\}\right) \cap Z_{1}$, $\phi(z)$ is either still in $Z_1$ or is in $S$. Hence 
   \begin{equation*}
       \begin{split}
           | \left( A_{x}(r)\times \{y\}\right) \cap Z_{1}| &= | \phi \left( \left( A_{x}(r)\times \{y\}\right) \cap Z_{1} \right)| 
           \\ &= | \phi \left( \left( A_{x}(r)\times \{y\}\right) \cap Z_{1} \right)\cap Z_{1}|+ | \phi \left( \left( A_{x}(r)\times \{y\}\right) \cap Z_{1} \right) \cap S|
           \\ &= |  \left( A_{x}(r)\times \{y'\}\right) \cap Z_{1}|+ |  \left( A_{x}(r)\times \{y'\}\right) \cap S|
           \\ & \leq (1-\delta) |A(r)| + |  B_Z((x,y'),4r) \cap S|.
       \end{split}
   \end{equation*}
   Since $  | \left( A_{x}(r)\times \{y\}\right) \cap Z_{1}| \geq \delta |A(r)| $ and $2 \delta -1 = 1- \alpha >0$, we conclude that
   $$ |  B_Z((x,y'),4r) \cap S| \geq (2 \delta -1) |A(r)|  \geq (2 \delta -1) \cut( A_{x}(r))    \geq  (2 \delta -1)    C e^{\beta r} . \qedhere$$ 
\end{proof}

\noindent
The next theorem shows that, if a metric space $X$ satisfies the conditions of the previous theorem and admits quasi-isometric embeddings (with uniform constants) into a metric space $Y$ to join any pair of its points, then $Y \in \mathfrak{M}_{exp}$.

\begin{thm}\label{separating by embedding}
    Let $X$ be a $k$-coarsely connected metric space of bounded geometry and let $\left\{ A_x(r) \right\}$ be an $\alpha$-persistent family. Assume that there exist constants $C,\beta >0$ such that, for all $x\in X$ and $r$ big enough, $\cut(A_x(r)) \geq C e^{\beta r}$ where $\delta = (1-\frac{\alpha}{2})$. Let $Y$ be a coarsely connected metric space of bounded geometry and suppose that there exist constants $\lambda \geq 1$, $K,M \geq 0$ such that, for any $y_1,y_2 \in Y$, there exists a $(\lambda,K)$-quasi-isometric embedding $f: X \to Y$ satisfying $y_1,y_2 \in f(X)^{+M}$. Then $Y \in \mathfrak{M}_{exp}$.

    \begin{proof}
        Let $k'\geq 0$ such that $Y$ is $k'$-coarsely connected. Up to replacing by a bigger constant, we can assume that $k' \geq \lambda k + K$. Let $\mathcal{S}$ be a family of subsets that coarsely separates $Y$, with thickening constant $D'$. Let $R$ be big enough, $S \in \mathcal{S}$, and $y_1,y_2 \in Y$ such that $B_Y(y_1,R)$ and $B_Y(y_2,R)$ are in different $k'$-coarsely connected components of $Y \backslash S^{+ D'}$. Denote by $(Y_i)_{i \in I}$ the $k'$-coarsely connected components of $Y \backslash S^{+ D'}$. Let $f : X \to Y$ be a $(\lambda,K)$-quasi-isometric embedding such that $y_1,y_2 \in f(X)^{+M}$. Let $x_1,x_2 \in X$ such that $d(f(x_1),y_1)\leq M$ and $d(f(x_2),y_2)\leq M$.
        
\medskip \noindent
Necessarily, $B_Y(f(x_1),R-M)$ and $B_Y(f(x_2),R-M)$ are entirely contained in different $k'$-coarsely connected components of $Y \backslash S^{+ D'}$. Since $f$ sends a $k$-path in $X$ to a $k'$-path in $Y$, $x_1$ and $x_2$ are in different $k$-connected components of $X \backslash f^{-1}(S^{+ D'})$. Moreover, for $r = \frac{R-M-K}{4\lambda}$, $A_{x_1}(r)$ and $A_{x_2}(r)$ are in different $k$-coarsely connected components of $X \backslash f^{-1}(S^{+ D'})$. By Proposition \ref{main}, there exists $x \in X$ such that $$|A_x(r)\cap f^{-1}(S^{+ D'})| \geq \cut(A_x(r) \geq C e^{\beta r} . $$ 
        It is easy to see that quasi-isometric embeddings coarsely preserve the volume (see \cite[Lemma 3.1]{bensaid2022coarse} for a proof), i.e.\ there exists a constant $\theta>0$ such that, for any bounded subset $A\subset X$, $|f(A)|\geq \theta |A|$. The inclusion $f(A_{x}(r)) \subset B_Y(x,R)$ implies, by an argument similar to the one in the proof of Corollary \ref{cor of main}, that, for $R$ big enough,
        $$    \frac{C'}{\theta} |B_Y(f(x),2R) \cap S|     \geq  \frac{1}{\theta} |B_Y(f(x),R) \cap S^{+ D'}| \geq C e^{\beta r} $$
        where $C' = \sup_{y \in Y}|B_Y(y,D')|$. Hence, by taking $s \in B_Y(f(x),2R) \cap S $,
        $$ |B_Y(s,4R) \cap S|  \geq \frac{C \theta}{C'} \exp \left(  \beta   \frac{R-M-K}{2\lambda}      \right)  .$$
        This concludes the proof.$\qedhere$
    \end{proof}
\end{thm}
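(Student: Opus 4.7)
The plan is to mimic the strategy of Theorem~\ref{invariance by taking product}(i), but transfer the separation problem from $Y$ back to $X$ via the quasi-isometric embeddings provided by the hypothesis. First I would fix a family $\mathcal{S}$ that coarsely separates $Y$, with thickening constant $D'$, and choose $k' \geq 0$ such that $Y$ is $k'$-coarsely connected and $k' \geq \lambda k + K$. This ensures that any $(\lambda, K)$-quasi-isometric embedding $f : X \to Y$ sends $k$-paths in $X$ to $k'$-paths in $Y$. Given a large $R$, coarse separation gives $S \in \mathcal{S}$ and points $y_1, y_2 \in Y$ such that $B_Y(y_1, R)$ and $B_Y(y_2, R)$ lie in distinct $k'$-coarsely connected components of $Y \setminus S^{+D'}$.

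Next I would invoke the uniform embedding hypothesis to pick a $(\lambda, K)$-quasi-isometric embedding $f : X \to Y$ with $y_1, y_2 \in f(X)^{+M}$, and take preimages $x_1, x_2 \in X$ with $d(f(x_i), y_i) \leq M$. Because $f$ maps $k$-paths in $X$ to $k'$-paths in $Y$, the preimage $f^{-1}(S^{+D'})$ separates $x_1$ from $x_2$ in the $k$-coarse sense. More strongly, setting $r = (R-M-K)/(4\lambda)$ guarantees that $f(B_X(x_i, 4r)) \subset B_Y(f(x_i), R-M) \subset B_Y(y_i, R)$, so in particular $A_{x_1}(r)$ and $A_{x_2}(r)$ lie in different $k$-coarsely connected components of $X \setminus f^{-1}(S^{+D'})$. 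Proposition~\ref{main} then produces $z \in X$ such that $A_z(r) \cap f^{-1}(S^{+D'})$ is a $\delta$-cut of $A_z(r)$, giving
\[
|A_z(r) \cap f^{-1}(S^{+D'})| \;\geq\; \cut(A_z(r)) \;\geq\; C e^{\beta r}.
\]

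Finally, I would push this back into $Y$. Using the standard fact that a quasi-isometric embedding $f$ coarsely preserves volumes (so $|f(A)| \gtrsim |A|$ for every bounded $A \subset X$), together with $f(A_z(r)) \subset B_Y(f(z), 4\lambda r + K)$, this yields a subset of $B_Y(f(z), R') \cap S^{+D'}$ of size $\gtrsim e^{\beta r}$ for some $R'$ linear in $R$. Then, as in the proof of Corollary~\ref{cor of main}, Lemma~\ref{neighb of neighb} lets me replace $S^{+D'}$ by $S$ at the cost of a multiplicative constant $\beta_Y(D')$, and picking any $s \in B_Y(f(z), R') \cap S$ inflates the radius to $2R'$. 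This produces the exponential lower bound $V_{\mathcal{S}}(2R') \gtrsim e^{\beta r}$, i.e.\ $V_{\mathcal{S}}$ grows exponentially in $R$, proving $Y \in \mathfrak{M}_{exp}$.

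The main source of friction, as I see it, is purely bookkeeping: managing the chain of constants $(\lambda, K, M, D', k, k')$ through the transfer $X \rightsquigarrow Y$, and in particular ensuring that the exponential rate in $r$ survives as a genuine positive exponent in $R$ after the linear substitution $r = (R-M-K)/(4\lambda)$. There is no difficulty in the uniformity over the varying embeddings $f$, since $\lambda$, $K$, $M$ are given as uniform constants in the hypothesis.
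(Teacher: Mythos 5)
Your proposal is correct and follows essentially the same route as the paper's proof: transfer the separating set to $X$ via the uniform quasi-isometric embedding, apply Proposition~\ref{main} to the persistent family to locate a large $\delta$-cut, and push the resulting volume bound back to $Y$ using coarse preservation of volume and Lemma~\ref{neighb of neighb}. The constant bookkeeping you flag is exactly what the paper carries out, and there are no gaps.
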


\subsubsection{Cheeger and Poincar\'e constants}
The purpose of this subsection is to introduce the main tools to get a lower bound on the $\delta$-cuts.
\begin{definition}(Compare with \cite[Definition 6.3]{hume2020poincare})
\\Let $X$ be a $k$-coarsely connected metric space, let $r\geq k \geq 1$, and let $B$ be a bounded subset of $X$. We define the \emph{$r$-boundary} of $B$ as 
$$\partial_r B = \{ x \in X \backslash B \mid d(x,b) \leq r \textup{ for some } b \in B \} = B^{+ r} \backslash B .$$
The \emph{$r$-Cheeger constant} of a bounded set $A$ is 
$$h_r(A) = \min \left\{ \frac{|\partial_r B \cap A|}{|B|}, B \subset A \textup{ and } 0<|B|\leq \frac{|A|}{2}  \right\}.$$
\end{definition}
\begin{remark}
    If $X$ has bounded geometry, then for any bounded subset $A$, $h_r(A) \leq \beta_X(r)$ (take $B$ to be any point in $A$). 
\end{remark}
\noindent
The following proposition is adapted from \cite[Proposition 2.2]{hume2014continuum}.
\begin{prop}\label{cut general}
Let $X$ be a $k$-coarsely connected metric space of bounded geometry and $r\geq k \geq 1$. Let $A \subset X$ be a bounded subset and $\delta \in (0,1)$. Then
    $$\textup{cut}^{\delta}_r(A) \geq \lambda \, h_r(A) |A| ,$$
where $\lambda = \frac{\min\left(\frac{1}{4}, \frac{1-\delta}{2}\right)}{\beta_X(r)}$.
\end{prop}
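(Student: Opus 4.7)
The plan is to take a minimum $(r,\delta)$-cut $S\subset A$, look at the partition of $A\setminus S$ into its $r$-coarsely connected components $\{C_i\}_i$ (each of size $\leq\delta|A|$ by hypothesis), and extract a union $B$ of some of these components with $|B|\leq|A|/2$ and $|B|$ comparable to $|A|$. Applying the definition of the $r$-Cheeger constant to $B$ will then yield the lower bound on $|S|$.

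The central geometric observation is that for any union $B$ of components $C_i$, one has $\partial_r B\cap A\subseteq S$. Indeed, a point $x\in A\setminus B$ within distance $r$ of some $b\in B$ either lies in $S$, or lies in $A\setminus S$ and hence belongs to the same $r$-coarsely connected component as $b$, which forces $x\in B$, a contradiction. Consequently, whenever $0<|B|\leq|A|/2$, the definition of $h_r(A)$ gives
$$|S|\;\geq\;|\partial_r B\cap A|\;\geq\;h_r(A)\,|B|.$$
So the task reduces to producing such a $B$ with $|B|$ bounded below by $\min(1/4,(1-\delta)/2)\,|A|$.

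I would then split into two cases. If every component satisfies $|C_i|\leq|A|/2$, a greedy procedure that adds components to $B$ in decreasing order of size, as long as $|B|\leq|A|/2$, either produces $|B|\in[|A|/4,|A|/2]$ directly, or stops with $|B|<|A|/4$, in which case the first rejected component $C$ must satisfy $|C|>|A|/4$ and can be used as $B$ instead; in either subcase one gets $|S|\geq h_r(A)|A|/4$. If on the other hand some component $C$ satisfies $|C|>|A|/2$, then necessarily $\delta>1/2$, and setting $B:=(A\setminus S)\setminus C$ one has $|B|<|A|/2$ together with $|B|\geq(1-\delta)|A|-|S|$; plugging this into the Cheeger bound and solving for $|S|$ yields $|S|\geq\tfrac{h_r(A)(1-\delta)}{1+h_r(A)}|A|$.

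The main obstacle is converting this Case~2 estimate into the cleaner form $\tfrac{1-\delta}{2}h_r(A)|A|$, which requires the basic fact that $h_r(A)\leq 1$. This is visible by testing the Cheeger ratio against a subset $B\subset A$ of size $\lfloor|A|/2\rfloor$, for which the inclusion $\partial_r B\cap A\subseteq A\setminus B$ gives $|\partial_r B\cap A|/|B|\leq|A\setminus B|/|B|\leq 1$ (up to a harmless parity correction). One also has to dispatch the degenerate subcase $B=\emptyset$ of Case~2, which forces $|S|\geq(1-\delta)|A|$ and hence already dominates the required bound. Combining the two cases then establishes $\textup{cut}^{\delta}_r(A)\geq\min(1/4,(1-\delta)/2)\,h_r(A)|A|$.
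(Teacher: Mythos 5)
Your proposal follows essentially the same route as the paper's proof: the same key observation that $\partial_r B \cap A \subseteq S$ for any union $B$ of $r$-coarsely connected components of $A \setminus S$, the same dichotomy according to whether some component exceeds $\tfrac{1}{2}|A|$, and the same extraction of a test set for the Cheeger ratio (your greedy selection in Case~1 is just a more explicit version of the paper's ``find $D$ with $\tfrac14 |A| \le |D| \le \tfrac12 |A|$''; note it silently omits the subcase where \emph{all} components get added and still $|B|<\tfrac14|A|$, i.e.\ $|S|>\tfrac34|A|$, which the paper dispatches up front).

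The one step worth flagging is the claim $h_r(A)\le 1$, on which your Case~2 algebra and your treatment of the degenerate subcases genuinely depend. As stated it is false: if $A$ consists of three pairwise adjacent vertices of a graph and $r=1$, the only admissible test sets are singletons and $h_1(A)=2$; your ``parity correction'' is a factor of $2$ there, not a harmless one. Consequently your chain of inequalities only yields $|S|\ge \frac{(1-\delta)\,h_r(A)}{1+h_r(A)}|A|$, which is strictly weaker than $\frac{1-\delta}{2}h_r(A)|A|$ whenever $h_r(A)>1$. To be fair, the paper's own proof makes the same implicit assumption (its ``we are done'' step when $|S|\ge \frac{1-\delta}{2}|A|$ also requires $h_r(A)\le 1$), and in the regime actually used in the applications, where $|A|$ is large, your argument does give $h_r(A)\le 1+O(1/|A|)$, so the estimate survives with a marginally degraded constant. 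So this is a shared blemish rather than a structural defect of your approach, but as written the inequality $h_r(A)\le 1$ is not proved and does fail for small $A$; you should either restrict to $|A|$ large (or impose it as a standing hypothesis) or accept the weaker constant $\frac{(1-\delta)h_r(A)}{1+h_r(A)}$ in place of $\frac{1-\delta}{2}h_r(A)$.
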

\begin{proof}
    Let $S$ be an $(r,\delta)$-cut of $A$, and let $A_1, \ldots, A_p$ denote the $r$-coarsely connected components of $A \backslash S$. So, for any $i$, $|A_i| \leq \delta |A| $. Note that, for all distinct $1 \leq i,j \leq p$, $d(A_i,A_j) > r \geq 1$, so, by Remark \ref{r s}, $|A_i \cup A_j| = |A_i|+|A_j|$.
    
\medskip \noindent
Suppose that $\delta \leq 1/2$, i.e.\ $S$ is a $\frac{1}{2}$-cut of $A$. If $|S| \geq \frac{1}{2}|A|$, we are done by the previous remark. If $|S| < \frac{1}{2}|A|$, then $\sum_{i=1}^p |A_i| \geq \frac{1}{2}|A|  $, and we can find a  $D = \cup_{i\in I}A_i $, $I \subset \{1,\dots,p\}$, such that $\frac{1}{4}|A| \leq | D| \leq \frac{1}{2}|A| $. Since $\partial_r D \cap A \subset S$, we have  $$ h_r(A) \leq \frac{|\partial_r D \cap A|}{|D|} \leq \frac{|S|}{ \frac{1}{4} |A|}.        $$
    Suppose now that $\delta >1/2$. If $|S| \geq \frac{1-\delta}{2} |A| $, we are done. If all the $A_i$ have size $\leq \frac{1}{2} |A| $, then $S$ is again a $\frac{1}{2}$-cut of $S$ and we conclude as in the previous case. Suppose that $|S| < \frac{1-\delta}{2} |A| $ and that $| A_1 | > \frac{1}{2} |A|$. Let $D = \cup_{i=2}^p A_i$. So $|D| \leq \frac{1-\delta}{2} |A|$, and $\partial_r D \cap A \subset S$. Therefore 
    $$ h_r(A) \leq \frac{|\partial_r D \cap A|}{|D|} \leq \frac{|S|}{ \frac{1-\delta}{2} |A|}.        $$
    Hence, in all cases,  $|S| \geq \min(\frac{1}{4}, \frac{1-\delta}{2}) h_r(A) |A| . \qedhere$
\end{proof}

\noindent
We recall that all spaces we consider have bounded geometry at scale $s=1/5$ (see Remark \ref{r s}), and that the volume $|A|$ of a subset is its $\frac{1}{5}$-volume.
\begin{lemma}\label{volume and counting measure}
    Let $X$ be a metric space of bounded geometry. Let us denote by $\nu$ the counting measure. Let $\eps \geq 1$, $A$ a bounded subset of $X$, and $Z \subset A$ a maximally $\eps$-separated net. Then for any $B \subset A$,
    $$ \frac{1}{\alpha} |B| \leq \nu \left( B^{+ \eps}\cap Z    \right) \leq \alpha |B| $$
    where $\alpha = \sup_{x \in X} |B_x(x,\eps)| $.
\end{lemma}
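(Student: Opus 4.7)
The plan is to prove the two inequalities separately, each exploiting one of the two defining properties of a maximally $\eps$-separated net $Z \subset A$: the covering property (every point of $A$ lies within $\eps$ of $Z$) will yield the lower bound, while the separation property (distinct points of $Z$ are at distance $\geq \eps$) will yield the upper bound. A preliminary observation used throughout is that, since $\eps \geq 1 \geq 2/5$, any ball of radius $1/5$ contains at most one point of $Z$; consequently, for any bounded subset $E \subset X$, every covering of $E$ by balls of radius $1/5$ must use at least one ball per element of $Z \cap E$, so $\nu(Z \cap E) \leq |E|$. Applied with $E = B(x,\eps)$, this gives the uniform bound $\nu(Z \cap B(x,\eps)) \leq \alpha$ for every $x \in X$.

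For the lower bound $\nu(B^{+\eps} \cap Z) \geq |B|/\alpha$, I would use the covering property. Since $B \subset A$ and $Z$ is a maximal $\eps$-net in $A$, every $b \in B$ admits some $z(b) \in Z$ with $d(b,z(b)) \leq \eps$, and such a $z(b)$ lies automatically in $B^{+\eps}$. Hence
\[
B \subset \bigcup_{z \in Z \cap B^{+\eps}} B(z,\eps),
\]
and since each $B(z,\eps)$ is covered by at most $\alpha$ balls of radius $1/5$, subadditivity of $|\cdot|$ immediately gives $|B| \leq \alpha \cdot \nu(Z \cap B^{+\eps})$.

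For the upper bound $\nu(B^{+\eps} \cap Z) \leq \alpha |B|$, I would cover $B$ with $|B|$ balls of radius $1/5$, say $B \subset \bigcup_{i=1}^{|B|} B(x_i,1/5)$. For each $z \in Z \cap B^{+\eps}$, pick $b_z \in B$ with $d(z,b_z) \leq \eps$, and group the $z$'s according to which ball $B(x_i,1/5)$ contains $b_z$. Each group sits inside $B(x_i,\eps+1/5)$, so by the preliminary observation its $\nu$-mass is bounded by the $1/5$-volume of that ball, hence by $\alpha$ (absorbing the tiny enlargement from $\eps$ to $\eps+1/5$ into the uniform constant $\alpha$). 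Summing over the $|B|$ groups yields $\nu(Z \cap B^{+\eps}) \leq \alpha |B|$. The only mild obstacle I foresee is the clean reconciliation of the counting measure $\nu$ with the $1/5$-volume $|\cdot|$: the whole point is that on $\eps$-separated sets these two notions are comparable up to the factor $\alpha$, so once that bridge is recorded in the preliminary observation, both halves of the lemma fall out by elementary covering arguments.
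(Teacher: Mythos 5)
Your argument is correct and follows essentially the same route as the paper: the lower bound is identical (cover $B$ by the $\eps$-balls around the net points it meets and use subadditivity of the $1/5$-volume), and the upper bound is the same elementary covering argument, merely repackaged — the paper covers $B^{+\eps}$ directly by $|B^{+\eps}|$ balls of radius $1/5$, notes each contains at most one point of the $\eps$-separated set $Z$, and concludes $\nu(B^{+\eps}\cap Z)\leq |B^{+\eps}|\leq \alpha|B|$ via Lemma~\ref{neighb of neighb}(i). The only blemish is your final step: the groups sit in balls $B(x_i,\eps+1/5)$, so what you literally obtain is the constant $\sup_x|B(x,\eps+1/5)|$, which need not equal $\alpha=\sup_x|B(x,\eps)|$; "absorbing the enlargement into $\alpha$" is not licensed as stated. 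This is harmless for the applications (any uniform constant suffices there), and it is easily repaired: each group lies in $B(x_i,1/5)^{+\eps}$, whose $1/5$-volume is at most $\alpha\cdot|B(x_i,1/5)|=\alpha$ by Lemma~\ref{neighb of neighb}(i), which recovers the stated constant exactly.
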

\begin{proof}
    Let $x \in B$. There exists $z \in Z$ such that $d(x,z)\leq \eps$. So $z \in B^{+ \eps}\cap Z$, and $x \in ( B^{+ \eps}\cap Z)^{+ \eps}$. So $B \subset ( B^{+ \eps}\cap Z)^{+ \eps} \subset \cup_{x \in B^{+ \eps}\cap Z}B_X(x,\eps) $. Therefore
    $$|B| \leq \alpha \, \nu \left( B^{+ \eps}\cap Z    \right). $$
    Let $n = |B^{+ \eps}|$ and $x_1, \dots , x_n$ such that $B^{+ \eps} \subset \cup_i B_X(x_i,1/5)$. Two different points $z_1, z_2 \in B^{+ \eps} \cap Z$ cannot be in a same ball $B_X(x_i,1/5)$ because $Z$ is $\eps$-separated and $\eps \geq 1$, so 
    $$\nu \left( B^{+ \eps}\cap Z    \right) \leq |B^{+ \eps}| \leq \alpha |B| . \qedhere$$
\end{proof}
\begin{definition}\label{Poincar\'e constant 1}
Let $(X,d)$ be a complete separable metric space, $\nu$ a finite Borel measure, and let $k>0$. Let $f: X \to \mathbb{R}$ be a measurable function. Its \emph{upper gradient at scale $k$}, $|\nabla_k f| : X \to \mathbb{R}$,  is defined as  
$$  |\nabla_k f|(x) = \sup_{y,y' \in B_X(x,k)}   | f(y)-f(y')   |      .          $$
The \emph{$L^1$-Poincar\'e constant at scale $k$} of $(X,d,\nu)$ is 
$$    h_k^1(X,d,\nu) = \inf_f \frac{||\nabla_k f||_1}{||f||_1} ,       $$
where the infinimum is over all $f \in L^1(X,\nu)$ such that $ \int_X f d \nu = 0$ and $f \ne 0$.
\end{definition}

\noindent
The $L^1$-Poincar\'e constant was introduced in \cite{hume2020poincare}, and is closely related to the Cheeger constant, see \cite[Proposition 6.10]{hume2020poincare}. We adapt their proof to show the following.
\begin{prop}\label{comparison cheeger Poincar\'e}
    Let $(X,d)$ be a $k$-coarsely geodesic metric space of bounded geometry, and $\eps$ such that $k \geq \eps \geq  1$. There exists a constant $M>0$ such that, for any bounded subset $A$ of $X$ and $Z \subset A$ a maximally $\eps$-separated net in $A$, we have
    $$  h_k^1(Z,d, \nu) \leq M \, h_k(A),          $$
    where $\nu$ is the counting measure.
\end{prop}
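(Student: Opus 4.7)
The plan is to build, from a near-optimal Cheeger set $B \subseteq A$ for $h_k(A)$, a mean-zero test function $f$ on $(Z,\nu)$ whose Rayleigh quotient is controlled by $|\partial_k B \cap A|/|B|$. Fix $B$ with $0<|B|\leq |A|/2$ realising $h_k(A)$ up to an arbitrarily small error, set $B' := B^{+\eps}\cap Z$, and consider
$$f := \mathds{1}_{B'} - \frac{\nu(B')}{\nu(Z)}\,\mathds{1}_Z,$$
which has $\int f\, d\nu = 0$ and $\|f\|_1 = 2\nu(B')\nu(Z\setminus B')/\nu(Z)$. Lemma~\ref{volume and counting measure} gives $\nu(B') \geq |B|/\alpha$ and $\nu(Z)\leq \alpha|A|$, with $\alpha:=\sup_{x\in X}|B(x,\eps)|\leq \beta_X(k)$.

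The core step is to estimate the gradient norm $\|\nabla_k f\|_1$ via the Cheeger boundary of $B$. The upper gradient $|\nabla_k f|$ is bounded by $1$ everywhere and vanishes outside the \emph{two-sided $k$-boundary}
$$E := \{z\in Z : B_X(z,k)\cap Z \text{ meets both } B' \text{ and } Z\setminus B'\}.$$
I would show $E\subseteq (\partial_k B\cap A)^{+k}$ by a short case analysis: for such a $z$, choose $y\in B'\cap B_X(z,k)$ and $y'\in (Z\setminus B')\cap B_X(z,k)$. If $y\notin B$, then $y\in \partial_\eps B\cap A \subseteq \partial_k B \cap A$ (since $\eps\leq k$), with $d(z,y)\leq k$. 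If $y\in B$, then $d(z,B)\leq k$; if $z\notin B$ then $z\in \partial_k B \cap A$, and otherwise $z\in B$ forces $y'\in A\setminus B$ with $d(y',B)\leq d(y',z)+d(z,B)\leq k$, so $y'\in \partial_k B\cap A$ at distance $\leq k$ from $z$. A bounded-geometry counting argument then yields $\nu(F^{+k}\cap Z)\leq \beta_X(k)\,|F|$ for any bounded $F\subseteq X$: the points of $Z$ in $F^{+k}$ are $\eps$-separated with $\eps\geq 1>2/5$, so each occupies a distinct $\frac{1}{5}$-ball in any minimal cover of $F^{+k}$, while $|F^{+k}|\leq \beta_X(k)|F|$ by Lemma~\ref{neighb of neighb}. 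Applied to $F=\partial_k B\cap A$, this yields $\|\nabla_k f\|_1 \leq \nu(E) \leq \beta_X(k)|\partial_k B \cap A|$.

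In the main regime $\nu(B')\leq \nu(Z)/2$, one has $\nu(Z\setminus B') \geq \nu(Z)/2$, so $\|f\|_1 \geq \nu(B') \geq |B|/\alpha$ and the Rayleigh quotient is bounded by $\alpha\beta_X(k)\cdot|\partial_k B\cap A|/|B|$. Taking the infimum over $B$ yields $h^1_k(Z,d,\nu) \leq M\,h_k(A)$ with $M = \alpha\beta_X(k)$. The main obstacle is the complementary regime $\nu(B')>\nu(Z)/2$, which forces $|B|\gtrsim |A|/\alpha^2$ and may make $\nu(Z\setminus B')$ very small. I would address this by swapping to the mean-zero function built from $\mathds{1}_{Z\setminus B'}$: either $\nu(Z\setminus B')$ remains comparable to $|B|$, giving the same estimate; or one enters the extremal sub-case $Z\subseteq B^{+\eps}$, whence $A\subseteq B^{+2\eps}$ and, via a short $k$-path argument, $A\setminus B\subseteq (\partial_k B)^{+k}$, forcing $h_k(A)$ to exceed an absolute positive constant and making the Poincar\'e inequality $h^1_k(Z,d,\nu)\lesssim 1$ hold trivially from a one-point test function.
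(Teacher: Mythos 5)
Your proposal is correct in its main thrust and follows essentially the same route as the paper: test the Poincar\'e constant against the mean-zero version of $\mathds{1}_{B^{+\eps}\cap Z}$ for a near-optimal Cheeger set $B$, observe that the upper gradient is supported near $\partial_k B\cap A$, and compare norms via Lemma~\ref{volume and counting measure}. One point where you are actually cleaner than the paper: your three-case analysis places the gradient support directly inside $(\partial_k B\cap A)^{+k}$, whereas the paper first lands in $\partial_{2k+\eps}B$ and then needs coarse geodesicity (the constant $t$ and an extra factor $\gamma$) to return to $\partial_k B$; your version avoids that detour.

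The one place to be careful is the regime $\nu(B')>\nu(Z)/2$. You are right to flag it (the paper's inequality $\|f-f_Z\|_1\geq\nu(Z_B)$ silently assumes $\nu(Z_B)\leq\nu(Z)/2$), but your dichotomy ``either $\nu(Z\setminus B')$ is comparable to $|B|$, or $Z\subseteq B^{+\eps}$'' is not exhaustive: $Z\setminus B'$ can be nonempty yet have measure far smaller than $|B|$, in which case $\|f\|_1\approx 2\nu(Z\setminus B')$ gives no useful lower bound and you are not in the extremal sub-case either. The fix is the natural interpolation of your own extremal argument: for any $x\in A\setminus B$, picking $z\in Z$ with $d(x,z)\leq\eps$ shows $x\in(\partial_k B\cap A)^{+k}\cup(Z\setminus B')^{+\eps}$ (if $z\in B'$ then either $z\in B$, forcing $x\in\partial_k B\cap A$, or $z\in\partial_\eps B\cap A$), whence $|A|/2\leq|A\setminus B|\leq\beta_X(k)|\partial_k B\cap A|+\alpha\,\nu(Z\setminus B')$. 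So either $\nu(Z\setminus B')\gtrsim|A|\gtrsim|B|$ and the main estimate goes through, or $|\partial_k B\cap A|\gtrsim|B|$, so $h_k(A)$ is bounded below by a constant and the one-point test function finishes the job. Also note that your inclusion should read $A\setminus B\subseteq(\partial_k B\cap A)^{+k}$ rather than $(\partial_k B)^{+k}$, since the Cheeger quotient only sees $\partial_k B\cap A$; the net-based argument above gives exactly that. With these repairs the argument is complete, and in fact more careful than the paper's own treatment of this regime.
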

\begin{proof}
    Let $B \subset A$ such that $|B| \leq \frac{|A|}{2}$. Let $Z_B = B^{+ \eps}\cap Z$ and $f : Z \to \mathbb{R}$ equal to $\mathds{1}_{Z_B}$. We denote by $f_Z = \frac{1}{\nu(Z)} \int_Z f d \nu $ the average of $f$. Let us show that $\frac{||\nabla_kf||_1}{||f-f_Z||_1}$ is bounded from above by a uniform constant times $\frac{|\partial_k B \cap A|}{|B|}$. We have $f_Z = \frac{\nu(Z_B)}{\nu(Z)}$ and
$$||f-f_Z||_1 = \nu(Z_B) \left(  1- \frac{\nu(Z_B)}{\nu(Z)}   \right) + \left( \nu(Z)- \nu(Z_B)   \right) \frac{\nu(Z_B)}{\nu(Z)} \geq \nu(Z_B).  $$
By Lemma \ref{volume and counting measure}, $\nu(Z_B) \geq \frac{1}{\alpha}|B| $, where $\alpha = \sup_{x \in X} |B_x(x,\eps)| $. So
$$   ||f-f_Z||_1 \geq    \frac{1}{\alpha}|B| .  $$
Let $x \in Z$ such that $\nabla_k f (x) = 1$. So there exist $y,y' \in B_Z(x,k)$ such that one is in $Z_B$ and the other in $Z \backslash Z_B$. Suppose $y \in Z_B$. So $y' \in Z \backslash Z_B $ and $y' \in B^{+(2k+\eps)}$, i.e.\ $y' \in \partial_{2k+\eps}(B) \cap A$. Therefore
$$ \left\{  x\in Z \mid \nabla_k f (x) = 1   \right\}   \subset   \left( 
 \partial_{2k+\eps}(B) \cap A    \right)^{+k}    . $$
So 
\begin{equation}\label{nabla}
    \begin{split}
         ||\nabla_kf||_1 = \nu \left(   \left\{  x\in Z \mid \nabla_k f (x) = 1   \right\}   \right) 
         &= \mid \left\{  x\in Z \mid \nabla_k f (x) = 1   \right\} \mid 
         \\ & \leq \mid \left( 
 \partial_{2k+\eps}(B) \cap A    \right)^{+k} \mid 
 \\ & \leq \beta  \mid 
 \partial_{2k+\eps}(B) \cap A,
    \end{split}
\end{equation}
where $\beta = \sup_{x \in X} |B_x(x,k)| $. The second equality in (\ref{nabla}) comes from the fact that $Z$ is $\eps$-separated and $\eps \geq 1$, so $\nu = |\cdot| $ for its subsets. Moreover, the same proof of Lemma \ref{neighb of neighb} shows that there exists $t \geq 0$ such that 
$$  \partial_{2k+\eps}(B) \subset \partial_{k}(B)^{+t}.     $$
By denoting $\gamma = \sup_{x \in X} |B_x(x,t)| $, we have 
$$ ||\nabla_kf||_1 \leq \beta \gamma \, |\partial_{k}(B) \cap A|.$$
We conclude that, for $M = \alpha \beta \gamma$, we have that, for all $B \subset A$, 
$$ h_k^1(Z,d, \nu) \leq  \frac{||\nabla_kf||_1}{||f-f_Z||_1} \leq M \frac{|\partial_k B \cap A|}{|B|} . \qedhere$$
\end{proof}

\subsection{Groups of polynomial growth}
The goal of this section is to prove the following theorem, which implies Theorem \ref{thmIntro:nilpot} by results of Bass \cite{bass1972degree} and Guivarc'h \cite{guivarc1973croissance}.
\begin{thm}\label{thm cglc poly growth}
Let $G$ be a compactly generated locally compact group, $d$ the word metric with respect to a compact symmetric generating subset, and $\mu$ a left-invariant Haar measure. If $G$ has growth $\beta_G(r) \simeq r^n$, then $G \in \mathfrak{M}_{n-1}$.
\end{thm}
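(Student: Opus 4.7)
My plan is to apply the persistent-family machinery of Section~\ref{section:CoarseSep} to the natural family of balls in $G$, after establishing the polynomial lower bound $\cut(B(x,r))\gtrsim r^{n-1}$ on their cut. To set up the persistent family, I take $A_x(r):=B(x,r)$ as in Example~\ref{exple persist}(i). Left-invariance of the word metric yields $|A_x(r)|=|A_y(r)|$ for all $x,y$. Since $\beta_G(r)\simeq r^n$, Losert's extension of Gromov's polynomial growth theorem shows that $(G,d,\mu)$ is doubling: there is $C\geq 1$ with $|B(x,2r)|\leq C|B(x,r)|$ for all large $r$. Hence, for $d(x,y)\leq k$ and $r\geq 2k$,
$$|B(x,r)\cap B(y,r)|\geq |B(x,r-k)|\geq |B(x,r/2)|\geq C^{-1}|A_x(r)|,$$
so the family is $\alpha$-persistent with $\alpha:=C^{-1}$.

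The heart of the argument is the cut lower bound $\cut(B(x,r))\gtrsim r^{n-1}$ for $\delta:=1-\alpha/2$ and all large $r$. By Proposition~\ref{cut general} this reduces to the relative isoperimetric inequality $h_k(B(x,r))\gtrsim 1/r$, namely
$$|\partial_k B\cap B(x,r)|\gtrsim |B|/r\quad\text{for every }B\subset B(x,r),\ |B|\leq |B(x,r)|/2.$$
By Losert's theorem, $G$ is a compact extension of a connected simply connected nilpotent Lie group, for which the local $L^1$-Poincar\'e inequality on balls, with Poincar\'e constant $\simeq r$, is classical (Varopoulos; Coulhon--Saloff-Coste; Hebisch--Saloff-Coste). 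Passing to a maximally $1$-separated net in $B(x,r)$ and applying Proposition~\ref{comparison cheeger Poincar\'e} transfers this Poincar\'e bound into the desired bound on $h_k$. Combined with $|B(x,r)|\simeq r^n$, Proposition~\ref{cut general} then yields $\cut(B(x,r))\gtrsim r^{n-1}$.

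With this polynomial cut bound in hand, Corollary~\ref{cor of main} concludes the proof for $G$ itself: any coarsely separating family $\mathcal{S}$ produces, for all large $r$, some $S\in\mathcal{S}$ and $z\in G$ with $|B(z,5r)\cap S|\gtrsim\cut(B(z,r))\gtrsim r^{n-1}$; picking $s\in B(z,5r)\cap S$ gives $|B(s,10r)\cap S|\gtrsim r^{n-1}$, hence $V_{\mathcal{S}}(r)\gtrsim r^{n-1}$ and $G\in\mathfrak{M}_{n-1}$. For the product $G\times X$ with $X$ a bounded degree graph, I replay verbatim the case analysis in the proof of Theorem~\ref{invariance by taking product}(ii), substituting the exponential lower bound on cuts by the polynomial one above: in both cases one still produces a ball in $G\times X$ meeting some $S\in\mathcal{S}$ in volume $\gtrsim r^{n-1}$, so $G\times X\in\mathfrak{M}_{n-1}$ as well.

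The main obstacle is the relative isoperimetric inequality $h_k(B(x,r))\gtrsim 1/r$. In the cglc polynomial growth setting this forces one to invoke Losert's structure theorem to reduce to a connected nilpotent Lie group, and then the classical Poincar\'e inequality on balls in such groups; stating this cleanly with the discrete $1/5$-volume used throughout the paper requires some care, but is quantitatively sharp (for $\mathbb{Z}^n$ a hyperplane cut already achieves $\simeq r^{n-1}$). The remainder is a direct and formal application of the persistent-family framework developed earlier.
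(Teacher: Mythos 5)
Your proposal is correct and follows the same core strategy as the paper: balls form a persistent family, and the relative isoperimetric inequality on balls in a group of polynomial growth yields the lower bound $r^{n-1}$ on the volume a separating set must meet. The packaging differs, though. The paper works with the Haar measure $\mu$ throughout: it proves persistence with constant $4/5$ via annuli estimates, re-runs the argument of Proposition~\ref{main} measure-theoretically, extracts via the bespoke partition Lemma~\ref{poly 1/4} a subset $A$ of the ball with $\tfrac14\mu(B)\leq\mu(A)\leq\tfrac12\mu(B)$, and applies the relative isoperimetric inequality of \cite[Proposition~8.10]{hume2020poincare} directly to $A$, converting to the discrete $\tfrac15$-volume only at the very end. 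You instead push everything through the general discrete machinery (Proposition~\ref{cut general}, Proposition~\ref{comparison cheeger Poincar\'e}, Corollary~\ref{cor of main}), deriving $\cut(B(x,r))\gtrsim r^{n-1}$ from $h_k(B(x,r))\gtrsim 1/r$; this is equivalent, and arguably cleaner in that it reuses the framework already set up for the hyperbolic cases. Two small remarks: the doubling property you need for persistence already follows from $\beta_G(r)\simeq r^n$ together with homogeneity of $G$ (all balls of a given radius have equal volume), so Losert's theorem is not needed there; and your paraphrase of Losert is slightly off ($\mathbb{Z}^n$ is not a compact extension of a connected simply connected nilpotent Lie group), so for the Poincar\'e input it is safer to cite the inequality in the cglc polynomial-growth generality directly, which is exactly what \cite[Proposition~8.10]{hume2020poincare} supplies and what the paper does. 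Finally, the product statement $G\times X\in\mathfrak{M}_{n-1}$ is not part of the theorem as stated, but your adaptation of Theorem~\ref{invariance by taking product}(ii) to the polynomial cut bound is sound.
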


\noindent
Throughout this section, $G$ is a compactly generated locally compact group, $d$ the word metric with respect to a compact symmetric generating subset, $\mu$ a left-invariant Haar measure, and $\beta_G(r) \simeq r^n$. In particular, $(G,d)$ is coarsely connected, see \cite[Proposition 4.B.8]{cornulier2014metric}, and let $k>0$ such that it is $k$-coarsely connected. 
\begin{prop}\label{persistent in poly}
    There exists $r_0>0$ such that, for any $r \geq r_0$ and for any $x,y \in G$ satisfying $d(x,y) \leq k$, 
    $$    \mu(B(x,r)\cap B(y,r)) \geq \frac{4}{5} \mu(B(x,r)). $$
\end{prop}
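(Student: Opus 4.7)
The plan is to exploit the left-invariance of both $d$ and $\mu$ in order to reduce the claim to a statement about balls centered at the identity, and then to use a precise asymptotic on the volume growth to control the thin annulus that accounts for the symmetric difference.

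First I would set $V(r) := \mu(B(e,r))$, noting that left-invariance gives $\mu(B(x,r)) = V(r)$ for every $x \in G$. Writing $z := x^{-1}y$, one has $d(e,z) = d(x,y) \leq 1$ and $B(x,r) \cap B(y,r) = x \cdot (B(e,r) \cap B(z,r))$, so the inequality becomes
\[
\mu\bigl(B(e,r) \cap B(z,r)\bigr) \geq \tfrac{4}{5}\, V(r)
\qquad \text{for every } z \text{ with } d(e,z) \leq 1.
\]
The triangle inequality then yields the key containment: if $g \in B(e,r-1)$, then $d(g,z) \leq d(g,e) + d(e,z) \leq r$, so $B(e,r-1) \subset B(z,r)$. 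Consequently
\[
\mu\bigl(B(e,r) \cap B(z,r)\bigr) \geq \mu\bigl(B(e,r) \cap B(e,r-1)\bigr) = V(r-1),
\]
and the problem is reduced to showing $V(r-1) \geq \tfrac{4}{5}\, V(r)$ for $r$ sufficiently large.

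The last step is where the hypothesis on polynomial growth enters in an essential way. I would invoke the precise asymptotic $V(r) = c r^n + o(r^n)$ for some constant $c>0$, which holds for any compact symmetric generating subset of a compactly generated locally compact group of polynomial growth of degree $n$: for discrete finitely generated groups this is Pansu's theorem (after Gromov), and the extension to compactly generated locally compact groups is due to Breuillard. From this asymptotic one gets $V(r-1)/V(r) \to 1$, and one chooses $r_0$ so that $V(r-1)/V(r) \geq 4/5$ for all $r \geq r_0$.

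The main obstacle is the reliance on the Pansu--Breuillard asymptotic: the weaker equivalence $\beta_G(r) \simeq r^n$, understood only up to multiplicative constants on both the value and the argument, is insufficient to guarantee that the annulus $B(e,r) \setminus B(e,r-1)$ has measure $o(V(r))$. For instance, bare volume doubling would only yield a lower bound of the form $V(r-1) \geq C V(r)$ for some constant $C < 1$ possibly much smaller than $4/5$. So the proof must genuinely appeal to the sharp asymptotic for the word-length volume growth in groups of polynomial growth, and the statement of Theorem~\ref{thm cglc poly growth} (via $\beta_G(r) \simeq r^n$) is to be interpreted in this stronger sense in the present framework.
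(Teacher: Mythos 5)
Your proof is correct, and its core --- the triangle-inequality containment $B(x,r-1)\subset B(x,r)\cap B(y,r)$ reducing everything to the estimate $\mu(B(x,r-1))\geq\frac{4}{5}\mu(B(x,r))$ for large $r$ --- is exactly the paper's argument. The only divergence is the source of that volume estimate. You invoke the sharp Pansu--Breuillard asymptotic $V(r)=cr^n+o(r^n)$, whereas the paper cites a weaker, more targeted result (\cite[Lemma~24]{tessera2007volume}, see also \cite[Lemma~8.7]{hume2020poincare}): for some $\eps>0$ and all $r$ large, the annulus $B(x,(1+\eps)r)\setminus B(x,r)$ has measure at most $\frac{1}{5}\mu(B(x,r))$; choosing $r\geq 1/\eps$ makes the unit annulus a subset of this one, and the conclusion follows. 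Your diagnosis that the two-sided bound $\beta_G(r)\simeq r^n$ with uncontrolled multiplicative constants cannot, by itself, produce the constant $\frac{4}{5}$ is accurate, but the conclusion that the proof \emph{must} appeal to the sharp asymptotic overstates the case: the negligibility of thin annuli is a strictly weaker statement, valid for all compactly generated locally compact groups of polynomial growth, and is established in \cite{tessera2007volume} without the full shape theorem for large balls. Both routes are legitimate appeals to the literature; yours imports a heavier theorem, while the paper's imports precisely what is needed.
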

\begin{proof}
    For $x \in G$ and $s>r \geq 0 $, let $C_{r,s}(x) = B(x,s)\backslash B(x,r)$. By \cite[Lemma~8.7]{hume2020poincare} (and \cite[Lemma 24]{tessera2007volume}), there exist $r_0'>0$ and $\epsilon>0$ such that, for all $r \geq r_0'$ and for all $x \in G$,
    $$    \mu(C_{r,(1+ \eps)r}(x)) \leq  \frac{1}{5} \mu(B(x,r)).   $$
    Take $r_0 = \max(r_0',\frac{k}{\eps})$ so that, for all $r \geq r_0$, $C_{r,r+k}(x) \subset  C_{r,(1+ \eps)r}(x)  $. Therefore
    $$ \mu(C_{r,r+k}(x)) \leq \frac{1}{5} \mu(B(x,r)) \leq \frac{1}{5} \mu(B(x,r+k))   .  $$
    So, for all $r \geq r_0+k$, $\mu(C_{r-k,r}(x)) \leq \frac{1}{5} \mu(B(x,r))$. In particular,
    $$  \mu(B(x,r-k)) \geq (1- \frac{1}{5})   \mu(B(x,r))   .      $$
    We conclude that, for all $x,y \in G$ satisfying $d(x,y) \leq k$ and for all $r \geq r_0+k$,
    $$      \mu\left(B(x,r) \cap B(y,r)\right) \geq \mu(B(x,r-k))    \geq \frac{4}{5} \mu(B(x,r)).  \qedhere    $$
\end{proof}
\begin{lemma}\label{poly 1/4}
    Let $K \subset G$ be a compact subset, $K = S \sqcup X_1 \sqcup \dots \sqcup X_p$ a measurable partition of $K$, and $B \subset K$ a measurable subset. If, for any $i$, $\mu(B\cap X_i) \leq \frac{3}{5}\mu(B)$, then 
    \begin{itemize}
        \item either $\mu(B\cap S) \geq \frac{3}{20}\mu(B)$,
        \item or there exists $J \subset \{1, \dots,p\}$ such that 
        $$ \frac{1}{4}\mu(B) \leq \mu(B \cap (\cup_{j \in J} X_j))\leq \frac{1}{2}\mu(B).$$
    \end{itemize}
\end{lemma}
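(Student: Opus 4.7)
The plan is to normalise by $\mu(B)$ and reduce the claim to a purely combinatorial statement about a finite sequence of non-negative reals. Setting $s := \mu(B \cap S)/\mu(B)$ and $a_i := \mu(B \cap X_i)/\mu(B)$, the partition gives $s + \sum_i a_i = 1$, and the hypothesis reads $a_i \leq 3/5$ for every $i$. Assuming the first alternative fails, i.e.\ $s < 3/20$, the task is to produce a subset $J \subset \{1, \ldots, p\}$ with $\sum_{j \in J} a_j \in [1/4, 1/2]$.

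First I would dispose of the easy case: if some $a_{i_0}$ already lies in $[1/4, 1/2]$, take $J = \{i_0\}$ and we are done. Hence we may assume every $a_i$ lies in $[0, 1/4) \cup (1/2, 3/5]$. Since the $a_i$ sum to at most $1$, at most one index $j^*$ can satisfy $a_{j^*} > 1/2$; all other $a_i$ are strictly less than $1/4$.

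The heart of the argument is then a greedy selection applied to the ``small'' indices $I := \{i : a_i < 1/4\}$. I first check that $\sum_{i \in I} a_i > 1/4$: this is immediate if no $j^*$ exists (the sum equals $1 - s > 17/20$), and otherwise it equals $1 - s - a_{j^*} > 1 - 3/20 - 3/5 = 1/4$, which is precisely where the thresholds $3/20$ and $3/5$ are used in a tight way. Next, enumerate the indices of $I$ in any order and form partial sums $T_0 = 0$, $T_k = T_{k-1} + a_{i_k}$. Each increment is strictly less than $1/4$, so when $T_k$ first reaches $1/4$, say at step $k^*$, one has $T_{k^*} < T_{k^*-1} + 1/4 < 1/2$; the set $J = \{i_1, \ldots, i_{k^*}\}$ then satisfies the required bounds.

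There is no substantial obstacle here: the lemma is essentially a bookkeeping fact whose proof is a two-line greedy argument. The only subtle point is isolating the at-most-one ``large'' index $j^*$ with $a_{j^*} > 1/2$ and verifying that the remaining mass among the small indices still exceeds $1/4$, which is exactly what the quantitative constants in the statement have been chosen to guarantee.
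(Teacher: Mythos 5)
Your proof is correct, and it is the same elementary case analysis the paper uses (split on whether some piece is ``large'', then select indices greedily), with the tight inequality $1-\tfrac{3}{20}-\tfrac{3}{5}=\tfrac14$ playing the same role. If anything, your write-up is more complete: the paper handles the large index by taking the complementary set $J=\{1,\dots,p\}\setminus\{i_0\}$ in one step and merely asserts ``we can find $J$'' in the remaining case, whereas you spell out the partial-sum argument that justifies it.
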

\begin{proof}
    The proof is very similar to that of Proposition \ref{cut general}. We do it for the sake of clarity. Suppose that $\mu(B\cap S) < \frac{3}{20}\mu(B)$. If there exists $i_0$ such that $\mu(B\cap X_{i_0}) \geq \frac{1}{2}\mu(B)$, then, for $J = \{1, \dots,p\} \backslash \{i_0\}$,
    $$   \mu(B \cap (\cup_{j \in J} X_i)) =  \mu(B) - \mu(B\cap S) -   \mu(B\cap X_{i_0}).  $$
    So
    $$  \frac{1}{4} \mu(B) \leq \left( 1-\frac{3}{20}- \frac{1}{2} \right)  \mu(B) \leq  \mu(B \cap (\cup_{j \in J} X_i)) \leq \frac{1}{2} \mu(B) .    $$
    If, for all $i$, $\mu(B\cap X_{i}) < \frac{1}{2}\mu(B)$, then, since $\mu(B \cap (\cup_{i \in I} X_i)) \geq \frac{3}{20}\mu(B)$, we can find $J \subset \{1, \dots,p\}$ satisfying the desired inequality.
\end{proof}
\begin{proof}[Proof of Theorem \ref{thm cglc poly growth}]
    Let $r_0$ be as in Proposition \ref{persistent in poly}, $\mathcal{S}$ a family of subsets that coarsely separates $G$, and $D \geq 0$ its thickening constant. For $r \geq r_0$, let $S \in \mathcal{S}$ such that there exist two balls $B(x,r)$ and $B(y,r)$ in two different $k$-coarsely connected components of $G \backslash S^{+ D}$. Let $(X_i)_{i \in I}$ be the $k$-coarsely connected components of $G \backslash S^{+ D}$. Since any two different components are at distance $>k$, any compact subset of $G$ intersects only finitely many components. So, up to restricting to a compact subset containing the $r$-neighbourhood of a $k$-path $\gamma$ from $x$ to $y$, we can suppose that $I$ is finite. Moreover, up to replacing each $X_i$ with its closure $\overline{X_i}$ and $S^{+D}$ with $\overline{S^{+D}} \backslash \cup_{i \in I} \overline{X_i} $, we can assume that $S^{+D} \sqcup \bigsqcup_{i \in I} X_i$ is a measurable partition.
    
\medskip \noindent
Using Proposition \ref{persistent in poly}, the same argument of Proposition \ref{main} implies that there exists $z \in \gamma$ such that, for any $i \in I$
    $$    \mu(B(z,r)\cap X_{i}) \leq \left(1-\frac{4/5}{2}\right)\mu(B(z,r)) = \frac{3}{5}\mu(B(z,r)) .$$
    By Lemma \ref{poly 1/4}, either $\mu(B(z,r)\cap S) \geq \frac{3}{20}\mu(B(z,r))$ or there exists $J \subset I$ such that $A = 
 B(z,r) \cap (\cup_{j \in J} X_j)  $ satisfies $\frac{1}{4}\mu(B(z,r)) \leq \mu(A)\leq \frac{1}{2}\mu(B(z,r))$. By \cite[Proposition 8.10]{hume2020poincare}, there exists a constant $C>0$ (independent of $r$) such that 
 $$\mu\left(A^{+1} \cap (A^c)^{+1}\right) \geq C r^{n-1}.$$ 
 However, 
 $$A^{+1} \cap (A^c)^{+1} \subset \left( B(z,r)\cap S^{+ D}\right)^{+1},$$ 
 so 
 $$\mu\left( \left( B(z,r)\cap S^{+ D}\right)^{+1} \right) \geq C r^{n-1}.$$
 It is easy to see that, for any bounded subset $K \subset G$, $\mu(K) \leq \lambda |K|$, where $| \cdot |$ is the $\frac{1}{5}$-volume (see Remark \ref{r s}) and $\lambda$ is the measure of a (any) ball of radius $\frac{1}{5}$. Therefore
 $$\left|  \left( B(z,r)\cap S^{+ D}\right)^{+1} \right| \geq \frac{C}{\lambda} r^{n-1}.$$
As in the proof of Corollary \ref{cor of main}, 
 $$ \beta_G(1) \beta_G(D) \cdot \left|  B(z,r+D)\cap S\right|  \geq \beta_G(1) \cdot \left|  B(z,r)\cap S^{+ D}\right|  \geq \left|  \left( B(z,r)\cap S^{+ D}\right)^{+1} \right|.$$
and by taking $s \in B(z,r+D)\cap S $, there exists a constant $C'$ (independent of $r$) such that for $r$ big enough
\begin{equation}\label{s in poly case}
    | B(s,2r)\cap S| \geq C' r^{n-1}.
\end{equation}
  This concludes the proof.
\end{proof}

\subsection{Rank one symmetric spaces and hyperbolic buildings}
Recall that rank one symmetric spaces of non-compact type are the $\mathbb{H}_{\mathbb{K}}^n$, where $n \geq 2$ and $\mathbb{K}$ is the real numbers $\mathbb{R}$, the complex numbers $\mathbb{C}$, the quaternions $\mathbb{H}$, or the octonions $\mathbb{O}$ (and in the last case $n = 2$). The hyperbolic buildings $I_{p,q}$, for $p \geq 5$ and $q \geq 3$, that we consider here were studied by Bourdon \cite{bourdon1997immeubles} and Bourdon--Pajot \cite{bourdon1999poincare}. The building $I_{p,q}$ is defined as the unique simply connected hyperbolic $2$-complex such that its $2$-cells (chambers) are right-angled hyperbolic $p$-gons, two of its $2$-cells share at most one edge or one vertex, and the link of each vertex is the complete bipartite graph $K_{q,q}$ with $q+q$ vertices. An apartment of $I_{p,q}$ is a copy of the hyperbolic plane of curvature $ -1 $ tiled by the chambers. When equipped with the path metric induced from the hyperbolic plane, it becomes a Gromov hyperbolic space. The graph product of cyclic groups
$$  \Gamma_{p,q} = \langle  s_1, \dots , s_p \mid s_i^q = 1, [s_i, s_{i+1}]=1      \rangle, $$
where the indices are modulo $p$, acts geometrically on $I_{p,q}$. We will also denote by $\Gamma_{p,q}$ its Cayley graph with respect to the generating set $(s_i)_{i}$. So $\Gamma_{p,q}$ and $I_{p,q}$ are quasi-isometric. From now on, we will work with $\Gamma_{p,q}$ instead of $I_{p,q}$, and we always assume $p \geq 5$ and $q \geq 3$. Note that if $p \leq 3$, then $\Gamma_{p,q}$ is a finite group; if $p=4$, then $\Gamma_{p,q}$ is virtually a direct sum of two non-abelian free groups; if $p \geq 5$ and $q=2$, then $\Gamma_{p,q}$ is virtually a surface group.

\medskip \noindent
When $X$ is either a rank one symmetric space of non-compact type $\mathbb{H}_{\mathbb{K}}^n$ or a Cayley graph $\Gamma_{p,q}$, it was shown in \cite{jerison1986poincare} (see also \cite[Section 11]{heinonen1998quasiconformal}) and in \cite{bourdon1999poincare} that its boundary $\partial X$ admits a metric satisfying a $1$-Poincar\'e inequality. More precisely, there exists a visual metric $\rho$ on $\partial X$ such that:
\begin{itemize}
    \item[(i)] There exists $C\geq 1$ such that, for all $ \xi, \eta \in \partial X$ and for any bi-infinite geodesic $\gamma : \mathbb{R} \to  X$ satisfying $\gamma(+\infty) = \xi$ and $\gamma(-\infty) = \eta$, we have
    $$ \frac{1}{C} a^{-d(o,\gamma)} \leq   \rho(\xi, \eta) \leq C a^{-d(o,\gamma)} , $$
    with $a=e$ for symmetric spaces and $a = \exp( \tau(p,2) )$, where $\tau(p,2) = \textup{Argch}(\frac{p-2}{2})$, for $\Gamma_{p,q}$.
    \item[(ii)] Its Hausdorff measure $\mu$ is Ahlfors-regular with dimension $Q_X$, i.e.\ there exists $C'>0$ such that, for all $\xi \in \partial X$ and $r \in (0, \textup{diam}(\partial X) ) $
    $$ \frac{1}{C'} r^{Q_X} \leq \mu(B_{\rho}(\xi,r)) \leq C' r^{Q_X}     ,       $$
with $Q_X = (n+1) \dim_{\mathbb{R}}\mathbb{K}-2 $ when $X =\mathbb{H}_{\mathbb{K}}^n $ and equals $1+ \frac{\log(q-1)}{\tau(p,2)} $ for $\Gamma_{p,q}$.
    \item[(iii)] $(X,\rho,\mu)$ satisfies a $1$-Poincar\'e inequality.
\end{itemize}
We recall that a metric measure space $(Z, d, \mu)$ admits a $1$-Poincar\'e inequality if there exist constants $C$, $\lambda \geq 1$ such that for every ball $B$ in $X$ and every measurable function $u: Z \to \mathbb{R}$, $B$ has measure in $(0,+ \infty)$ and
$$ \dashint_B |u-u_B|d\mu \leq  C \, \textup{diam}B \dashint_{\lambda B}  \rho \, d \mu                 ,$$
where $\rho$ is an upper gradient for $u$, and for any $A \subset Z$ and $f : Z \to R$ a measurable function,
$$ f_A =  \dashint_A f = \frac{1}{\mu(A)} \int_A f d\mu.$$
We refer to \cite{keith2003modulus} for more details and for other equivalent definitions.

\medskip \noindent
We are now ready to prove Theorem \ref{thmIntro:Symmetric/buildings} for rank one symmetric spaces and Bourdon's hyperbolic buildings. We will first give a proof using the Poincar\'e inequality in the boundary, and then we give a second proof for rank one symmetric spaces using Theorem~\ref{thmIntro:nilpot}.
\subsubsection{Proof using the Poincar\'e inequality}
The goal is to apply Theorem \ref{invariance by taking product}. We will consider the family $A_x(r) = S_X(o,r)^{+3}$ for $x \in X$ (based at the vertices for $\Gamma_{p,q}$) and $r>0$. We start by showing that it is an $\alpha$-persistent family, for some $\alpha \in (0,1)$, and then we give a lower bound on their $\cut$, for $\delta = 1 - \frac{\alpha}{2}$.
\begin{lemma}\label{persistent in rank 1}
    Let $X$ be either a rank one symmetric space or the Cayley graph of $\Gamma_{p,q}$. Then the family $A_x(r) = S_X(o,r)^{+3}$, for $x \in X$ (based at the vertices for $\Gamma_{p,q}$) and $r\geq1$, is $\alpha$-persistent with $\alpha = \frac{1}{|B_X(3)|} $.
\end{lemma}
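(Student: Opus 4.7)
The plan is to check the three defining conditions of Definition~\ref{persistent} for $A_x(r) = S_X(x,r)^{+3}$. The inclusion $A_x(r) \subset B_X(x,4r)$ is immediate since $S_X(x,r)^{+3} \subset B_X(x,r+3)$ and $r \geq 1$. For the equality $|A_x(r)| = |A_y(r)|$, I would invoke homogeneity: in $\mathbb{H}_{\mathbb{K}}^n$ the isometry group acts transitively, while in the Cayley graph of $\Gamma_{p,q}$ the group acts transitively on vertices by left translation (which are graph isometries).

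The heart of the proof is the third condition. The strategy is to exhibit the inclusion $S_X(x,r) \subset A_x(r) \cap A_y(r)$ whenever $d(x,y) \leq 1$, and then combine it with the crude bound $|A_x(r)| \leq |B_X(3)| \cdot |S_X(x,r)|$ coming from the covering $A_x(r) = \bigcup_{z \in S_X(x,r)} B_X(z,3)$. Together these give
$$|A_x(r) \cap A_y(r)| \geq |S_X(x,r)| \geq \frac{|A(r)|}{|B_X(3)|},$$
yielding $\alpha = 1/|B_X(3)|$ as required.

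The inclusion $S_X(x,r) \subset A_x(r)$ is trivial; the only real step is $S_X(x,r) \subset A_y(r) = S_X(y,r)^{+3}$. Given $w \in S_X(x,r)$, the triangle inequality places $d(y,w) \in [r-1,r+1]$, and the aim is to produce a point $z \in S_X(y,r)$ with $d(w,z) \leq 1$. If $d(y,w) \geq r$, I would truncate a geodesic from $y$ to $w$ at radius $r$; if $d(y,w) < r$, I would extend a geodesic from $y$ through $w$ slightly beyond $w$. For rank one symmetric spaces this is clean by geodesic completeness of $\mathbb{H}_{\mathbb{K}}^n$. The main subtle point will be this geodesic-extension step in the Cayley graph of $\Gamma_{p,q}$, where I must use the building structure of $I_{p,q}$ (together with the infiniteness of the graph) to ensure that from any vertex at distance $\leq r-1$ from $y$ one can step outward to a vertex at distance exactly $r$. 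The generous $+3$ thickening in the definition of $A_x(r)$ is there precisely to absorb such discretisation defects, so that a single outward edge is more than enough to land inside $S_X(y,r)^{+3}$.
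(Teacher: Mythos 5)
Your argument is exactly the paper's: the proof of this lemma is just a pointer to Example~\ref{exple persist}(ii), which uses the same inclusion $S_X(x,r)\subset A_x(r)\cap A_y(r)$ together with the covering bound $|A_x(r)|\le |B_X(3)|\cdot|S_X(x,r)|$, and homogeneity for the second condition. You are in fact more careful than the paper about the one non-trivial point, namely that a vertex $w$ with $d(y,w)=r-1$ lies close to $S_X(y,r)$ (immediate from geodesic extension in $\mathbb{H}^n_{\mathbb{K}}$, and for $\Gamma_{p,q}$ it follows, as you suspect, from the combinatorial structure: the normal form of a graph product over a $p$-cycle with $p\ge 5$ always admits a generator $s_k$ with $|gs_k|=|g|+1$, so there are no dead ends and a single outward edge suffices).
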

\begin{proof}
See Examples \ref{exple persist}
    $\qedhere$
\end{proof}

\noindent
In \cite{hume2020poincare}, a construction of Bourdon--Pajot \cite{bourdon2003cohomologie} was used to get a pointed simplicial graph $(\Gamma,o)$ that is quasi-isometric to $X$ and whose spheres $S(o,r)$ approximate the Gromov boundary $\partial X$. By pulling-back the Poincar\'e inequality from $\partial X$ to these spheres, they were able to get a lower bound on the $L^1$-Poincar\'e constant of $S(o,r)$. However, this only holds for spheres based at $o$, since the graph $\Gamma$ is not cocompact. Instead of using the graph $\Gamma$, we will use $X$ itself to pull-back the Poincar\'e inequality to its spheres. Thus showing:
\begin{prop}\label{Cut in rank 1}
    Let $X$ be either a rank one symmetric space of non-compact type or the Cayley graph of $\Gamma_{p,q}$. For any $\delta \in (0,1)$, there exists a constant $\gamma$ such that, for any $x \in X$ and any $r>0$,
    $$   \cut(A_x(r)) \geq \gamma \, a^{(Q_X-1)r}.$$
\end{prop}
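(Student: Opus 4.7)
The plan is to combine three ingredients: the cut-to-Cheeger comparison of Proposition~\ref{cut general}, the Cheeger-to-Poincar\'e comparison of Proposition~\ref{comparison cheeger Poincar\'e}, and the $1$-Poincar\'e inequality on the boundary $\partial X$ listed as property $(iii)$ above. Concretely, by Proposition~\ref{cut general}, it suffices to show that there is a constant $C>0$ such that, for every $x\in X$ and every $r$ large enough,
\[
h_k(A_x(r))\cdot |A_x(r)| \;\gtrsim\; a^{(Q_X-1)r},
\]
where $k$ is the coarse connectivity constant of $X$ (taken large enough). The volume factor is easy: since $X$ is negatively curved/hyperbolic and the visual boundary is Ahlfors-regular of dimension $Q_X$, we have $|A_x(r)|\gtrsim a^{Q_Xr}$ by a standard shadow-cover counting argument using property $(ii)$. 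So the heart of the matter is to prove the Cheeger bound
\[
h_k(A_x(r))\;\gtrsim\; a^{-r}.
\]

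To obtain it, I would follow the strategy of \cite{hume2020poincare} but work directly in $X$ rather than in the auxiliary Bourdon--Pajot graph, exploiting the cocompactness of $X$ to get the estimate uniformly in the base-point~$x$. Fix $x\in X$, let $Z\subset A_x(r)$ be a maximal $1$-separated net, and consider the \emph{radial projection} $\pi\colon Z\to \partial X$ sending $z$ to the endpoint $\pi(z)$ of a geodesic ray $[x,\pi(z))$ passing within distance $3$ of $z$. Using property $(i)$ (the shadow lemma built into the visual metric), one checks that:
\begin{itemize}
\item if $d_X(z,z')\leq k$, then $\rho(\pi(z),\pi(z'))\lesssim a^{-r}$;
\item if $d_X(z,z')\gtrsim k$, then $\rho(\pi(z),\pi(z'))\gtrsim a^{-r}$;
\item the pushforward of the counting measure on $Z$ is comparable, up to the factor $a^{-Q_Xr}$, to the Hausdorff measure $\mu$ restricted to shadows of radius $\asymp a^{-r}$.
\end{itemize}
Given $f\colon Z\to\mathbb{R}$ with $\sum_Z f=0$, I extend $f$ to $\tilde f\colon\partial X\to\mathbb{R}$ by setting $\tilde f$ equal to $f(z)$ on the Voronoi cell of $\pi(z)$ in $\partial X$. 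Applying the $1$-Poincar\'e inequality on $(\partial X,\rho,\mu)$ at the scale $a^{-r}$ gives
\[
\|\tilde f-\tilde f_{\partial X}\|_{L^1(\mu)} \;\lesssim\; a^{-r}\,\|\nabla_{a^{-r}}\tilde f\|_{L^1(\mu)}.
\]
Translating this back through the three comparisons above cancels the $a^{-Q_Xr}$-factor on both sides and yields $h_k^1(Z,d,\nu)\gtrsim a^{-r}$, which by Proposition~\ref{comparison cheeger Poincar\'e} gives $h_k(A_x(r))\gtrsim a^{-r}$.

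The main technical obstacle will be making the Voronoi extension compatible with the upper gradient: I must show that the jumps of $\tilde f$ across Voronoi boundaries in $\partial X$ at scale $a^{-r}$ can be bounded by values of $|\nabla_k f|$ on $Z$, so that $\|\nabla_{a^{-r}}\tilde f\|_{L^1(\mu)}\lesssim a^{-Q_Xr}\|\nabla_k f\|_{\ell^1(Z)}$. This is exactly the content of the shadow-lemma estimates above: a boundary point $\xi$ lying within $a^{-r}$ of two different Voronoi centres $\pi(z),\pi(z')$ forces $d_X(z,z')\lesssim k$ (after choosing $k$ sufficiently large), so the jump $|\tilde f(\xi)-\tilde f(\xi')|$ is controlled by $|\nabla_k f|$ at a nearby point of $Z$. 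Once this compatibility is established, combining the Cheeger lower bound with $|A_x(r)|\gtrsim a^{Q_Xr}$ and Proposition~\ref{cut general} gives the desired estimate $\cut(A_x(r))\gtrsim a^{(Q_X-1)r}$, with a constant $\gamma$ depending only on $\delta$ and the geometry of $X$, and uniformly in the base-point $x$ thanks to the transitive (or cocompact) action on $X$.
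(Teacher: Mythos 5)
Your proposal is correct and follows essentially the same route as the paper: reduce via Proposition~\ref{cut general} to a Cheeger bound $h_k(A_x(r))\gtrsim a^{-r}$ plus the volume estimate $|A_x(r)|\asymp a^{Q_Xr}$ from Ahlfors-regularity, and obtain the Cheeger bound by radially projecting a separated net in the thickened sphere to $\partial X$, checking it is a discretization at scale $a^{-r}$ via the visual-metric/Gromov-product estimates, and pulling back the boundary $1$-Poincar\'e inequality through Proposition~\ref{comparison cheeger Poincar\'e}. The paper packages your Voronoi-cell transfer as the discretization lemmas of Hume--Mackay--Tessera, but the geometric content (including your "technical obstacle", which is exactly the paper's Claim~\ref{claim discret}) is the same.
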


\noindent
Before starting the proof, let us first state the following lemma.

\begin{lemma}\label{thin ideal triangles}
    Let $X$ be a geodesic $\delta$-hyperbolic metric space. Let $x \in X$, $\gamma$ a bi-infinite geodesic, $\gamma(+\infty) = \eta$, $\gamma(-\infty) = \xi$, and let $\alpha,\beta$ be geodesic rays such that $\alpha(0) = x$, $\alpha(+\infty) = \xi$, $\beta(0) = x$, and $\beta(+\infty) = \eta$. Then there exists $z \in \gamma$ such that 
    $$\max (d(z,\alpha),d(z,\beta)) \leq 10 \delta.$$
\end{lemma}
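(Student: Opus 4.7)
The proof I envisage combines an intermediate-value argument with the standard $\delta$-thinness of ideal triangles. I should first observe that the symbol $y$ is not formally introduced before the statement; the natural reading (and in fact the only one under which the conclusion can hold, as the configuration is otherwise an ideal triangle with vertex $x$ and vertices $\xi,\eta$ at infinity) is that $y=x$, so that $\gamma_x$ and $\gamma_y$ are two rays from $x$ ending at $\xi$ and $\eta$ respectively. I will proceed under this interpretation.

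The plan is first to produce a candidate $z$ on $\gamma$ via an intermediate-value argument. Parametrise $\gamma$ by arclength and consider the continuous function
$$ f(t) = d(\gamma(t), \gamma_x) - d(\gamma(t), \gamma_y). $$
Since $\gamma_x$ and the $\xi$-ray of $\gamma$ share the endpoint $\xi$ at infinity, the standard fellow-traveling lemma for asymptotic geodesic rays in a $\delta$-hyperbolic space gives $d(\gamma(t),\gamma_x) \leq 2\delta$ for $t$ sufficiently negative, whereas $d(\gamma(t),\gamma_y) \to +\infty$ as $t \to -\infty$ because $\gamma_y$ stays in a bounded neighbourhood of a ray ending at $\eta \neq \xi$. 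Hence $f(t) \to -\infty$ as $t \to -\infty$, and symmetrically $f(t) \to +\infty$ as $t \to +\infty$. By the intermediate value theorem, there is $t^*$ with $f(t^*) = 0$; set $z := \gamma(t^*)$ and $D := d(z,\gamma_x) = d(z,\gamma_y)$.

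The remaining task is to bound $D$ by $10\delta$. I would invoke the standard fact that the ideal triangle with vertex $x$ and ideal vertices $\xi,\eta$, with sides $\gamma_x$, $\gamma_y$, $\gamma$, is uniformly thin: each side lies in the $K\delta$-neighbourhood of the union of the other two, for an absolute constant $K$ that can be taken at most $10$. One proves this by approximating the rays $\gamma_x, \gamma_y$ by finite segments $[x,\gamma_x(n)]$, $[x,\gamma_y(n)]$ together with a connecting geodesic $[\gamma_x(n),\gamma_y(n)]$, applying the $\delta$-thinness of finite triangles, and passing to the limit using the fact that the connecting geodesics accumulate on $\gamma$ at uniformly bounded Hausdorff distance. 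Applying this to $z\in\gamma$ yields $\min(d(z,\gamma_x),d(z,\gamma_y)) \leq 10\delta$, and combined with the equality from the previous paragraph we obtain $\max(d(z,\gamma_x),d(z,\gamma_y)) = D \leq 10\delta$. The only genuinely technical point is pinning down the absolute constant in the ideal-triangle thinness; the value $10\delta$ is not optimal but is comfortably achieved by this approximation argument.
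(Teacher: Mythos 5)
Your proof is correct and follows essentially the same route as the paper's: the paper simply cites Dru\c{t}u--Kapovich for the $5\delta$-thinness of the ideal triangle $\gamma\cup\gamma_x\cup\gamma_y$ and then "concludes by the continuity of the distance function in $\gamma$," which is exactly your intermediate-value argument locating $z$ where $d(z,\gamma_x)=d(z,\gamma_y)$. Your reading $y=x$ is also the intended one (in the paper's application both rays emanate from the common basepoint $o$), and the only difference is that you sketch the approximation argument for ideal-triangle thinness where the paper just gives a reference.
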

\begin{proof}
    The ideal triangle $\alpha \cup \beta \cup \gamma$ is $5 \delta$-thin (see \cite[Chapter 11.11]{dructu2018geometric}). 
We conclude by the continuity of the distance function in $\gamma$. $\qedhere$
\end{proof}

\noindent
Let $X$ be either a rank one symmetric space of non-compact type or the Cayley graph of $\Gamma_{p,q}$, and let $\delta$ be a Gromov-hyperbolicity constant of $X$. Set $\epsilon = \max( 103 \delta,1)$. For every integer $r >0$, we note $S_r = S_X(o,r)$ and let $Z_r \subset S_r$ be a maximally $\epsilon$-separated net. So
    \begin{equation*}
     \left\{
        \begin{split}
         &\forall x \neq y \in Z_r, \, d(x,y) > \epsilon \\&
        S_r \subset \cup_{z \in Z_r} B(z,\epsilon)
        \end{split}
        \right..
    \end{equation*}
    Note that $X$ is visual, i.e.\ there exists $D>0$ such that every $x\in X$ is at distance at most $D$ from a geodesic ray emanating from $o$ (see \cite[Lemma 3.1]{bestvina1991boundary}). So, up to replacing $\epsilon$ with $\epsilon+D$ and moving the points of $Z_r$ by a distance at most $D$ in $S_r$, we can suppose that, for every $x\in Z_r$, there is a geodesic ray emanating from $o$ that extends $[o,x]$; let us denote by $\xi_x \in \partial X$ its limit point. Denote 
    $$\Tilde{Z_r} = \{ \xi_x, x\in Z_r \} \subset \partial X.$$
    \begin{claim}\label{claim discret}
        There exist $\lambda,\lambda' >0$ such that, for any $r>0$,
        \begin{equation}\label{discret}
        \left\{
            \begin{split}
                & (i) \, \cup_{\xi \in \Tilde{Z_r}} B_{\rho}(\xi,\lambda' a^{-r} )  = \partial X 
            \\& (ii) \, \, B_{\rho}(\xi,\lambda a^{-r}) \cap B_{\rho}(\eta,\lambda a^{-r}) = \emptyset  ,\, \forall \xi,\eta \in \Tilde{Z_r} 
            \end{split}
            \right..
        \end{equation}
    \end{claim}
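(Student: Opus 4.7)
The plan is to reduce both parts of the claim to the standard comparison between the visual metric on $\partial X$ and the Gromov product on $X$. By property (i) of the visual metric recalled in the excerpt, for $\xi,\eta \in \partial X$ and any bi-infinite geodesic $\gamma$ from $\xi$ to $\eta$ one has $\rho(\xi,\eta) \asymp a^{-d(o,\gamma)}$; and in a $\delta$-hyperbolic space $d(o,\gamma)$ agrees with the Gromov product $(\xi|\eta)_o$ up to an additive $O(\delta)$-error. So both parts amount to controlling $(\xi|\eta)_o$ in terms of $r$, where $\xi,\eta$ range over $\Tilde{Z_r}$ and $\partial X$.

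For the covering statement (i), I would take an arbitrary $\xi \in \partial X$ and set $x := \gamma_\xi(r) \in S_r$. By the net property of $Z_r$ there is $z \in Z_r$ with $d(x,z) \leq \epsilon$. Now $x$ lies on $\gamma_\xi$ and $z$ lies on the geodesic ray $[o,\xi_z)$ constructed before the claim, both at distance $r$ from $o$; hence the Gromov product of these approximating points satisfies $(x|z)_o = r - \tfrac{1}{2}d(x,z) \geq r - \epsilon/2$. Standard $\delta$-hyperbolic geometry (passage from points to boundary limits, using that geodesic rays $\epsilon$-fellow-travel up to the Gromov product $(\xi|\eta)_o + O(\delta)$) then gives $(\xi|\xi_z)_o \geq (x|z)_o - O(\delta) \geq r - \epsilon/2 - O(\delta)$. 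Combined with the visual-metric estimate, this yields $\rho(\xi,\xi_z) \leq \lambda' a^{-r}$ for an explicit constant $\lambda'$ depending only on $C$, $\epsilon$ and $\delta$, proving (i).

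For the separation statement (ii), take distinct $x,y \in Z_r$, so $d(x,y) > \epsilon = \max(103\delta,1)$. I want an upper bound on $(\xi_x|\xi_y)_o$. The fellow-traveling principle in a $\delta$-hyperbolic space states that if $(\xi_x|\xi_y)_o = T$, then $d(\gamma_{\xi_x}(t),\gamma_{\xi_y}(t)) \leq C_1\delta$ for all $t \leq T - C_2$, with universal constants $C_1,C_2$; in the CAT$(-1)$ setting at hand (rank-one symmetric spaces and the buildings $I_{p,q}$) this is immediate via comparison with the hyperbolic plane. If $r \leq T - C_2$, then $d(x,y) \leq C_1\delta$, contradicting $d(x,y) > 103\delta$ provided $103$ is chosen so as to dominate $C_1$, which was precisely the purpose of that constant in the definition of $\epsilon$. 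Hence $(\xi_x|\xi_y)_o < r + C_2$, so $\rho(\xi_x,\xi_y) \geq C^{-1} a^{-C_2} a^{-r}$, and taking $\lambda$ to be any positive constant strictly smaller than half of $C^{-1} a^{-C_2}$ ensures the desired disjointness.

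The only non-routine point is the fellow-traveling estimate relating Gromov products of points at finite distance with those of their limit boundary points; this is classical in Gromov-hyperbolic geometry and especially transparent in the CAT$(-1)$ spaces we work with, so I do not anticipate a serious obstacle. The rest of the argument is just bookkeeping with the visual-metric quasi-isometry $\rho \asymp a^{-(\cdot|\cdot)_o}$.
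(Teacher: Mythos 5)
Your proof is correct and follows essentially the same route as the paper's: part (i) is the same Gromov-product computation using the net property, and part (ii) is the same fellow-travelling argument, which the paper carries out explicitly via a thin-ideal-triangle lemma to get the bound $102\delta$ (hence the choice $\epsilon\geq 103\delta$), exactly as you surmise when you say the constant $103$ was designed to dominate the fellow-travelling constant. The only thing your write-up defers is that explicit constant check, which is routine and which the paper does in full.
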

    \begin{proof}[Proof of the claim]
   $(i)$ Let $\xi \in \partial X$. Then $[o,\xi)$ intersects $S_r$ at some $p$. Let $x\in Z_r$ such that $p\in B(x,\epsilon)$. Let $\gamma$ be a geodesic satisfying $\gamma(+\infty) = \xi_x$ and $\gamma(-\infty) = \xi$, let $\alpha$ be a geodesic ray from $o$ to $\xi_x$, and let $\beta$ be a geodesic ray from $o$ to $\xi$. There exists a constant $k$ only depending on $X$ and $\delta$ such that $d(o,\gamma) > (\xi_x,\xi)_o - k$, where $(\cdot , \cdot)_o$ denotes the Gromov product based at $o$ (see \cite[Chapter 3, 3.18]{bridson2013metric}). In particular, 
    $$   d(o,\gamma) > \liminf_{s,t \to +\infty} (\alpha(t),\beta(s))_o - k .    $$
    For $t,s >r$, 
    \begin{equation*}
        \begin{split}
            (\alpha(t),\beta(s))_o &= \frac{1}{2}\left( d(o,\alpha(t)) + d(o,\beta(s)) - d(\alpha(t),\beta(s))\right)
            \\ &= \frac{1}{2}\left(r + d(x,\alpha(t)) + r + d(x,\beta(s)) - d(\alpha(t),\beta(s))\right)
            \\ &= r+ \frac{1}{2}\left(d(x,\alpha(t)) + d(x,\beta(s)) - d(\alpha(t),\beta(s))\right).
        \end{split} 
    \end{equation*}
    By the triangle inequality we have
    $$  d(\alpha(t),\beta(s)) \leq   d(x,\alpha(t)) + d(p,x)+ d(p,\beta(s))    . $$
    So
    $$  -\epsilon \leq   d(x,\alpha(t))+ d(p,\beta(s)) -d(\alpha(t),\beta(s))   . $$
    Therefore, $(\alpha(t),\beta(s))_o \geq r- \frac{\epsilon}{2} $ and $d(o,\gamma) \geq r- \frac{\epsilon}{2} - k .$ Hence, for $\lambda' = C a^{k+\epsilon/2}$, we have
    $$   \rho(\xi,\xi_x) \leq \lambda' a^{-r}  .  $$
    $(ii)$ Now, let $x,y \in Z_r$ be two different points, and let us show that $\rho(\xi_x,\xi_y) \geq \frac{1}{C} a^{-r}$. Again, let $\gamma$ be a geodesic satisfying $\gamma(+\infty) = \xi_y$ and $\gamma(-\infty) = \xi_x$. Let $\gamma_x$ be a geodesic ray emanating from $o$ such that $\gamma_x(r) = x$ and $\gamma_x(+ \infty) = \xi_x$, and let $\gamma_y$ be a geodesic ray emanating from $o$ satisfying $\gamma_y(r) = y$ and $\gamma_y(+ \infty) = \xi_y$. By Lemma~\ref{thin ideal triangles}, there exists $z \in \gamma$ such that $\max \left( d(z,\gamma_x),d(z,\gamma_y)  \right)  \leq 10 \delta  $. Let $t,s \in \mathbb{R}$ such that $\gamma_x(t) = z_x$  and $\gamma_y(s) = z_y$, where $d(z,z_x) \leq 10 \delta$ and $d(z,z_y) \leq 10 \delta$.
    
\medskip \noindent
Let us show that either $t$ or $s$ is $\leq r- h$, where $h = 10 \delta$. If not, we have 
    $$d(\gamma_x(t),\gamma_y(s)) = d(z_x, z_y)\leq d(z_x,z)+d(z,z_y) \leq 20 \delta.$$
    By the triangle inequality, we have $|t-s| \leq 20 \delta$, so 
    $$    d(\gamma_x(t),\gamma_y(t))   \leq 40 \delta    . $$
Since $\gamma_x$ and $\gamma_y$ are fellow-travelling (\cite[Lemma 11.4]{dructu2018geometric}),
    $$    d(\gamma_x(r-h),\gamma_y(r-h))   \leq 2( 40 \delta + \delta) = 82 \delta    .              $$
    Hence 
    $$  \epsilon < d(x,y) =  d(\gamma_x(r),\gamma_y(r))  \leq 82 \delta + 2h = 102 \delta        .     $$
    This contradicts $\epsilon \geq 103 \delta$. Suppose that $t \leq r-h$,
    $$      d(o,\gamma) \leq d(o,z_x) + d(z_x,z) = r-h +h .                $$
    We conclude that $\rho(\xi_x,\xi_y) \geq \frac{1}{C} a^{-r}$. $\qedhere$
    \end{proof}
    \begin{proof}[Proof of Propsition \ref{Cut in rank 1}]
    Let $r>0$ be fixed, and let us denote $\rho_r = a^r \rho$ and $\mu_r = a^{Qr}\mu$. Following the proof of  \cite[Theorem 11.1]{hume2020poincare} , the $1$-Poincar\'e inequality satisfied by $\rho$ gives a lower bound on the $L^1$-Poincar\'e constant of $(\partial X, \rho_r, \mu_r )$, in particular there exists $\beta>0$ such that $h_2^1(\partial X, \rho_r, \mu_r ) \geq \beta a^{-r}$. Let $\nu$ be the counting measure on $\Tilde{Z_r}$. Then $(\Tilde{Z_r}, \rho_r, \nu )$ is a discretization of $(\partial X, \rho_r, \mu_r )$, in the sense of \cite[Section 5]{hume2020poincare}. By (\ref{discret}), there exists a measurable partition
    $$   \partial X =  \bigsqcup_{\xi \in \Tilde{Z_r} }   P_{\xi} ,    $$
    where 
     $$ B_{\rho_r}(\xi, \lambda) =   B_{\rho}(\xi, \lambda a^{-r})  \subset  P_{\xi} \subset    B_{\rho}(\xi, \lambda' a^{-r})  = B_{\rho_r}(\xi, \lambda') .    $$
     Moreover, the Ahlfors-regularity of $\mu$ implies that, independently of $r$, we have  
     \begin{equation}\label{ahlfors}
         \lambda^Q \leq   \mu_r(P_{\xi}) \leq \lambda'^Q .
     \end{equation} 
     The Poincar\'e constant $h_2^1$ of a space and that of its discretization are comparable (\cite[Lemma 5.8 and 3.3]{hume2020poincare}): there exists $\beta' >0$ such that 
     $$h_{40}^1(\Tilde{Z_r}, \rho_r, \nu ) \geq \beta' a^{-r}.$$
     By taking $h = \log_a(40C) + 10 \delta $ in the proof of Claim~\ref{claim discret}, we can show that, for all $x,y \in Z_r$, $d(x,y) > D$ implies that $\rho_r(\xi_x,\xi_y)>40$, where $D = \max(2 \log_a(40C)+103 \delta, 2)$. 

\medskip \noindent
Let us also denote by $\nu$ the counting measure in $Z_r$. We have then
     \begin{claim}
         $ h_{D}^1(Z_r, d, \nu ) \geq h_{40}^1(\Tilde{Z_r}, \rho_r, \nu ) .$
     \end{claim}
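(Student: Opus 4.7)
The plan is to transfer the Poincaré ratio from $Z_r$ to $\tilde{Z}_r$ via the assignment $\phi : x \mapsto \xi_x$ and use the contrapositive of the just-established implication ``$d(x,y) > D$ implies $\rho_r(\xi_x,\xi_y) > 40$.'' First, I would observe that $\phi : Z_r \to \tilde{Z}_r$ is actually a bijection: surjectivity holds by the very definition of $\tilde{Z}_r$, and injectivity follows from the second part of Claim~\ref{claim discret} (more precisely, from the estimate $\rho(\xi_x,\xi_y) \geq \frac{1}{C} a^{-r}$ proved in its proof for every pair of distinct points $x,y \in Z_r$). Via this bijection, the pushforward of the counting measure on $Z_r$ is exactly the counting measure $\nu$ on $\tilde{Z}_r$.

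Given any $f \in L^1(Z_r,\nu)$ with $\int_{Z_r} f\, d\nu = 0$ and $f \neq 0$, I would then define $\tilde{f} : \tilde{Z}_r \to \mathbb{R}$ by $\tilde{f}(\xi_x) := f(x)$. Since $\phi$ is a measure-preserving bijection, one has $\|\tilde{f}\|_1 = \|f\|_1$ and $\int \tilde{f}\, d\nu = 0$, so $\tilde{f}$ is admissible in the variational definition of $h^1_{40}(\tilde{Z}_r,\rho_r,\nu)$.

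The main point is the pointwise comparison $|\nabla_{40}\tilde{f}|(\xi_x) \leq |\nabla_D f|(x)$ for every $x \in Z_r$. Indeed, suppose $\eta = \xi_y$ and $\eta' = \xi_{y'}$ both lie in $B_{\rho_r}(\xi_x, 40) \cap \tilde{Z}_r$. The contrapositive of the implication stated just before the claim gives $d(x,y) \leq D$ and $d(x,y') \leq D$, so $y, y' \in B_{Z_r}(x,D)$. Hence
\[
|\tilde{f}(\eta) - \tilde{f}(\eta')| = |f(y) - f(y')| \leq |\nabla_D f|(x),
\]
and taking the supremum over such $\eta,\eta'$ yields the claimed pointwise inequality. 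Integrating against $\nu$ gives $\|\nabla_{40}\tilde{f}\|_1 \leq \|\nabla_D f\|_1$, so
\[
\frac{\|\nabla_D f\|_1}{\|f\|_1} \;\geq\; \frac{\|\nabla_{40}\tilde{f}\|_1}{\|\tilde{f}\|_1} \;\geq\; h^1_{40}(\tilde{Z}_r,\rho_r,\nu),
\]
and taking the infimum over $f$ gives the claim.

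I do not expect any serious obstacle: the only subtlety is verifying that $\phi$ is injective (which is already implicit in the proof of Claim~\ref{claim discret}) and making sure that the constants are arranged so that the contrapositive applies to any pair $\eta,\eta'$ in a $40$-ball of $\rho_r$, which is precisely how $D$ was chosen.
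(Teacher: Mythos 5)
Your proof is correct and follows essentially the same route as the paper: both define $\tilde{f}(\xi_x):=f(x)$, note that the $L^1$-norms agree, and use the contrapositive of ``$d(x,y)>D \Rightarrow \rho_r(\xi_x,\xi_y)>40$'' to get the pointwise bound $|\nabla_{40}\tilde{f}|(\xi_x)\leq|\nabla_D f|(x)$ before taking the infimum. Your explicit check that $x\mapsto\xi_x$ is a measure-preserving bijection is a worthwhile bit of extra care, but it does not change the argument.
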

     \begin{proof}[Proof of the claim]
         We associate to every measurable map $f : (Z_r,d) \to \mathbb{R}$ the map $\Tilde{f} : (\Tilde{Z_r},\rho_r) \to \mathbb{R}$ such that $\Tilde{f}(\xi_x) := f(x) $ for every $ \xi_x \in \Tilde{Z_r}$. It is immediate that $||f||_1 = ||\Tilde{f}||_1$. For all $x \in Z_r$,
         \begin{equation*}
             \begin{split}
                 |\nabla_D f|(x) & = \sup_{y,y' \in B(x,d)}|f(y)-f(y')| = \sup_{y,y' \in B(x,d)}|\Tilde{f}(\xi_y)-\Tilde{f}(\xi_{y'})|
                 \\ & \geq \sup_{\xi_{y},\xi_{y'} \in B_{\rho_r}(\xi_x,40)}|\Tilde{f}(\xi_y)-\Tilde{f}(\xi_{y'})|
                  \geq |\nabla_{40} \Tilde{f}|(\xi_x).\qedhere
             \end{split}
         \end{equation*} 
     \end{proof}
     
\noindent
     Now, let $Z_r'$ be a maximally $\eps$-separated subset of $S_r^{+3}$ that contains $Z_r$. This is always possible by starting from $Z_r$ thanks to Zorn's lemma. Again, $Z_r$ is a discretization of $Z_r'$, 
     \begin{equation*}
         \left\{
     \begin{split}
        &\forall x \neq y \in Z_r', \, d(x,y) > \epsilon
        \\& Z_r' \subset \cup_{z \in Z_r} B(z,\epsilon+3)
        \end{split}
\right..
     \end{equation*}
So there exist $D'\geq D$, $\beta''>0$ such that 
$$h_{D'}^1(Z_r', d, \nu ) \geq \beta'' h_{D}^1(Z_r, d, \nu ) .$$
     Since $X$ is $1$-coarsely geodesic and $D' \geq 1$, $X$ is $D'$-coarsely geodesic. Because $D' \geq \epsilon \geq 1$, it follows from Proposition~\ref{comparison cheeger Poincar\'e} that there exists a constant $M>0$, only depending on $X$, $D$, and $\eps$, such that, for any $r>0$,
     \begin{equation}\label{cheeger Sr}
          M \, h_D(S_r^{+3}) \geq h_{D}^1(Z_r', d, \nu )     \geq \beta'' a^{-r}.
     \end{equation}
     The inequality (\ref{ahlfors}) implies that 
     $$  C_1 \nu(\Tilde{Z_r}) \leq     a^{Qr}\mu(\partial X) \leq C_2 \nu(\Tilde{Z_r}) ,      $$
     where $C_1 = \lambda^Q$ and $C_2 = \lambda'^Q$.
     By construction, $\nu(\Tilde{Z_r}) = \nu(Z_r)$ and $Z_r$ is maximally $\eps$-separated in $S_r$, so, by Lemma \ref{volume and counting measure},
     \begin{equation*}
         C_2' a^{Qr}   \leq \frac{1}{\alpha}  \nu(Z_r) \leq |S_r|  \leq \alpha \, \nu(Z_r) \leq C_1' a^{Qr},
     \end{equation*}
where $C_1' = \frac{\alpha \, \mu(\partial X)}{C_1}  $ and $C_2' = \frac{ \mu(\partial X)}{ \alpha \, C_2}  $. Therefore, there exists $C_1''>0$ such that
 \begin{equation}\label{taille Sr}
         C_2' a^{Qr}  \leq |S_r^{+3}|  \leq C_1'' a^{Qr}.
     \end{equation}
We conclude from the inequalities (\ref{cheeger Sr}) and (\ref{taille Sr}), and using Proposition \ref{cut general}, that, for any $\delta \in (0,1)$, there exists a constant $\gamma$ independent of $r$ such that
\begin{equation*}
    \cut_D(S_r^{+3}) \geq \gamma \, a^{(Q-1)r}. \qedhere
\end{equation*}
\end{proof}
\noindent
Since $Q=1$ only when $X = \mathbb{H}_{\mathbb{R}}^2$ and $Q>1$ in all the other cases, the proof of Theorem~\ref{thmIntro:Symmetric/buildings} for rank one symmetric spaces and Bourdon's hyperbolic buildings follows from Lemma \ref{persistent in rank 1} and Proposition \ref{Cut in rank 1} by applying Theorem \ref{invariance by taking product}.

\subsubsection{Proof using the polynomial growth of horospheres in rank one}
 We give a second proof of Theorem \ref{thmIntro:Symmetric/buildings} for rank one symmetric spaces of non-compact type,  which does not use the Poincar\'e inequality in the boundary, but instead uses Theorem \ref{thm cglc poly growth} and the fact that horospheres in rank one symmetric spaces are exponentially distorted nilpotent Lie groups. 
\begin{proof}
    Let $X$ be a rank one symmetric space of non-compact type, $\mathcal{S}$ a family of subsets that coarsely separates $X$, and $D \geq 0$ its thickening constant. Let $r >0$ be big enough and $S \in \mathcal{S}$ such that there exist two balls $B_X(x,r)$ and $B_X(y,r)$ in two different coarsely connected components of $X \backslash S^{+ D}$. In rank one symmetric spaces, $\textup{Isom}_0(X)$ acts transitively on the set of geodesic rays of $X$, so all horospheres are isometric; and, for any pair of points in $X$, there is a horosphere containing both of them. Let $H$ be a horosphere containing $x$ and $y$ and let $N<\textup{Isom}_0(X)$ be the nilpotent Lie group that stabilizes $H$. By results of Bass \cite{bass1972degree} and Guivarc'h \cite{guivarc1973croissance}, there exists $d \geq 1$ such that $\beta_{G}(n) \simeq n^d$. Equipped with a left-invariant Riemannian metric, $N$ is bi-Lipschitz equivalent to $(H,d_H)$ by the orbit map, where $d_H$ is the path metric along the horosphere $H$. Note that the bi-Lipschitz constants are independent of $x$ and $y$ since all horospheres are isometric. Up to this bi-Lipschitz map, and to simplify notations, Theorem \ref{thm cglc poly growth} applies to $(H,d_H)$. Moreover, it is known that horospheres are exponentially distorted in rank one. In particular, there exists $\lambda >0$ such that, for any $z \in H$ and $R>0$,
    $$ B_H\left(z,e^{\lambda R}\right) \subset B_X(z, R) .$$
    Therefore, $B_H\left(x,e^{\lambda r}\right)$ and $B_H\left(y,e^{\lambda r}\right)$ are in different coarsely connected components of $H \backslash S^{+ D}$. By the proof of Theorem \ref{thm cglc poly growth}, see the inequality \ref{s in poly case}, there exists $s \in H \cap S$ such that 
    $$| B_H\left(s,2e^{\lambda r} \right)\cap S| \geq k e^{\lambda (d-1) r}.$$
Hence 
$$| B_X\left(s,2r \right)\cap S| \geq k e^{\lambda (d-1) r}.$$
   Since $d$ is equal to $1$ if and only if $X = \mathbb{H}_{\mathbb{R}}^2 $, this concludes the proof.
\end{proof}

\subsection{Higher rank symmetric spaces and Euclidean buildings}
In this section, we prove Theorem \ref{thmIntro:Symmetric/buildings} for higher rank symmetric spaces of non-compact type and Euclidean buildings. We refer to \cite{kleiner1997rigidity} for the background material on Euclidean buildings.

\medskip \noindent
We will start by giving the proof for products of $3$-regular trees $T_3 \times \dots \times T_3 $, then using suitable embeddings of such products into symmetric spaces and Euclidean buildings, we can conclude using Theorem \ref{separating by embedding}. Let us now focus on $T_3 \times T_3 $.

\medskip \noindent
Let us first recall the definition of Busemann functions. Let $X$ be a CAT(0) space, $x \in X$, $\xi \in \partial X$, and $\gamma$ the geodesic ray from $x$ to $\xi$. The \emph{Busemann function} $b_{\gamma} : X \to \mathbb{R}$ of $X$ with respect to $\gamma$, or equivalently with respect to $x$ and $\xi$, is defined by  
$$    b_{\gamma}(z) = \lim_{t \to + \infty} (t - d(z, \gamma(t)).$$
Usually, the Busemann function is defined as $-b_{\gamma}$, but we choose this convention for convenience.

\medskip \noindent
For $i=1,2$, let $X_i$ be a copy of $T_3$, fix some $\xi_i \in \partial X_i$, and let $b_i$ denote a Busemann function of $X_i$ with respect to $\xi_i$ and some base point, which we just denote by $b$ for short (it will be clear from the context whether $b$ refers to the Busemann function for $X_1$ or $X_2$). Let us denote $X=X_1 \times X_2$. Here, we only work with the product of two trees for clarity, but our argument applies to products of more trees without major modifications.

\medskip \noindent
\textbf{Notation:} For $i=1,2$ and $x \in X_i$ a vertex:
\begin{itemize}
    \item  we let $\hat{x}^k$ be its \emph{$k$-th ancestor}, i.e.\ the vertex in the geodesic ray $[x, \xi_i)$ that is at distance $k$ from $x$;
    \item we define the \emph{binary tree below $x$}, denoted $T(x) \subset X_i$, as the connected component of $\{  z \in X_i \mid b(z) \leq b(x)         \} $ containing $x$.
\end{itemize}
For $x=(x_1,x_2) \in X$ and $k \in \mathbb{N}$, define $A_{x}(k)$ as the subgraph of $X$ induced by the vertices $(T(x_1) \times T(x_2))\bigcap ( S_X(x,k-1)\cup S_X(x,k))$. It is isomorphic  to an annulus $(k-1,k)$ around the origin in a product of two binary rooted trees.
\begin{prop}\label{persistent family of product trees}
    $\left\{ A_{x}(k) \right\}_{x\in X, k \in \mathbb{N}^*}$ is a $\frac{1}{8}$-persistent family. Moreover, there exists $C>0$ such that, for any $x \in X$ and $k \in \mathbb{N}^*$,
    $$ \textup{cut}^{\frac{15}{16}}((A_{x}(k)) \geq  C \, 2^k . $$
\end{prop}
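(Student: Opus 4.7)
My plan is to verify the two assertions separately, with the bulk of the work concentrated on the cut estimate.

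For persistence, the inclusion $A_x(k)\subset B_X(x,4k)$ is immediate (indeed $A_x(k)\subset B_X(x,k)$), and the cardinalities follow at once from the fact that each $T(x_i)$ is isomorphic to a binary tree rooted at $x_i$: this yields $|T(x_1)\times T(x_2)\cap S_X(x,m)| = \sum_{i+j=m} 2^i 2^j = (m+1)2^m$, and hence $|A_x(k)| = k\cdot 2^{k-1}+(k+1)2^k = (3k+2)2^{k-1}$, independent of $x$. For the intersection bound, I would pick an adjacent vertex, say $y=(y_1,x_2)$, and distinguish the two cases $y_1=\hat{x_1}^{1}$ (the parent of $x_1$) and $y_1$ a child of $x_1$ in $T(x_1)$. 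In both cases a short computation using the tree metric gives $d_X(y,v)-d_X(x,v)=\pm 1$ on the relevant region, so that $A_x(k)\cap A_y(k)$ reduces to the set of points of $T(x_1)\times T(x_2)$ (respectively $T(y_1)\times T(x_2)$) at distance exactly $k-1$ from $x$, of cardinality $k\cdot 2^{k-1}$. The resulting ratio $k/(3k+2)\geq 1/5$ comfortably exceeds $1/8$.

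For the cut estimate I would exploit the layered structure of $A_x(k)$. The outer sphere decomposes into $k+1$ slices $S_i := (T(x_1)\cap S_{X_1}(x_1,i)) \times (T(x_2)\cap S_{X_2}(x_2,k-i))$ of cardinality $2^k$, and the inner sphere into $k$ slices $T_i$ of cardinality $2^{k-1}$. Since any edge of $A_x(k)$ moves one step in exactly one factor, the induced graph has a ladder structure $S_0 - T_0 - S_1 - T_1 - \cdots - T_{k-1} - S_k$, with edges only between consecutive slices. For each pair $(u,v)\in S_0\times S_k$, I would consider the canonical path of length $2k$ obtained by alternately climbing one step in $X_2$ (from the $X_2$-coordinate of $u$ toward $x_2$) and descending one step in $X_1$ (along the geodesic from $x_1$ to the $X_1$-coordinate of $v$). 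This produces $4^k$ paths in total, and a direct count shows that every vertex of $A_x(k)$ lies on at most $2^{k+1}$ of them, the maximum being realized on the middle $T_i$'s.

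I would then invoke Proposition~\ref{cut general} (with $\delta=15/16$, yielding the prefactor $1/32$) to reduce the cut bound to a Cheeger-type inequality $h(A_x(k))\gtrsim 1/k$; since $|A_x(k)|\simeq k\cdot 2^k$, this gives $\textup{cut}^{15/16}(A_x(k))\gtrsim 2^k$ as desired. The Cheeger bound would in turn follow from a multi-commodity flow built from the canonical paths above and analogous families connecting every pair of slices, routing $\Theta(|B|)$ units of mass from any $B\subset A_x(k)$ with $|B|\leq |A_x(k)|/2$ to its complement along paths of length $O(k)$ and per-vertex congestion $O(2^k)$; the standard expansion/max-flow duality then yields the required isoperimetric inequality.

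The main technical obstacle is the congestion bookkeeping for the multi-commodity flow: the canonical paths naturally connect only endpoint pairs $(u,v)\in S_i\times S_j$ with compatible descendant relations in the two tree factors, so the remaining pairs must be routed through intermediate slices. Verifying that the resulting flow still has per-vertex congestion $O(2^k)$ is the delicate step, handled by weighting each commodity by $\min(2^{i},2^{k-j})$ to match the natural tree branching factors. The Menger lower bound of $2^{k-1}$ on the $S_0$-to-$S_k$ min vertex cut (which follows immediately from the path count $4^k$ divided by the per-vertex congestion $2^{k+1}$) can be viewed as the special case of this general flow and serves as a useful sanity check for the target bound $C\cdot 2^k$.
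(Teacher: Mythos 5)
Your persistence argument is correct and essentially coincides with the paper's: $|A_x(k)|$ is independent of $x$, and for adjacent vertices the intersection contains a full middle sphere of size $k\cdot 2^{k-1}$, giving a ratio comfortably above $1/8$. Your overall strategy for the cut --- canonical paths in the layered annulus, a congestion count, a Cheeger/Poincar\'e bound $h(A_x(k))\gtrsim 1/k$, and then Proposition~\ref{cut general} combined with $|A_x(k)|\simeq k2^k$ --- is also exactly the paper's strategy, which follows \cite{benjamini2012separation} and \cite{hume2022poincare} by encoding vertices as dotted binary words and building three-step canonical paths between \emph{all} pairs of vertices.

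The gap is in the step you yourself flag as delicate: the all-pairs flow is never constructed, and the parameters you announce for it do not suffice. Routing $\Theta(|B|)$ units from $B$ to $B^{c}$ with per-vertex congestion $O(2^k)$ only yields $|\partial B|\gtrsim |B|\,2^{-k}$, i.e.\ $h(A_x(k))\gtrsim 2^{-k}$, which is exponentially weaker than the required $1/k$ and gives $\textup{cut}^{15/16}(A_x(k))\gtrsim k$ rather than $2^k$. For the duality argument to produce $h\gtrsim 1/k$ one must route one unit between \emph{every} ordered pair of vertices, so that the demand separated by $B$ is $\gtrsim |B|\cdot|A_x(k)|\simeq |B|\cdot k2^k$, and then show that the per-edge congestion of the resulting fractional flow is $O(k^2 2^k)$. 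Your Menger computation for $S_0\times S_k$ (congestion $2^{k+1}$ for $4^k$ unit demands) is consistent with this but covers only one of the $\Theta(k^2)$ slice pairs; the pairs $(u,v)\in S_i\times S_j$ with incompatible ancestry are precisely where one needs families of several paths per pair (or your proposed weights), and that bookkeeping is the whole content of the estimate. The paper's proof supplies it: between a point of $R^t$ and a point of $R^s$ it constructs $2^{k-t+s-2}$ three-step paths, shows each edge lies on at most $2^{2k-t+s+1}$ of them, hence a normalized congestion of $2^{k+3}$ per slice pair and $O(k^2 2^k)$ in total, whence $h^1(A_o(k))\geq 3/(2(k+2))$. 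Without this count, or a verified substitute for your weighted flow, the proposal does not establish the stated lower bound on the cut.
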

\begin{proof}
For any $x = (x_1,x_2) \in X$ and $k \in \mathbb{N}^*$, 
\begin{equation}\label{size aok}
     |A_x(k)| = |A_x(k)\cap S(x,k-1)| + |A_x(k)\cap S(x,k)| \leq 2 |A_x(k)\cap S(x,k)| = (k+1) 2^{k+1} .
\end{equation}
Let $(x_1,x_2),(x_1',x_2) \in X $ such that $d(x_1,x_1') = 1$. If $b(x_1) < b(x_1')$, then $A_{(x_1,x_2)}(k)\cap A_{(x_1',x_2)}(k)$ contains $A_{(x_1,x_2)}(k-1)$ and 
$$  | A_{(x_1,x_2)}(k)\cap A_{(x_1',x_2)}(k) | \geq k \,2^{k-1}.   $$
Same applies if $b(x_1) < b(x_1')$, and also for the second coordinate. Hence, for all $(x_1,x_2),(x_1',y_2') \in X$ satisfying $d((x_1,x_2),(x_1',x_2')) = 1$,
$$  \frac{| A_{(x_1,x_2)}(k)\cap A_{(x_1',x_2)}(k) |}{|A(k)|} \geq \frac{k \,2^{k-1}}{(k+1) 2^{k+1}}\geq \frac{1}{8}.   $$
Therefore, $\left\{ A_{x}(k) \right\}_{x\in X, k \in \mathbb{N}^*}$ is $\frac{1}{8}$-persistent. Let us now give a lower bound of its $L^1$-Poincar\'e constant (at scale $1$). Let $o = (o_1,o_2) \in X$ and $k \in \mathbb{N}$ be fixed. The elements of $A_o(k)$ can be represented, as described in the proof of \cite[Theorem 3.5]{benjamini2012separation}, by binary sequences of length $\in \{k-1,k\}$ with a dot at some place inside. The dot represents the root $o$, and two such sequences are neighbours if one is obtained from the other by adding a digit either on the right or on the left of the sequence. Therefore, for an element $(x,y) \in A_o(k)$, the sub-sequence to the right of the dot represents $y$, and the one to the left of the dot, read from the dot, represents $x$.

\medskip \noindent
We consider the following paths, also described in \cite{benjamini2012separation}. To go from $(x,y)$ to $(x',y')$ in $A_o(k)$, we take the following path in three steps:
\begin{description}
    \item[Step 1:] If $(x,y) \in A_o(k) \cap S(o,k) $, we delete the leftmost digit in $(x,y)$ and add a digit to the right of the sequence. If $(x,y) \in A_o(k) \cap S(o,k-1)$, start by adding a digit to the right of the sequence, then delete the leftmost digit. Do so repeatedly until the dot is to the left of the sequence, i.e.\ until reaching a word of the form $(o_1,z)$.
    \item[Step 2:] Now add the first digit of $x'$ to the left, and delete the rightmost digit. Follow this process until having $x'$ to the left of the dot (no choices to make), and continue until the dot is to the right of the sequence, i.e.\ until reaching a word of the form $(z',o_2)$.
    \item[Step 3:] Add the first digit of $y'$ to the right, and delete the leftmost digit. Follow this process until reaching $(x',y')$.
\end{description}
For $0\leq s \leq k$, let $R^s = \{ (x,y) \in A_o(k) \mid  d(o_2,y)=s  \}$. We will follow the same steps as in the proof of \cite[Theorem 3.4]{hume2022poincare}. For $(x,y),(x',y') \in A_o(k)$ such that $(x,y) \in R^t$, $(x',y') \in R^s$, and $s \leq t$, we assign the family of paths from $(x,y)$ to $(x',y')$ described before. Let us denote it by $C_{(x,y),(x',y')}$. There are $2^{(k-1)-t}$ choices of sub-paths in the first step. In the second step, there are no choices to make before reaching $x'$ to the right of the dot. For the rest of the path, we can make $2^{s-1}$ choices. Finally, there are no choices to make in step 3. Therefore, there are $2^{k-t+s-2}$ paths in $C_{(x,y),(x',y')}$. 

\medskip \noindent
Moreover, each edge of $A_o(k)$ appears in at most $2^{2k-t+s}$ paths from some point in $R^t$ to some point in $R^s$. Indeed, an edge appears in the first step of some path from $R^t$ to $R^s$ at most $2^{k-t+s}$ times: if the edge $(a,b)(a,b')$ appears in the first step of some path in $C_{(x,y),(x',y')}$, then $d(o_2,b) \geq t $, $y$ is uniquely determined and there are at most $2^{d(o_2,b)-t+1}$ choices for $x$. After reaching $(a,b')$, there are $2^{k-1-d(o_2,b')} = 2^{k-2-d(o_2,b)}$ choices of paths to complete the first step, and there are $2^s$ choices to complete step $2$, see Figure \ref{fig path in tree}.
\begin{figure}[!ht]
    \centering
    \includegraphics[width=0.9\linewidth]{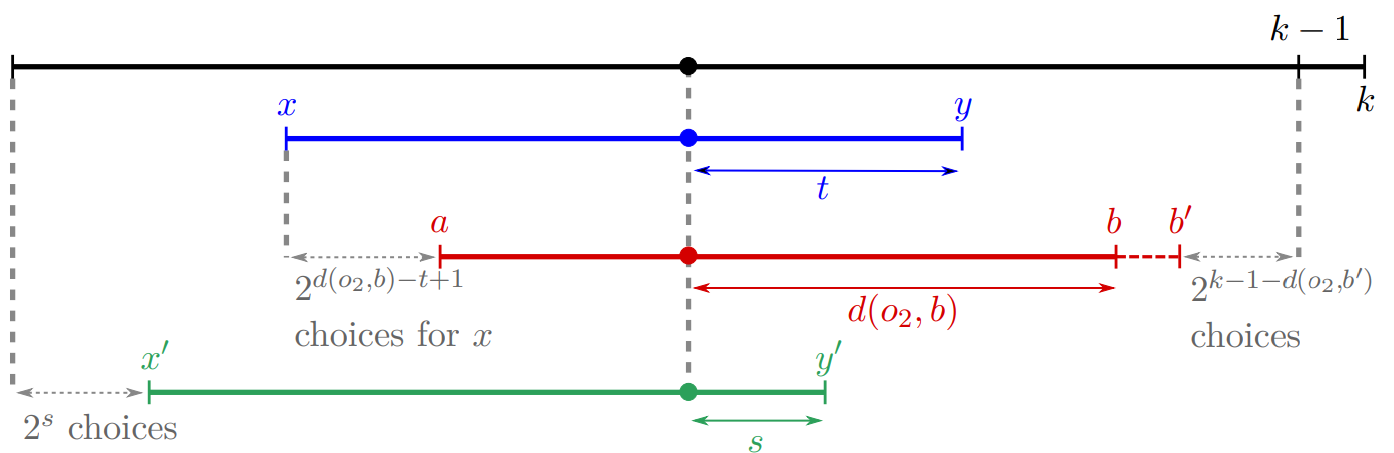}
    \caption{Paths from $R^t$ to $R^s$ passing through the edge $(a,b)(a,b')$ in step $1$.}
    \label{fig path in tree}
\end{figure}
\\The same computations imply that an edge appears in step $3$ at most $2^{k-t+s}$ times. However, if $(a,b)(a,b')$ appears in the second step and, say, $t \leq d(o_2,b)$, then $y$ is uniquely determined and we are left with $2^{k-t}$ choices for $x$, $2^{k-d(o_2,b)}$ choices of paths at the end of step $1$, $2^{d(o_2,b)}$ choices to complete step $2$, and $2^s$ choices in step 3. So the edge appears in at most $2^{2k-t+s}$ paths from $R^t$ to $R^s$. The other cases are dealt with similarly.

\medskip \noindent
Let $f : A_o(k) \to \mathbb{R}$ be a non-constant, zero-mean function. If $e = (a,b)$ is an edge in $A_o(k)$, we denote $|\nabla f(e)| = |f(b)-f(a)|$. Since every vertex in $A_o(k)$ has valency $2$ or $3$, we have
$$   \sum_{e \in E(A_o(k))}  |\nabla f(e)| \leq \frac{3}{2}  ||\nabla f||_1   .         $$
By the triangle inequality along all the paths considered, and averaging, we have
$$   \sum_{z,z' \in A_o(k)}  |f(z')-f(z)| \leq 2 \sum_{t \geq s}\sum_{x \in R^t} \sum_{y \in R^s} \frac{1}{2^{k-t+s-2}}  \sum_{\gamma \in C_{x,y}} \sum_{e \in E(\gamma)}  |\nabla f(e)|.     $$
Since each edge of $A_o(k)$ appears in at most $2^{2k-t+s}$ paths from some point in $R^t$ to some point in $R^s$, we have
$$    \sum_{x \in R^t} \sum_{y \in R^s}   \sum_{\gamma \in C_{x,y}} \sum_{e \in E(\gamma)} \frac{1}{2^{2k-t+s}}  |\nabla f(e)|       \leq  \sum_{e \in E(A_o(k))}  |\nabla f(e)| .      $$
Therefore
\begin{equation}\label{nabla f}
    \begin{split}
        \sum_{z,z' \in A_o(k)}  |f(z')-f(z)| &\leq   2  \sum_{t \geq s} \frac{2^{2k-t+s}}{2^{k-t+s-2}} \sum_{e \in E(A_o(k))}  |\nabla f(e)| 
        \\&=  2^{k+2} (k+1)(k+2) \sum_{e \in E(A_o(k))}  |\nabla f(e)|
        \\& \leq  \frac{2^{k+1} (k+1)(k+2)}{3} ||\nabla f||_1 .
    \end{split}
\end{equation}
Now, $\sum_{z \in A_o(k)} f(z) = 0$ implies 
\begin{equation*}
    \begin{split}
        |A_o(k)| \, ||f||_1  &=  |A_o(k)| \sum_{z \in A_o(k)}  \Big| f(z) - \frac{1}{|A_o(k)|}\sum_{z' \in A_o(k)} f(z')  \Big|  
        \\&\leq \sum_{z,z' \in A_o(k)}  | f(z) - f(z') |
        \\&\leq \frac{2^{k+1} (k+1)(k+2)}{3} ||\nabla f||_1.
    \end{split}
\end{equation*}
Using (\ref{size aok}), we conclude that 
\begin{equation}\label{h1 of product }
    h^1(A_o(k))\geq \frac{3}{4(k+2)} .
\end{equation}
For $\delta = 1-\frac{1/8}{2} = \frac{15}{16}$, by applying Proposition \ref{cut general}, Proposition \ref{comparison cheeger Poincar\'e}, and (\ref{h1 of product }), there exist constants $\lambda >0$ and $C>0$ such that
$$   \cut(A_x(k)) \geq \lambda \, h_1(A_{x}(k)) |A_x(k)| \geq \frac{3/4 (k+1)}{32 \, M(k+2)} 2^k   \geq C \,  2^k.    \qedhere$$
\end{proof}

\noindent
Thus, we recover \cite[Theorem 3.5]{benjamini2012separation}, and the same proof works for any finite product $T_3 \times \dots \times T_3 $ without major modifications.

\medskip \noindent
In \cite{bensaidnguyen}, and building on results of \cite{fisher2018quasi}, it is shown that:

\begin{theorem*}[\cite{bensaidnguyen}\cite{fisher2018quasi}]
    Let $X$ be either a symmetric space of non-compact type of rank $k$ or a thick Euclidean building of rank $k$. There exist constants $\lambda \geq 1, K,M \geq 0$ such that, for any $x,y \in X$, there exists a $(\lambda,K)$-quasi-isometric embedding of the product of $k$ copies of $T_3$ into $X$ such that $x,y \in f(X)^{+M}$.
\end{theorem*}

\noindent
Thus Theorem \ref{thmIntro:Symmetric/buildings} for higher rank symmetric spaces and Euclidean buildings is an immediate consequence of Theorem \ref{separating by embedding}.

\subsection{Horocyclic products}\label{horo}
This section is dedicated to the proof of Theorem \ref{thmIntro:horocyclic products}.

\medskip \noindent
For $i=1,2$, let $X_i$ be regular trees, $\gamma_i$ a geodesic ray in $X_i$ such that $\gamma_i(0) \in V(X_i)$, and $b_i$ the Busemann function of $X_i$ with respect to $\gamma_i$. The \emph{horocyclic product} $X_1 \bowtie X_2$ is defined as the graph whose vertex set is 
$$ V(X_1 \bowtie X_2)  = \{  (x_1,x_2) \in X_1 \times X_2 \mid b_1(x_1)+b_2(x_2)=0  \},  $$
and two vertices $(x_1,x_2),(x_1',x_2')$ are joined by an edge if $d(x_1,x_2)=d(x_1',x_2')=1$. The horocyclic product $T_{p+1} \bowtie T_{q+1}$, $p,q \geq 2$, is called the Diestel-Leader graph $\dl(p,q)$. 

\medskip \noindent
In the general case, for $i=1,2$, let $X_i$ be either a $(k+1)$-regular tree or a rank one symmetric space of non-compact type, $\gamma_i$ a geodesic ray in $X_i$, and $b_i$ the Busemann function of $X_i$ with respect to $\gamma_i$. The \emph{horocyclic product} $X_1 \bowtie X_2$ is defined as 
$$ X_1 \bowtie X_2  = \{  (x_1,x_2) \in X_1 \times X_2 \mid b_1(x_1)+b_2(x_2)=0  \},  $$
equipped with the $L^1$ product metric.

\medskip \noindent
For example, if $n\geq 2$, $T_{n+1} \bowtie \mathbb{H}_{\mathbb{R}}^2$ is quasi-isometric to the solvable Baumslag-Solitar group $\textup{BS}(1,n) = \langle a,b \mid b a b^{-1} = a^n \rangle$, whereas $\mathbb{H}_{\mathbb{R}}^2 \bowtie \mathbb{H}_{\mathbb{R}}^2$ is quasi-isometric to the solvable Lie group $\textup{SOL}_3$ (this is well-known: see for instance \cite[\S 3.2]{hume2022poincare}).
\begin{prop}[\cite{hume2022poincare}]\label{HMT}
    There exists $\lambda \geq 1$ such that, if $\gamma_1$ is a geodesic ray in $T_3$, $b_{1}$ the associated Busemann function, $\gamma_2$ a geodesic ray in $\mathbb{H}_{\mathbb{R}}^2$, and $b_2$ the associated Busemann function, then there exists a $\lambda$-biLipschitz embedding $f : T_3 \to \mathbb{H}_{\mathbb{R}}^2$ such that $f(\gamma_1) = \gamma_2$ and $b_2(f(x)) = b_1(x)$. Moreover, this embedding induces a $\lambda$-biLipschitz embedding $\dl(2,2) \to \mathbb{H}_{\mathbb{R}}^2 \bowtie \mathbb{H}_{\mathbb{R}}^2 $.
\end{prop}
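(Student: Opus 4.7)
The plan is to construct an explicit bi-Lipschitz embedding $f: T_3 \to \mathbb{H}_{\mathbb{R}}^2$ that preserves the Busemann structure on the nose, and then to derive the horocyclic product embedding simply by taking $f \times f$ and restricting. Using the transitivity of the isometry groups of $T_3$ and of $\mathbb{H}_{\mathbb{R}}^2$ on pointed geodesic rays, I reduce to the case where $\mathbb{H}_{\mathbb{R}}^2$ is the upper half-plane and $\gamma_2(t) = (0, e^t)$, so that $b_2(x,y) = \log y$, and where $\gamma_1$ starts at a vertex $o \in T_3$ with $b_1(o) = 0$. Note that $T_3$ rooted at the end of $\gamma_1$ is a binary tree in which every vertex has exactly two children at Busemann value one less.

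Define $f$ on vertices by $f(v) := (x_v, e^{b_1(v)})$, so that $f(v)$ lies on the horocycle $\{y = e^{b_1(v)}\}$ and $b_2\circ f = b_1$ is automatic on vertices. Set $x_v := 0$ for every vertex $v$ lying on $\gamma_1$; then inductively, for every vertex $p$ of $T_3$ whose $x_p$ is already defined and whose two children $u_-, u_+$ live at Busemann value $b_1(p)-1$, if both children are off $\gamma_1$ set $x_{u_\pm} := x_p \pm e^{b_1(p)-1}$, and if one is on $\gamma_1$ (hence already placed at $x = 0$) set the other one's $x$-coordinate to $x_p + e^{b_1(p)-1}$. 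Extend $f$ to each edge $\{u,p\}$ (with $u$ the child) by the curve $t\in[0,1]\mapsto (x_u+t(x_p-x_u),\,e^{b_1(u)+t})$, which again satisfies $b_2\circ f = b_1$ linearly along the edge. Since every edge of $\gamma_1$ has $x_u=x_p=0$, these edges map to vertical segments and we obtain $f(\gamma_1)=\gamma_2$.

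For the bi-Lipschitz property, a direct computation shows that every tree edge maps to a hyperbolic curve of length at most $\textup{arcosh}\!\bigl(1+\tfrac{1+(e-1)^2}{2e}\bigr)$, giving the upper Lipschitz bound with a universal constant. For the lower bound, given two vertices $u,v$ with nearest common ancestor $w$ with respect to the end of $\gamma_1$, the recursion yields $|x_u - x_v| \asymp e^{b_1(w)}$: summing the geometric series $\sum_{k<b_1(w)} e^k$ gives the upper bound, and the lower bound comes from the fact that $u$ and $v$ sit in the two disjoint subtrees beneath the two children of $w$ (whose $x$-intervals are separated by a positive fraction of $e^{b_1(w)}$ since $e>2$). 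The standard hyperbolic distance formula then gives
$$ d_{\mathbb{H}_{\mathbb{R}}^2}(f(u), f(v)) = 2 b_1(w) - b_1(u) - b_1(v) + O(1) = d_{T_3}(u,v) + O(1), $$
and since distinct vertices of $T_3$ are at tree-distance $\geq 1$, the additive $O(1)$ can be absorbed into a universal multiplicative bi-Lipschitz constant $\lambda$; the extension to edges is bi-Lipschitz since the velocity of the defining curve is uniformly bounded above and below in the hyperbolic metric.

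For the second assertion, the map $f\times f : T_3\times T_3 \to \mathbb{H}_{\mathbb{R}}^2 \times \mathbb{H}_{\mathbb{R}}^2$ is $\lambda$-bi-Lipschitz with respect to the $L^1$ product metrics, and since $b_2\circ f = b_1$ coordinatewise, it restricts to a $\lambda$-bi-Lipschitz embedding $T_3\bowtie T_3 = \dl(2,2) \to \mathbb{H}_{\mathbb{R}}^2 \bowtie \mathbb{H}_{\mathbb{R}}^2$. The main technical obstacle here is the lower bi-Lipschitz bound in the single factor: one must verify that a $\mathbb{H}_{\mathbb{R}}^2$-geodesic between $f(u)$ and $f(v)$ effectively tracks the combinatorial ``up-to-$w$-and-back-down'' path in $T_3$. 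This is a manifestation of the Gromov-hyperbolic tree-approximation of the upper half-plane, and is the step that pins down the universality of $\lambda$.
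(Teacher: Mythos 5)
Your overall strategy --- an explicit horospherical placement of $T_3$ in the upper half-plane followed by restricting $f\times f$ to the horocyclic product --- is the right one (the paper itself only cites \cite{hume2022poincare} here), and the generic part of your lower-bound analysis is sound: when the two children of a vertex $p$ at level $k=b_1(p)$ are displaced by $\pm e^{k-1}$, their subtrees' $x$-shadows have radius $\sum_{i\leq k-2}e^{i}=\tfrac{e^{k-1}}{e-1}$ and are therefore genuinely separated, by $2e^{k-1}\tfrac{e-2}{e-1}>0$.

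The gap is that the rule you impose along $\gamma_1$ destroys exactly this separation. At $w=\gamma_1(k)$ with $k\geq 1$ the two children are displaced by $0$ and $+e^{k-1}$, so they are only $e^{k-1}$ apart, while each of their subtrees still has $x$-radius $\tfrac{e^{k-1}}{e-1}\approx 0.582\,e^{k-1}$. Since $\tfrac{2}{e-1}>1$ (equivalently $e<3$), the two shadows overlap in an interval of length about $0.164\,e^{k-1}$: for instance, the descendant of the off-ray child obtained by always choosing $-$ eventually has a \emph{smaller} $x$-coordinate than the descendant of $\gamma_1(k-1)$ obtained by branching off at $\gamma_1(k-1)$ and always choosing $+$. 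So the parenthetical claim that the two subtrees occupy intervals ``separated by a positive fraction of $e^{b_1(w)}$ since $e>2$'' is false precisely at the vertices of $\gamma_1$ (for your asymmetric displacements the correct threshold is $e>3$). Once these two Cantor-like sets of $x$-coordinates interleave, lower-bounding $|x_u-x_v|$ amounts to lower-bounding $|P(e)|$ over integer polynomials with bounded coefficients, a Diophantine issue your argument does not address and which would in any case not yield the uniform exponential lower bound you need. The fix is standard and cheap: abandon the symmetric $\pm e^{k-1}$ rule, assign to each vertex $v$ a nested interval $I_v=[a_v,\,a_v+e^{b_1(v)}]$, give its two children the two \emph{end} subintervals of length $e^{b_1(v)-1}$ (disjoint because $2<e$), set $f(v)=(a_v,e^{b_1(v)})$, and route $\gamma_1$ through the left child at every step so that $a_{\gamma_1(j)}=0$ for all $j$. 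With that modification the rest of your argument --- including the product step, modulo the routine comparison of the graph metric on $\dl(2,2)$ with the restricted $L^1$ metric --- goes through.
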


\noindent
 Since $\mathbb{H}_{\mathbb{R}}^2$ isometrically embeds into any rank one symmetric space of non-compact type $X$, and $\textup{Isom}(X)$ acts transitively on its set of geodesic rays, then $\mathbb{H}_{\mathbb{R}}^2 \bowtie \mathbb{H}_{\mathbb{R}}^2 $ can be replaced in Proposition \ref{HMT} with $X_1 \bowtie X_2 $, for $X_i$ any rank one symmetric space of non-compact type or $T_{k+1}$.

\medskip \noindent
Let $X_1,X_2$ two copies of $T_3$. For $i=1,2$, let $\gamma_i$ be a geodesic ray in $X_i$ such that $\gamma_i(0) \in V(X_i)$, and denote $\xi_i = \gamma_i(+ \infty)$. Let $b_{i}$ be the corresponding Busemann functions, which we will just denote by $b$, and let $X = X_1 \bowtie X_2$ the horocyclic product with respect to the Busemann functions.

\medskip \noindent
Let $r \in \mathbb{N}$ and $(o_1,o_2) \in X_1 \times X_2$ such that $b(o_1) + b(o_2) = r$. Consider the following subset of $X$, see \cite[Theorem 4.3]{hume2022poincare}.
\begin{equation*}
V_{(o_1,o_2)}(r) = \left\{ (z_1,z_2) \in X:  \, \,
    \begin{split}
           & d_{X_1}(o_1,z_1) = r - b(z_1),\\&  d_{X_2}(o_2,z_2) = - b(z_2),\\& 0 \leq d_{X_1}(o_1, z_1), d_{X_2}(o_2, z_2) \leq r   
    \end{split}
    \right\}.
\end{equation*}
For $x = (x_1,x_2) \in X$ and $r \in \mathbb{N}$, we consider $A_x(r) = V_{(x_1^r,x_2)}(r) \subset X$. Recall that $x_1^r$ is the $r$-th ancestor of $x_1$ in $X_1$, see Figure \ref{fig persist in horo}.
\begin{figure}[!ht]
    \centering
    \includegraphics[width=\linewidth]{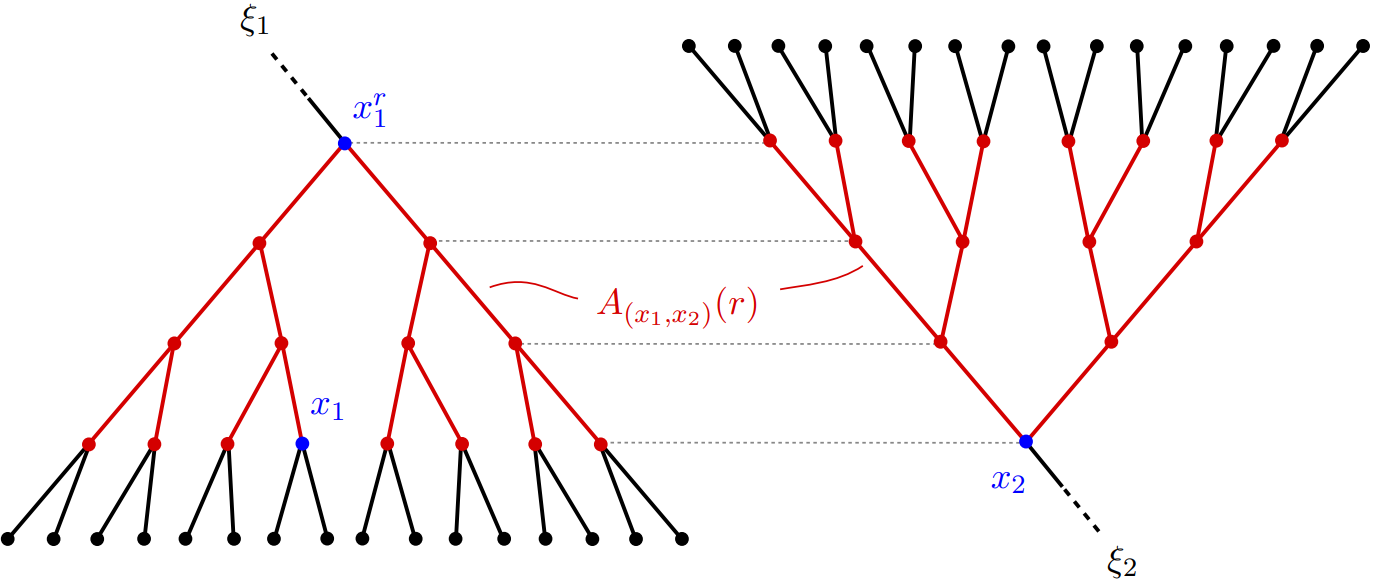}
    \caption{$A_x(r)\subset X$, for $r=3$.}
    \label{fig persist in horo}
\end{figure}
\begin{prop}\label{persistent family of dl}
    $\left\{ A_{x}(r) \right\}_{x\in X, r \in \mathbb{N}^*}$ is a $\frac{1}{4}$-persistent family in $X$. Moreover, there exists $C>0$ such that, for any $x \in X$ and $r \in \mathbb{N}^*$,
    $$ \cut(A_{x}(r)) \geq  C \, 2^r  $$
    where $\delta = \frac{7}{8}$.
\end{prop}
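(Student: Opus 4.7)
The plan is to follow the scheme of Proposition \ref{persistent family of product trees}, with the combinatorics of annuli in a product of binary trees replaced by that of the Diestel--Leader ``ball'' $V_{(o_1,o_2)}(r)$. The starting point is an explicit parametrization of $V_{(o_1,o_2)}(r)$ by the depth $d_1 = d_{X_1}(o_1,z_1) \in \{0,\ldots,r\}$: for each such $d_1$, there are $2^{d_1}$ descendants of $o_1$ at depth $d_1$ in $X_1$, paired with $2^{r-d_1}$ descendants of $o_2$ at depth $r-d_1 = -b(z_2)$ in $X_2$. This yields $|A_x(r)| = (r+1)2^r$ independently of $x$, and the inclusion $A_x(r) \subset B_X(x,4r)$ follows at once from the crude bound $d_X(x,(z_1,z_2)) \leq d_{X_1}(x_1,x_1^r) + d_{X_1}(x_1^r,z_1) + d_{X_2}(x_2,z_2) \leq 3r$.

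For the intersection property, moving from $x$ to an adjacent $y \in X$ shifts the base point $x_1^r$ of $A_x$ to the parent $y_1^r = x_1^{r+1}$ of $x_1^r$, while shifting $x_2$ to a child $y_2$ of $x_2$ (or conversely, swapping the roles of $X_1$ and $X_2$). A direct count then shows that the pairs $(z_1,z_2) \in A_x(r) \cap A_y(r)$ are parametrized by $d_1 \in \{0,\ldots,r-1\}$, with $2^{d_1}$ choices for $z_1$ (descendants of $x_1^r$ at depth $d_1$, which are automatically descendants of $x_1^{r+1}$ at depth $d_1+1$) and $2^{r-d_1-1}$ choices for $z_2$ (descendants of $y_2$ at depth $r-d_1-1$). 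This gives
\[\frac{|A_x(r) \cap A_y(r)|}{|A(r)|} \;\geq\; \frac{\sum_{d_1=0}^{r-1} 2^{d_1} \cdot 2^{r-d_1-1}}{(r+1)\,2^r} \;=\; \frac{r}{2(r+1)} \;\geq\; \frac{1}{4}\]
for every $r \geq 1$, establishing $\frac{1}{4}$-persistence.

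For the cut estimate, the value $\delta = 1 - \frac{\alpha}{2} = \frac{7}{8}$ is fixed by persistence, and Proposition \ref{cut general} reduces the claim to the Cheeger-type bound $h(A_x(r)) \cdot |A_x(r)| \gtrsim 2^r$. This is exactly the content of the curve-counting argument carried out in \cite{hume2022poincare} on $V_{(o_1,o_2)}(r)$, which produces an $L^1$-Poincar\'e lower bound of the form $h^1(A_x(r)) \gtrsim \frac{1}{r}$, uniformly in $(o_1,o_2)$; via Proposition \ref{comparison cheeger Poincar\'e} this translates into a Cheeger lower bound $h(A_x(r)) \gtrsim \frac{1}{r}$. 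Combining this with $|A_x(r)| = (r+1) 2^r$ yields $\cut^{7/8}(A_x(r)) \geq \frac{1}{16}\, h(A_x(r))\, |A_x(r)| \gtrsim 2^r$. The only step demanding genuine care is the intersection count for adjacent $x,y$, which amounts to unpacking the shearing inherent in the horocyclic product; the remainder is combinatorial bookkeeping together with a direct invocation of \cite{hume2022poincare}, so I do not anticipate any serious obstacle beyond carefully running the case analysis on which of the two coordinates moves up and which moves down.
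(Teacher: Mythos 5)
Your proposal is correct and follows essentially the same route as the paper's proof: the paper gets the intersection bound from the nesting $A_x(r-1)\subset A_{x'}(r)$ (which yields exactly the count $r\,2^{r-1}\geq \frac{1}{4}(r+1)2^r$ that your direct depth-parametrization produces), and for the cut it likewise invokes the Poincar\'e/Cheeger lower bound $h_1(A_x(r))\gtrsim 1/r$ from \cite[Theorem 4.3]{hume2022poincare} and plugs it into Proposition~\ref{cut general} together with $|A_x(r)|=(r+1)2^r$.
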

\begin{proof}
Let $x = (x_1,x_2) \in X$ and $r \in \mathbb{N}^*$. We have
$$A_x(r) \subset B_{X_1}(x_1^r,r)\times B_{X_2}(x_2,r) \subset B_{X_1}(x_1,2r)\times B_{X_2}(x_2,r) \subset B_{X}(x,4r).$$
Let $x' = (x_1',x_2') \in X$ be adjacent to $(x_1,x_2)$. If $b(x_1') = b(x_1)-1$, then $ A_x(r-1) \subset  A_{x'}(r) $ and 
$$    |A_x(r) \cap A_{x'}(r)| \geq |A_x(r-1)|   .     $$
If $b(x_1') = b(x_1)+1$, then $ A_{x'}(r-1) \subset  A_{x}(r) $ and 
$$    |A_x(r) \cap A_{x'}(r)| \geq |A_{x'}(r-1)|   .  $$
We have $|A(r)| =(r+1)2^r $, so $|A(r-1)| \geq \frac{1}{4}|A(r)| $. Therefore,
$$    |A_x(r) \cap A_{x'}(r)| \geq \frac{1}{4} |A(r)|   .  $$
It was shown in \cite[Theorem 4.3]{hume2022poincare}, and using \cite[Proposition 6.10]{hume2020poincare}, that there exists a constant $C'>0$ such that $h_1(A_x(r)) \geq C'/r$. Therefore, by Proposition~\ref{cut general}, there exist $\lambda >0$ and $C>0$ such that
$$ \cut(A_x(r)) \geq \lambda \, h_1(A_x(r)) |A_x(r)| \geq \frac{\lambda \, C'}{r} (r+1)2^r  \geq   C \, 2^r. \qedhere$$
\end{proof}
\begin{cor}
    $\dl(2,2) \in \mathfrak{M}_{exp}$.
\end{cor}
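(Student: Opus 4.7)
The plan is to combine the two key results that immediately precede the corollary: Proposition~\ref{persistent family of dl} supplies an explicit persistent family with exponentially growing cuts on $\dl(2,2) = T_3 \bowtie T_3$, and Theorem~\ref{invariance by taking product}(i) converts such data into membership in $\mathfrak{M}_{exp}$. So the proof is essentially a citation assembly.

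More precisely, I would first observe that $\dl(2,2)$ is a connected graph of uniformly bounded valency, hence a $1$-coarsely connected metric space of bounded geometry at scale $s=1/5$ in the sense of the preliminaries; this verifies the ambient hypothesis of Theorem~\ref{invariance by taking product}. Next, Proposition~\ref{persistent family of dl} applied to $X_1 = X_2 = T_3$ produces a family $\{A_x(r)\}_{x \in X,\, r \in \mathbb{N}^*}$ that is $\alpha$-persistent with $\alpha = 1/4$, and satisfies $\cut^{\delta}(A_x(r)) \geq C \cdot 2^r$ for $\delta = 7/8 = 1 - \alpha/2$ and some constant $C>0$. This is exactly the hypothesis of Theorem~\ref{invariance by taking product}(i) with $\beta = \log 2$.

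I would then simply invoke Theorem~\ref{invariance by taking product}(i) to conclude $\dl(2,2) \in \mathfrak{M}_{exp}$. There is no real obstacle here, since all the work has been done in Proposition~\ref{persistent family of dl} and the abstract machinery of Section~\ref{sebsec:tools}; the corollary is a direct specialisation. If one wanted the slightly stronger statement that $\dl(2,2) \times Y$ also lies in $\mathfrak{M}_{exp}$ for any coarsely connected $Y$, one would instead apply part (ii) of the same theorem, without any additional effort.
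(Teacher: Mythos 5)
Your proof is correct and matches the paper's argument exactly: the paper likewise notes that $\dl(2,2)$ is $1$-coarsely connected and then invokes Theorem~\ref{invariance by taking product} together with the persistent family and exponential cut bound from Proposition~\ref{persistent family of dl}. No differences worth noting.
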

\begin{proof}
    $\dl(2,2)$ is $1$-coarsely connected, so the result is a consequence of Theorem \ref{invariance by taking product}.\qedhere
\end{proof}

\noindent
For the general case $Y_1 \bowtie Y_2$, we use suitable embeddings of $\dl(2,2)$ into $Y_1 \bowtie Y_2$. The idea is to apply a similar argument as for higher rank symmetric spaces and Euclidean buildings. We will (almost) join every pair of points in $ Y = Y_1 \bowtie Y_2$ by the images of three $\lambda$-biLipschitz embeddings of $\dl(2,2)$, for a fixed $\lambda$. Moving the persistent family of subsets of $\dl(2,2)$ along these embeddings yields a big intersection with the separating family of subsets.

\begin{proof}[Proof of Theorem \ref{thmIntro:horocyclic products}]
For $i=1,2$, let $\xi_i \in \partial X_i$ be the boundary point corresponding to the Busemann function $b_{Y_i}$. We know that $Y= Y_1 \bowtie Y_2 $ is coarsely $1$-connected. Let $\lambda$ be the Lipschitz constant of Proposition~\ref{HMT}. Since $\lambda \geq 1$, it is also coarsely $\lambda$-connected. 
In what follows, we fix the coarse connectivity constant of $Y$ to be $\lambda$.

\medskip \noindent
Let $\mathcal{S}$ be a family of subsets that coarsely separates $Y$, with thickening constant $D \geq \lambda$. In particular, $\mathcal{S}$ with thickening constant $D+2$ is also coarsely separating. Let $r \in \mathbb{N}^*$ be fixed. For $R = 4 \lambda r$, there exist $S \in \mathcal{S}$ and $(y_1,y_2),(y_1',y_2')\in Y$ such that $B_Y((y_1,y_2),R)$ and $B_Y((y_1',y_2'),R)$ are in different $\lambda$-coarsely connected components of $Y \backslash S^{+(D+2)}$. Let us denote by $(C_i)_{i \in I}$ the $\lambda$-coarsely connected components of $Y \backslash S^{+(D+2)}$. Without loss of generality, we assume that, for $i = 1,2$, $b_{Y_i}(y_i) = 0$; and let $\gamma_i$ be a geodesic in $Y_i$ satisfying $\gamma_i(0) = y_i$ and $\gamma_i(+ \infty) = \xi_i$. Up to moving $(y_1',y_2')$ by a distance $\leq 1/2$, we may also assume that $b_{Y_1}(y_1) = - b_{Y_2}(y_2) \in \mathbb{Z}$.

\medskip \noindent
For $i=1,2$, let $X_i$ be a copy of $T_3$, $x_i \in X_i$, $\eta_i \in \partial X_i$, $\beta_i$ a bi-infinite geodesic such that $\beta_i(0) = x_i$ and $\beta_i(+\infty) = \eta_i$. Let us also denote by $b_{X_i}$ the Busemann function of $X_i$ with respect to $x_i$ and $\eta_i$. By Proposition \ref{HMT}, for i=1,2, let $f_i : X_i \to Y_i$ be the $\lambda$-biLipschitz embedding such that $f_i(\beta_i(t)) = \gamma_i(t)$ and $b_{Y_i}(f_i(x)) = b_{X_i}(x)$. Let us denote by $f : X \to Y$ the induced embedding of the horocyclic products, i.e.\ the restriction of $(f_1,f_2) : X_1 \times X_2 \to Y_1 \times Y_2$ to $X = X_1 \bowtie X_2$. Note that $f$ sends isometrically the geodesic $(\beta_1(t),\beta_2(-t))_{t \in \mathbb{R}}$ to the geodesic $(\gamma_1(s),\gamma_2(-s))_{s \in \mathbb{R}}$.

\medskip \noindent
Let us also consider, for $i=1,2$, the geodesic $\gamma_i'$ in $Y_i$ passing through $y_i'$ satisfying $\gamma_i'(+\infty) = \xi_i$ and $b_{Y_i}(\gamma_i'(t)) = t$. Take $\phi_i \in \textup{Isom}(Y_i)$ the parabolic isometry such that $\phi_i(\gamma_i(t))= \gamma_i'(t)$, i.e.\ $\phi_i$ is in the nilpotent part $U$ of the Iwasawa decomposition $\textup{Isom}_0(X) = KAU$ with respect to $\gamma_i(0)$ and $\gamma_i(+ \infty)$ (note that in rank one, $U$ acts simply transitively on the set of geodesics $c$ such that $c(+\infty) = \gamma_i(+ \infty)$). In particular, $(\phi_1,\textup{Id}_{Y_2})$ and $(\textup{Id}_{Y_1},\phi_2)$ are isometries of $Y$. See Figure \ref{fig setting horocyclic prod} for the setting (where $Y_1=Y_2= \mathbb{H}^2_{\mathbb{R}}$).

\medskip \noindent
\begin{lemma}\label{asymptotic geod in rankone}
Let $X$ be a symmetric space of non-compact type, $\gamma$ a regular geodesic, $\textup{Isom}_0(X) = KAU$ the Iwasawa decomposition with respect to $\gamma (0)$ and $\gamma(+\infty)$, and $\phi \in U$. Then for any $\varepsilon>0$, and $R \geq0$, there exists $N \geq 0$ such that for $t \geq N$ and for any $z \in B_X(\gamma(t),R)$, $d_X(z, \phi(z)) \leq \varepsilon$.
\end{lemma}
\begin{proof}
Let us denote $\gamma(0) = x_0$, and let $X \in \textup{Lie}(A)$ such that $\gamma(t) = e^{tX}.x_0$. $\phi \in U$ so $d_X(e^{-tX}\phi e^{tX}.x_0,x_0) \xrightarrow[t \to + \infty] \, 0 $. Since $AU$ acts simply transitively on $X$, the orbit map $AU \to X$, $g \to g.x_0$ is a diffeomorphism and $ e^{-tX}\phi e^{tX} \xrightarrow[t \to + \infty] \, \textup{Id} $. By continuity of the orbit map and of the product in $AU$, for any $x \in X$ $d_X(e^{-tX}\phi e^{tX}.x,x) \xrightarrow[t \to + \infty] \, 0 $. Therefore, by compacity of the closed ball of radius $R$ centered at $x_0$, there exists $N \geq 0$ such that for $t \geq N$ and for any $x \in B_X(x_0,R)$, 
$$ d_X(\phi e^{tX}.x,e^{tX}x) = d_X(e^{-tX}\phi e^{tX}.x,x)  \leq \varepsilon . $$
The conclusion follows since $B_X(\gamma(t),R) = e^{tX}. B_X(x_0,R)$.
\end{proof}
For $i=1,2$, $d_{Y_i}(\gamma_i(t),\gamma_i'(t)) \xrightarrow[t \to + \infty] \, 0 $, so by the previous lemma, there exists an $N \in \mathbb{N}$ such that $t\geq N$ implies that, for any $z \in B_{Y_i}(\gamma_i(t),r)$, $d_{Y_i}(z,\phi_i(z)) < \frac{1}{2}$. In particular for any $z =(z_1,z_2) \in \Gamma_N$, 
\begin{equation}\label{<1/2}
    d_{Y_1}\left(f_1(z_1),\phi_1((f_1(z_1))\right) < 1/2 .
\end{equation}
\begin{figure}[!ht]
    \centering
    \includegraphics[width=0.9\linewidth]{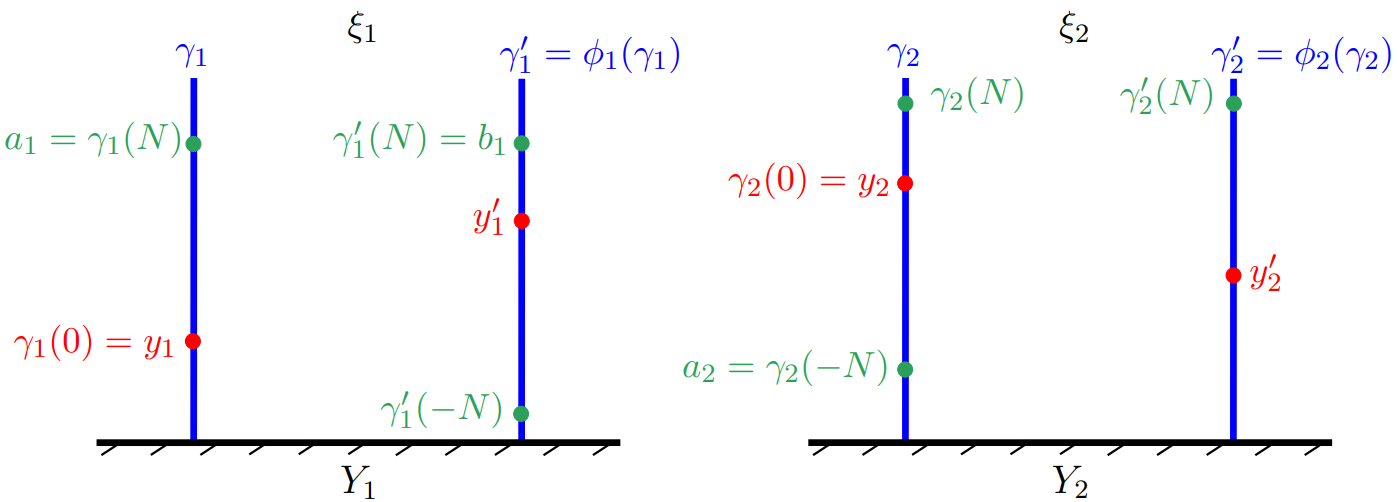}
    \caption{The setting if $Y_1=Y_2= \mathbb{H}^2_{\mathbb{R}}$ and $\xi_i$ are the points $y=\infty$.}
    \label{fig setting horocyclic prod}
\end{figure}

\medskip \noindent
Let us consider the following points (see Figure \ref{fig setting horocyclic prod}): 
\begin{equation*}
    \begin{split}
        & a_1 = f_1(\beta_1(N)) = \gamma_1(N);
        \\& a_2 = f_2(\beta_2(-N)) = \gamma_2(-N);
        \\& b_1 = \phi_1(a_1) = \gamma_1'(N); 
        \\& c_1 = \gamma_1'(-N); 
        \\& c_2 = \gamma_2(N); 
        \\& d_2 = \phi_2(c_2) = \gamma_2'(N).
    \end{split}
\end{equation*}

\medskip \noindent
Consider $\Gamma = A_{(x_1,x_2)}(r) \subset X $, and notice that $f(\Gamma) \subset B_Y((y_1,y_2),R)$.
For all $n\in \mathbb{Z}$, consider $\Gamma_n =   A_{(\beta_1(n),\beta_2(-n))}(r)  \subset B_X\left ((\beta_1(n),\beta_2(-n)), 4r  \right) $. In particular $\Gamma_0 = \Gamma$. 

\medskip \noindent
We will consider the images of $(\Gamma_n)_n$, in the following the path (see Figure \ref{fig path horocyclic prod}): 
\begin{equation*}
            (y_1,y_2) \xrightarrow[f(X)] \, (a_1,a_2) \xrightarrow[\textup{jump 1}] \, (b_1,a_2) \xrightarrow[g(X)] \, (c_1,c_2) \xrightarrow[\textup{jump 2}] \, (c_1,d_2) \xrightarrow[h(X)] \, (y_1',y_2') 
\end{equation*}
where $f = (f_1,f_2)$, $g = (\phi_1 \circ f_1,f_2)$, and $h = (\phi_1 \circ f_1,\phi_2 \circ f_2)$.

\medskip \noindent

\begin{figure}[!ht]
\begin{center}
    \includegraphics[width=0.8\linewidth]{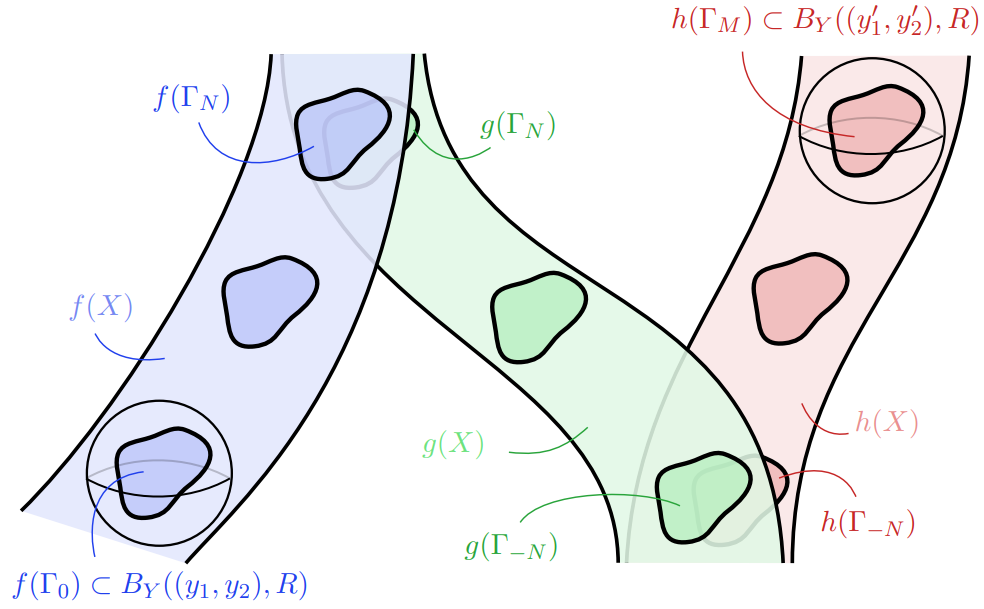}
    \caption{Images of $(\Gamma_n)_n$ by $f$,$g$, and $h$.}
    \label{fig path horocyclic prod}
\end{center}
\end{figure}

\medskip \noindent
Let $C_1$ be the component containing $B_Y((y_1,y_2),r)$ and $C_2$ the component containing $B_Y((y_1',y_2'),R)$. Let $\delta = \frac{7}{8}$ as in Proposition \ref{persistent family of dl}. The proof is as follows:

\medskip \noindent
\textbf{Case 1:} Assume that $|\Gamma_N \cap f^{-1}(C_1) | < \delta |\Gamma| $. Since $\Gamma_0 \subset f^{-1}(C_1) $, then, by Proposition~\ref{main}, there exists $i \in \{1, \dots, n\}$ such that $\Gamma_i \cap f^{-1}(S^{+(D+2)}) $ is a $\delta$-cut of $\Gamma_i$,
$$        | \Gamma_i \cap f^{-1}(S^{+(D+2)})| \geq \cut(\Gamma_i) \geq C \, 2^{r}  . $$
As in the proof of Theorem \ref{separating by embedding}, there exists a constant $\theta>0$ such that, for any bounded subset $A\subset X$, $|f(A)|\geq \theta |A|$. Let $z = (\beta_1(n),\beta_2(-n))$. The inclusion $f(\Gamma_i) \subset B_Y(z,R)$ implies, by an argument similar to the one in the proof of Corollary \ref{cor of main}, that, for $R$ big enough,
$$ C'|B_Y(z,2R) \cap S| \geq | B_Y(z,R) \cap S^{+(D+2)}|  \geq \theta | \Gamma_i \cap f^{-1}(S^{+(D+2)})|     $$
where $C' = \sup_{y \in Y}|B_Y(y,D+2)|$. Therefore
\begin{equation}\label{case 1}
     |B_Y(z,8 \lambda r) \cap S| \geq \frac{C \theta}{C'}2^r.  
\end{equation}
\textbf{Case 2:} Otherwise,  let $(C_j')_{j \in J'}$ be the $\lambda$-coarsely connected components of $Y \backslash S^{+(D+1)}$, and $C_1'$ the component containing $B_Y((y_1,y_2),r)$. 
\begin{claim}
    $|\Gamma_N \cap f^{-1}(C_1) | \geq  \delta |\Gamma| $ \textup{implies that} $|\Gamma_N \cap g^{-1}(C_1') | \geq  \delta |\Gamma| $.
\end{claim}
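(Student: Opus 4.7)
The plan is to prove the stronger containment $\Gamma_N \cap f^{-1}(C_1) \subseteq \Gamma_N \cap g^{-1}(C_1')$, from which the volume inequality follows at once. The key feature that makes this work is that $f$ and $g$ differ only in the first coordinate, and by inequality (\ref{<1/2}) they satisfy $d_Y(f(z), g(z)) < 1/2$ for every $z \in \Gamma_N$. The thickening gap between $C_1$ (cut out at level $D+2$) and $C_1'$ (cut out at level $D+1$) is precisely designed to absorb this small perturbation.

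The argument then proceeds in three short steps. First, I will check that $g(z) \in Y \setminus S^{+(D+1)}$ whenever $f(z) \in C_1$: since $f(z) \in C_1$ implies $d(f(z), S) > D+2$, the triangle inequality gives $d(g(z), S) > (D+2) - 1/2 > D+1$. Second, I will verify that $C_1 \subseteq C_1'$. The set $C_1$ is $\lambda$-coarsely connected and contained in $Y \setminus S^{+(D+2)} \subseteq Y \setminus S^{+(D+1)}$, so it lies in a single $\lambda$-coarsely connected component of $Y \setminus S^{+(D+1)}$. Because $C_1 \supseteq B_Y((y_1,y_2), R) \supseteq B_Y((y_1,y_2), r)$ (recall $R = 4\lambda r \geq r$) and $C_1'$ is by definition the component of $Y \setminus S^{+(D+1)}$ containing $B_Y((y_1,y_2), r)$, these two components must coincide, giving $C_1 \subseteq C_1'$.

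Finally I combine: for $z \in \Gamma_N \cap f^{-1}(C_1)$, both $f(z)$ and $g(z)$ lie in $Y \setminus S^{+(D+1)}$ at mutual distance $< 1/2 \leq \lambda$, so they are $\lambda$-path connected there, hence lie in the same $\lambda$-coarsely connected component of $Y \setminus S^{+(D+1)}$. Since $f(z) \in C_1 \subseteq C_1'$, this component is $C_1'$, so $g(z) \in C_1'$, proving the containment. No serious obstacle arises: the claim is essentially a definition chase exploiting the slack of one unit built into the thickening constants, together with the fact that $\phi_1$ is an isometry moving every point of $f_1(\Gamma_N)_1$ by less than $1/2$.
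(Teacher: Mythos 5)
Your proposal is correct and follows essentially the same route as the paper: the authors likewise use inequality (\ref{<1/2}) to get $d_Y(f(z),g(z))<1/2$ and then invoke the "trivial lemma" preceding the claim (whose content is exactly your steps on absorbing the $1/2$-perturbation into the one-unit gap between the thickenings $D+2$ and $D+1$) to conclude the containment $\Gamma_N\cap f^{-1}(C_1)\subseteq\Gamma_N\cap g^{-1}(C_1')$. Your write-up merely makes explicit the two points the paper leaves implicit, namely that $g(z)$ stays in $Y\setminus S^{+(D+1)}$ and that $C_1\subseteq C_1'$.
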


\noindent
Let us state the following trivial lemma.
\begin{lemma}
    Let $X$ be a $k$-coarsely connected metric space, $S \subset X$, and $D\geq k$. Let $x,x' \in X$ such that $x \in X \backslash S^{+(D+k)}$ and $d(x,x')<k$. Then $x$ and $x'$ are in a same $k$-coarsely connected component of $X \backslash S^{+ D}$.
\end{lemma}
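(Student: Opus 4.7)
The plan is completely elementary: we just need to verify that both $x$ and $x'$ lie in $X\setminus S^{+D}$, after which the single-step sequence $x, x'$ is already a $k$-path joining them in $X\setminus S^{+D}$ (since $d(x,x')<k$).

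First, since $x\in X\setminus S^{+(D+k)}$, we have $d(x,S)>D+k\geq D$, so trivially $x\in X\setminus S^{+D}$. Next, to handle $x'$, I would invoke the reverse triangle inequality:
\[
d(x',S)\;\geq\; d(x,S)-d(x,x')\;>\;(D+k)-k\;=\;D,
\]
so $x'\in X\setminus S^{+D}$ as well. Finally, the two-point sequence $x_0=x$, $x_1=x'$ is a $k$-path in $X\setminus S^{+D}$ because $d(x_0,x_1)=d(x,x')<k$; hence $x$ and $x'$ lie in the same $k$-coarsely connected component of $X\setminus S^{+D}$.

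There is really no obstacle here: the lemma only combines the reverse triangle inequality with the definition of a $k$-path. The slack of $k$ between the neighbourhoods $S^{+D}$ and $S^{+(D+k)}$ is exactly what absorbs a single $k$-jump away from $x$, which is why the hypothesis $D\geq k$ is harmless and the hypothesis $d(x,x')<k$ (strict) matches the strict inequality $d(x,S)>D+k$ coming from $x\notin S^{+(D+k)}$.
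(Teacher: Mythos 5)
Your proof is correct and is exactly the intended elementary verification: the paper labels this a ``trivial lemma'' and omits the proof entirely, so your two lines (the reverse triangle inequality gives $d(x',S)>D$, and then $x,x'$ is itself a $k$-path in $X\setminus S^{+D}$) supply precisely what was left to the reader. Note, as you observed, that the hypothesis $D\geq k$ plays no role in the argument; it is only there because of how the lemma is used in the surrounding proof.
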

\begin{proof}[Proof of the claim]
Let $z = (z_1,z_2) \in \Gamma_N $. By the inequality (\ref{<1/2}),
\begin{equation*}
    \begin{split}
        d_Y(f(z), g(z))  &= d_{Y_1}(f_1(z_1), g_1(z_1)) +  d_{Y_2}(f_2(z_2), g_2(z_2)) 
        \\&=  d_{Y_1}(f_1(z_1), \phi_1(f_1(z_1))) < \frac{1}{2}. 
    \end{split}
\end{equation*}
Therefore, $f(z) \in C_1 $ implies that $g(z) \in C_1'$. So $\Gamma_N \cap f^{-1}(C_1) \subset \Gamma_N \cap g^{-1}(C_1')$, and $|\Gamma_N \cap f^{-1}(C_1) | \geq  \delta |\Gamma_N| $ implies that $|\Gamma_N \cap g^{-1}(C_1') | \geq  \delta |\Gamma_N| $. $\qedhere$
\end{proof}

\noindent
Now, we follow the images of $\Gamma_{N}, \Gamma_{N-1}, \dots , \Gamma_{-N}$ by $g$. If $|\Gamma_{-N} \cap g^{-1}(C_1') | <  \delta |\Gamma| $, then it is done as in the first case.

\medskip \noindent
\textbf{Case 3:} If not, let $(C_j'')_{j \in J''}$ be the $\lambda$-coarsely connected components of $Y \backslash S^{+ D}$, and $C_1''$ the component containing $B_Y((y_1,y_2),r)$. We can show similarly: $|\Gamma_{-N} \cap g^{-1}(C_1') | \geq  \delta |\Gamma| $ implies that $|\Gamma_{-N} \cap h^{-1}(C_1'') | \geq  \delta |\Gamma| $.

\medskip \noindent
Finally, we follow the images of $\Gamma_{-N},\Gamma_{-N+1}, \dots, \Gamma_{M}$ by $h$, where $M = b_{Y_1}(y_1') = -b_{Y_2}(y_2')$. Note that $h(\Gamma_M) \subset C_2$. Indeed,
$$ h(\Gamma_M) \subset B_Y(h((\beta_1(M),\beta_2(-M))),R). $$
However,
\begin{equation*}
    \begin{split}
        h((\beta_1(M),\beta_2(-M))) &= (\phi_1 \circ f_1(\beta_1(M)),\phi_2 \circ f_2(\beta_2(-M))) \\&=  (\phi_1 (\gamma_1(M)),\phi_2 (\gamma_2(-M))) 
        \\&=  (\gamma_1'(M),\gamma_2'(-M)) = (y_1',y_2').
    \end{split}
\end{equation*}
The last equality follows from the choice $M = b_{Y_1}(y_1') = -b_{Y_2}(y_2')$. So we have 
   \begin{equation*}
   \left\{
       \begin{split}
          &|\Gamma_{-N} \cap h^{-1}(C_1'') | \geq  \delta |\Gamma|
        \\& |\Gamma_{M} \cap h^{-1}(C_1'') | =0
       \end{split}
       \right..
   \end{equation*}
   Therefore, there exists $j \in \{-N,-N+1, \dots, M\}$ such that $\Gamma_{j} \cap f^{-1}\left(S^{+ D}\right) $ is a $\delta$-cut of $\Gamma_j$, and we get an inequality similar to (\ref{case 1}). This concludes the proof. $\qedhere$
\end{proof}

\section{Application to graphs of groups}\label{section graph of groups}

The main result of this section is Theorem~\ref{sep in graphs of groups general}, which implies Proposition~\ref{sep in graphs of groups}, Corollary~\ref{corIntro: chordal}, and Corollary~\ref{corIntro:qi between graphs of groups}.

\medskip \noindent
Let us briefly recall the definition of a graph of groups. A graph of groups $(\Gamma,\mathcal{A})$ consists of the following data:
\begin{itemize}
    \item [(1)] An oriented and connected graph $\Gamma$. For each edge $e$, we denote by $\ori e$ its initial vertex and by $\ter e$ its terminal one.
    \item [(2)] An assignment of a \emph{vertex-group} $A_v$ to every vertex $v$ of $\Gamma$, an assignment of an \emph{edge-group} $A_e$ to every edge $e$, and an injective homomorphism $i_0 : A_e \to A_v$ (resp.\ $i_1 : A_e \to A_v$) whenever $\ori e = v$ (resp.\ $\ter e = v$).
\end{itemize}
A group $\pi_1(\Gamma,\mathcal{A})$ is associated to a graph of groups $(\Gamma,\mathcal{A})$, called its \emph{fundamental group}, see \cite{serre2002trees}. A topological definition of $\pi_1(\Gamma,\mathcal{A})$ was given in \cite{scott1979topological}. Let us recall it briefly. To a graph of groups $(\Gamma,\mathcal{A})$ one associates a \emph{graph of spaces} $X$ such that $\pi_1(X) = \pi_1(\Gamma,\mathcal{A})$ as follows. For each vertex $v$ of $\Gamma$ (resp.\ each edge $e$) take a finite simplicial complex $X_v$ (resp.\ $X_e$) such that $\pi_1(X_v) = A_v$ (resp.\ $\pi_1(X_e) = A_e$). Let $I$ be the unit interval. Then $X$ is defined by gluing the complexes $X_v$ and $X_e \times I$, for any $v$ vertex and $e$ edge, as follows. If $e$ is an edge and $i_0 : A_e \to A_{\ori e}$ is the associated injective homomorphism, let $f_0 : X_e \to X_{\ori e}$ be a simplicial map inducing it; and identify, for each $x \in X_e$,  $(x,0) \in X_e \times \left\{0\right\}$ to $f_0(x) \in X_{\ori e}$. Similarly, we identify $X_e \times \left\{1\right\} $ to $X_{\ter e}$. 

\medskip \noindent
The universal cover $\Tilde{X}$ is a union of copies of $\Tilde{X_v}$ and $\Tilde{X_e}\times I$. By giving each edge length one, $\Tilde{X}$ is quasi-isometric to $\pi_1(\Gamma,\mathcal{A})$. Moreover, the Bass-Serre tree $T$ of $\pi_1(\Gamma,\mathcal{A})$ can be obtained from $\Tilde{X}$, see \cite[Section 4]{scott1979topological}, by identifying each copy of $\Tilde{X_v}$ in $\Tilde{X}$ to a vertex and each copy of $\Tilde{X_e}\times I$ to a copy of $I$. This map $p : \Tilde{X}\to T$ is $\pi_1(\Gamma,\mathcal{A})$-equivariant. If $t$ is a vertex of $T$ (resp.\ a midpoint of an edge of $T$), we call $p^{-1}(t)$ a \emph{vertex-space} (resp.\ an \emph{edge-space}). It is easy to see that removing an edge-space separates $\Tilde{X}$ into two coarsely connected components with arbitrarily large balls (see \cite[Lemma 2.2]{papasoglu2012splittings} for a proof). Therefore, the family of edge-spaces coarsely separates $\Tilde{X}$.

\medskip \noindent
The proof of the following statement is similar to that of \cite[Lemma 3.2]{papasoglu2002quasi}.
\begin{thm}\label{sep in graphs of groups general}
Let $X$ be a graph of spaces associated to a graph of groups $(\Gamma,\mathcal{A})$, $Z$ a coarsely connected metric space, and $f : Z \to \Tilde{X}$ a coarse embedding. Either $f(Z)$ is coarsely separated by the family of edge-spaces of $\Tilde{X}$ or it is contained in a bounded neighbourhood of some vertex-space. 
\end{thm}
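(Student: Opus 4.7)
The plan is to exploit the Bass--Serre tree $T$ of $(\Gamma,\mathcal{A})$ together with the projection $p\colon \tilde X \to T$ recalled in the excerpt, which collapses each vertex-space to a vertex of $T$ and sends each cylinder $\tilde X_e \times I$ onto the corresponding edge. As the excerpt notes, every edge-space $E_e = p^{-1}(\text{midpoint of }e)$ separates $\tilde X$ into two components, one for each component of $T\setminus\{e\}$. We may assume $f(Z)$ is non-empty.

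Suppose $f(Z)$ is not coarsely separated by the family $\mathcal{E}$ of edge-spaces. Since $Z$ is coarsely connected and $f$ is a coarse embedding, $f(Z)$ is coarsely connected. Applying Lemma~\ref{lem:SepButNotCoarse} (whose proof only uses coarse connectedness of $Y$ and extends verbatim to our metric setting, or after passing to a Cayley graph of $\pi_1(\Gamma,\mathcal{A})$ quasi-isometric to $\tilde X$), there exists $L \geq 0$ such that, for every edge $e$ of $T$, at most one component $C_e^{+}$ of $\tilde X \setminus E_e$ satisfies $C_e^{+}\cap f(Z) \nsubseteq E_e^{+L}$. Let $T_e^{+} \subset T$ denote the corresponding half-tree. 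I view the assignment $e \mapsto T_e^{+}$ as an orientation of $T$, in which each $e$ points towards its ``chosen side'' $T_e^{+}$.

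The key claim is that there exists a vertex $v_\ast \in T$ such that every edge incident to $v_\ast$ points towards $v_\ast$, i.e.\ $v_\ast \in T_e^{+}$ for every edge $e$ adjacent to $v_\ast$. Granting this, for any vertex $v \neq v_\ast$ the first edge $e$ on the geodesic in $T$ from $v_\ast$ to $v$ has $v$ on its non-chosen side, whence $f(Z)\cap p^{-1}(v) \subset E_e^{+L} \subset p^{-1}(v_\ast)^{+L+1}$ (since $e$ is adjacent to $v_\ast$); handling the open edge-fibres similarly yields $f(Z) \subset p^{-1}(v_\ast)^{+L+1}$, the desired bounded neighbourhood of the vertex-space $p^{-1}(v_\ast)$.

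To prove the claim, suppose no such $v_\ast$ exists: every vertex of $T$ has some adjacent edge pointing away from it. Fix $x_0 \in f(Z)$ with $v_0 = p(x_0)$, and inductively construct a geodesic ray $v_0, v_1, v_2, \ldots$ in $T$, with edges $e_k = (v_k, v_{k+1})$, by selecting at each stage an edge $e_k$ incident to $v_k$ pointing away from $v_k$. The constraint $e_k \neq e_{k-1}$ is automatic, since $e_{k-1}$ was oriented towards $v_k$ by construction. For every $k$, the vertex $v_0$ lies on the non-chosen side of $e_k$, so $x_0 \in E_{e_k}^{+L}$. But $p$ is $1$-Lipschitz (collapsing vertex-spaces and projecting cylinders $\tilde X_e \times I \to I$ cannot increase lengths), so $d_{\tilde X}(x_0, E_{e_k}) \geq d_T(v_0, E_{e_k}) \to \infty$, a contradiction. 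The main obstacle is precisely this sink-existence claim: in an infinite tree such a vertex need not exist a priori, and ruling out an attractive end of the orientation is where non-emptiness of $f(Z)$ together with the Lipschitz projection $p$ is essentially used.
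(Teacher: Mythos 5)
Your proof is correct and rests on the same core mechanism as the paper's: project to the Bass--Serre tree via $p$, orient each edge towards the side where $f(Z)$ has points far from the corresponding edge-space, and locate a sink vertex whose vertex-space coarsely contains $f(Z)$. The execution of the middle step differs: you extract a uniform thickening constant $L$ from Lemma~\ref{lem:SepButNotCoarse} (correctly noting that its proof only needs coarse connectedness and adapts to the geodesic space $\tilde{X}$) and rule out the absence of a sink by following an outward-oriented geodesic ray, which traps the fixed point $x_0$ in $E_{e_k}^{+L}$ while $d(x_0,E_{e_k})\to\infty$; the paper instead first shows that $p\circ f(Z)$ is bounded and finds the sink inside that bounded subtree, then concludes with a second argument by contradiction. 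Your route yields an explicit final bound $f(Z)\subset p^{-1}(v_\ast)^{+L+1}$, whereas the paper's avoids having to produce a uniform $L$ up front. The one detail you leave implicit is the degenerate case where neither component of $\tilde{X}\setminus E_e$ meets $f(Z)$ outside $E_e^{+L}$, so that $T_e^{+}$ is undefined; but then $f(Z)\subset E_e^{+L}$ and one concludes immediately (or one orients $e$ arbitrarily and the argument is unaffected).
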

\begin{proof}
    Suppose that $f(Z)$ is not coarsely separated by the family of edge-spaces of $\Tilde{X}$. Let $T$ be the Bass-Serre tree of $\pi_1(\Gamma,\mathcal{A})$ obtained from $\Tilde{X}$, and $p : \Tilde{X} \to T$. Since $Z$ is coarsely connected and $p$ is simplicial, we can assume that $p \circ f(Z)$ is connected.
    \begin{claim}
        $p \circ f(Z)$ is bounded.
    \end{claim}
    \begin{proof}[Proof of the claim]
    If not, then, for every $n \in \mathbb{N}$, there exists $x_n,y_n \in Z$ such that $d\left(p \circ f(x_n),p \circ f(y_n)\right )> 3n$. Let $t_n \in T$ be the midpoint of an edge in the path from $p \circ f(x_n)$ to $p \circ f(y_n)$. Note that $p$ is distance non-increasing, so $B_{\Tilde{X}}(f(x_n),n)$ and $B_{\Tilde{X}}(f(y_n),n)$ are in different connected components of $\Tilde{X} \backslash p^{-1}(t_n) $. This implies that the family of edge-spaces $\left( p^{-1}(t_n) \right)_n$ coarsely separates $f(Z)$, which leads to a contradiction.
    \end{proof}
    
\noindent
    Let us denote $p \circ f(Z)$ by $Y$. Since $f(Z)$ cannot be coarsely separated by any edge-space of $\Tilde{X}$, it cannot have unbounded intersection with both components of $\Tilde{X} \backslash p^{-1}(t)$, for any $t$ midpoint of an edge of $T$. If there exists a midpoint $t$ of an edge of $T$ such that the intersection of $f(Z)$ with both components of $\Tilde{X} \backslash p^{-1}(t)$ is bounded, we are done. If not, for any $t_e$ midpoint of an edge $e$ of $Y$ there is exactly one component of $\Tilde{X} \backslash p^{-1}(t_e)$ with unbounded intersection with $f(Z)$, and we orient $e$ towards this component. In other words, $e$ points towards the subset of $Y$ onto which the component of $\Tilde{X} \backslash p^{-1}(t_e)$ having unbounded intersection with $f(Z)$ projects. If $v$ is a vertex of $Y$, there can be only one edge incident on $v$ pointing away from it. Therefore, following this orientation leads to a unique vertex $v$ towards which all the edges of $Y$ are pointing.
        \begin{claim}
        $f(Z)$ is contained in a bounded neighbourhood of $p^{-1}(v)$.
    \end{claim}
    \begin{proof}[Proof of the claim]
    If not, for every $n$ there exists $z_n \in Z$ such that $d\left(f(z_n),p^{-1}(v)\right)>3n $. Let us denote by $e_n$ the edge incident to $v$ that the path $[p \circ f(z_n),v]$ crosses, and $t_n$ its midpoint. Let $E_v$ be the set of edges incident on $v$. Since they are all pointing towards $v$, for any $e \in E_v$ there exists $N_e \in \mathbb{N}$ such that $n \geq N_e \implies e_n \ne e $. Therefore, for any $n$, there exists $m>n$ such that $e_m \ne e_n$, i.e.\ $p^{-1}(t_n)$ separates the balls $B_{\Tilde{X}}(f(z_n),n)$ and $B_{\Tilde{X}}(f(z_m),n)$ (because $n<m$). This leads to a contradiction as in the previous claim.   
    \end{proof}

\noindent
    This ends the proof of the theorem.
\end{proof}

\noindent
Note also that each vertex-space (resp.\ edge-space) of $\Tilde{X}$ corresponds to a left coset of some vertex-group (resp.\ edge-group) of the corresponding graph of groups. Therefore, Proposition~\ref{sep in graphs of groups} follows immediately.
\begin{remark}
    Note that $\Gamma$ is not necessarily finite in the previous theorem, but it should be in its applications. Indeed, in order for the condition of ``non separation by the family of edge-spaces'' to be satisfied, there should be a uniform upper bound on the growth of the edge-spaces, which is necessarily satisfied when $\Gamma$ is finite.
\end{remark}
\begin{proof}[Proof of Corollary \ref{corIntro:qi between graphs of groups}]
Let $f : G_1 \to G_2$ be a quasi-isometry, with quasi-inverse $g$, and let $H$ be a vertex-group of $G_1$. All the vertex-groups of $G_1$ and $G_2$ being finitely generated, $G_1$ and $G_2$ are finitely generated. Recall that the subgroup inclusion between finitely generated groups is a coarse embedding, therefore $f$ induces a coarse embedding $H \to G_2$. By the previous theorem, $f(H)$ is in a bounded neighbourhood of a left coset $K$ of some vertex-group of $G_2$. Similarly, $g(K)$ is in a bounded neighbourhood of a left coset $H'$ of some vertex-group of $G_1$. The rest follows as in the proof of \cite[Theorem 3.1]{papasoglu2002quasi}.
\end{proof}
\noindent
Finally, we prove a strong form of Corollary~\ref{corIntro: chordal}, replacing right-angled Artin groups with more general graph products.

\medskip \noindent
Recall that, given a (simplicial) graph $\Gamma$ and a collection of groups $\mathcal{G}= \{ G_u, u \in V(\Gamma)\}$ indexed by the vertices of $\Gamma$, the \emph{graph product} $\Gamma \mathcal{G}$ is the quotient
$$\left( \underset{u \in V(\Gamma)}{\ast} G_u \right) / \langle \langle [g,h]=1, \ g \in G_u, h \in G_v, \{u,v\} \in E(\Gamma) \rangle \rangle$$
where $E(\Gamma)$ denotes the edge-set of $\Gamma$. Roughly speaking, we take the free product of the groups from $\mathcal{G}$, referred to as \emph{vertex-groups}, and force any two adjacent vertex-groups to commute. One can think as graph products as providing an interpolation between free products (when $\Gamma$ has no edges) and direct sums (when $\Gamma$ is complete). Right-angled Artin (resp.\ Coxeter) groups coincide with graph products of infinite cyclic (resp.\ cyclic of order two) groups. Given a subgraph $\Lambda \subset \Gamma$, we denote by $\langle \Lambda \rangle$ the subgroup of $\Gamma \mathcal{G}$ generated by the vertex-groups given by the vertices of $\Lambda$. If $\Lambda$ is a \emph{clique} (i.e.\ a complete subgraph), we refer to $\langle \Gamma \rangle$ as a \emph{clique-subgroup}. 

\begin{thm}\label{thm chordal}
Let $\Gamma$ be a finite chordal graph and $\mathcal{G}$ a collection of finitely generated groups indexed by $V(\Gamma)$. Let $Z$ be a geodesic metric space and $\rho : Z \to \Gamma \mathcal{G}$ a coarse embedding. Either $\rho(Z)$ is contained in a neighbourhood of a coset of a clique-subgroup or it is coarsely separated by the cosets of clique-subgroups.
\end{thm}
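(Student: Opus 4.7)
The plan is to realise $\Gamma\mathcal{G}$ as the fundamental group of a finite graph of groups whose vertex- and edge-groups are all clique-subgroups, and then apply Theorem~\ref{sep in graphs of groups general}. The key combinatorial ingredient is the classical characterisation of finite chordal graphs as those admitting a \emph{clique tree}: a tree $T$ whose vertex set is the collection of maximal cliques of $\Gamma$, and such that for every $u\in V(\Gamma)$ the maximal cliques containing $u$ span a subtree of $T$. In particular, for every edge $\{K,K'\}$ of $T$, the intersection $K\cap K'$ is itself a clique of $\Gamma$.

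The first step is to construct a graph-of-groups decomposition of $\Gamma\mathcal{G}$ whose underlying graph is a clique tree $T$ of $\Gamma$, with vertex-group $\langle K\rangle$ at each maximal clique $K$ and edge-group $\langle K\cap K'\rangle$ at each edge $\{K,K'\}$ (with the natural inclusions). This proceeds by induction on the number of maximal cliques. Picking a leaf $K$ of $T$ with unique neighbour $K'$ and setting $S:=K\cap K'$, the subtree property forces every vertex in $K\setminus S$ to belong to no maximal clique other than $K$. Consequently $\Gamma':=\Gamma\setminus(K\setminus S)$ is chordal with clique tree $T\setminus\{K\}$, and
\[
\Gamma\mathcal{G}=\Gamma'\mathcal{G}'\ast_{\langle S\rangle}\langle K\rangle,
\]
since $\langle K\rangle=\langle S\rangle\times\langle K\setminus S\rangle$ (because $K$ is a clique) and no vertex in $K\setminus S$ is adjacent in $\Gamma$ to any vertex of $\Gamma'$ outside $S$. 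Appending $\langle K\rangle$ as a new leaf-vertex joined to $K'$ by an edge labelled $\langle S\rangle$ then extends the inductively given decomposition of $\Gamma'\mathcal{G}'$ to the required decomposition of $\Gamma\mathcal{G}$.

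The second step is a direct application of Theorem~\ref{sep in graphs of groups general}. Passing to an associated graph of spaces $\widetilde{X}$, which is quasi-isometric to $\Gamma\mathcal{G}$, the vertex-spaces are precisely cosets of the clique-subgroups $\langle K\rangle$ for maximal cliques $K$, and the edge-spaces are precisely cosets of the clique-subgroups $\langle K\cap K'\rangle$ for edges of $T$. Composing $\rho$ with this quasi-isometry produces a coarse embedding $Z\to\widetilde{X}$, and the dichotomy delivered by Theorem~\ref{sep in graphs of groups general} is exactly the one claimed: either $\rho(Z)$ lies in a bounded neighbourhood of a vertex-space, hence of a coset of a clique-subgroup, or it is coarsely separated by the family of edge-spaces, which are cosets of clique-subgroups.

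The main obstacle is the verification of the graph-of-groups decomposition. The amalgamation identity is routine once one knows that vertices of $K\setminus S$ only appear in the single maximal clique $K$, but one must also check that the iterated attachment of new leaves yields an underlying graph that is genuinely a clique tree of $\Gamma$, so that the edge-groups arising are intersections of \emph{adjacent} cliques in $T$ rather than intersections of cliques related in some more indirect way. Both points follow from the subtree property of the clique tree, but spelling them out rigorously requires some care with the combinatorics of chordal graphs.
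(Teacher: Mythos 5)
Your argument is correct, but it follows a genuinely different route from the paper's. The paper does not build the full clique-tree decomposition: it invokes Dirac's theorem that a non-complete finite chordal graph contains a \emph{separating} clique $\Lambda$, writes $\Gamma\mathcal{G}=\langle\Gamma_1\rangle\ast_{\langle\Lambda\rangle}\langle\Gamma_2\rangle$ for the corresponding amalgam of two proper induced subgraphs, applies Theorem~\ref{sep in graphs of groups general} to this two-factor splitting to conclude that $\rho(Z)$ lies in a neighbourhood of a coset of some $\langle\Gamma_i\rangle$, and then recurses, using that $\Gamma_1,\Gamma_2$ are again chordal and that $\langle\Gamma_i\rangle$ is itself the graph product over $\Gamma_i$. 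You instead assemble the whole clique tree $T$ at once, obtain a single graph-of-groups decomposition with vertex-groups the maximal clique-subgroups and edge-groups the $\langle K\cap K'\rangle$, and apply Theorem~\ref{sep in graphs of groups general} exactly once. The trade-off: your version concentrates the work in the combinatorics (the clique-tree/junction-tree property and the leaf-peeling verification of the amalgam, which you carry out correctly --- the subtree property does force every vertex of $K\setminus S$ to lie in no maximal clique other than $K$, hence to have no neighbours outside $K$), and in exchange it yields an explicit separating family, namely the cosets of the $\langle K\cap K'\rangle$, and avoids the small bookkeeping the paper's induction glosses over (that non-separation by clique-cosets of $\Gamma\mathcal{G}$ descends to non-separation, inside the retract $\langle\Gamma_i\rangle$, by clique-cosets of $\langle\Gamma_i\rangle$). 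Both arguments ultimately rest on the same engine, Theorem~\ref{sep in graphs of groups general}, and both are valid.
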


\begin{proof}
Assume that $\rho(Z)$ is not coarsely separated by the cosets of clique-subgroups. We argue by induction over the number of vertices of $\Gamma$. If $\Gamma$ is a clique, there is nothing to prove. Otherwise, it follows from \cite{MR0130190} that $\Gamma$ contains a separating clique, say $\Lambda$. Decompose $\Gamma$ as an amalgamate of two proper subgraphs $\Gamma_1$ and $\Gamma_2$ along $\Lambda$. Then $\Gamma \mathcal{G}$ decomposes as the amalgamated product $\langle \Gamma_1 \rangle \ast_{\langle \Lambda \rangle} \langle \Gamma_2 \rangle$. Moreover, $\langle \Lambda \rangle$ is a retract (which can be seen through the morphism $\Gamma \mathcal{G} \to \langle \Lambda \rangle$ that kills the factors indexed by the vertices not in $\Lambda$), so it is an undistorted subgroup. Because $\rho(Z)$ is not coarsely separated by the cosets of $\langle \Lambda \rangle$, it follows from Theorem~\ref{sep in graphs of groups general} that it must be contained in the neighbourhood of a coset of either $\langle \Gamma_1 \rangle$ or $\langle \Gamma_2 \rangle$. Because $\langle \Gamma_i \rangle$ coincides with the graph product $\Gamma_i \{ G_u, u \in V(\Gamma_i)\}$ for $i=1,2$ and that $\Gamma_1,\Gamma_2$ are again chordal, we conclude that $\rho(Z)$ must be contained in a neighbourhood of a coset of a clique-subgroup
\end{proof}

\section{Distortion of coarse embeddings between coarse Poincar\'e duality groups and spaces}\label{sec:distortion}
A coarse version of the Jordan separation theorem was first proved in \cite{Schwartz1996, Farb1996}, and a stronger version in the setting of coarse Poincar\'e duality groups and spaces was given later in \cite{kapovich2005coarse}. Before stating it, let us recall that a metric space $X$ is \emph{uniformly contractible} if there is a function $\phi : \mathbb{R}_{+} \to \mathbb{R}_{+}$ such that, for any $x \in X$, $r>0$, a continuous map from a finite simplicial complex with image in $B_X(x,r)$ is contractible in $B_X(\phi(x),r)$. A version of this theorem that is easy to state is the following:
\begin{thm}{\cite[Theorem 9.73]{dructu2018geometric}}\label{jordan}
Let $X,Y$ be uniformly contractible simplicial complexes of bounded geometry, homeomorphic to $\mathbb{R}^{n-1}$ and $\mathbb{R}^{n}$ respectively. Then if $f : X \to Y$ is a simplicial coarse embedding, its image $f(X)$ coarsely separates $Y$.
\end{thm}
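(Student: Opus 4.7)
The plan is to pass from the coarse embedding $f$ to a proper continuous map, then detect separation via a coarse degree or linking-number argument that exploits the codimension-one topological structure. I would proceed in three stages.

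First, I would promote $f$ to a continuous simplicial proper map $\tilde{f}$. Because $X$ is a bounded-geometry simplicial complex and $Y$ is uniformly contractible of bounded geometry, one can extend $f$ skeleton by skeleton: map vertices by $f$, then inductively fill in $k$-simplices by using uniform contractibility of $Y$ to contract the image of the boundary of each simplex inside a uniformly bounded ball controlled by the contractibility function. This yields a continuous simplicial map $\tilde{f}: X \to Y$ at uniformly bounded distance from $f$, which is automatically proper because $f$ is a coarse embedding. Coarse separation of $\tilde{f}(X)$ and $f(X)$ are equivalent up to adjusting the thickening constant, so it suffices to show that $\tilde{f}(X)$ coarsely separates $Y$.

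Second, I would define a parity-valued \emph{linking invariant} $\ell : Y \setminus \tilde{f}(X)^{+R} \to \mathbb{Z}/2$ for $R$ large. Given a point $y$ in the complement, choose a proper ray $\gamma : [0, \infty) \to Y$ starting at $y$ and escaping to infinity; after a small PL perturbation using general position, one arranges $\gamma$ to meet $\tilde{f}(X)$ in a discrete set, and sets $\ell(y)$ to be the parity of the number of such intersections. To show $\ell$ is independent of the choice of $\gamma$, use that $Y \cong \mathbb{R}^n$ is contractible, so any two proper rays are related by a proper homotopy, and a transversality argument shows that the parity of intersections is preserved under such homotopies; the necessary codimension count is provided by the fact that $\tilde{f}$ has image of topological dimension at most $n-1$ in the $n$-dimensional complex $Y$.

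Third, I would show that $\ell$ is non-constant by producing, for any $D$, two points $y_\pm$ with $\ell(y_+) \neq \ell(y_-)$ and $d(y_\pm, \tilde{f}(X)) \geq D$. Pick a point $x_0 \in X$ where $\tilde{f}$ is a PL immersion (arranged by a further simplicial perturbation) and take a small transverse arc at $\tilde{f}(x_0)$; its two endpoints realise opposite local sides of $\tilde{f}(X)$ and therefore have opposite linking. Because $X \cong \mathbb{R}^{n-1}$ is non-compact and $\tilde{f}$ is proper, one can push such transverse arcs along rays escaping to infinity in $X$, producing pairs $y_\pm$ with arbitrary distance from any fixed bounded subset. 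Two points with different $\ell$ cannot lie in the same connected component of $Y \setminus \tilde{f}(X)^{+R}$, which delivers the required coarse separation.

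The hard part is the second stage: setting up the linking-number invariant rigorously in the non-compact, merely continuous, not-necessarily-embedded setting, and proving it is independent of the ray used. A clean treatment most likely requires coarse Alexander duality or uniformly finite homology, realising $[\tilde{f}(X)]$ as a coarse fundamental cycle of codimension one and pairing it with coarse $0$-chains of the complement. Avoiding that machinery and relying solely on PL transversality is possible but delicate, because one must control the singular set of $\tilde{f}$ and verify that it has codimension at least two in $\tilde{f}(X)$ so that generic rays and generic homotopies avoid it.
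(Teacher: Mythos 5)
This theorem is quoted by the paper from \cite[Theorem 9.73]{dructu2018geometric} and not proved there, so there is no internal argument to compare against; the reference proof goes through coarse algebraic topology (coarse/uniformly finite homology and coarse Alexander duality), which is exactly the machinery you defer to at the end. Judged on its own terms, your proposal has a genuine gap, and it sits in the third stage, not the second.

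The problem is that producing two points of opposite linking parity on either side of a generic sheet of $\tilde{f}(X)$ does not produce \emph{deep} points of opposite parity, i.e.\ points at distance $\geq D$ from $\tilde{f}(X)$ for arbitrary $D$. Pushing a short transverse arc ``along rays escaping to infinity in $X$'' only yields pairs $y_\pm$ that escape every fixed bounded set while remaining at bounded distance from the (unbounded) image; that is not coarse separation in the sense of Definition~\ref{weak separation}. A concrete obstruction: the proper topological embedding $t \mapsto (|t|,\arctan t)$ of $\mathbb{R}$ into $\mathbb{R}^2$ passes every step of your argument --- it is proper, in general position its pushforward fundamental class has odd local multiplicity, the linking invariant $\ell$ is well defined and non-constant --- yet its image does not coarsely separate $\mathbb{R}^2$: one of the two complementary components (the wedge between the two branches) lies entirely in a bounded neighbourhood of the image, so $\ell$ is constant on the deep part of the complement. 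This shows that stages 1--3 as written cannot suffice, because nowhere do they use that $f$ is a coarse embedding rather than merely proper; the lower control function $\rho_-$ must enter essentially in showing that \emph{both} values of $\ell$ are attained arbitrarily far from the image.

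This is precisely what coarse Alexander duality supplies: since $f$ is a coarse embedding, $f(X)$ with the induced metric is coarsely equivalent to $X$, hence is a coarse $PD(n-1)$ space, and its top-dimensional coarse cohomology pairs nontrivially with $H^0$ of the deep complement, forcing at least two deep components. So your closing paragraph correctly identifies the missing ingredient, but the PL transversality route you sketch in its place does not close the gap: the difficulty is not only controlling the singular set of $\tilde{f}$, but propagating the local two-sidedness to the deep complement, and that step is where the coarse (as opposed to topological) hypotheses do all the work.
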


\noindent
Examples of spaces to which this theorem applies include:
\begin{itemize}
    \item symmetric spaces of non-positive curvature;
    \item connected, simply-connected nilpotent Lie groups;
    \item horocyclic products $X \bowtie Y$, where $X$ and $Y$ are rank one symmetric spaces of non-compact type.
\end{itemize}
Indeed, if $M$ is such a manifold, then it has bounded geometry, and is therefore quasi-isometric to a finite dimensional simplicial complex $X$ homeomorphic to it (\cite[Theorem 8.53]{dructu2018geometric}). Since $M$ is homogeneous, $X$ is uniformly contractible. Moreover, if $f : M_1 \to M_2$ is a coarse embedding between such manifolds, it induces a coarse embedding $\Tilde{f} : X_1 \to X_2$ between these simplicial complexes. The complexes $X_1$ and $X_2$ are finite-dimensional, uniformly contractible, and have bounded geometry, so by a ``connect the dots'' argument we can assume that $\Tilde{f}$ is continuous. Finally, up to a barycentric subdivision of $X_1$ (a finite number of iterations) and a simplicial approximation, we can assume that $\Tilde{f}$ is simplicial. Therefore, if $X_1$ and $X_2$ are homeomorphic to $\mathbb{R}^{n-1}$ and $\mathbb{R}^{n}$ respectively, then $\Tilde{f}(X_1)$ coarsely separates $X_2$, and, by Lemma \ref{lem:CoarseSep}, $f(M_1)$ coarsely separates $M_2$.

\medskip \noindent
By combining this theorem with Theorem \ref{thmIntro:nilpot}, we now prove Theorem \ref{thmIntro:lower bound distortionNilp}.
\begin{proof}[Proof of Theorem \ref{thmIntro:lower bound distortionNilp}]
    The image $f(H)$ coarsely separates $G$, so, by Theorem \ref{thmIntro:nilpot}, $f(H)$ has growth $\gtrsim r^{n-1}$: there exists $\alpha >0$ such that, for any $r$ big enough, there exists $h_r \in H$ such that 
    $$ | B_G(f(h_r),r) \cap f(H) | \geq \alpha r^{n-1}. $$ 
    We recall that coarse embeddings coarsely preserve the volume, so there exists a constant $\theta>0$ such that, for all $r>0$, $C_r = \{ h \in H \mid d(f(h_r),f(h)) \leq r       \} $ satisfies
    $$      | C_r |   \geq    \theta  r^{n-1} . $$
    Since $\beta_H(r) \simeq r^p$, there exists a constant $\delta>0$ such that $C_r \not \subset B_H\left(h_r, \delta r^{\frac{n-1}{p}}\right) $, i.e.\ there exists $h_r' \in C_r$ such that $d_H(h_r,h_r')> \delta r^{\frac{n-1}{p}} $. Hence
    $$           \rho_- ( \delta  r^{\frac{n-1}{p}} )    \leq     \rho_- (  d_H(h_r,h_r')  )  \leq   d_G(f(h_r),f(h_r')) \leq r  .  $$
    We conclude that $\rho_-(r) \lesssim r^{\frac{p}{n-1}}$.
\end{proof}

\noindent
By using Theorems \ref{thmIntro:Symmetric/buildings} and \ref{thmIntro:horocyclic products} instead, the proof of Theorem \ref{thmIntro:lower bound distortionExp} is similar.

\section{Large-scale geometry of wreath products}\label{section:Wreath}

\noindent
In this section, we focus on the geometry of wreath products. In \cite{GTbig}, we proved that, given a finite group $F$ and a finitely generated group $H$, every coarse embedding $Z \to F \wr H$ must have its image contained in a neighbourhood of an $H$-coset whenever $Z$ is coarsely simply connected and uniformly one-ended. In Section~\ref{section:EmbeddingTheorem}, we extend this Embedding Theorem by allowing $F$ to be infinite. In this case, the uniform one-endedness of $Z$ is replaced with a non-separation assumption depending on $F$. See Theorem~\ref{thm:FullEmbeddingThm}. Following \cite{GTbig, GTlike}, such an Embedding Theorem has strong consequences on the structure of quasi-isometries and coarse embeddings between wreath products. This is explained in Section~\ref{section:Aptolic}. Combined with the non-separation results proved in Section~\ref{section:CoarseSep}, we deduce various concrete applications, recorded in Sections~\ref{section:AppCoarseEmb} and~\ref{section:AppQI}.

\subsection{The embedding theorem for wreath products}\label{section:EmbeddingTheorem}

\noindent
This section is dedicated to the proof of the following Embedding Theorem:

\begin{thm}\label{thm:FullEmbeddingThm}
Let $A,B$ be two finitely generated groups. For all finite subset $E \subset B$ and colouring $c \in A^{(B)}$, set
$$\mathcal{B}(E,c):= \{ (f,x) \mid x \in E, f \in c A^{(E)} \}.$$
Also, set 
$$\mathcal{B}_k:= \{ \mathcal{B}(E,c) \mid E \subset B \text{ of size } \leq k, \ c \in A^{(B)} \}$$ 
for every $k \geq 1$, and $\mathcal{B}:= \bigcup_{k \geq 1} \mathcal{B}_k$. For every coarsely simply connected graph $Z$ and for every coarse embedding $\rho : Z \to A \wr B$, if $\rho(Z)$ is neither contained in a subset of $\mathcal{B}$ nor coarsely separated by $\mathcal{B}_k$ for some $k \geq 1$, then the image of $\rho$ must lie in the neighbourhood of some $B$-coset. Moreover, the size of this neighbourhood only depends on $A$, $B$, $Z$, and the parameters of $\rho$.
\end{thm}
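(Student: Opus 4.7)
The plan is to adapt the strategy of \cite{GTbig} to the case of infinite lamp groups via a sequence of truncations of $A \wr B$ endowed with quasi-median geometric models. Fix a finite symmetric generating set $S$ of $B$ with associated word metric $d_S$, and, for each $n \geq 1$, let $\Gamma_n$ be the graph on vertex set $B$ with edges $\{x,y\}$ for $d_S(x,y) \leq n$, and let $G_n$ be the graph product over $\Gamma_n$ of copies of $A$. The inclusions of the vertex-groups assemble into a surjection $W_n := G_n \rtimes B \twoheadrightarrow A \wr B$ whose kernel is normally generated by commutators $[a_x, a'_y]$ with $d_S(x,y)>n$, and is therefore generated by relators of length $\gtrsim n$ in $W_n$. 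As $n \to \infty$ the graph $\Gamma_n$ converges to the complete graph on $B$, so the $W_n$ ``approach'' $A \wr B$.

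Next I would lift $\rho$ to one of these truncations. Since $Z$ is coarsely simply connected, every loop in $\rho(Z)$ of bounded length bounds a coarse disc of bounded diameter. Combined with the long-relator property above, this allows, for $n$ large enough in terms of $A$, $B$, $Z$, and the parameters of $\rho$, to lift $\rho$ to a coarse embedding $\tilde{\rho} : Z \to W_n$ whose control functions depend only on those of $\rho$.

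The third step exploits the geometry of $W_n$. Following \cite{Qm}, the graph product $G_n$ acts on a quasi-median graph whose cliques are cosets of the vertex-groups (copies of $A$); endowing each clique with the Cayley metric of $A$ gives a \emph{coherent system of metrics} in the sense of \cite{Qm}, and $W_n$ is then quasi-isometric to a ``graph of pointed cliques in a quasi-median graph with coherent metrics'' acted on by $B$. The hyperplanes of this quasi-median graph yield walls whose pre-images in $A \wr B$ are, by construction of the truncation, subsets of the form $\mathcal{B}(E,c)$ with $|E| \leq k$ for some $k=k(n,S)$. By hypothesis, $\mathcal{B}_k$ does not coarsely separate $\rho(Z)$; hence, by Lemma~\ref{lem:SepButNotCoarse}, for each such wall $\tilde{\rho}(Z)$ lies, up to a bounded neighbourhood, on a prescribed side. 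Intersecting the selected halfspaces yields a convex subset of the quasi-median model essentially containing $\tilde{\rho}(Z)$; projecting back to $A \wr B$, this intersection is either a subset of some $\mathcal{B}(E,c) \in \mathcal{B}$ or a bounded neighbourhood of a single $B$-coset, and the non-containment hypothesis rules out the first case.

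The main obstacle --- and the reason the argument of \cite{GTbig} does not apply verbatim --- is the presence of coherent metrics on the cliques in place of bounded cliques. In \cite{GTbig} the cliques are finite, so walls automatically produce bounded separating subsets and one only has to track a finite ``support'' $E \subset B$; here each clique is an infinite copy of $A$, and one must show that halfspaces indexed jointly by a finite $E \subset B$ and by a colouring $c \in A^{(B)}$ still behave coherently enough for the wall-crossing argument to confine $\tilde{\rho}(Z)$ to a set of the expected form, with quantitative bounds depending only on $A$, $B$, $Z$, and the parameters of $\rho$. This requires a careful reworking of the quasi-median machinery of \cite{Qm} in the presence of coherent metrics, and is where the bulk of the technical effort lies.
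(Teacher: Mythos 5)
Your proposal follows essentially the same route as the paper's proof: approximate $A \wr B$ by partial wreath products $\Gamma A \rtimes B$ over locally finite Cayley graphs $\Gamma$ of $B$, lift $\rho$ through one of these truncations using coarse simple connectedness, realise the truncation as a graph of pointed cliques in the quasi-median graph of the graph product endowed with a coherent system of metrics, orient each wall (bulkhead) using the non-separation hypothesis --- the bulkheads project exactly onto sets coarsely coinciding with the $\mathcal{B}(E,c)$ --- and invoke the non-containment hypothesis to force the chosen pieces to intersect in a single vertex whose pointed cliques project onto a $B$-coset. The technical core you defer (that the oriented pieces intersect in a single vertex, and that the lifted image stays at uniformly bounded distance from the pointed cliques at that vertex, with estimates controlled through the coherent metrics) is precisely where the paper's argument does its work, but your outline is the correct one.
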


\noindent
Notice that, if $A$ is finite, then the conclusion of the theorem holds whenever $Z$ is unbounded and uniformly one-ended. Therefore, Theorem~\ref{thm:FullEmbeddingThm} generalises the (algebraic version of the) Embedding Theorem proved in \cite{GTbig}.

\subsubsection{Preliminaries on quasi-median geometry}

\noindent
Similarly to \cite{GTbig}, our proof of the Embedding Theorem is fundamentally based on quasi-median geometry. In this section, we record the basic definitions and properties about quasi-median graphs that will be necessary later.

\paragraph{Quasi-median graphs.} There exist several equivalent definitions of quasi-median graphs. See for instance \cite{quasimedian}. Below, we give the definition used in \cite{Qm}.

\begin{definition}
A connected graph $X$ is \emph{quasi-median} if it does not contain $K_4^-$ and $K_{3,2}$ as induced subgraphs, and if it satisfies the following two conditions:
\begin{description}
	\item[(triangle condition)] for every triplet of vertices $a, x,y \in X$, if $x$ and $y$ are adjacent and if $d(a,x)=d(a,y)$, then there exists a vertex $z \in X,$ which is adjacent to both $x$ and $y$, satisfying $d(a,z)=d(a,x)-1$;
	\item[(quadrangle condition)] for every quadruplet of vertices $a,x,y,z \in X$, if $z$ is adjacent to both $x$ and $y$ and if $d(a,x)=d(a,y)=d(a,z)-1$, then there exists a vertex $w \in X$ that is adjacent to both $x,y$ and that satisfies $d(a,w)=d(a,z)-2$.
\end{description}
\end{definition}

\noindent
The graph $K_{3,2}$ is the bipartite complete graph, corresponding to two squares glued along two adjacent edges; and $K_4^-$ is the complete graph on four vertices minus an edge, corresponding to two triangles glued along an edge.

\medskip \noindent
However, the definition of quasi-median graphs is only given for completeness. Similarly to median graphs (i.e. one-skeletons of CAT(0) cube complexes), the geometry of quasi-median graphs essentially reduces to the combinatorics of separating subspaces referred to as \emph{hyperplanes}. Understanding this interaction between hyperplanes and geometry will be sufficient for us.

\begin{definition}
Let $X$ be a quasi-median graph. A \emph{hyperplane} $J$ is an equivalence class of edges with respect to the transitive closure of the relation that identifies two edges in the same $3$-cycle and two opposite edges in the same $4$-cycle. The \emph{carrier} of $J$, denoted by $N(J)$, is the subgraph generated by the edges in $J$. The connected components of the graph $X\backslash \backslash J$ obtained from $X$ by removing the interiors of the edges in $J$ are the \emph{sectors delimited by $J$}; and the connected components of $N(J) \backslash \backslash J$ are the \emph{fibres} of $J$. Two distinct hyperplanes $J_1$ and $J_2$ are \emph{transverse} if $J_2$ contains an edge in $N(J_1) \backslash J_1$. 
\end{definition}
\begin{figure}
\begin{center}
\includegraphics[clip, trim= 0 16cm 10cm 0, scale=0.4]{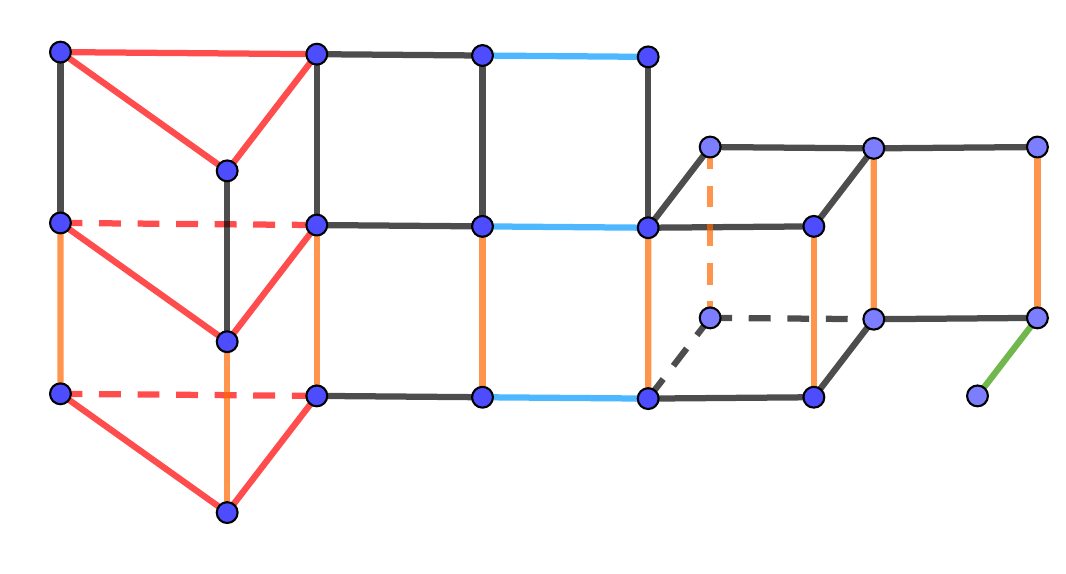}
\caption{Four hyperplanes in a quasi-median graphs. The orange hyperplane is transverse to the red and blue hyperplanes.}
\label{hyperplanes}
\end{center}
\end{figure}

\noindent
See Figure \ref{hyperplanes} for a few examples. The role of hyperplanes is highlighted by the following statement. For a self-contained proof, we refer to \cite[Propositions 2.15 and 2.30]{Qm}; see also \cite[Theorem 3.19]{MR4033512} (or \cite[Theorem 2.14]{AutQM} for proofs specific to the quasi-median graphs that will be considered in the next sections). 

\begin{thm}\label{thm:MainQM}
Let $X$ be a quasi-median graph. The following assertions hold:
\begin{itemize}
	\item[(i)] For every hyperplane $J$, $X\backslash \backslash J$ contains at least two connected components.
	\item[(ii)] Carriers and fibres of hyperplanes are gated.
	\item[(iii)] A path is a geodesic if and only if it crosses each hyperplane at most once.
	\item[(iv)] The distance between two vertices coincides with the number of hyperplanes separating them.
\end{itemize}
\end{thm}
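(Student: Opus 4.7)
The plan is to prove the four assertions in a bootstrap order, with assertion (iii) as the technical core. I would first analyse the equivalence relation generating a hyperplane $J$: because the only elementary identifications are through $3$-cycles and $4$-cycles, and because $K_4^-$ and $K_{3,2}$ are forbidden as induced subgraphs, in any $3$-cycle or $4$-cycle the edges belonging to a fixed hyperplane are arranged in a tightly controlled way (all three edges of a triangle lie in a common hyperplane, while opposite edges of a square lie in the same hyperplane and the two pairs of adjacent edges in distinct ones). This local description is the workhorse for everything that follows.

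For (iii), suppose some geodesic $\gamma$ between $x$ and $y$ crossed a hyperplane $J$ twice, say at edges $e$ and $e'$. Since $e$ and $e'$ are linked by a finite chain of elementary equivalences through $3$-cycles and $4$-cycles, I would iteratively apply the triangle condition (to shortcut subpaths traversing a $3$-cycle) and the quadrangle condition (to shortcut subpaths traversing a $4$-cycle), rewriting $\gamma$ into a strictly shorter $x$-$y$ path and contradicting geodesicity; the excluded subgraphs prevent the rewriting from getting stuck. Once (iii) is in hand, (i) follows: if both endpoints of an edge $e$ of $J$ lay in the same component of $X \backslash \backslash J$, then concatenating $e$ with a connecting path that avoids the interior of $J$ would produce a closed loop crossing $J$ exactly once, while applying (iii) to a geodesic between the endpoints of $e$ forces any such parity count to be zero modulo $2$. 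Assertion (iv) is then immediate by counting: a geodesic crosses exactly the hyperplanes separating its endpoints, each exactly once.

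The main obstacle is assertion (ii). To show that a fibre $F$ of a hyperplane $J$ is gated, given a vertex $v$ I would take as candidate gate a vertex $p \in F$ minimising $d(v, \cdot)$ and try to verify that $p$ lies on a geodesic from $v$ to every $q \in F$. By (iii) and (iv) this reduces to proving that every hyperplane separating $v$ from $p$ also separates $v$ from $q$, which I would establish by induction on $d(v, F)$. The delicate case concerns hyperplanes transverse to $J$: transversality corresponds to squares straddling adjacent fibres of $J$, and ruling out conflicting behaviour across $F$ uses the absence of $K_{3,2}$ and $K_4^-$ to ensure that a transverse hyperplane extends coherently from one fibre to another. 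Gatedness of the carrier $N(J)$ then follows by presenting $N(J)$ as the union of the fibres of $J$ together with the edges of $J$ and reducing to gatedness of the fibres.
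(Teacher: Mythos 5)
The paper does not actually prove this statement: it is quoted from the literature, with the proof deferred to \cite[Propositions 2.15 and 2.30]{Qm}. So the comparison below is with the known proofs rather than with an argument in the text.

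Your proposal has two genuine gaps. First, the deduction of (i) from (iii) by a parity count is wrong for quasi-median graphs. Unlike in median graphs, a hyperplane can delimit more than two sectors: a single $3$-clique is itself a hyperplane, and the triangle is a closed loop crossing that hyperplane three times. So the claim that any closed loop crosses $J$ an even number of times is false, and (iii) (a statement about geodesics) does not yield any mod-$2$ invariant for loops. Separation must instead be proved by exhibiting a well-defined ``sector'' map, e.g.\ by showing that the projection onto a clique of $J$ is locally constant away from $J$ --- and that already requires gatedness of cliques. Second, your core argument for (iii) does not work as described: the chain of elementary identifications (through $3$-cycles and $4$-cycles) witnessing $e \sim e'$ is an arbitrary sequence of edges somewhere in the graph and has no a priori relation to the geodesic $\gamma$; the triangle and quadrangle conditions are statements about distances to a fixed basepoint, not rewriting rules along such a chain, so there is no mechanism for ``shortcutting $\gamma$'' from it.

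More structurally, your bootstrap order ((iii) $\Rightarrow$ (i) $\Rightarrow$ (iv), then (ii)) inverts the logical dependencies used in the actual proofs. The standard route (as in \cite{Qm}) first proves that cliques are gated (using the triangle condition and the absence of $K_4^-$), then uses the quadrangle condition and the absence of $K_{3,2}$ to show that the carrier $N(J)$ splits as a product $(\text{fibre}) \times (\text{clique})$ and that carriers and fibres are gated --- i.e.\ assertion (ii) comes first --- and only then deduces separation (i), the single-crossing property (iii), and the distance formula (iv). Your treatment of (ii), which presupposes (iii) and (iv) to reduce gatedness to a statement about separating hyperplanes, is therefore circular relative to any viable version of the argument. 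The missing idea is precisely the gatedness of cliques and the product structure of carriers; once those are in place, the rest follows along lines close to what you sketch.
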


\noindent
Recall that a subgraph $Y \subset X$ is \emph{gated} if, for every vertex $x \in X$, there exists a vertex $y \in Y$ such that, for every $z \in Y$, there exists a geodesic between $x$ and $z$ passing through $y$. We refer to the vertex $y$ as the \emph{projection of $x$ onto $Y$}. The property of being gated can be thought of as a strong convexity condition. In particular, being gated implies being convex. Also, observe that the projection $y$ coincides with the unique vertex of $Y$ minimising the distance to $x$. A useful statement, showing how hyperplanes interact with projections, is the following. We refer to \cite[Lemma 2.34]{Qm} or \cite[Corollary~3.20]{MR4033512} for a proof. 

\medskip \noindent
Quasi-median graphs are quite similar to median graphs. Essentially, the only differences are that edges are replaced with \emph{cliques} (i.e.\ maximal complete subgraphs), that cubes are replaced with \emph{prisms} (i.e.\ subgraphs which are products of cliques), and that cutting along a hyperplane may disconnected the graph into more than two connected components (possibly infinitely many). Following this analogy, we refer to the \emph{cubical dimension} of a quasi-median graph as the maximal number of factors in a prism, or, equivalently as a consequence of \cite[Proposition~2.73]{Qm}, as the maximal number of pairwise transverse hyperplanes.

\begin{lemma}[\cite{GTbig}]\label{lem:CubDist}
Let $X$ be a quasi-median graph. The inequality
$$d(x,y) \leq \mathrm{dim}_\square(X) \cdot \left( \begin{array}{c} \text{maximal number of pairwise non-} \\ \text{transverse hyperplanes separating $x,y$} \end{array} \right)$$
holds for all vertices $x,y \in X$.
\end{lemma}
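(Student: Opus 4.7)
The plan is to reduce the claim to Dilworth's theorem applied to a suitable partial order on the set $\mathcal{J}(x,y)$ of hyperplanes separating $x$ and $y$. By Theorem~\ref{thm:MainQM}(iv) we have $d(x,y) = |\mathcal{J}(x,y)|$, so it will suffice to prove $|\mathcal{J}(x,y)| \leq \mathrm{dim}_\square(X) \cdot k$, where $k$ denotes the maximal size of a pairwise non-transverse subfamily of $\mathcal{J}(x,y)$.

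For each $J \in \mathcal{J}(x,y)$, let $S_x(J)$ and $S_y(J)$ denote the sectors of $J$ containing $x$ and $y$, and declare $J \preceq J'$ when $J = J'$ or $J' \subseteq S_y(J)$. Antisymmetry and transitivity will follow immediately from the corresponding inclusions of sectors, so the first genuine step is to show that any two distinct non-transverse hyperplanes $J, J' \in \mathcal{J}(x,y)$ are $\preceq$-comparable. Non-transversality forces $J'$ to lie in some single sector $S$ of $J$; the key observation is that $S$ must in fact coincide with $S_x(J)$ or $S_y(J)$, because every other sector of $J$ contains neither $x$ nor $y$ and hence cannot contain a hyperplane separating them. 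Running the symmetric observation for $J$ relative to $J'$ yields the dichotomy $J' \subseteq S_y(J)$ or $J \subseteq S_y(J')$, i.e.\ $J \preceq J'$ or $J' \preceq J$.

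Once $\preceq$ is set up, I expect the remainder to be bookkeeping. Two distinct hyperplanes in $\mathcal{J}(x,y)$ are transverse if and only if they are $\preceq$-incomparable, so the chains of $\preceq$ are exactly the pairwise non-transverse subfamilies (of maximal size $k$), while the antichains are exactly the pairwise transverse subfamilies, which by definition of $\mathrm{dim}_\square(X)$ have at most $\mathrm{dim}_\square(X)$ elements. Dilworth's theorem then decomposes $\mathcal{J}(x,y)$ into at most $\mathrm{dim}_\square(X)$ chains of length at most $k$, giving the desired bound.

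The main obstacle I anticipate is precisely the promotion of non-transversality to $\preceq$-comparability. In a median graph this step is trivial since hyperplanes have exactly two sectors; in the quasi-median setting, however, a hyperplane can have arbitrarily many sectors, and it is essential to invoke the hypothesis that both $J$ and $J'$ separate $x$ from $y$ in order to exclude the pathological configuration where $J'$ sits in a third, irrelevant sector of $J$.
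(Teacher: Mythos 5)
Your proof is correct, and it is the standard argument for this statement (which the paper does not prove itself but imports from \cite{GTbig}): Dilworth's theorem applied to the nesting order on the separating hyperplanes, combined with Theorem~\ref{thm:MainQM}(iv) and the characterisation of $\mathrm{dim}_\square(X)$ as the maximal number of pairwise transverse hyperplanes. The one place to be careful is that antisymmetry and transitivity of $\preceq$ are not quite ``immediate from the corresponding inclusions of sectors'': both require knowing that $J' \subseteq S_y(J)$ forces $S_y(J') \subsetneq S_y(J)$, which in turn rests on excluding the configuration $J' \subseteq S_x(J)$ together with $J \subseteq S_x(J')$ --- exactly the crux you identify, ruled out by following a geodesic from $x$ to $y$ and using that it crosses each of $J$ and $J'$ exactly once --- so that exclusion should be established before the poset axioms are checked.
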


\noindent
It is worth noticing that a quasi-median graph $X$ is naturally endowed with wallspace structure. Indeed, given a hyperplane $J$ and one of its fibers $F$, we refer to the sector delimited by $F$ as the sector delimited by $J$ containing $F$ and to the cosector delimited by $F$ as the complement of this sector (or, equivalently, to the union of all the sectors delimited by $J$ that are disjoint from $F$). The pairs (sector, cosector) given by all the fibres of all the hyperplanes of $X$ endow $X$ with the structure of a wallspace. For convenience, we refer to the sector or cosector delimited by a fibre as a \emph{piece}. 

\begin{lemma}\label{lem:InterSectors}
Let $X$ be a quasi-median graph of finite cubical dimension. For every fiber $F$, let $F^+$ denote the choice of the sector or cosector delimited by $F$. Assume that
\begin{itemize}
	\item for any two fibres $F_1$ and $F_2$, $F_1^+ \cap F_2^+ \neq \emptyset$;
	\item every non-increasing sequence $F_1^+ \supset F_2^+ \supset \cdots$ is eventually constant;
	\item for every hyperplane $J$, there is at least one fibre $F$ of $J$ for which $F^+$ is a sector.
\end{itemize}
Then $\bigcap\limits_{\text{$F$ fibre}} F^+$ is non-empty and reduced to a single vertex.
\end{lemma}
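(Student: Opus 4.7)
Uniqueness is the easier half. If $x \neq y$ both lie in $\bigcap_{F} F^+$, Theorem~\ref{thm:MainQM}(iv) produces a hyperplane $J$ separating them, and the third hypothesis provides a fibre $F$ of $J$ whose $F^+$ is a single sector $\Sigma$ of $J$. The relation $x, y \in \Sigma$ places $x$ and $y$ in the same connected component of $X \backslash \backslash J$, contradicting the choice of $J$.

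For non-emptiness, I would first reduce to considering only sectors. Using the third hypothesis, for each hyperplane $J$ pick a fibre $F_J$ such that $S_J := F_J^+$ is a sector. I claim that any vertex in $\bigcap_{J} S_J$ automatically lies in every $F^+$: if $F$ is a fibre of a hyperplane $J'$ with $F^+$ a cosector, then by the first hypothesis $S_{J'} \cap F^+ \neq \emptyset$, and since $F^+$ is the union of all sectors of $J'$ other than the one containing $F$ while $S_{J'}$ is a \emph{single} sector of $J'$, this forces $S_{J'} \subseteq F^+$; hence any $x \in \bigcap_{J} S_J$ also belongs to $F^+$.

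It thus suffices to exhibit a vertex in $\bigcap_{J} S_J$. Each $S_J$ is gated (hence convex), and by the Helly property for gated subgraphs of a quasi-median graph (see \cite{Qm}), any finite subfamily of $\{S_J\}$ has non-empty intersection. The strategy is iterative: starting from an arbitrary vertex $v_0$, whenever $v_n \notin S_{J_n}$ for some $J_n$, set $v_{n+1} := \pi_{S_{J_n}}(v_n)$, the gate of $v_n$ in $S_{J_n}$. The key observation is that if $v_n \in S_J$ for some other $J$, then $v_{n+1} \in S_J$ as well: pick $w \in S_{J_n} \cap S_J$ using the first hypothesis; the gate property places $v_{n+1}$ on a geodesic from $v_n$ to $w$, and convexity of $S_J$ then yields $v_{n+1} \in S_J$. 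Consequently, the set $U_n := \{J : v_n \notin S_J\}$ is strictly decreasing in~$n$.

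The main obstacle is to show that the iteration terminates. Were it not to, one would obtain infinitely many distinct sectors $S_{J_0}, S_{J_1}, \ldots$, and I would extract, using the finite cubical dimension (Lemma~\ref{lem:CubDist}) to rule out infinite collections of pairwise transverse hyperplanes, a subsequence along which the associated pieces organise themselves into a strictly descending chain $F_{n_1}^+ \supsetneq F_{n_2}^+ \supsetneq \cdots$, contradicting the DCC encoded by the second hypothesis. I expect this final step to require the most care: it is essentially the quasi-median analogue of the classical fact that a DCC consistent orientation of half-spaces in a CAT(0) cube complex corresponds to a vertex, and it should involve a careful book-keeping of which hyperplanes become ``newly unsatisfied'' at each stage of the projection process, together with the fact that a chain of nested sectors through a bounded-dimension prism structure cannot cascade indefinitely.
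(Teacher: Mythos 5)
Your uniqueness argument and your reduction to one sector per hyperplane are correct and coincide with the first half of the paper's proof: the paper likewise observes that the third hypothesis, combined with the first (two distinct sectors of a hyperplane are disjoint), singles out for each hyperplane $J$ a unique fibre $F_J$ with $F_J^+$ a sector, sets $J^+ := F_J^+$, notes that every other fibre $L$ of $J$ has $L^+ \supset J^+$, and thereby rewrites the intersection as $\bigcap_J J^+$. Where the two arguments diverge is on the non-emptiness of $\bigcap_J S_J$: the paper does not prove this at all but cites it from \cite[Lemma~5.9]{GTbig}, whereas you attempt a direct proof by iterated gate projections.

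That direct proof has a genuine gap, and you have correctly located it yourself: the termination of the iteration. The strict decrease of $U_n = \{J : v_n \notin S_J\}$ is useless unless $U_0$ is finite (or the relevant chain stabilises), and reducing non-termination to the DCC of the second hypothesis is exactly the content of the cited lemma, not a routine verification. Concretely, you would need: (i) a Ramsey argument together with the bound on pairwise transverse hyperplanes coming from the finite cubical dimension (Theorem~\ref{thm:MainQM} / Lemma~\ref{lem:CubDist}) to extract from an infinite $U_0$ an infinite pairwise non-transverse subfamily; (ii) the consistency hypothesis plus the fact that $v_0$ lies outside both chosen sectors to show that any two such hyperplanes have \emph{nested} sectors (the ``facing away'' configuration $X = S_J \cup S_{J'}$ being excluded precisely because $v_0 \in S_J^c \cap S_{J'}^c$); and (iii) an argument converting the resulting totally ordered family into a strictly \emph{descending} chain of pieces $F^+$ — note that an infinite \emph{ascending} chain of sectors $S_{J_1} \subsetneq S_{J_2} \subsetneq \cdots$ is not immediately forbidden by the hypothesis as stated, since the cosector pieces $L^+$ of the $J_i$ are complements of the sectors containing the fibres $L$, not of the $S_{J_i}$, so some bookkeeping is required to land on a chain of the form covered by the DCC assumption. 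None of steps (i)–(iii) is carried out in your proposal. A minor additional point: your appeal to the Helly property for gated subgraphs is never actually used in the iteration and can be deleted; and the convexity of sectors that you do use is true (see \cite{Qm}) but is not among the properties recalled in Theorem~\ref{thm:MainQM}, so it should be sourced explicitly.
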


\begin{proof}
Given a hyperplane $J$, we know from the third item that there is at least one fibre $F$ of $J$ such that $F^+$ is a sector. Actually, it must be the only one, since otherwise one would contradict the first item (as two distinct sectors are always disjoint). Set $J^+:=F^+$. Notice that, for every fibre $L$ of $J$ distinct from $F$, $L^+ \supset J^+$ since $L^+$ must be the cosector delimited by $L$. We conclude that
$$\bigcap\limits_{F \text{ fibre}} F^+ = \bigcap\limits_{J \text{ hyperplane}} J^+ \neq \emptyset,$$
where the last assertion follows from \cite[Lemma~5.9]{GTbig}.
\end{proof}

\paragraph{Coherent systems of metrics.} Let $X$ be a quasi-median graph. A \emph{system of metrics} is the data, for every clique $C$ of $X$, of a metric $\delta_C$ on $C$. It is \emph{coherent} if, for all cliques $A,B$ contained in the same hyperplane, the projection of $A$ onto $B$ induces an isometry $(A,\delta_A) \to (B,\delta_B)$. Given a coherent system of metrics $\{(C,\delta_C) \mid C \subset X \text{ clique}\}$, one can naturally extend the $\delta_C$ to a new metric $\delta$ defined on $X$, namely, we set
$$\delta_J : (x,y) \mapsto \delta_C(\mathrm{proj}_C(x),\mathrm{proj}_C(y))$$
for all hyperplane $J$ and clique $C \subset J$, and we define
$$\delta : (x,y) \mapsto \sum\limits_{J \text{ hyperplane}} \delta_J(x,y).$$
The fact that our system is coherent implies that the $\delta_J$ are well-defined, i.e.\ the definition does not depend on the choice of the clique $C$ in $J$. This metric was introduced in \cite{Qm}. For instance, if $\delta_C(x,y)=1$ for all clique $C \subset X$ and distinct vertices $x,y \in C$, the metric $\delta$ coincides with the graph metric of $X$. 

\begin{definition}
Let $X$ be a quasi-median graph endowed with a system of coherent graph metrics $\{(C,\delta_C) \mid C \text{ clique}\}$. A \emph{broken geodesic} between two vertices $x,y \in X$ is a concatenation $\gamma_1 \cup \cdots \cup \gamma_n$ such that:
\begin{itemize}
	\item for every $1 \leq i \leq n$, $\gamma_i$ is a geodesic in some $(C,\delta_C)$ connecting two vertices $a_i,a_{i+1} \in C$;
	\item the path $a_1, \ldots, a_{n+1}$ defines a geodesic in $X$.
\end{itemize}
\end{definition}

\noindent
Roughly speaking, broken geodesics are obtained from geodesics in the quasi-median graph by connecting any two consecutive vertices by a geodesic in the clique they live in with respect to the local metric. According to \cite[Lemma~3.18]{Qm}, broken geodesics turn out to be geodesics with respect to the extended metric. 

\begin{lemma}\label{lem:BrokenGeod}
Let $X$ be a quasi-median graph endowed with a system of coherent graph metrics $\{(C,\delta_C) \mid C \text{ clique}\}$. Broken geodesics are geodesics in $(X,\delta)$.
\end{lemma}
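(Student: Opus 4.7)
The plan is to compute the $\delta$-length of a broken geodesic $\gamma_1 \cup \cdots \cup \gamma_n$ from $x = a_1$ to $y = a_{n+1}$ (with $\gamma_i$ a $\delta_{C_i}$-geodesic in the clique $C_i$, and $a_1, \ldots, a_{n+1}$ a geodesic in the graph $X$) and show that it equals $\delta(x,y)$. Since any path from $x$ to $y$ in $(X,\delta)$ has length at least $\delta(x,y)$ by the triangle inequality, this equality will force the broken geodesic to be a geodesic.

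First I would establish a local compatibility: for any two vertices $u,v$ lying in a common clique $C$, one has $\delta(u,v) = \delta_C(u,v)$. The hyperplane containing $C$ contributes exactly $\delta_C(u,v)$, so it suffices to check that every other hyperplane $J$ contributes zero. But such a $J$ cannot separate $u$ and $v$ (the edge of $C$ between them belongs to a different hyperplane), so $u,v$ lie in a common sector of $J$. A standard structural fact about quasi-median graphs is that the sectors of $J$ are in bijection with the vertices of any clique $C' \subset J$, the projection onto $C'$ being constant on each sector. Hence $\mathrm{proj}_{C'}(u) = \mathrm{proj}_{C'}(v)$ and $\delta_J(u,v) = 0$. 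It follows that the $\delta$-length of the broken geodesic equals $\sum_{i=1}^n \delta_{C_i}(a_i, a_{i+1})$, since the $\delta$-length of each $\gamma_i$ (summing $\delta$-distances between consecutive vertices, all of which lie in $C_i$) equals its $\delta_{C_i}$-length, which in turn equals $\delta_{C_i}(a_i,a_{i+1})$ by the geodesicity assumption on $\gamma_i$.

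Next I would compute $\delta(x,y)$ hyperplane by hyperplane. Let $J_i$ denote the hyperplane containing $C_i$. By Theorem~\ref{thm:MainQM}(iii)--(iv), the $J_i$ are pairwise distinct and are exactly the hyperplanes separating $x$ from $y$. For any hyperplane $J$ not among the $J_i$, the local-step argument yields $\delta_J(x,y) = 0$. For $J_i$ itself, the subpath $a_1,\ldots,a_i$ lies entirely in the sector of $J_i$ adjacent to $a_i \in C_i$ (the geodesic $a_1,\ldots,a_{n+1}$ crosses $J_i$ only between $a_i$ and $a_{i+1}$), and similarly $a_{i+1},\ldots,a_{n+1}$ lies in the sector adjacent to $a_{i+1}$. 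Consequently $\mathrm{proj}_{C_i}(x) = a_i$ and $\mathrm{proj}_{C_i}(y) = a_{i+1}$, so $\delta_{J_i}(x,y) = \delta_{C_i}(a_i,a_{i+1})$. Summing, $\delta(x,y) = \sum_{i=1}^n \delta_{C_i}(a_i,a_{i+1})$, matching the length computed above.

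The main obstacle is really just pinning down the behaviour of projections onto cliques in a quasi-median graph, and specifically the fact that for a hyperplane $J$ and a clique $C \subset J$, the projection of a vertex onto $C$ is determined entirely by which sector of $J$ contains it, with each sector meeting $C$ in exactly one vertex. Once this structural point is available (it follows from Theorem~\ref{thm:MainQM}(ii) together with the description of carriers of hyperplanes, as each vertex of $C$ lies in a distinct fibre because the edges of $C$ belong to $J$), the argument reduces to a clean bookkeeping identity summed over the hyperplanes of $X$.
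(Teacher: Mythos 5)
Your argument is correct: the key structural input (the gate of a vertex in a clique $C\subset J$ is the unique vertex of $C$ lying in the same sector of $J$, so projections onto cliques are constant on sectors) is valid, and it yields both that $\delta(u,v)=\delta_C(u,v)$ for $u,v$ in a common clique and that $\delta(x,y)=\sum_i\delta_{C_i}(a_i,a_{i+1})$, which is exactly the $\delta$-length of the broken geodesic. The paper does not reprove this lemma but cites \cite[Lemma~3.18]{Qm}, whose proof proceeds by the same hyperplane-by-hyperplane bookkeeping, so your proposal matches the standard argument.
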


\paragraph{Graph of pointed cliques.} The geometric spaces that will be relevant for us later are not quasi-median graphs per se, but can be described as graphs pointed cliques in quasi-median graphs (endowed with coherent system of metrics). More precisely:

\begin{definition}
Let $X$ be a quasi-median graph endowed with coherent system of graph metrics $\mathcal{M}= \{(C,\delta_C) \mid C \subset X \text{ clique} \}$. The \emph{graph of pointed cliques} $\mathrm{PC}(X,\mathcal{M})$ is the graph whose vertices are the pointed cliques $(C,x)$ of $X$ and whose edges connect two pointed cliques $(C_1,x_1),(C_2,x_2)$ whenever
\begin{description}
	\item[(slide)] $C_1=C_2$ and $\delta_{C_1}(x_1, x_2)=1$;
	\item[(rotation)] or $x_1=x_2$ and $C_1,C_2$ span a prism.
\end{description}
\end{definition}
\begin{figure}
\begin{center}
\includegraphics[width=0.6\linewidth]{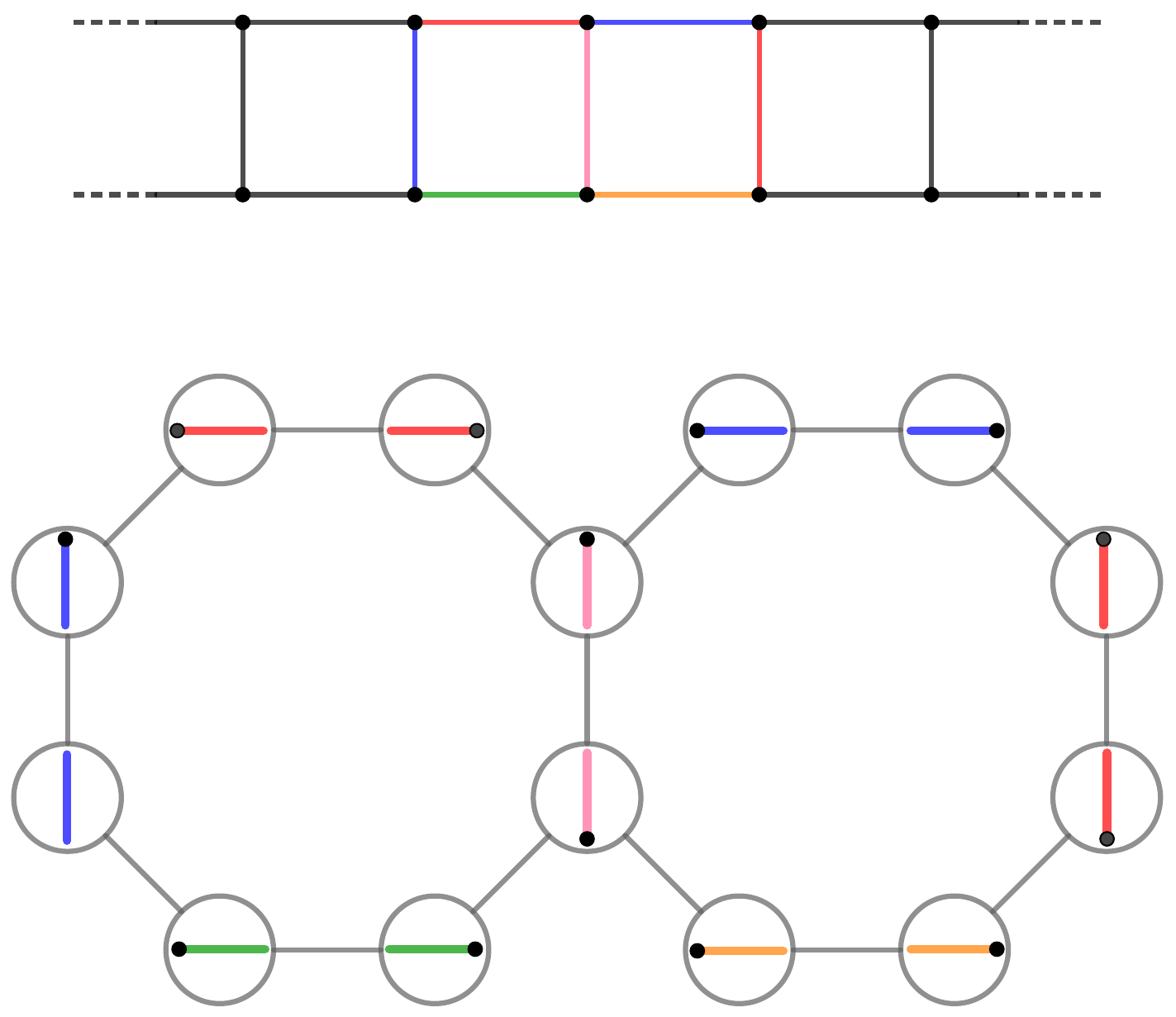}
\caption{A median graph and a piece of its graph of pointed edges.}
\label{PS}
\end{center}
\end{figure}

\noindent
See Figure~\ref{PS} for an example. Let us record a couple of observations about graphs of pointed cliques of quasi-median graphs. 

\begin{lemma}\label{lem:PCdistLow}
Let $X$ be a quasi-median graph endowed with coherent system of graph metrics $\mathcal{M}= \{(C,\delta_C) \mid C \subset X \text{ clique} \}$. The inequality
$$\delta(a,b) \leq d_{\mathrm{PC}}((A,a),(B,b))$$
holds for all pointed cliques $(A,a),(B,b)$.
\end{lemma}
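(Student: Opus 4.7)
The plan is to bound $\delta(a,b)$ by reading off its value along any path in $\mathrm{PC}(X,\mathcal{M})$ connecting $(A,a)$ to $(B,b)$. Fix such a path realising the distance $d_{\mathrm{PC}}((A,a),(B,b)) = n$, and look at the corresponding sequence of basepoints
\[ a = x_0, x_1, \ldots, x_n = b. \]
Each edge of the path is either a rotation, in which case $x_i = x_{i+1}$, or a slide through some clique $C_i$, in which case $x_i$ and $x_{i+1}$ lie in $C_i$ with $\delta_{C_i}(x_i,x_{i+1}) = 1$. So it suffices to show that $\delta(x_i,x_{i+1}) \leq 1$ in each case.

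For a rotation edge there is nothing to prove. For a slide edge, $x_i$ and $x_{i+1}$ are two distinct vertices of the same clique $C_i$, hence are adjacent in $X$ and are separated by exactly one hyperplane of $X$, namely the hyperplane $J_i$ containing $C_i$. By the very definition of $\delta_{J_i}$ (which is computed by projecting onto any clique in $J_i$, and here $C_i$ itself works), we have $\delta_{J_i}(x_i,x_{i+1}) = \delta_{C_i}(x_i,x_{i+1}) = 1$, while $\delta_{J'}(x_i,x_{i+1}) = 0$ for every other hyperplane $J'$ since $x_i$ and $x_{i+1}$ then have the same projection onto any clique of $J'$. Summing over hyperplanes gives $\delta(x_i,x_{i+1}) = 1$. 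Alternatively, one can simply observe that the single edge from $x_i$ to $x_{i+1}$ inside $C_i$ is a broken geodesic of $\delta_{C_i}$-length $1$, and invoke Lemma~\ref{lem:BrokenGeod}.

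The conclusion follows by applying the triangle inequality for $\delta$:
\[ \delta(a,b) \;\leq\; \sum_{i=0}^{n-1} \delta(x_i,x_{i+1}) \;\leq\; n \;=\; d_{\mathrm{PC}}((A,a),(B,b)). \]
No serious obstacle is expected: the statement is essentially an unpacking of the definitions of edges in $\mathrm{PC}(X,\mathcal{M})$ and of the extended metric $\delta$. The only tiny subtlety is to check that a slide edge contributes exactly $1$ to $\delta$ and not more, which is immediate once one recalls how $\delta_J$ is defined via projections onto any clique in the hyperplane $J$ and uses the coherence of the system $\mathcal{M}$.
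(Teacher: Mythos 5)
Your argument is correct and is essentially the paper's: the paper simply observes that the map $\mathrm{PC}(X,\mathcal{M}) \to X$ sending a pointed clique to its marked vertex is $1$-Lipschitz for $\delta$ because each edge is sent to a vertex or to an edge of $\delta$-length $1$, which is exactly your edge-by-edge computation packaged differently. Your verification that a slide edge contributes exactly $1$ (only the hyperplane of $C_i$ separates $x_i$ from $x_{i+1}$, and there $\delta_{J_i}=\delta_{C_i}=1$) is the small point the paper leaves implicit.
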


\begin{proof}
There is a natural map $\pi : \mathrm{PC}(X,\mathcal{M}) \to X$ that sends every pointed clique to its marked vertex. Clearly, $\pi$ sends an edge to either an edge or a vertex. Consequently, $\pi$ is $1$-Lipschitz when $X$ is endowed with $\delta$. This fact is equivalent to the desired inequality. 
\end{proof}

\begin{lemma}\label{lem:PointedHyp}
Let $X$ be a quasi-median graph endowed with coherent system of graph metrics $\mathcal{M}= \{(C,\delta_C) \mid C \subset X \text{ clique} \}$. For all pointed cliques $(A,a)$ and $(B,b)$, if $A$ and $B$ belong to the same hyperplane, then
$$d_{\mathrm{PC}}((A,a),(B,b)) \leq 3 \delta(a,b)+1.$$
\end{lemma}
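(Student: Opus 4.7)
The plan is to construct an explicit path in $\mathrm{PC}(X,\mathcal{M})$ from $(A,a)$ to $(B,b)$ via a horizontal--vertical decomposition. Let $b' := \mathrm{proj}_A(b)$, the gated projection onto $A$. Since every $\delta$-geodesic from $b$ into $A$ may be routed through $b'$ (gatedness of $A$ together with Lemma~\ref{lem:BrokenGeod}), I get
$$\delta(a,b) \;=\; \delta_A(a,b') + \delta(b',b),$$
with $b'$ and $b$ lying in a common fibre $F_b$ of $J$. Set $\alpha := \delta_A(a,b')$ and $\beta := \delta(b',b)$. The horizontal portion of the path consists of $\alpha$ slides in $A$ along a $\delta_A$-geodesic from $a$ to $b'$.

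For the vertical portion, I fix a broken geodesic from $b'$ to $b$: a sequence $b' = w_0, w_1, \ldots, w_{d'} = b$ with $w_{i-1},w_i \in D_i$, where each $D_i$ is a clique of some hyperplane $J_i''$, and $\sum_i \delta_{D_i}(w_{i-1},w_i) = \beta$. Since the underlying geodesic $w_0,\ldots,w_{d'}$ stays inside the gated fibre $F_b$, no $J_i''$ coincides with $J$. Let $A_i$ denote the unique clique of $J$ through $w_i$, so $A_0 = A$ and $A_{d'} = B$. The key observation is that $J$ and each $J_i''$ are transverse: the edge $(w_{i-1},w_i) \in J_i''$ has both endpoints in $N(J)$, hence lies in $N(J) \setminus J$. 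By a standard quasi-median consequence of the quadrangle condition (cf.\ \cite{Qm}), two cliques sharing a vertex whose hyperplanes are transverse span a prism. Hence both pairs $\{A_{i-1},D_i\}$ and $\{A_i,D_i\}$ span prisms, providing legal rotations for the sequence
$$(A_{i-1},w_{i-1}) \xrightarrow{\mathrm{rot.}} (D_i,w_{i-1}) \xrightarrow{\text{slides}} (D_i,w_i) \xrightarrow{\mathrm{rot.}} (A_i,w_i),$$
of total length $\delta_{D_i}(w_{i-1},w_i) + 2$ in $\mathrm{PC}$.

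Concatenating the horizontal move with the $d'$ vertical units yields a path of length $\alpha + \beta + 2d' = \delta(a,b) + 2d'$. Since every $\delta_{D_i}(w_{i-1},w_i) \geq 1$, we have $d' \leq \beta \leq \delta(a,b)$, giving the bound $\leq 3\delta(a,b)$, comfortably within $3\delta(a,b)+1$ (the extra unit absorbing degenerate boundary cases such as $\alpha = 0$ with $A \neq B$). The main technical obstacles are: justifying the additivity $\delta(a,b) = \delta_A(a,b') + \delta(b',b)$ from the gated projection property of $A$ (which reduces to choosing an underlying geodesic through $b'$ and applying Lemma~\ref{lem:BrokenGeod}), and verifying the transversality/prism-spanning bookkeeping in the quasi-median setting; both are standard in the framework developed in \cite{Qm}.
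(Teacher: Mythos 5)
Your proof is correct and follows essentially the same route as the paper's: identify the gate $b'=\mathrm{proj}_A(b)$ (the paper's $a'$, the vertex of $A$ in the fibre of $b$), slide within $A$ to it, then alternate rotations and slides along a broken geodesic inside that fibre, using that each clique of $J$ spans a prism with the cliques carrying the geodesic edges, and bound the number of rotations by the number of edges of the underlying geodesic via $d\leq\delta$. Your accounting even yields the marginally sharper bound $3\delta(a,b)$, so nothing further is needed.
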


\begin{proof}
Let $a'$ denote the unique vertex of $A$ that belongs to the same fibre as $b$ of the hyperplane containing $A$ and $B$. Fix a geodesic
$$x_0=a', \ x_1, \ldots, \ x_{n-1}, \ x_n=b$$
in $X$. According to Theorem~\ref{thm:MainQM}, this geodesic lies in a fibre of $J$, so, for every $0 \leq i \leq n$, $x_i$ belongs to a (unique) clique $C_i \subset J$. Notice that $C_0=A$ and $C_n=B$. For every $0 \leq i \leq n-1$, let $K_i$ denote the unique clique containing the edge $[x_i,x_{i+1}]$. Notice that $C_i$ spans a prism with both $K_{i-1}$ and $K_i$, so $(C_i,a_i)$ is adjacent to both $(K_{i-1},x_i)$ and $(K_{i+1},x_i)$ in $\mathrm{PC}(X,\mathcal{M})$. By applying triangle inequalities along the vertices
$$(A,a), (A,a')=(C_0,x_0), (K_0,x_0), (K_0,x_1), (C_1,x_1), \ldots,$$ $$(C_{n-1},x_{n-1}), (K_{n-1},x_{n-1}), (K_{n-1},x_n), (C_n,x_n)= (B,b)$$
of $\mathrm{PC}(X,\mathcal{M})$, we deduce that
$$d_\mathrm{PC}((A,a),(B,b)) \leq 1+2 d_X(a,b) + \delta_A(a,a')+ \sum\limits_{i=0}^{n-1} \delta_{K_i}(x_i,x_{i+1})$$
Because $a,a'=x_0, \ldots, x_n=b$ defines a geodesic in $X$, it follows from Lemma~\ref{lem:BrokenGeod} that
$$\delta_A(a,a')+ \sum\limits_{i=0}^{n-1} \delta_{K_i}(x_i,x_{i+1})= \delta(a,b).$$
Since $d \leq \delta$, this yields the desired inequality.
\end{proof}

\noindent
Because quasi-median graphs are naturally cut by hyperplanes, our graphs of pointed cliques are then naturally endowed with a wallspace structure. More precisely:

\begin{definition}
Let $X$ be a quasi-median graph endowed with coherent system of graph metrics $\mathcal{M}= \{(C,\delta_C) \mid C \subset X \text{ clique} \}$. The \emph{bulkhead} associated to a fibre $F$ of a hyperplane $J$ is the subgraph
$$B_F:= \{ (C,p) \mid C \subset J, p \in C \cap F \}$$
of $\mathrm{PC}(X,\mathcal{M})$. The \emph{zones} delimited by $B_F$ are the subgraphs
$$\{ (C,p) \mid C \subset \text{sector delimited by } F, p \in C\}$$
and 
$$\{ (C,p) \mid C \subset \text{cosector delimited by } F, p \in C \} \cup \{ (C,p) \mid C \subset J, p \in C \backslash F\}$$
of $\mathrm{PC}(X, \mathcal{M})$. 
\end{definition}

\noindent
The idea to keep in mind is that, in a graph of pointed cliques, a bulkhead separates the zones it delimits. As justified by the next lemma, this is essentially due to the fact that hyperplanes separate in quasi-median graphs.

\begin{lemma}\label{lem:BulkSep}
Let $X$ be a quasi-median graph endowed with coherent system of graph metrics $\mathcal{M}= \{(C,\delta_C) \mid C \subset X \text{ clique} \}$. Fix a fibre $F$. In the graph of pointed cliques $\mathrm{PC}(X,\mathcal{M})$, the bulkhead $B_F$ separates the zones it delimits.
\end{lemma}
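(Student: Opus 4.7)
The plan is to verify that the vertex set of $\mathrm{PC}(X,\mathcal{M})$ is partitioned by $B_F$ together with the two zones it delimits, and that no edge of $\mathrm{PC}(X,\mathcal{M})$ joins these two zones directly. Write $Z_1$ for the zone made of pointed cliques $(C,p)$ with $C$ contained in the sector $S$ of $J$ containing $F$, and $Z_2$ for the other zone. The argument will hinge on the following dichotomy for any clique $C \subset X$: either every edge of $C$ lies in $J$, in which case $C$ meets each fibre of $J$ in exactly one vertex; or no edge of $C$ lies in $J$, in which case $C$ is entirely contained in a single sector of $J$. This dichotomy is because two edges of a clique sharing a vertex belong to a common triangle, hence to the same hyperplane. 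It is really the only piece of quasi-median combinatorics I need, and, granting it, checking that $\{B_F,Z_1,Z_2\}$ is a partition of the vertex set reduces to a direct verification.

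Once the partition is in place, the heart of the argument is to analyse the two types of edges of $\mathrm{PC}(X,\mathcal{M})$ and show that none of them can have one endpoint in $Z_1$ and the other in $Z_2$. For a slide edge $(C,x_1)\sim(C,x_2)$, the clique $C$ is fixed: if $C \not\subset J$, then by the dichotomy both endpoints lie in the common zone determined by the sector containing $C$; if $C \subset J$, then $C \cap F$ is a single vertex, so at most one of $x_1,x_2$ lies in $F$, forcing the edge to stay in $B_F \cup Z_2$. For a rotation edge $(C_1,x) \sim (C_2,x)$, the prism spanned by $C_1,C_2$ forces the hyperplanes carrying $C_1$ and $C_2$ to be distinct (they are actually transverse), so $C_1,C_2$ cannot both be contained in $J$. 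If neither is contained in $J$, both cliques lie in the unique sector containing the common point $x$ and the two endpoints are in the same zone. If exactly one, say $C_1$, is contained in $J$, a case split on whether $x$ belongs to $F$ shows that the edge lies in $B_F \cup Z_1$ (when $x \in F$) or entirely in $Z_2$ (when $x \notin F$).

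The only mildly subtle step is this last subcase of the rotation analysis: when $C_1 \subset J$ and $C_2 \not\subset J$ share the vertex $x$, I need the sector containing $C_2$ to be determined by the fibre of $J$ containing $x$. This follows from Theorem~\ref{thm:MainQM} together with the dichotomy, since every path inside $C_2$ avoids $J$, so $C_2$ sits in the sector corresponding to the fibre of $x$: if $x \in F$ this sector is the one containing $F$, and if $x \notin F$ it is a different sector. With this book-keeping done, the conclusion that $B_F$ separates $Z_1$ from $Z_2$ in $\mathrm{PC}(X,\mathcal{M})$ is immediate, and I expect no further obstacle.
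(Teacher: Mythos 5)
Your proof is correct and rests on exactly the same combinatorial facts as the paper's: the dichotomy for a clique relative to the hyperplane $J$, the fact that a slide fixes the clique while a rotation fixes the marked vertex, and the observation that a clique meeting the sector $S$ without being contained in it must lie in $J$ with its $S$-vertex in $F$. The only difference is organizational — you classify all edges of $\mathrm{PC}(X,\mathcal{M})$ exhaustively, whereas the paper follows a path from one zone to the other and examines the first vertex whose clique leaves $S$ — so this is essentially the same argument.
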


\begin{proof}
Let $(A,a)$ be a pointed clique contained in the sector $S$ delimited by $F$ and $(B,b)$ a pointed clique in the other piece delimited by $F$. Fix a path in $\mathrm{PC}(X,\mathcal{M})$ connecting $(A,a)$ to $(B,b)$. Along this path, let $(C,c)$ be the first point with $C$ not contained in $S$. Let $(D,d)$ be the previous vertex. Necessarily, $D$ is contained in $S$. In particular, $D$ is distinct from $C$, which implies that $(C,c)$ is obtained from $(D,d)$ by rotating the clique. This implies that $c=d$ belongs to $S$. Thus, $C$ is a clique that is not contained in $S$ but intersects it. The only possibility is that $C$ is contained in $J$ and that $c$ belongs to $F$. Thus, $(C,c)$ belongs to the bulkhead $B_F$. 
\end{proof}

\begin{remark}
It is worth noticing that, in general, the zones delimited by a bulkhead may not be connected.
\end{remark}

\subsubsection{Preliminaries on graph products}

\noindent
Given a simplicial graph $\Gamma$ and a collection $\mathcal{G}=\{G_u \mid u \in V(\Gamma)\}$ of groups indexed by the vertex-set $V(\Gamma)$ of $\Gamma$, the \emph{graph product} $\Gamma \mathcal{G}$ is the quotient
$$\left( \underset{u \in V(\Gamma)}{\ast} G_u \right) / \langle \langle [g,h]=1 \text{ if $g \in G_u$ and $h \in G_v$ for some $\{u,v\} \in E(\Gamma)$} \rangle \rangle$$
where $E(\Gamma)$ denotes the edge-set of $\Gamma$. The groups in $\mathcal{G}$ are referred to as \emph{vertex-groups}. If all the vertex-groups are isomorphic to a single group $G$, we denote the graph product by $\Gamma G$ instead of $\Gamma \mathcal{G}$. 

\medskip \noindent
Vertex-groups embed into the graph product. More generally, given an induced subgraph $\Lambda \subset \Gamma$, the subgroup generated by the vertex-groups indexed by the vertices in $\Lambda$, which we denote by $\langle \Lambda \rangle$, is naturally isomorphic to the graph product $\Lambda \mathcal{G}_{|V(\Lambda)}$ where $\mathcal{G}_{|V(\Lambda)}:= \{ G_u \mid u \in V(\Lambda) \} \subset \mathcal{G}$. 

\medskip \noindent
As observed in \cite{Qm}, graph products naturally act on quasi-median graphs. More precisely, they act by left-multiplication on the following Cayley graphs:

\begin{thm}\label{thm:GPandQM}
Let $\Gamma$ be a simplicial graph and $\mathcal{G}= \{G_u \mid u \in V(\Gamma)\}$ a collection of groups. The Cayley graph
$$\mathrm{QM}(\Gamma, \mathcal{G}) := \mathrm{Cayl} \left( \Gamma \mathcal{G}, \bigcup\limits_{u \in V(\Gamma)} G_u \backslash \{1\} \right)$$
is a quasi-median graph of cubical dimension $\mathrm{clique}(\Gamma)$. 
\end{thm}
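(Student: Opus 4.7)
The plan is to verify the quasi-median axioms directly by using the normal form theorem for graph products (Green, Hermiller--Meier). This theorem associates to every $g \in \Gamma\mathcal{G}$ a \emph{syllable length} $|g|$ equal to the number of letters in any reduced expression, where ``reduced'' means that no combination of merging two syllables indexed by the same vertex, deleting a trivial syllable, or swapping two adjacent syllables indexed by adjacent vertices of $\Gamma$ can shorten the word. A direct induction shows $d_{\mathrm{QM}}(1,g)=|g|$, so from now on the graph distance can be computed combinatorially, and it suffices to check everything at the base vertex $1$ and then translate.

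The first observation is that the maximal cliques of $\mathrm{QM}(\Gamma, \mathcal{G})$ are precisely the cosets $gG_u$ for $g \in \Gamma\mathcal{G}$ and $u \in V(\Gamma)$: any edge comes from some $G_u\setminus\{1\}$, and two distinct cosets $gG_u, gG_v$ (with $u \neq v$) meeting at $g$ involve generators from different vertex-groups, hence non-adjacent pairs of their non-common vertices. Thus every triangle lies in a single such coset, which rules out an induced $K_4^-$. Similarly, an inspection using normal forms shows that two distinct cosets of the same $G_u$ meet in at most one point, which forbids an induced $K_{3,2}$.

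I would then verify the triangle and quadrangle conditions. For the triangle condition, after left-translating so that $a=1$, one has $y=xs$ for some $s \in G_u\setminus\{1\}$ with $|x|=|y|$; the normal form then forces $x$ to admit a reduced expression ending in a syllable $t \in G_u\setminus\{1\}$ (otherwise $|xs|=|x|+1$, contradicting $|y|=|x|$), and $z := xt^{-1}$ is adjacent to both $x$ and $y$ with $|z|=|x|-1$. For the quadrangle condition, writing $z = xs = yt$ with $s \in G_u$ and $t \in G_v$ and $|x|=|y|=|z|-1$, a similar normal form analysis shows that $s$ and $t$ must commute (hence $\{u,v\}\in E(\Gamma)$), and then $w := xt^{-1} = ys^{-1}$ has syllable length $|z|-2$ and sits adjacent to both $x$ and $y$, giving the required fourth vertex.

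The final step is to compute the cubical dimension. By construction, two cliques $gG_u$ and $hG_v$ can lie in a common hyperplane only when $u=v$, and analyzing the equivalence relation defining hyperplanes shows that a family of pairwise-transverse hyperplanes meeting at a vertex corresponds exactly to a family of pairwise-commuting vertex-groups $G_{u_1},\dots,G_{u_k}$, that is, to a clique $\{u_1,\dots,u_k\}$ of $\Gamma$; combined with the characterization of the cubical dimension as the maximal number of pairwise-transverse hyperplanes, this yields $\mathrm{dim}_\square(\mathrm{QM}(\Gamma,\mathcal{G}))=\mathrm{clique}(\Gamma)$. I expect the bookkeeping with reduced normal forms in the quadrangle condition to be the main technical obstacle; everything else follows mechanically from the normal form theorem.
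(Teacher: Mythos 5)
The paper does not actually prove this statement: it is quoted from \cite{Qm} (Proposition 8.2 there), and your normal-form strategy is essentially the proof given in that reference --- identify the graph metric with syllable length, check the forbidden subgraphs and the triangle/quadrangle conditions by manipulating reduced words, and read off the cubical dimension from the description of prisms as cosets of $\langle \Lambda \rangle$ with $\Lambda$ a complete subgraph. Your treatments of the triangle and quadrangle conditions are correct: $|xs|=|x|$ forces $u$ to lie in the tail of $x$, and two distinct vertices in the tail of $z$ must span an edge of $\Gamma$, which is exactly the commutation needed to produce $w$.

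One local slip: your justification for excluding $K_{3,2}$ does not work as stated. Two distinct cosets of the same $G_u$ are automatically disjoint, so ``meet in at most one point'' is vacuous and has no bearing on $K_{3,2}$. What you actually need is that two non-adjacent vertices at distance $2$ have at most two common neighbours: writing $a^{-1}b=st$ in reduced form with $s\in G_u$, $t\in G_v$, $u\neq v$ (the case $u=v$ being impossible since the syllables would merge), the only length-one elements $h$ with $|h^{-1}st|=1$ are $h=s$ and, when $\{u,v\}\in E(\Gamma)$, $h=t$. This is again a routine normal-form computation, so the gap is cosmetic rather than structural. Finally, note the standing convention (in force in \cite{Qm}) that all vertex-groups are nontrivial; without it the equality $\mathrm{dim}_\square(\mathrm{QM}(\Gamma,\mathcal{G}))=\mathrm{clique}(\Gamma)$ can fail, since vertices of $\Gamma$ carrying trivial groups contribute no hyperplanes.
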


\noindent
We refer to \cite[Proposition 8.2]{Qm} for a proof. The cliques, prisms, and hyperplanes of $\mathrm{QM}(\Gamma, \mathcal{G})$ are described as follows (see \cite[Lemma~8.6 and Corollaries~8.7 and~8.10]{Qm} or \cite[Lemmas~2.4 and~2.6, and Theorem~2.10]{AutQM}):
\begin{figure}
\begin{center}
\includegraphics[scale=0.4]{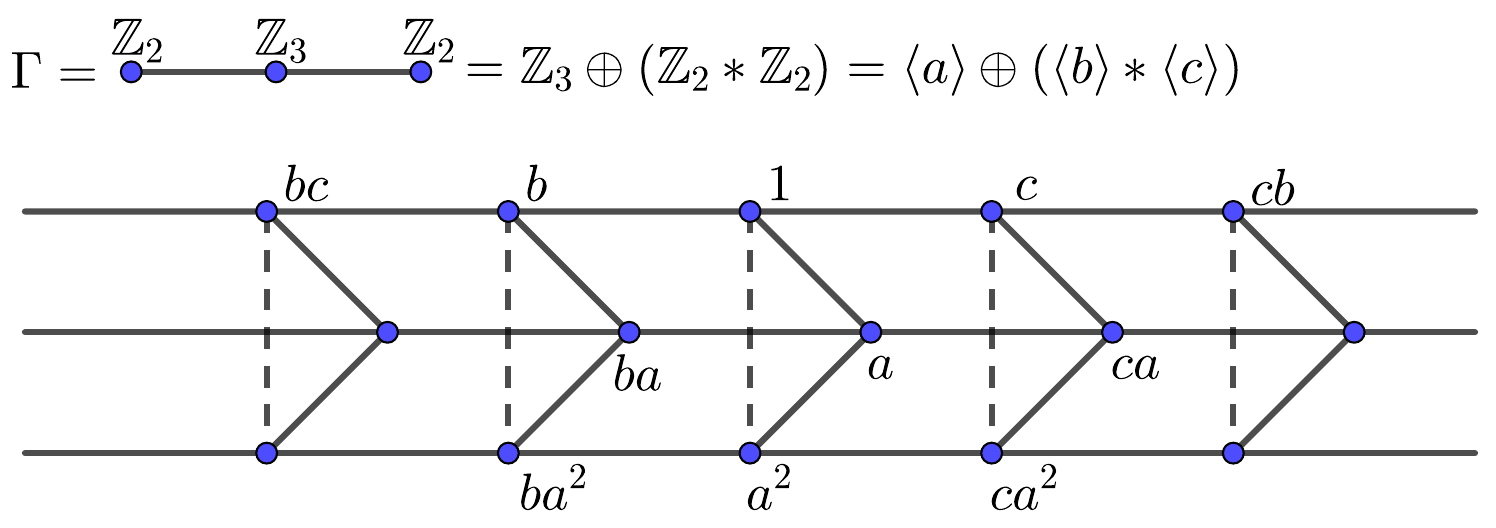}
\caption{Example of a quasi-median graph $\mathrm{QM}(\Gamma, \mathcal{G})$.}
\label{Cayl}
\end{center}
\end{figure}

\begin{lemma}\label{lem:QMcliques}
Let $\Gamma$ be a simplicial graph and $\mathcal{G}$ a collection of groups indexed by $V(\Gamma)$. The cliques of $\mathrm{QM}(\Gamma, \mathcal{G})$ coincide with the cosets of vertex-groups.
\end{lemma}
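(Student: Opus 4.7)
\medskip \noindent
The plan is to establish the two inclusions separately by exploiting the left-multiplication action of $\Gamma \mathcal{G}$ on $\mathrm{QM}(\Gamma, \mathcal{G})$, which is by graph automorphisms and acts transitively on vertices; this lets me reduce any question about a clique or a coset to the case where $1$ belongs to it.

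\medskip \noindent
First I would verify that, for every vertex $u \in V(\Gamma)$ and every $g \in \Gamma \mathcal{G}$, the coset $gG_u$ induces a complete subgraph of $\mathrm{QM}(\Gamma, \mathcal{G})$: for any two distinct elements $ga, gb$ of $gG_u$ with $a,b \in G_u$, the quotient $(ga)^{-1}(gb) = a^{-1}b$ lies in $G_u \setminus \{1\}$, so the edge is present by definition of the generating set. This gives the easy inclusion (cosets of vertex-groups are contained in cliques); it remains to check maximality and the reverse inclusion.

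\medskip \noindent
For the reverse inclusion, let $C$ be any clique of $\mathrm{QM}(\Gamma, \mathcal{G})$. Translating by an element of $C$, I may assume $1 \in C$. Every other element $h \in C$ is then adjacent to $1$, hence $h \in G_u \setminus \{1\}$ for some $u$; so $C \subset \{1\} \cup \bigcup_u (G_u \setminus \{1\})$. The key step is showing that a single index $u$ suffices. Suppose by contradiction that $C$ contains $a \in G_u \setminus \{1\}$ and $b \in G_v \setminus \{1\}$ with $u \neq v$. Since $a$ and $b$ are adjacent in $C$, one must have $a^{-1}b \in G_w \setminus \{1\}$ for some $w$. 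I would exclude this using the normal form theory of graph products: if $u$ and $v$ are non-adjacent in $\Gamma$, the element $a^{-1}b$ has a reduced expression of syllable length two and therefore cannot lie in any single vertex-group; if $u$ and $v$ are adjacent in $\Gamma$, then $a^{-1}b = b \cdot a^{-1}$ is reduced as written and lies in $G_w$ only when $a^{-1}$ or $b$ is trivial, which is ruled out since distinct vertex-groups intersect trivially. Either way we reach a contradiction, so $C \subset G_u$ for a unique $u$, and combined with the first paragraph the maximality of $C$ forces $C = G_u$. Translating back, an arbitrary clique is a coset of some vertex-group.

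\medskip \noindent
The only genuinely non-trivial step is the normal form argument excluding mixed cliques; everything else is a direct application of left-invariance and the definition of the Cayley graph. I would therefore present the proof by first doing the translation reduction, then dispatching the two cases ($u,v$ adjacent in $\Gamma$ or not) using the standard fact that an element of $\Gamma \mathcal{G}$ has a unique normal form in terms of syllables, and conclude with the observation that the complete subgraph obtained is maximal because any strictly larger complete subgraph would, by the same argument applied to any additional vertex, force that vertex into $G_u$.
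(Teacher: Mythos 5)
Your proof is correct, and it is essentially the standard argument: the paper does not prove this lemma itself but cites \cite{Qm} and \cite{AutQM}, where the proof runs exactly as you describe (translate so that $1$ lies in the clique, then use Green's normal form for graph products to rule out two non-trivial elements from distinct vertex-groups being adjacent). One small remark: your case split on whether $u$ and $v$ are adjacent in $\Gamma$ is unnecessary, since in either case $a^{-1}b$ is a reduced word of syllable length two and hence lies in no vertex-group; and, as usual, the statement implicitly assumes the vertex-groups are non-trivial, which your maximality argument uses.
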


\begin{lemma}\label{lem:QMprisms}
Let $\Gamma$ be a simplicial graph and $\mathcal{G}$ a collection of groups indexed by $V(\Gamma)$. The prisms of $\mathrm{QM}(\Gamma, \mathcal{G})$ coincide with the cosets of the $\langle \Lambda \rangle$, where $\Lambda \subset \Gamma$ is a complete subgraph.
\end{lemma}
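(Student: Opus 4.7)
My plan is to exploit the vertex-transitivity of the action of $\Gamma\mathcal{G}$ on $\mathrm{QM}(\Gamma,\mathcal{G})$ by left-multiplication in order to reduce the problem to describing the prisms that contain the identity vertex $1$. Indeed, left-multiplication acts by automorphisms of the Cayley graph, so it permutes cliques and hence permutes prisms; every prism can therefore be translated to contain $1$, and conversely any left-translate of a prism through $1$ is again a prism. By Lemma~\ref{lem:QMcliques}, the cliques passing through $1$ are exactly the vertex-groups $G_u$ for $u \in V(\Gamma)$, so any prism through $1$ decomposes as a product of finitely many pairwise distinct vertex-groups $G_{u_1},\dots,G_{u_k}$.

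The key step is to show that such a family of vertex-groups spans a prism if and only if the vertices $u_1,\dots,u_k$ form a complete subgraph $\Lambda$ of $\Gamma$, and that, when this is the case, the vertex-set of the prism coincides with the subgroup $\langle \Lambda \rangle$. The \emph{if} direction is straightforward: when the $u_i$ are pairwise adjacent in $\Gamma$, the corresponding vertex-groups commute elementwise in $\Gamma\mathcal{G}$, so the multiplication map $G_{u_1} \times \cdots \times G_{u_k} \to \Gamma\mathcal{G}$ is injective (by the normal form theorem for graph products), its image is exactly $\langle \Lambda \rangle$, and the induced subgraph of $\mathrm{QM}(\Gamma,\mathcal{G})$ on this image is the product of the cliques $G_{u_1},\dots,G_{u_k}$. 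The \emph{only if} direction reduces, by looking at pairs of cliques inside the prism, to the two-clique case: if $G_u$ and $G_v$ already span a $4$-cycle at $1$, then $\{u,v\}$ must be an edge of $\Gamma$.

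The heart of the argument is therefore this last implication, which I expect to be the main technical point. To establish it, I would invoke the normal form theorem for graph products (Green): if $g_u \in G_u\setminus\{1\}$ and $g_v \in G_v\setminus\{1\}$ with $u \neq v$ satisfy $g_u g_v = g_v g_u$, then $\{u,v\}$ must be an edge of $\Gamma$, since otherwise the word $g_u g_v g_u^{-1} g_v^{-1}$ is already in reduced form and so cannot represent the identity. Combined with the reduction above and with translation by arbitrary elements of $\Gamma\mathcal{G}$, this yields that every prism is a coset of some $\langle\Lambda\rangle$ with $\Lambda$ a complete subgraph; conversely, the same normal form argument ensures that each such coset indeed forms a prism of $\mathrm{QM}(\Gamma,\mathcal{G})$.
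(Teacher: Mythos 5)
Your proof is correct and follows the standard route: the paper itself does not prove this lemma but delegates it to the cited references (\cite[Corollary~8.10]{Qm}, \cite[Lemma~2.6]{AutQM}), where essentially the same strategy --- translate to the identity, identify the coordinate cliques through $1$ with vertex-groups via Lemma~\ref{lem:QMcliques}, and apply the normal form theorem for graph products --- is used. The one step you should make explicit is the passage from ``$G_u$ and $G_v$ span a $4$-cycle at $1$'' to the commutation relation $g_ug_v=g_vg_u$: the fourth vertex $w$ of the square is not a priori the product $g_ug_v$, but writing $w=g_ua=g_vb$ with $a,b$ nontrivial elements of vertex-groups and comparing the two reduced words forces $a=g_v$, $b=g_u$, and a shuffle of the two syllables, which is exactly where the adjacency of $u$ and $v$ in $\Gamma$ is extracted; the same computation, applied coordinate by coordinate, also identifies the vertex set of the prism with the coset $\langle\Lambda\rangle$.
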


\begin{lemma}\label{lem:QMhyp}
Let $\Gamma$ be a simplicial graph and $\mathcal{G}$ a collection of groups indexed by $V(\Gamma)$. Fix a vertex $u \in V(\Gamma)$ and let $J_u$ denote the hyperplane containing the clique $G_u$. The cliques in $J_u$ are the cosets $gG_u$ where $g \in \langle \mathrm{star}(u) \rangle$. Consequently, the carrier $N(J_u)$ coincides with the subgraph $\langle \mathrm{star}(u) \rangle$ and splits at the Cartesian product $G_u \times \langle \mathrm{link}(u) \rangle$; and the stabiliser of $J_u$ in $\Gamma \mathcal{G}$ coincides with the subgroup $\langle \mathrm{star}(u) \rangle$.
\end{lemma}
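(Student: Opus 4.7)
The plan is to identify the cliques lying in the hyperplane $J_u$ by unpacking the equivalence relation defining hyperplanes, then read off the carrier and stabiliser as immediate consequences. Throughout I shall combine Lemmas~\ref{lem:QMcliques} and~\ref{lem:QMprisms} with the concrete description of $3$- and $4$-cycles in $\mathrm{QM}(\Gamma,\mathcal{G})$ coming from its Cayley graph structure.

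First, I would analyse the local combinatorics. A $3$-cycle in $\mathrm{QM}(\Gamma,\mathcal{G})$ must have its three edges labelled by elements of one and the same vertex-group $G_v$ (since $g \to gs \to gst$ with $s \in G_v$, $t \in G_w$ forces $st \in \bigcup_x G_x$, hence $w=v$). Thus the triangle relation only identifies edges that already lie in a common clique. A $4$-cycle of the form $g,\,gs,\,gst,\,gt$ is non-degenerate only when $s \in G_v$, $t \in G_w$ with $\{v,w\} \in E(\Gamma)$; in this case the two opposite edges labelled by $s$ span the cliques $gG_v$ and $gtG_v$, whereas those labelled by $t$ span $gG_w$ and $gsG_w$. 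Starting from $G_u$ and propagating through such $4$-cycles, every clique in $J_u$ is of the form $gG_u$ with $g$ obtained as a product of elements of $\{G_v \mid v \in \mathrm{link}(u)\}$, possibly composed on the right by elements of $G_u$ (which do not change the coset). An easy induction on the number of equivalence steps then shows that the cliques of $J_u$ are precisely the cosets $gG_u$ with $g \in \langle \mathrm{star}(u)\rangle$. Conversely, any such coset is reached from $G_u$ by a finite sequence of prism relations, since $\langle \mathrm{star}(u)\rangle$ is generated by $G_u \cup \bigcup_{v \in \mathrm{link}(u)} G_v$.

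Once this description is established, the carrier is obtained by taking the union of vertices of these cliques, which is exactly the subset $\langle \mathrm{star}(u)\rangle$ of the vertex-set; the induced subgraph on this subset is the Cayley graph of $\langle \mathrm{star}(u)\rangle$ with respect to $\bigcup_{v \in \mathrm{star}(u)} G_v \setminus \{1\}$, i.e.\ it is naturally identified with $\mathrm{QM}(\mathrm{star}(u),\mathcal{G}_{|V(\mathrm{star}(u))})$. Because $\mathrm{star}(u)$ is the join of $\{u\}$ with $\mathrm{link}(u)$, the group $\langle \mathrm{star}(u)\rangle$ factors as the direct product $G_u \times \langle \mathrm{link}(u)\rangle$, and the corresponding Cayley graph splits as the Cartesian product of the clique $G_u$ with $\langle \mathrm{link}(u)\rangle$, giving the claimed decomposition of $N(J_u)$.

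For the stabiliser, the inclusion $\langle \mathrm{star}(u)\rangle \subset \mathrm{Stab}(J_u)$ is clear from left-multiplication: any $g \in \langle \mathrm{star}(u)\rangle$ sends a clique $hG_u$ of $J_u$ to $ghG_u$, which is again of the required form. For the reverse inclusion, if $h$ stabilises $J_u$ then $hG_u$ is a clique of $J_u$, hence $hG_u = gG_u$ for some $g \in \langle \mathrm{star}(u)\rangle$ by the first step, so that $h \in g G_u \subset \langle \mathrm{star}(u)\rangle$. The main technical point to handle carefully is the first step: verifying that the transitive closure of the $3$- and $4$-cycle relations does not move us outside the set of cosets $g G_u$ with $g \in \langle \mathrm{star}(u)\rangle$, which reduces to checking that no exotic $4$-cycle produces an edge labelled by an element of $G_v$ with $v \notin \mathrm{star}(u)$; this follows from the normal form for graph products (Green's theorem), but can equivalently be read directly off Lemma~\ref{lem:QMprisms} since any $4$-cycle spans a prism, whose two transverse cliques correspond to an edge of $\Gamma$.
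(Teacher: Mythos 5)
Your argument is correct. Note that the paper itself gives no proof of this lemma: it is quoted from the literature (the text points to \cite[Lemma~8.6 and Corollaries~8.7 and~8.10]{Qm} and to \cite{AutQM}), and your reasoning --- classifying the $3$-cycles as lying inside single cliques, the non-degenerate $4$-cycles as coming from pairs of adjacent vertex-groups via the normal form for graph products, and then propagating the coset $G_u$ through opposite edges of squares before reading off the carrier and the stabiliser --- is essentially the standard proof found in those references. The one point worth making fully explicit in a written version is the inductive step: an edge of a clique $gG_u$ sits in a square only with an edge of $gtG_u$ for some $t \in G_w$, $w \in \mathrm{link}(u)$, so the transitive closure never leaves the set of cosets $\{gG_u \mid g \in \langle \mathrm{star}(u)\rangle\}$; you indicate this correctly at the end of your sketch.
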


\noindent
Recall that, given a graph $\Gamma$ and a vertex $u \in V(\Gamma)$, the \emph{star} (resp.\ the \emph{link}) of $u$ is the subgraph induced by $u$ and its neighbours (resp.\ by its neighbours). 

\medskip \noindent
Of course, in the quasi-median Cayley graph given by Theorem~\ref{thm:GPandQM}, vertex-groups are bounded. Roughly speaking, it is possible to ``unfold'' them thanks to a system of coherent metrics, as described by the statement:

\begin{lemma}\label{lem:GPsystem}
Let $\Gamma$ be a simplicial graph and $\mathcal{G}= \{ G_u \mid u \in V(\Gamma)\}$ a collection of finitely generated groups. For every $u \in V(\Gamma)$, fix a finite generating set $S_u$ of $G_u$ and let $|\cdot|_u$ denote the correspond word-length. For every clique $C$ of $\mathrm{QM}(\Gamma, \mathcal{G})$, write $C=gG_u$ for some $u \in V(\Gamma)$ and $g \in \Gamma \mathcal{G}$ of minimal length, and set
$$\delta_C : (gr,gs) \mapsto |r^{-1}s|_u.$$
Then $\{(C,\delta_C) \mid C \subset X \text{ clique}\}$ is a coherent system of graph metrics. 
\end{lemma}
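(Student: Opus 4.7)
The plan is to verify three claims in turn: that $\delta_C$ is well-defined, that each $\delta_C$ is a graph metric, and that the system is coherent.

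First, I would invoke Lemma \ref{lem:QMcliques}: every clique $C$ is a coset $gG_u$, with $u$ uniquely determined by $C$ (since distinct vertex-subgroups of $\Gamma\mathcal{G}$ have trivial intersection) and $g$ determined up to right-multiplication by an element of $G_u$. Left-invariance of $|\cdot|_u$ then makes the formula $\delta_C(gr, gs) = |r^{-1}s|_u$ independent of the choice of minimal-length representative---and in fact independent of any choice of representative, so the minimal-length requirement is merely cosmetic. The bijection $C \to G_u,\ gr \mapsto r$, isometrically identifies $(C, \delta_C)$ with the Cayley graph $(G_u, |\cdot|_u)$, which establishes that $\delta_C$ is a graph metric.

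The substance lies in coherence. Fix a hyperplane $J$ with two cliques $A, B \subset J$. By Lemma \ref{lem:QMhyp}, every clique in $J$ is a coset of a single vertex-group $G_u$, and the carrier $N(J)$ splits as a Cartesian product. Writing $A = g_A G_u$, the translate $g_A^{-1} J$ contains the clique $G_u$, so equals $J_u$; hence $g_A^{-1} N(J) = N(J_u) = G_u \times \langle \mathrm{link}(u) \rangle$. It follows that $B = g_A h G_u$ for some $h \in \langle \mathrm{link}(u) \rangle$. The next step is to identify the gated projection $A \to B$ with the coordinate-preserving map $(k, 1) \mapsto (k, h)$ in $G_u \times \langle \mathrm{link}(u) \rangle$, i.e., with $g_A k \mapsto g_A h k$ in the ambient group (using that $k$ and $h$ commute). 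Under the identifications $A \to G_u,\ g_A r \mapsto r$ and $B \to G_u,\ g_A h s \mapsto s$, this projection becomes the identity on $G_u$; since both $\delta_A$ and $\delta_B$ pull back to $(G_u, |\cdot|_u)$ under those identifications, the projection is automatically an isometry $(A,\delta_A) \to (B,\delta_B)$.

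The main (mild) obstacle is justifying that the gated projection between two cliques in the same hyperplane acts coordinatewise with respect to the product decomposition of $N(J)$. This follows from the general behaviour of gated projections onto sub-prisms in a quasi-median graph, which in the present setting reduces to the elementary observation that in a Cartesian product $X_1 \times X_2$, the projection from $X_1 \times \{y_1\}$ onto the gated subgraph $X_1 \times \{y_2\}$ is $(x, y_1) \mapsto (x, y_2)$. Once this is granted, the coherence of the system is a purely formal bookkeeping exercise.
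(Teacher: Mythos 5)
Your argument is correct. Note, however, that the paper does not actually prove Lemma~\ref{lem:GPsystem}: it only states it and defers to \cite[Section~8]{Qm}, so there is no in-paper proof to compare against. What you have written is a valid self-contained verification built from the structural facts the paper does quote. The well-definedness step is right (and you correctly observe that the minimal-length requirement on $g$ is only a normalisation, since left-invariance of $|\cdot|_u$ makes $\delta_C$ independent of the coset representative), the identification of $(C,\delta_C)$ with the Cayley graph $(G_u,|\cdot|_u)$ settles the graph-metric claim, and the coherence argument correctly combines Lemma~\ref{lem:QMhyp} (all cliques of $J$ are $G_u$-cosets, $N(J)$ splits as $G_u\times\langle\mathrm{link}(u)\rangle$) with the gatedness of carriers and cliques from Theorem~\ref{thm:MainQM} to reduce the gated projection $A\to B$ to the coordinatewise projection $(k,1)\mapsto(k,h)$ in a Cartesian product; the commutation $[k,h]=1$ for $k\in G_u$, $h\in\langle\mathrm{link}(u)\rangle$ then makes the projection the identity under both identifications with $G_u$, hence an isometry. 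The one point you gloss over --- that the projection onto $B$ in the ambient graph may be computed inside $N(J)$ --- is harmless, since $N(J)$ is gated (hence convex) and contains both $A$ and $B$. This matches the treatment in \cite{Qm}, so your proof is an acceptable expansion of the citation rather than a divergence from it.
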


\noindent
We refer the reader to \cite[Section~8]{Qm} for more information. It is worth noticing that, as a graph, $\mathrm{QM}(\Gamma, \mathcal{G})$ endowed with the graph metric associated to the previous coherent system of metrics coincides with the Cayley graph of $\Gamma \mathcal{G}$ given by the generating set $\bigcup_{u} S_u$ \cite[Claim~8.24]{Qm}. The point of view given by coherent systems of metrics allows us to highlight the underlying quasi-median structure.

\subsubsection{Approximations of wreath products}\label{section:Approximations}

\noindent
In this section, we show that wreath products of finitely generated groups admit truncated presentations that can be thought, geometrically, as graphs of pointed cliques in quasi-median graphs of graph produdcts endowed with coherent systems of metrics. 

\begin{definition}
Let $A,B$ be two groups and $\Gamma$ a simplicial graph on which $B$ acts freely-transitively. The semidrect product
$$A \square_\Gamma B:= \Gamma A \rtimes B$$
is a \emph{partial wreath product}, where $B$ acts on $\Gamma A$ by permuting the vertex-groups according to its action on $\Gamma$. 
\end{definition}

\noindent
When $B$ is a complete graph, $A \square_\Gamma B$ coincides with the wreath product $A \wr B$. When $B$ has no edge, then $A \square_\Gamma B = \ast_B A \rtimes B$ is isomorphic to $A \ast B$. 

\medskip \noindent
Observe that the natural projection
$$\dag : \left\{ \begin{array}{ccc} \Gamma A & \twoheadrightarrow & \bigoplus\limits_B A \\ g & \mapsto & g^\dag \end{array} \right.$$
induces a surjective morphism
$$\pi_\Gamma : \left\{ \begin{array}{lcl} A \square_\Gamma B & \twoheadrightarrow & A \wr B \\ (g,b) & \mapsto & (g^\dag, b) \end{array} \right..$$
Alternatively, one can think of $A \square_\Gamma B$ as a group obtained by truncating a presentation of $A \wr B$, and then $\pi_\Gamma$ is the natural corresponding projection. More precisely, the wreath product $A \wr B$ admits
$$\langle A_b \ (b \in B), B \mid [A_b,A_c]=1 \text{ and } bA_cb^{-1}=A_{bc} \text{ for all } b,c \in B \rangle$$
as a relative presentation, where $A_b$ are copies of $A$ and where $[H,K]=1$ is a shorthand for ``$[h,k]=1$ for all $h \in H$, $k \in K$''. Because $B$ acts freely-transitively on the vertices of $\Gamma$, we know that $\Gamma$ is a Cayley graph of $B$. Let $S$ be the corresponding generating set (which may not be finite). Then
$$\langle A_b \ (b \in B), B \mid [A_b,A_{bs}]=1 \text{ and } bA_cb^{-1}=A_{bc} \text{ for all } b,c \in B, s \in S \rangle$$
is a relative presentation of $A \square_\Gamma B$ obtained by truncating the previous presentation of $A \wr B$. Loosely speaking, we removed commutation relations. Notice that, when one adds to $S$ successively all the elements of $B$, then one adds to the presentation of $A \square_\Gamma B$ successively the missing commutation relation. 

\medskip \noindent
The connection between partial wreath products and our Embedding Theorem is made explicit by the following well-known fact:

\begin{lemma}
Let $G$ be a group, $S \subset G$ a finite generating set, and $\langle S \mid r_1,r_2, \ldots \rangle$ an infinite presentation of $G$. For every $n \geq 1$, let $G_n$ denote the group given by the presentation $\langle S \mid r_1, \ldots, r_n \rangle$. For every coarsely simply connected graph $Z$ and every coarse embedding $\rho : Z \to G$, there exist an $n \geq 1$ and a coarse embedding $\eta : Z \to G_n$ such that $\rho= \pi \circ \eta$ where $\pi$ is the canonical projection $G_n \to G$. Moreover, the parameters of $\eta$ only depends on $G$, $S$, $Z$, and the parameters of $\rho$. \qed
\end{lemma}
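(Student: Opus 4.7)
The plan is to view $\mathrm{Cayl}(G_n,S)$ as a ``partial cover'' of $\mathrm{Cayl}(G,S)$. Indeed, the canonical projection $\pi:G_n\to G$ is a surjective homomorphism that sends $S$ to $S$, so it induces a $1$-Lipschitz graph morphism between Cayley graphs; every combinatorial path in $\mathrm{Cayl}(G,S)$ lifts uniquely to $\mathrm{Cayl}(G_n,S)$ from any prescribed starting vertex, and a loop downstairs lifts to a loop upstairs if and only if its associated word belongs to the normal closure of $\{r_1,\ldots,r_n\}$. The integer $n$ will be chosen large enough so that all ``small'' loops coming from $Z$ enjoy this property. More precisely, letting $\sigma_-,\sigma_+$ denote the control functions of $\rho$, I would first fix $k,\ell \geq 1$ such that $Z$ is $k$-coarsely connected and every $k$-loop in $Z$ is a product of conjugates of $k$-loops of length at most $\ell$; this is exactly the form of coarse simple connectedness that will be needed. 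Interpolating consecutive images along a $k$-path by geodesics of $\mathrm{Cayl}(G,S)$ multiplies lengths by at most $\sigma_+(k)$, so each $k$-loop in $Z$ of length $\leq \ell$ gives rise to a combinatorial loop in $\mathrm{Cayl}(G,S)$ of length at most $L:=\ell\,\sigma_+(k)$. There are only finitely many such words in $S\cup S^{-1}$ up to left-translation, and each of them represents the identity in $G$, hence is a product of conjugates of finitely many $r_i$. I would choose $n$ so that $\{r_1,\ldots,r_n\}$ contains all of those.

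To construct $\eta$, fix a basepoint $z_0\in Z$ and any preimage $\widetilde{g}_0\in G_n$ of $\rho(z_0)$. For every $z\in Z$, I would pick a $k$-path $\gamma_z$ from $z_0$ to $z$, interpolate its image to a combinatorial path $\widetilde{\gamma}_z$ in $\mathrm{Cayl}(G,S)$, and define $\eta(z)$ to be the endpoint of the unique lift of $\widetilde{\gamma}_z$ to $\mathrm{Cayl}(G_n,S)$ starting at $\widetilde{g}_0$. The main obstacle is showing that $\eta(z)$ does not depend on the choice of $\gamma_z$: two such paths differ by a $k$-loop at $z_0$, which by coarse simple connectedness decomposes as a concatenation of conjugates of $k$-loops of length $\leq \ell$, each of which lifts to a loop in $\mathrm{Cayl}(G_n,S)$ by our choice of $n$. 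With well-definedness secured, the identity $\pi\circ\eta=\rho$ is immediate from the construction.

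Finally, I would verify the coarse embedding estimates. The upper control of $\eta$ is inherited from that of $\rho$ up to the multiplicative factor $\sigma_+(k)$ coming from the geodesic interpolation. The lower bound is free: by $1$-Lipschitzness of $\pi$,
\[
d_{G_n}(\eta(x),\eta(y)) \;\geq\; d_G\bigl(\pi(\eta(x)),\pi(\eta(y))\bigr) \;=\; d_G(\rho(x),\rho(y)) \;\geq\; \sigma_-(d_Z(x,y)).
\]
All constants introduced depend only on $G$, $S$, $Z$ and the parameters of $\rho$, as required. The only genuine subtlety in the argument is the well-definedness step, where coarse simple connectedness is essential: it reduces arbitrary ambiguous loops to a uniformly bounded collection of ``small'' loops, and thereby determines how large $n$ must be taken.
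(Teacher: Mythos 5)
Your argument is correct and is exactly the standard lifting argument this lemma is meant to invoke; the paper itself states the lemma as well-known and gives no proof, so there is nothing to diverge from. The one point worth being careful about in a written version is that the product-of-conjugates decomposition of a $k$-loop in $Z$ must be pushed forward to an identity of words in the free group $F(S)$ (choosing the interpolating geodesics symmetrically so that backtrack cancellation in $Z$ corresponds to free reduction), but your well-definedness step already contains the essential idea.
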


\noindent
As an immediate consequence of the previous discussion, we deduce that:

\begin{cor}\label{cor:ApproxWreath}
Let $A,B$ be two finitely generated groups, $Z$ a coarsely simply connected graph, and $\rho : Z \to A \wr B$ a coarse embedding. Then there exist a locally finite Cayley graph $\Gamma$ of $B$ and a coarse embedding $\eta : Z \to A \square_\Gamma B$ such that $\pi_\Gamma \circ \eta = \rho$. Moreover, the parameters of $\eta$ only depend on $A$, $B$, $Z$, and the parameters of $\rho$. 
\end{cor}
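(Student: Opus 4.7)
The plan is to apply the preceding lemma to a well-chosen infinite presentation of $A \wr B$ whose finite truncations realise, as subpresentations, the partial wreath products $A \square_\Gamma B$ for an increasing family of locally finite Cayley graphs of $B$.

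First, I fix finite generating sets $T_A \subset A$ and $T_B \subset B$ with finite defining relators $R_A$ and $R_B$, and a nested exhaustion $T_B \cup T_B^{-1} \subset F_1 \subset F_2 \subset \cdots$ of $B \setminus \{e\}$ by finite symmetric subsets. For each $i \geq 1$, set $\Gamma_i := \mathrm{Cayl}(B, F_i)$, which is a locally finite connected Cayley graph of $B$, and let
$$R_i := R_A \cup R_B \cup \{ [t, bt'b^{-1}] \mid t,t' \in T_A, \ b \in F_i \}.$$
As in the discussion preceding the corollary, $\langle T_A \cup T_B \mid R_i \rangle$ is a finite presentation of $A \square_{\Gamma_i} B$, and $\langle T_A \cup T_B \mid \bigcup_i R_i \rangle$ is an infinite presentation of $A \wr B$ over the finite generating set $T_A \cup T_B$.

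Next, I enumerate these relators as $r_1, r_2, \ldots$ so that every relator of $R_i$ is listed before any relator of $R_{i+1} \setminus R_i$, and apply the preceding lemma to obtain an integer $n$ and a coarse embedding $\eta_0 : Z \to G_n := \langle T_A \cup T_B \mid r_1, \ldots, r_n \rangle$ satisfying $\pi \circ \eta_0 = \rho$, with parameters depending only on $A$, $B$, $Z$, and those of $\rho$. To replace $G_n$ by a genuine partial wreath product, I pick the smallest index $m \geq n$ at which the enumeration has fully completed some $R_j$, so that $G_m \cong A \square_{\Gamma_j} B$, and let $\eta : Z \to A \square_{\Gamma_j} B$ be the composition of $\eta_0$ with the canonical projection $G_n \twoheadrightarrow G_m$. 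Setting $\Gamma := \Gamma_j$ then gives $\pi_\Gamma \circ \eta = \rho$ by construction.

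The only technical point I expect to have to check is that $\eta$ is still a coarse embedding. The upper control is automatic because the projection $G_n \to G_m$ is $1$-Lipschitz. For the lower control, note that the further quotient $A \square_{\Gamma_j} B \twoheadrightarrow A \wr B$ is also $1$-Lipschitz, so the lower bound coming from $\rho$ being a coarse embedding transfers directly through the composition to $\eta$. Since all parameter bounds are preserved under these $1$-Lipschitz compositions, the parameters of $\eta$ still depend only on $A$, $B$, $Z$, and those of $\rho$. The genuine difficulty is thus entirely packaged inside the preceding lemma; beyond that, the argument is essentially a bookkeeping exercise identifying the right truncations of the chosen presentation with partial wreath products.
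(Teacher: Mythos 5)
Your overall strategy is exactly the one the paper intends: realise $A\wr B$ by an infinite presentation over the finite generating set $T_A\cup T_B$ whose truncations are (or surject onto) the partial wreath products $A\square_{\Gamma}B$, apply the factorisation lemma, and then push the resulting coarse embedding of $Z$ into $G_n$ forward to a genuine partial wreath product, checking the two controls via the fact that all the projections involved are $1$-Lipschitz. That last verification is correct: the lower control of $\eta$ is inherited from $\rho$ through the $1$-Lipschitz map $\pi_{\Gamma}$, and the upper control from $\eta_0$.

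There is, however, one step that fails in the stated generality: you ``fix finite generating sets $T_A\subset A$ and $T_B\subset B$ with finite defining relators $R_A$ and $R_B$''. The corollary only assumes $A$ and $B$ finitely generated, and in the paper's main application (Theorem~\ref{thm:FullEmbeddingThm}) they are arbitrary finitely generated groups, so $R_A$ and $R_B$ may have to be infinite. In that case each $R_i$ is infinite, your enumeration (all of $R_1$, then $R_2\setminus R_1$, etc.) never reaches $R_2$, and no finite truncation $G_m=\langle T_A\cup T_B\mid r_1,\dots,r_m\rangle$ ever coincides with an $A\square_{\Gamma_j}B$, so the index $m$ you select does not exist. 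The repair is routine but changes the bookkeeping: enumerate \emph{all} relators (those of $R_A$, of $R_B$, and the commutators $[t,bt'b^{-1}]$) by a diagonal enumeration so that every relator appears at some finite stage; after applying the lemma to get $\eta_0:Z\to G_n$, let $E$ be the finite set of elements $b\in B\setminus\{e\}$ occurring in the commutator relators among $r_1,\dots,r_n$, set $F:=E\cup E^{-1}\cup T_B\cup T_B^{-1}$ and $\Gamma:=\mathrm{Cayl}(B,F)$. Every relator among $r_1,\dots,r_n$ holds in $A\square_{\Gamma}B$, so $A\square_{\Gamma}B$ is a quotient of $G_n$ through which the projection to $A\wr B$ factors; composing $\eta_0$ with this quotient map gives $\eta$, and your argument for the controls then goes through verbatim. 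With this modification your proof is complete and matches the paper's intended ``immediate consequence'' of the preceding discussion.
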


\noindent
An element $(g,b)$ of $A \square_\Gamma B$ is the data of an element $g \in \Gamma A$ and an element $b \in B$. The element $g$ can be thought of as a vertex in the quasi-median graph $\mathrm{QM}(\Gamma,A)$. According to Lemma~\ref{lem:QMcliques}, the cliques of $\mathrm{QM}(\Gamma,A)$ containing $g$ are the cosets $g \langle u \rangle$, where $u \in V(\Gamma)$. But $V(\Gamma)=B$, so we can use the element $b$ to choose the clique $g \langle b \rangle$. Thus, to each element $(g,b)$ of $A \square_\Gamma B$ can be associated a pointed clique $(g \langle b \rangle, g)$ of $\mathrm{QM}(\Gamma,A)$. 

\medskip \noindent
Let us describe how the pointed clique $(g \langle b \rangle, g)$ evolves when $(g,b)$ moves in the Cayley graph of $A \square_\Gamma B$. Here, $A \square_\Gamma B$ is endowed with a generating set obtained as the union of a finite generating set of $A$ (identified with the copy $A_1$ in $A \square_\Gamma B$) with the generating set of $B$ given by $\Gamma$ (which we assume to be finite). 
\begin{itemize}
	\item Given a generator $s$ of $A$, we have $(g,b) \cdot s = (g s^b, b)$ where $s^b$ corresponds to the element $s$ in the copy $A_b$ of $A$. The new pointed clique is then $(g s^b \langle b \rangle, gs^b)= (g \langle b \rangle, gs^b)$. Thus, the clique remains the same but the vertex is modified. When identifying the clique $g \langle b \rangle$ with $B$, this amounts to replacing a vertex with a neighbour in the Cayley graph.
	\item Given a generator $s$ of $B$, we have $(g,b) \cdot s = (g,bs)$. The new pointed clique is then $(g \langle bs \rangle, g)$. Thus, the vertex remains the same but he clique is modified. Notice that, because $b$ and $bs$ are adjacent in the Cayley graph $\Gamma$ of $B$, the cliques $g \langle b \rangle$ and $g \langle bs \rangle$ span a prism (and, conversely, the fact that $g \langle b\rangle$ and $g \langle bs \rangle$ span a prism implies that $b$ and $bs$ must be adjacent in $\Gamma$).
\end{itemize}
It follows from this description that the Cayley graph of $A \square_\Gamma B$ can be described as the graph of pointed cliques of the quasi-median graph $\mathrm{QM}(\Gamma,A)$ endowed with the coherent system of metrics given by Lemma~\ref{lem:GPsystem}.

\subsubsection{Proof of the Embedding Theorem}

\noindent
We are finally ready to prove our main theorem regarding the large-scale geometry of wreath products.

\begin{proof}[Proof of Theorem \ref{thm:FullEmbeddingThm}.]
According to Corollary \ref{cor:ApproxWreath}, there exist a locally finite Cayley graph $\Gamma$ of $B$ and a coarse embedding $\eta : Z \to A \square_\Gamma B$ such that $\rho = \pi_\Gamma \circ \eta$. 

\centerline{
\xymatrix{
Z \ar[rrd]^\rho \ar[d]_\eta & & \\ A \square_\Gamma B \ar[rr]^{\pi_\Gamma} & & A \wr B 
}}

\noindent
In the following, when thought of geometrically, $A \square_\Gamma B$ will be identified with the graph of pointed cliques $\mathrm{PC}:= \mathrm{PC}(\mathrm{QM}(\Gamma,A), \mathcal{M})$ of the quasi-median graph $\mathrm{QM}:=\mathrm{QM}(\Gamma,A)$ endowed with a coherent system of graph metrics $\mathcal{M}$ as described in Section~\ref{section:Approximations}. 

\medskip \noindent
Our first observation is that, given a fibre $F$ in $\mathrm{QM}$, the pointed cliques in $\eta(Z) \subset \mathrm{PC}$ lie mostly in a single piece delimited by $F$. More precisely:

\begin{claim}\label{claim:Orientation}
Let $F$ be a fibre in $\mathrm{QM}$ and let $B_F$ denote the corresponding bulkhead in $\mathrm{PC}$. There exist a zone $P$ delimited by $F$ and a constant $D \geq 0$ that depends only on $A$, $B$, $Z$, $\Gamma$, and the parameters of $\rho$ such that $\eta(Z) \subset \mathrm{PC}$ contains pointed cliques in $P$ arbitrarily far away from $B_F$ but every pointed clique of $\eta(Z)$ in $(P \cup F)^c$ lies at distance $\leq D$ from $B_F$. 
\end{claim}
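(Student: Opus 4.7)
The plan is to argue by contradiction. I will suppose that both zones $P_1,P_2$ delimited by $B_F$ contain points of $\eta(Z)$ arbitrarily far from $B_F$, and deduce that $\rho(Z)$ is coarsely separated by $\mathcal{B}_k$ with $k=\deg(\Gamma)+1$, contradicting the hypothesis. The first step promotes this pointwise unboundedness into an honest coarse separation of $\eta(Z)$ by $B_F$ inside $A\square_\Gamma B$. Let $k_0$ be a coarse-connectivity constant for $\eta(Z)$, inherited from the coarse connectivity of $Z$ via the coarse embedding $\eta$. By Lemma~\ref{lem:BulkSep}, $B_F$ separates the two zones in $\mathrm{PC}$, so any $k_0$-path in $\eta(Z)$ joining points in distinct zones must enter the thickening $B_F^{+k_0}$. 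Hence, for every $D$, the pair $\eta(z_1^D),\eta(z_2^D)$ supplied by the assumption — at distance $\geq D$ from $B_F$ and in distinct zones — lies in distinct $k_0$-coarsely connected components of $\eta(Z)\setminus B_F^{+k_0}$.

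The second step transfers this separation to $\rho(Z)$ inside $A\wr B$. Any coarse path in $\rho(Z)$ can be lifted through $\eta$ to a coarse path in $\eta(Z)$ with the same endpoints: since $\eta$ and $\rho$ are both coarse embeddings of the common space $Z$, close $\rho$-images correspond to close $\eta$-images, which controls the step-size of the lift. Moreover, since $\pi_\Gamma$ is $1$-Lipschitz, a lifted vertex lies at $A\square_\Gamma B$-distance $>L$ from $B_F$ whenever its $\rho$-image lies at $A\wr B$-distance $>L$ from $\pi_\Gamma(B_F)$. Combined with the first step, this forces $\rho(z_1^D)$ and $\rho(z_2^D)$ into distinct coarse components of $\rho(Z)\setminus\pi_\Gamma(B_F)^{+L}$ for an appropriate $L$ depending on the parameters of $\rho$. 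On the side of the separating set, using Lemma~\ref{lem:QMhyp} and writing the hyperplane containing $B_F$ as $gJ_u$ with fibre $F=ga\langle\mathrm{link}(u)\rangle$, a direct computation yields
\[
\pi_\Gamma(B_F)=\bigl\{(f,u)\in A\wr B : f(u)=g^\dag(u)\,a,\ f_{|B\setminus\mathrm{star}_\Gamma(u)}=g^\dag_{|B\setminus\mathrm{star}_\Gamma(u)}\bigr\}\subseteq \mathcal{B}(\mathrm{star}_\Gamma(u),\,g^\dag a^\dag),
\]
and since $\Gamma$ is uniformly locally finite with $|\mathrm{star}_\Gamma(u)|\leq\deg(\Gamma)+1$, this ambient set lies in $\mathcal{B}_{\deg(\Gamma)+1}$.

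The hard part will be securing the distance lower bound demanded by Definition~\ref{weak separation}: $\rho(z_i^D)$ must lie at $A\wr B$-distance $\geq\sigma(D)\to\infty$ not only from $\pi_\Gamma(B_F)$ but from the enlarged set $\mathcal{B}(\mathrm{star}_\Gamma(u),g^\dag a^\dag)$. The difficulty is that $\pi_\Gamma$ has infinite kernel and is not a quasi-isometry, so $\rho(z_i^D)$ could a priori lie close to $\mathcal{B}(\mathrm{star}_\Gamma(u),g^\dag a^\dag)$ even when $\eta(z_i^D)$ is far from $B_F$. The plan is to argue back through $Z$: if $\rho(z_i^D)$ were within bounded $A\wr B$-distance $R$ of the ambient set, then $z_i^D$ would lie within bounded $Z$-distance of some $w\in\rho^{-1}(\mathcal{B}(\mathrm{star}_\Gamma(u),g^\dag a^\dag)^{+R})$, and $\eta(w)$ would sit in $\pi_\Gamma^{-1}(\mathcal{B}(\mathrm{star}_\Gamma(u),g^\dag a^\dag)^{+R})$, a set controlled by $B_F$, cliques of type $u$ around $g$, and $\ker\pi_\Gamma$. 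Combined with the hypothesis that $\rho(Z)$ is not contained in any element of $\mathcal{B}$ — so $\eta(Z)$ genuinely spreads away from these preimages on large scales — a further path-crossing argument along coarse geodesics in $Z$ should force $\eta(z_i^D)$ into a bounded neighbourhood of $B_F$, contradicting the choice $D\to\infty$. Once this step is carried out, $\mathcal{B}_{\deg(\Gamma)+1}$ coarsely separates $\rho(Z)$, giving the desired contradiction and establishing the claim.
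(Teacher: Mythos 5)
Your overall strategy coincides with the paper's: assume both zones contain points of $\eta(Z)$ arbitrarily far from $B_F$, push the resulting separation through $\pi_\Gamma$, and contradict the non-separation hypothesis. Your first two steps are sound (they amount to Lemma~\ref{lem:BulkSep} together with the fact that $\pi_\Gamma$ restricts to a coarse equivalence $\eta(Z)\to\rho(Z)$, i.e.\ Lemma~\ref{lem:CoarseSep}), and your formula for $\pi_\Gamma(B_F)$ is correct. But the gap you flag as ``the hard part'' is a genuine gap, and your sketched plan for closing it does not work. The problem is that $\mathcal{B}(\mathrm{star}_\Gamma(u),g^\dag a^\dag)$ is coarsely strictly larger than $\pi_\Gamma(B_F)$ when $A$ is infinite: it allows the colour at $u$ itself to be arbitrary. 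Concretely, a pointed clique $(gG_u,\,gx)$ with $x\in G_u$ of huge word length lies arbitrarily deep in a zone of $B_F$ (its slide-distance to $B_F$ is at least $|x^{-1}h|_u$ by Lemma~\ref{lem:PCdistLow}), yet its image $(g^\dag x^\dag,u)$ sits \emph{inside} $\mathcal{B}(\mathrm{star}_\Gamma(u),g^\dag a^\dag)$. So the deep points you produce may be at distance $0$ from your separating set, the distance lower bound of Definition~\ref{weak separation} fails, and no path-crossing argument through $Z$ or appeal to the non-containment hypothesis will rescue the implication for the star-based set.

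The difficulty is self-inflicted and disappears if you replace $\mathrm{star}$ by $\mathrm{link}$: the set $\mathcal{B}(\mathrm{link}_\Gamma(u),g^\dag a^\dag)$ belongs to $\mathcal{B}_{\deg(\Gamma)}$ and lies at Hausdorff distance $\leq 1$ from $\pi_\Gamma(B_F)=\{(g^\dag a^\dag c,u):\mathrm{supp}(c)\subset\mathrm{link}(u)\}$ (only the cursor position differs, by one edge of $\Gamma$). Two subsets at bounded Hausdorff distance coarsely separate the same subspaces, so coarse separation of $\rho(Z)$ by $\pi_\Gamma(B_F)$ immediately contradicts the hypothesis on $\mathcal{B}_{\deg(\Gamma)}$; this is exactly the paper's argument, which then quotes Lemma~\ref{lem:SepButNotCoarse} instead of reproving it. Finally, note that your contradiction set-up only treats the case where both zones are deep: the claim also asserts that \emph{some} zone contains points of $\eta(Z)$ arbitrarily far from $B_F$, and the degenerate case where $\eta(Z)$ lies entirely in a bounded neighbourhood of $B_F$ must be excluded using the hypothesis that $\rho(Z)$ is not contained in an element of $\mathcal{B}$ (a bounded neighbourhood of $\mathcal{B}(E,c)$ is contained in $\mathcal{B}(E^{+R},c)$ with $E^{+R}$ still finite).
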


\noindent
Let $J$ denote the hyperplane having $F$ as a fibre. By definition, we have
$$B_F= \{ (C,p) \text{ pointed clique} \mid C \subset J, p \in C \cap F\}.$$
Notice that, writing $J=gJ_u$ for some $u \in V(\Gamma)$ and $g \in \Gamma A$ satisfying $u \notin \mathrm{tail}(g)$ and writing $F=gh \langle \mathrm{link}(u) \rangle$ for some $h \in G_u$, we have
$$\pi_\Gamma(B_F) = \left\{ (p^\dag, u) \mid p \in gh \langle \mathrm{link}(u) \rangle \right\}=\left\{ (g^\dag h^\dag  c,u) \mid \mathrm{supp}(c) \subset \mathrm{link}(u) \right\}.$$
Thus, $\pi_\Gamma(B_F)$ is contained in $\mathcal{B} \left(\mathrm{link}(u),k \right)$ where $k:=g^\dag h^\dag$. Actually, it is quasi-dense in $\mathcal{B} \left(\mathrm{link}(u),k \right)$. Indeed, given a point in $\mathcal{B} \left(\mathrm{link}(u),k \right)$, say $(kc,v)$ where $c \in A^{\mathrm{link}(u)}$ and $v \in \mathrm{link}(u)$, the point $(kc,u)$ belongs to $\pi_\Gamma(B_F)$ and lies at controlled distance from $(kc,v)$. (More precisely, when $A\wr B$ is endowed with a finite generating set obtained from the union of a generating set of $A$ and the generating set of $B$ associated to $\Gamma$, then the distance between $(kc,u)$ and $(kc,v)$ in $A \wr B$ is $\leq 1$.) Therefore, $\pi_\Gamma(B_F)$ and $\mathcal{B}(\mathrm{link}(u),k)$ coarsely coincide.

\medskip \noindent
But we know that $\pi_\Gamma$ restricts to a coarse embedding on $\eta(Z)$ and that $\mathcal{B}_{\mathrm{deg}(\Gamma)}$ does not coarsely separate $\rho(Z)= \pi_\Gamma(\eta(Z))$. We deduce from Lemmas~\ref{lem:BulkSep} and~\ref{lem:SepButNotCoarse} that $B_F$ contains one of its zones in some neighbourhood (whose size does not depend on $F$), while the other zone contains points arbitrarily far away from $B_F$. This concludes the proof Claim~\ref{claim:Orientation}. 

\medskip \noindent
For every fibre $F$ of $\mathrm{QM}$, we denote by $F^+$ the zone delimited by $F$ as given by Claim~\ref{claim:Orientation}. 

\begin{claim}\label{claim:Vertex}
The intersection $\bigcap\limits_{\text{$F$ fibre}} F^+$ is non-empty and reduced to a single vertex.
\end{claim}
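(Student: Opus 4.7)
The plan is to apply Lemma~\ref{lem:InterSectors} to the quasi-median graph $\mathrm{QM}$. For each fibre $F$, the bulkhead $B_F$ decomposes $\mathrm{PC}$ into two zones: the sector zone, which projects via $(C,p) \mapsto p$ onto the sector delimited by $F$ in $\mathrm{QM}$, and the non-sector zone, which projects onto the cosector. I set $F^+$ to be the sector of $F$ if Claim~\ref{claim:Orientation} selects the sector zone, and the cosector of $F$ otherwise. With this choice, the present claim reduces to verifying the three hypotheses of Lemma~\ref{lem:InterSectors}.

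For condition (3), I argue by contradiction. Assuming every fibre of some hyperplane $J = gJ_u$ had $F^+$ equal to its cosector, Claim~\ref{claim:Orientation} would force the underlying vertices of $\eta(Z)$ inside every sector of $J$ to lie within a uniform distance $D$ of the corresponding fibre, and hence within $D$ of the carrier $N(J) = g\langle \mathrm{star}(u)\rangle$ (Lemma~\ref{lem:QMhyp}). Using the explicit description of $\pi_\Gamma(N(J))$ computed at the beginning of the proof of Claim~\ref{claim:Orientation}, it follows that $\rho(Z)$ lies within a bounded neighbourhood of $\mathcal{B}(\mathrm{star}(u),g^\dag)$; absorbing this bounded thickening into the exponent set, $\rho(Z)$ is contained in some $\mathcal{B}(E,c) \in \mathcal{B}$, contradicting the standing hypothesis.

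For condition (1), I analyse the possible relative positions of two fibres $F_1,F_2$. When the hyperplanes containing them are transverse, all four pairwise intersections of their sectors and cosectors are non-empty. In the remaining cases (two fibres of the same hyperplane, or fibres of two nested hyperplanes) a configuration that would give $F_1^+ \cap F_2^+ = \emptyset$ must be ruled out using Claim~\ref{claim:Orientation}: the points of $\eta(Z)$ witnessing the unbounded orientation at one bulkhead can be shown, using the explicit combinatorial description of the zones in $\mathrm{PC}$ together with Lemma~\ref{lem:PCdistLow} and the fact that the bulkheads of nearby fibres lie at controlled $\mathrm{PC}$-distance from each other, to also witness a forced orientation at the other bulkhead, which contradicts the assumed choice.

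The main obstacle will be verifying condition (2), i.e.\ ruling out infinite strictly decreasing chains $F_1^+ \supsetneq F_2^+ \supsetneq \cdots$. The hyperplanes carrying the $F_i$ are then pairwise non-transverse and so form a nested family; one has to argue that an infinite such chain would force $\eta(Z)$ to accumulate along a specific ``limit direction'' in $\mathrm{QM}$. After pushing this direction forward under $\pi_\Gamma$, I expect the same mechanism as in condition~(3) to produce a set of the form $\mathcal{B}(E,c)$ asymptotically containing $\rho(Z)$, again contradicting the hypothesis that $\rho(Z)$ is not contained in any subset from $\mathcal{B}$; the finiteness of the cubical dimension of $\mathrm{QM}$, equal to $\mathrm{clique}(\Gamma)$, will be used to organise the nested chain. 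Once the three conditions are in place, Lemma~\ref{lem:InterSectors} immediately yields that $\bigcap_F F^+$ is non-empty and reduced to a single vertex, completing the proof of Claim~\ref{claim:Vertex}.
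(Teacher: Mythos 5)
Your overall strategy (reduce to the three hypotheses of Lemma~\ref{lem:InterSectors}, with $F^+$ chosen via Claim~\ref{claim:Orientation}) is exactly the paper's, and your treatments of conditions (1) and (3) match the paper's arguments in substance. The problem is condition (2), which you explicitly leave as "the main obstacle" with only an expectation of how it might go — and the route you sketch (extract a ``limit direction'' from the nested chain, push it through $\pi_\Gamma$, and contradict the hypothesis that $\rho(Z)$ is not contained in a member of $\mathcal{B}$) is not the right mechanism and would be hard to make work: a strictly decreasing chain of pieces corresponds to an escape to infinity in $\mathrm{QM}$, not to confinement near any $\mathcal{B}(E,c)$ (those sets come from stars and links of vertices of $\Gamma$, i.e.\ from a \emph{single} hyperplane, which is exactly the situation of condition (3) and not of condition (2)).

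The actual argument is much shorter and uses the metric bound in Claim~\ref{claim:Orientation} rather than the containment hypothesis. Suppose $F_1^+ \supsetneq F_2^+ \supsetneq \cdots$ for fibres of pairwise non-transverse hyperplanes $J_1, J_2, \ldots$. Fix any pointed clique $(C,x) \in \eta(Z)$ with $C \subset F_1^+$. Since two vertices of a quasi-median graph are separated by only finitely many hyperplanes (Theorem~\ref{thm:MainQM}), $C$ must be disjoint from $F_r^+$ for some $r$. For every $s>r$, Claim~\ref{claim:Orientation} then forces $d_{\mathrm{PC}}((C,x),B_{F_s}) \leq D$; but $J_r,\ldots,J_{s-1}$ separate $x$ from $J_s$, so $d_{\mathrm{PC}}((C,x),B_{F_s}) \geq d(x,J_s) \geq s-r$, a contradiction once $s>r+D$. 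Note that the finiteness of the cubical dimension, which you invoke for (2), is not needed there (it is used later in the proof of the theorem, in Fact~\ref{fact:FewHyp}). Without an argument of this kind your proof of the claim is incomplete.
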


\noindent
Our goal is to apply Lemma~\ref{lem:InterSectors}.

\medskip \noindent
Let $F_1,F_2$ be two fibres of two hyperplanes $J_1$ and $J_2$. If $J_1$ and $J_2$ are transverse, then clearly $F_1^+$ and $F_2^+$ intersect. Next, if $J_1$ and $J_2$ are not transverse and if $F_1^+$ contains $F_2$, then clearly $F_1^+$ and $F_2^+$ intersect. Finally, if $J_1$ and $J_2$ are not transverse and if $F_1^+$ does not contain $F_2$, then $F_2^+$ must be the piece delimited by $F_2$ that contains $J_1$ as a direct consequence of our choice. It follows that $F_1^+$ and $F_2^+$ intersect. Thus, the first item of Lemma~\ref{lem:InterSectors} is satisfied.

\medskip \noindent
Next, assume for contradiction that $F_1^+ \supsetneq F_2^+ \supsetneq \cdots$ for some collection of fibres $F_1,F_2, \ldots$ of hyperplanes $J_1,J_2, \ldots$ in $\mathrm{QM}$. Fix a pointed clique $(C,x) \in \eta(Z)$ such that $C \subset F_1^+$. Because two vertices in $\mathrm{QM}$ are always separated by only finitely many hyperplanes, there must exist some $r \geq 1$ such that $C$ is disjoint from $F_r^+$. Given an $s >r$, we know from Claim~\ref{claim:Orientation} that $(C,x)$ must lie at distance $\leq D$ from the bulkhead $B_s$ associated to $F_{s}$. But
$$d((C,x),B_s)) \geq d(x,J_s) \geq s-r$$
because $J_r, \ldots, J_{s-1}$ separates $x$ from $J_s$. Therefore, we get a contradiction if $s$ is taken sufficiently large, namely if $s>r+D$. Thus, the second item of Lemma~\ref{lem:InterSectors} is satisfied. 

\medskip \noindent
Finally, let $J$ be a hyperplane. Assume for contradiction that, for every fibre $F$ of $J$, $F^+$ is the cosector delimited by $F$. This implies that, for every sector $S$ delimited by $J$, every pointed clique of $\eta(Z)$ contained in $S$ lies at distance $\leq D$ from the bulkhead of the fibre of $J$ in $S$. Consequently, $\eta(Z)$ is contained in the $D$-neighbourhood of 
$$B:=\{ (C,p) \text{ pointed clique} \mid C \subset J, p \in C \}.$$
A fortiori, $\rho(Z)= \pi_\Gamma(\eta(Z))$ must be contained in a neighbourhood of $\pi_\Gamma(B)$. But, writing $J=gJ_u$ for some $u \in V(\Gamma)$ and $g \in \Gamma A$, we have
$$\pi_\Gamma(B)= \left\{ (p^\dag, u) \mid p \in g \langle \mathrm{star}(u) \rangle \right\} = \left\{ (g^\dag c, u) \mid \mathrm{supp}(c) \subset \mathrm{star}(u) \right\}.$$
As before, we deduce that $\pi_\Gamma(B)$ coarsely coincides with $\mathcal{B}(\mathrm{star}(u),g)$. It follows that $\rho(Z)$ is contained in a neighbourhood of $\mathcal{B}(\mathrm{star}(u),g)$, which is impossible by assumption. Thus, the third item of Lemma~\ref{lem:InterSectors} is satisfied. 

\medskip \noindent
We conclude that Lemma~\ref{lem:InterSectors} applies, proving Claim~\ref{claim:Vertex}.

\medskip \noindent
Let $x$ denote the vertex of $\mathrm{QM}$ given by Claim \ref{claim:Vertex} and set 
$$\mathcal{H}:=\{(C,x) \mid \text{$C$ clique} \} \subset \mathrm{PC}.$$
Notice that $\pi_\Gamma(\mathcal{H})$ coincides with the coset $x^\dag B$ in $A \wr B$. Therefore, in order to conclude the proof of our theorem, it suffices to show that every vertex in $\eta(Z)$ is at (uniformly) bounded distance from $\mathcal{H}$. 

\medskip \noindent
So let $(C,y)$ be an arbitrary pointed clique in $\eta(Z)$. Fix a geodesic in $\mathrm{QM}$ from $y$ to $x$, and let $Q$ denote the clique containing the last edge of this geodesic. Also, let $J$ denote the hyperplane containing $Q$ and $F$ its fibre containing $x$. By definition of $x$, $F^+$ is the sector delimited by $J$ containing $x$; and, by construction, $(C,y)$ is contained in the union of $F$ with the cosector delimited by $F$. Therefore, $(C,y)$ must lie at distance $\leq D$ from the bulkhead associated to $F$. In other words, there exists a pointed clique $(K,z)$ with $K \subset J$ and $z \in F$ such that the distance between $(K,z)$ and $(C,y)$ in $\mathrm{PC}$ is $\leq D$. 

\begin{center}
\includegraphics[width=0.4\linewidth]{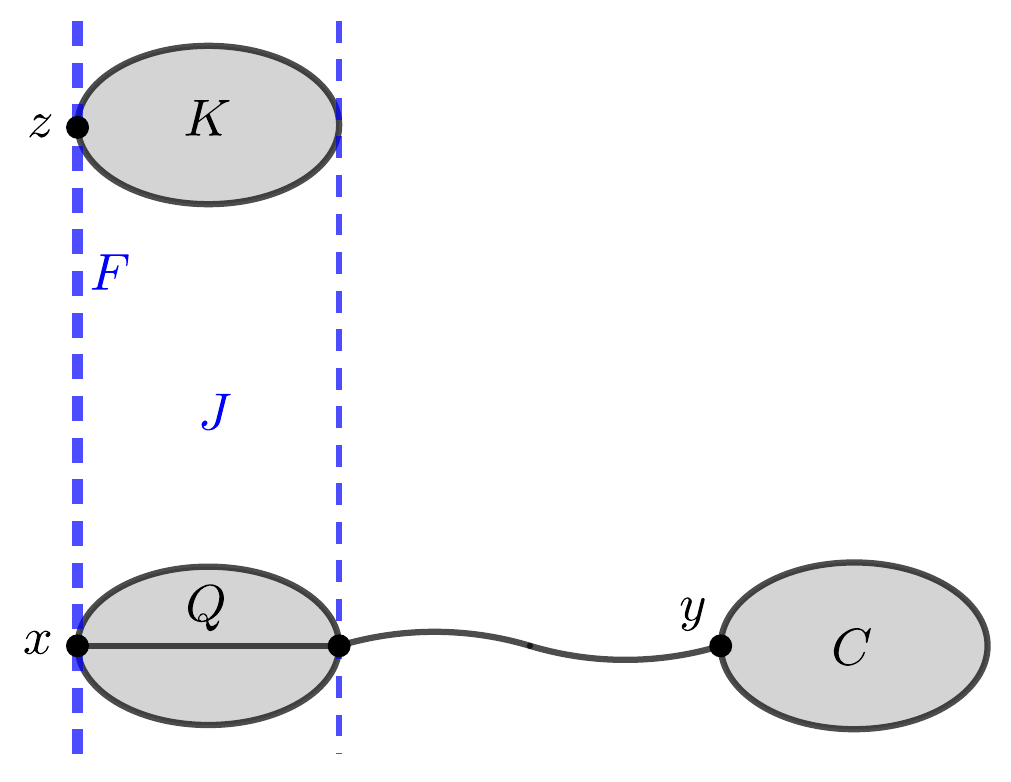}
\end{center}

\noindent
According to Lemma~\ref{lem:PointedHyp}, the distance between $(K,z)$ and $(Q,x)$ is bounded above by $3 \delta(x,z)+1$. Therefore,
$$\begin{array}{lcl} d_\mathrm{PC}((C,y),\mathcal{H}) & \leq & d_\mathrm{PC}((C,y),(Q,x)) \leq d_\mathrm{PC}((C,y),(K,z)) + d_\mathrm{PC}((K,z),(Q,x)) \\ \\ & \leq & D+ 3 \delta (x,z)+1. \end{array}$$
It remains to show that $\delta(x,z)$ is controlled. By triangle inequality, we have $\delta(x,z) \leq \delta(x,y)+\delta(y,z)$. On the one hand, we know from Lemma~\ref{lem:PCdistLow} that
$$\delta(y,z) \leq d_\mathrm{PC} ((C,y), (K,z)) \leq D.$$
On the other hand, it follows from Facts~\ref{fact:FewHyp} and~\ref{fact:ThinHyp} below that  $\delta(x,y) \leq \mathrm{clique}(\Gamma) \cdot D^2$.  We finally deduce that
$$d_\mathrm{PC}((C,y),\mathcal{H}) \leq 3D^2 \mathrm{clique}(\Gamma) + 4D+1.$$
Thus, we have proved that $\eta(Z)$ lies in a controlled neighbourhood of $\mathcal{H}$, as desired.

\begin{fact}\label{fact:FewHyp}
There exist at most $\mathrm{clique}(\Gamma) \cdot D$ hyperplanes separating $x$ and $y$.
\end{fact}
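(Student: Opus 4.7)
The plan is to reduce the count of hyperplanes separating $x$ and $y$ to the length of a maximal chain of pairwise non-transverse such hyperplanes, and then to bound that chain length using the orientation structure furnished by Claim~\ref{claim:Orientation}. By Theorem~\ref{thm:MainQM}(iv), the number of hyperplanes separating $x$ and $y$ equals $d_{\mathrm{QM}}(x,y)$, so Lemma~\ref{lem:CubDist} combined with $\dim_{\square}(\mathrm{QM})=\mathrm{clique}(\Gamma)$ reduces the task to showing that any such chain $J_1,\ldots,J_n$ satisfies $n\lesssim D$. I would order the chain so that a $\mathrm{QM}$-geodesic from $y$ to $x$ crosses $J_1,\ldots,J_n$ in this order; pairwise non-transversality then forces $J_1,\ldots,J_{i-1}$ all to separate $y$ from the carrier $N(J_i)$, yielding the lower bound $d_{\mathrm{QM}}(y,N(J_i))\geq i-1$.

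For each $i$, I would attach to $J_i$ the unique fibre $F_i$ for which $F_i^+$ is a \emph{sector} (the third bullet in the proof of Claim~\ref{claim:Vertex} shows that such a fibre exists and is unique). By construction of $x$ we have $x\in F_i^+$, and since $J_i$ separates $x$ and $y$ we must have $y\notin F_i^+$, and in particular $y\notin F_i$ since $F_i\subset F_i^+$. The only delicate step---the point I expect to require the most care---will be to verify that the pointed clique $(C,y)\in\eta(Z)$ lies in the zone opposite to $F_i^+$ and not on the bulkhead $B_{F_i}$: because $y\in C$ and $y\notin F_i^+$, the clique $C$ cannot be contained in $F_i^+$, so $(C,y)\notin\{(C',p)\mid C'\subset F_i^+\}$; and because $y\notin F_i$, the pair $(C,y)$ does not belong to $B_{F_i}=\{(C',p)\mid C'\subset J_i,\ p\in C'\cap F_i\}$. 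Claim~\ref{claim:Orientation} then provides $(K_i,z_i)\in B_{F_i}$ with $d_{\mathrm{PC}}((C,y),(K_i,z_i))\leq D$, and by construction $z_i\in F_i\subset N(J_i)$.

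To close the loop, I would compare the two distances on $\mathrm{QM}$. Lemma~\ref{lem:PCdistLow} together with the inequality $d_{\mathrm{QM}}\leq\delta$ (which holds since each local metric $\delta_C$ dominates the graph metric of the clique) gives
\[
d_{\mathrm{QM}}(y,z_i)\ \leq\ \delta(y,z_i)\ \leq\ d_{\mathrm{PC}}((C,y),(K_i,z_i))\ \leq\ D,
\]
while the first paragraph supplies $d_{\mathrm{QM}}(y,z_i)\geq d_{\mathrm{QM}}(y,N(J_i))\geq i-1$. Hence $i\leq D+1$ for every $i\leq n$, so $n\leq D+1$, and Lemma~\ref{lem:CubDist} then yields at most $\mathrm{clique}(\Gamma)\cdot(D+1)$ hyperplanes separating $x$ and $y$, from which the claimed bound follows after absorbing the $+1$ into $D$. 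The overall argument closely parallels the proof of Claim~\ref{claim:Vertex}; the only genuinely new ingredient is the zone-identification observation above, which is the step I would handle most carefully.
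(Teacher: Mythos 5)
Your argument is correct in substance and follows the same strategy as the paper: reduce to a maximal chain of pairwise non-transverse separating hyperplanes via Lemma~\ref{lem:CubDist}, then bound the chain length by playing a lower bound on the distance from $(C,y)$ to a bulkhead against the upper bound $D$ from Claim~\ref{claim:Orientation}. The one place where you diverge is in how the chain length is extracted, and it costs you a $+1$. You apply Claim~\ref{claim:Orientation} once per hyperplane $J_i$ and estimate $i-1\leq d_{\mathrm{QM}}(y,N(J_i))\leq D$, so you only get $n\leq D+1$: a path from $y$ to the carrier $N(J_i)$ need not cross $J_i$ itself. The paper instead orders the chain with $J_1$ closest to $x$, observes that maximality of the chain forces $x\in N(J_1)$, and takes $F_1$ to be the fibre of $J_1$ containing $x$; then all $n$ hyperplanes $J_1,\dots,J_n$ (including $J_1$) separate $y$ from $F_1$, so $d_{\mathrm{PC}}((C,y),B_{F_1})\geq n$ and hence $n\leq D$ in one shot. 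Your closing remark about ``absorbing the $+1$ into $D$'' is not literally available --- $D$ is the fixed constant of Claim~\ref{claim:Orientation} and the Fact asserts the bound $\mathrm{clique}(\Gamma)\cdot D$ --- but the discrepancy is harmless, since the downstream estimate in the proof of Theorem~\ref{thm:FullEmbeddingThm} only requires a bound polynomial in $D$. The remaining steps (the identification of $(C,y)$ as lying off the bulkhead and in the zone opposite to $F_i^+$, and the chain $d_{\mathrm{QM}}\leq\delta\leq d_{\mathrm{PC}}$ via Lemma~\ref{lem:PCdistLow}) are correct and match the paper's usage.
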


\noindent
Let $J_1, \ldots, J_n$ be a maximal collection of pairwise non-transverse hyperplanes separating $x$ and $y$. Assume that $J_i$ separates $J_{i-1}$ and $J_{i+1}$ for every $2 \leq i \leq n-1$ and that $J_1$ separates $x$ and $J_2$. Notice that $x$ belongs to the carrier of $J_1$, since otherwise there would exist a hyperplane separating $x$ from $J_1$, contradicting the maximality of our collection of hyperplanes. Let $F_1$ denote the fibre of $J_1$ containing $x$. Because $J_1, \ldots, J_n$ separate $y$ from $F_1$, necessarily $(C,y)$ lies at distance $\geq n$ from the bulkhead associated to $F_1$. However, since $F_1^+$ is necessarily the sector delimited by $J_1$ containing $x$, the latter distance must be $\leq D$. Hence $n \leq D$. We deduce from Lemma~\ref{lem:CubDist} (and Theorem~\ref{thm:GPandQM}) that there are at most $\mathrm{clique}(\Gamma) \cdot D$ hyperplanes separating $x$ and $y$. 

\begin{fact}\label{fact:ThinHyp}
For every hyperplane $H$ separating $x$ and $y$, $\delta_H(x,y) \leq D$. 
\end{fact}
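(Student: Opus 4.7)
The plan is to find a suitable point in the bulkhead $B_{F_x}$ associated to the fibre $F_x$ of $H$ on the $x$-side, so that it serves as a proxy for $x$ when computing $\delta_H(x,y)$. First I would consider the fibre $F_x$ of $H$ contained in the sector of $H$ that contains $x$. Since $x$ belongs to the sector delimited by $F_x$, the piece $(F_x)^+$ chosen in Claim~\ref{claim:Orientation} must be this sector: otherwise $x \notin (F_x)^+$ would contradict Claim~\ref{claim:Vertex}. Equivalently, the zone of $B_{F_x}$ labelled $(F_x)^+$ is the sector-side zone.

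Next, I would verify that the pointed clique $(C,y) \in \eta(Z)$ lies in the opposite (cosector-side) zone of $B_{F_x}$. Indeed, $y$ lies in a sector of $H$ distinct from that of $x$, and the clique $C$ containing $y$ is either entirely contained in $y$'s sector (when $y \notin N(H)$) or is a clique of $H$ with $y \in C \setminus F_x$ (when $y \in N(H)$); in both cases, $(C,y)$ belongs to the cosector-side zone of $B_{F_x}$. Applying Claim~\ref{claim:Orientation} to $F_x$ then yields a pointed clique $(K^*,z^*) \in B_{F_x}$, with $K^*$ a clique of $H$ and $z^* \in K^* \cap F_x$, such that $d_{\mathrm{PC}}((C,y),(K^*,z^*)) \leq D$; Lemma~\ref{lem:PCdistLow} converts this into $\delta(y,z^*) \leq D$, and hence $\delta_H(y,z^*) \leq D$ since $H$ separates $y$ from $z^*$.

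To finish, I would identify $\delta_H(x,y)$ with $\delta_H(z^*,y)$ by showing that $\mathrm{proj}_{K^*}(x) = z^*$. This is the main technical step, and it rests on two observations: fibres of hyperplanes are gated in a quasi-median graph (Theorem~\ref{thm:MainQM}(ii)), so $\mathrm{proj}_{N(H)}(x) \in F_x$; and distinct vertices of a clique of $H$ lie in distinct fibres of $H$, so the projection of any vertex of $F_x$ onto $K^*$ must be the unique vertex of $K^*$ lying in $F_x$, which is $z^*$. Combining these with coherence of the system of metrics gives
\[
\delta_H(x,y) = \delta_{K^*}(\mathrm{proj}_{K^*}(x),\mathrm{proj}_{K^*}(y)) = \delta_{K^*}(z^*,\mathrm{proj}_{K^*}(y)) = \delta_H(z^*,y) \leq D,
\]
which is the desired bound.
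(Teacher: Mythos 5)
Your proposal is correct and follows essentially the same route as the paper: locate a pointed clique $(K^*,z^*)$ in the bulkhead of the fibre $F_x$ within $\mathrm{PC}$-distance $D$ of $(C,y)$ via Claim~\ref{claim:Orientation}, bound $\delta_H(z^*,y)$ by $\delta(z^*,y)\leq d_{\mathrm{PC}}$ using Lemma~\ref{lem:PCdistLow}, and conclude with $\delta_H(x,y)=\delta_H(z^*,y)$ because $x$ and $z^*$ lie in the same sector of $H$ and hence have the same projection onto any clique of $H$. The only difference is that you spell out the last equality (via gatedness and the fibre structure of cliques in $H$) where the paper simply asserts it.
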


\noindent
Let $F$ denote the fibre of $H$ given by the sector delimited by $H$ containing $x$. Because $H$ separates $x$ and $y$, necessarily $F^+$ coincides with the sector delimited by $H$ containing $x$. Fix a pointed clique $(P,a)$ in the bulkhead of $F$ at distance $\leq D$ from $(C,y)$. Then
$$\delta_H(x,y)= \delta_H(a,y) \leq d_\mathrm{PC}((P,a),(C,y)) \leq D,$$
where the first equality is justified by the fact that $x$ and $a$ belong to the same sector delimited by $H$. 
\end{proof}

\subsection{Applications to coarse geometry}

\subsubsection{Aptolic embeddings}\label{section:Aptolic}

\noindent
In \cite{GTbig} is introduced a notion of maps between wreath products that, roughly speaking, preserve the lamplighter structure. More precisely:

\begin{definition}
Let $A_1,A_2,B_1,B_2$ be four finitely generated groups. A coarse embedding (resp.\ a quasi-isometry) $\varphi : A_1 \wr B_1 \to A_2 \wr B_2$ is \emph{aptolic} if there exist two maps $\alpha : \bigoplus_{B_1} A_1 \to \bigoplus_{B_2} A_2$ and $\beta : B_1 \to B_2$ such that 
$$\varphi : (a,b) \mapsto (\alpha(a),\beta(b)), \ (a,b) \in \left( \bigoplus_{B_1} A_1 \right) \rtimes B_1$$
where $\alpha$ is injective (resp.\ bijective) and where $\beta$ is a coarse embedding (resp.\ a quasi-isometry).
\end{definition}

\noindent
A fundamental result is that quasi-isometries between wreath products are often aptolic. For instance:

\begin{thm}\label{thm:QIaptolic}
Let $A_1,A_2,B_1,B_2$ be four finitely generated groups, with $B_1,B_2$ infinite, and let $\varphi : A_1 \wr B_1 \to A_2 \wr B_2$ be a quasi-isometry. Assume that there is a constant $D \geq 0$ such that $\varphi$ (resp.\ its quasi-inverse) sends every $B_1$-coset (resp. $B_2$-coset) in the $D$-neighbourhood of some $B_2$-coset (resp.\ $B_1$-coset). Then $\varphi$ lies at finite distance from an aptolic quasi-isometry $\psi$.
\end{thm}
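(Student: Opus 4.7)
The plan is to extract the two components of the aptolic map from $\varphi$ separately, then verify that combining them yields a map at bounded distance from $\varphi$. Let $D$ be the constant from the hypothesis.

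I would first construct the bijection $\alpha$. Left $B_1$-cosets in $A_1 \wr B_1$ are canonically indexed by $\bigoplus_{B_1} A_1$: the coset of $(\ell,b)$ is $\ell B_1 = \{(\ell,b'): b' \in B_1\}$. The hypothesis gives, for each $\ell$, a $B_2$-coset whose $D$-neighbourhood contains $\varphi(\ell B_1)$. Since $B_1$ is infinite, $\varphi(\ell B_1)$ is unbounded, and since any two distinct $B_2$-cosets are at infinite Hausdorff distance (as follows from the description of distances in wreath products together with $B_2$ being infinite), this target coset is unique; call it $\alpha(\ell) B_2$. Running the same argument on a quasi-inverse $\bar\varphi$ of $\varphi$ and using that the composition is close to the identity, we get that $\alpha : \bigoplus_{B_1} A_1 \to \bigoplus_{B_2} A_2$ is a bijection.

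Next I would construct $\beta$ by restricting $\varphi$ to the identity $B_1$-coset: this gives a quasi-isometric embedding of $B_1$ into a $D$-neighbourhood of $\alpha(1) B_2$, and composing with the nearest-point projection onto $\alpha(1) B_2$ followed by the canonical left-multiplication identification $\alpha(1) B_2 \cong B_2$ yields a quasi-isometry $\beta : B_1 \to B_2$. Set $\psi(\ell,b) := (\alpha(\ell), \beta(b))$: this is aptolic by construction, so the remaining task is a uniform bound on $d_{A_2 \wr B_2}(\varphi(\ell,b), \psi(\ell,b))$. Writing $\varphi(\ell,b) = (\alpha(\ell) \cdot c_{\ell,b}, g_{\ell,b})$ where the support of $c_{\ell,b}$ lies within distance $D$ of $g_{\ell,b}$ and its $A_2$-weight is uniformly bounded, the first-coordinate discrepancy contributes at most a bounded amount to the wreath-product distance. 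Everything therefore reduces to bounding $d_{B_2}(g_{\ell,b}, \beta(b))$ uniformly in $(\ell, b)$, that is, to showing that the fibrewise quasi-isometry $b \mapsto g_{\ell,b}$ induced by $\varphi$ on the coset $\ell B_1$ is, up to a uniform error, independent of $\ell$.

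The main obstacle is precisely this last independence. A priori cosets $\ell B_1$ and $0\cdot B_1$ are far apart in $A_1 \wr B_1$, so a direct Lipschitz comparison of $g_{\ell,b}$ and $g_{0,b}$ fails. My strategy is to use single lamp flips as bridges: when $\ell^{-1}\ell'$ is supported at a single point $x \in B_1$, the points $(\ell,b)$ and $(\ell',b)$ are at ambient distance bounded in terms of $d_{B_1}(b,x)$ and the $A_1$-word length of the flip, and for $b$ chosen near $x$ this is uniformly bounded. Applying $\varphi$ and using again the wide separation of distinct $B_2$-cosets, together with the already-built $\alpha$, one extracts that $g_{\ell,b}$ and $g_{\ell',b}$ are within uniformly bounded $B_2$-distance for such special $b$. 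The right-$B_1$-action on $A_1 \wr B_1$ preserves cosets and its image under $\varphi$ can be used to propagate the estimate from one such $b$ to all $b \in B_1$; a connectivity argument (any $\ell$ is obtained from $0$ by a finite sequence of single flips) then finishes. The delicate points, which would be the technical heart of the argument, are to keep the accumulated error bounded independently of $\ell$ and $b$, and to handle the case in which $\mathrm{supp}(\ell)$ is far from $b$ by first translating $b$ via the right action into a favourable position.
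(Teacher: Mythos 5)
Your overall framing is reasonable, and the first two steps are essentially fine: $\alpha$ is well defined because an unbounded subset of $A_2\wr B_2$ can lie in the $D$-neighbourhood of at most one $B_2$-coset, it is a bijection by the symmetric argument applied to a quasi-inverse, and $\beta$ can be read off the identity coset (its coarse surjectivity also needs the quasi-inverse, but that is routine). You have also correctly isolated where all the difficulty lives: showing that the leaf maps $b\mapsto g_{\ell,b}$ are independent of $\ell$ up to a uniform error. That, however, is exactly the step your proposal does not prove, and the bridge-and-propagation mechanism you sketch does not close it. First, a flip at $x$ by $s\in A_1$ moves $(\ell,x)$ to $(\ell\delta_x^s,x)$ a distance comparable to $|s|_{A_1}$, which is unbounded when $A_1$ is infinite; restricting to generator flips repairs this, but then the number of flips needed to reach a general $\ell$ is its full weight, so a per-flip bound yields nothing. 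Second, even for a single generator flip, bounded displacement at the one point $b=x$ does not propagate: two quasi-isometries $B_1\to B_2$ with uniform constants that agree at one point need not be uniformly close elsewhere, and the right $B_1$-action cannot spread the estimate because $\varphi$ does not intertwine it with any action on the target. You flag the control of the accumulated error as ``the technical heart''; it is, and the proposal supplies no mechanism for it.

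For comparison: the paper does not reprove this theorem but imports it (\cite{GTbig} for finite lamps; Theorem~\ref{thm:FullEmbeddingThm} combined with results of \cite{GTlike} for infinite lamps). The imported argument does not attempt a direct proof that the leaf maps coincide. Instead it establishes a dichotomy, visible in Theorem~\ref{thm:CoarseAptolic}: either the coset-preserving map is aptolic, or the divergence of the leaf maps can be organised into a coarse embedding of an entire wreath product $A_1\wr B$ (with $B$ quasi-isometric to $B_1$) into $B_2$, i.e.\ into a single $B_2$-coset. Under the two-sided hypothesis of the present theorem the second branch is excluded, roughly because the quasi-inverse must also send $B_2$-cosets near $B_1$-cosets, which is incompatible with compressing a whole wreath product into one coset direction. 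This dichotomy is precisely the idea that replaces the uncontrolled error accumulation in your sketch, and your proposal never engages with it. So the reduction is correct, but the core step is asserted rather than proved, and the mechanism offered for it fails.
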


\noindent
The statement is proved for wreath products with finite lamp-groups in \cite[Theorem~4.3]{GTbig} and follows from Theorem~\ref{thm:FullEmbeddingThm} combined with \cite[Corollary~6.11 and Lemma~6.12]{GTlike} for infinite lamp-groups.

\medskip \noindent
Coarse embeddings between wreath products, however, may not be aptolic. Obvious counterexamples are given by embeddings of the form $B \wr C \hookrightarrow A \wr (B \wr C)$. Nevertheless, we have the following statement, which shows that there are essentially no other counterexamples.

\begin{thm}\label{thm:CoarseAptolic}
Let $A_1,A_2,B_1,B_2$ be four finitely generated groups and $\rho : A_1 \wr B_1 \to A_2 \wr B_2$ a coarse (resp.\ quasi-isometric) embedding. Assume that there is a constant $D \geq 0$ such that $\rho$ sends every $B_1$-coset in the $D$-neighbourhood of some $B_2$-coset. Either $\rho$ is aptolic or it induces a coarse (resp.\ quasi-isometric) embedding $A_1 \wr B \to B_2$ for some graph of bounded degree $B$ quasi-isometric to $B_1$.
\end{thm}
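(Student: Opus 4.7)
The plan is to adapt the strategy from \cite{GTbig, GTlike} to coarse embeddings. Using the hypothesis, I first decompose $\rho$ as follows. For each $f \in \bigoplus_{B_1} A_1$, the hypothesis yields an element $\alpha(f) \in \bigoplus_{B_2} A_2$ such that $\rho(fB_1) \subset (\alpha(f)B_2)^{+D}$, giving a decomposition $\rho(f,b) = (F(f,b),G(f,b))$ with $F(f,b)$ at uniformly bounded distance from $\alpha(f)$. In particular, each restriction $\beta_f := G(f,\cdot):B_1 \to B_2$ is a coarse embedding (resp.\ quasi-isometric embedding) with parameters independent of $f$, because $\rho$ sends $fB_1$ into a bounded neighbourhood of $\alpha(f)B_2 \cong B_2$.

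I then distinguish two cases according to whether the family $\{\beta_f\}_f$ is uniformly close. The \emph{aptolic case} corresponds to the existence of a constant $K \geq 0$ with $d_{B_2}(\beta_{f_1}(b),\beta_{f_2}(b)) \leq K$ for all $f_1,f_2,b$. Setting $\beta := \beta_1$, the map $\psi(f,b) := (\alpha(f),\beta(b))$ lies at bounded distance from $\rho$; the coarse embedding property of $\rho$ transfers to $\psi$, forcing $\beta$ to be a coarse embedding and $\alpha$ to be coarsely injective with uniformly bounded fibers. A bounded perturbation then upgrades $\alpha$ to an honest injection, so that $\rho$ agrees up to bounded error with an aptolic embedding.

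In the \emph{non-aptolic case}, the variation of $\{\beta_f\}$ is used to build the alternative coarse embedding. One constructs the graph $B$ as a Cayley graph of $B_1$ whose generating set $S$ is chosen so that the associated ``twist''
\[
\tau(s) := \sup_{f,b}\ d_{B_2}\!\bigl(\beta_f(bs),\ \beta_f(b)\cdot\beta_e(s)\beta_e(e)^{-1}\bigr)
\]
is controlled for $s\in S$. Because the coarse embedding property of $\rho$ rules out wild behaviour of $\beta_f$ over too many generators, one shows that $S$ can be taken of bounded size and that the Cayley graph $B$ on $(B_1,S)$ is quasi-isometric to $B_1$. Finally, the map $\mu := \pi_2 \circ \rho:A_1 \wr B \to B_2$ (where $A_1 \wr B$ denotes $A_1 \wr B_1$ with the metric induced by $B$, in the spirit of the partial wreath products of Section~\ref{section:Approximations}) is shown to be a coarse embedding: the new metric on $A_1 \wr B$ is precisely designed so that distances are dominated by $d_{B_2}(\mu(\cdot),\mu(\cdot))$ up to controlled twist errors.

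The hard part is the non-aptolic case: one must simultaneously identify the right generating set $S$, prove that the resulting Cayley graph has bounded degree and is quasi-isometric to $B_1$, and establish the crucial lower bound $d_{A_1 \wr B}(x,y) \lesssim d_{B_2}(\mu(x),\mu(y))$. This requires a quantitative analysis of how $\beta_f$ depends on the lamp configuration $f$, carried out via the quasi-median geometric model of partial wreath products developed in Section~\ref{section:EmbeddingTheorem}, together with the bulkhead/zone separation arguments used there to decide in which $B_2$-coset each image ``lives''.
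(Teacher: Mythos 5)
First, a remark on the comparison you are being asked to survive: the paper does not prove Theorem~\ref{thm:CoarseAptolic} in-house. Its ``proof'' is the single sentence that the statement follows from \cite[Theorem~6.20 and Lemma~6.12]{GTlike}. So your proposal is really an attempt to reconstruct the argument of that external reference, and it has to be judged on its own merits.

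Your set-up (the decomposition $\rho(f,b)=(F(f,b),G(f,b))$ with $F(f,b)$ at controlled distance from a configuration $\alpha(f)$, and the uniform coarse embeddings $\beta_f=G(f,\cdot)$) is correct and is indeed how the argument starts. The aptolic half is acceptable modulo the perturbation step ``coarsely injective with uniformly bounded fibres $\Rightarrow$ at bounded distance from an honest injection'', which is exactly the cited Lemma~6.12 of \cite{GTlike} and is not free; you assert it rather than prove it, but at least you flag it.

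The genuine gap is in the non-aptolic case, and it is structural rather than technical. You propose to take $\mu:=\pi_2\circ\rho$ on all of $A_1\wr B_1$, remetrized as some $A_1\wr B$, and claim it is a coarse embedding into $B_2$. This cannot work whenever the leaf map $\alpha$ is unbounded: a coarse embedding must be coarsely injective, but $\pi_2\circ\rho$ discards precisely the coordinate $F\approx\alpha$ that separates points lying over distinct $B_2$-cosets. (Already for $\rho=\mathrm{id}:A\wr B\to A\wr B$ one has $G(f,b)=b$ for all $f$, so $\pi_2\circ\rho$ collapses all lamp configurations; no change of metric on the source quasi-isometric to a wreath product over something quasi-isometric to $B_1$ can repair this.) The correct mechanism, and the content of \cite[Theorem~6.20]{GTlike}, is the opposite: non-aptolicity forces the existence of a \emph{single} $B_2$-coset $cB_2$ whose $D$-neighbourhood absorbs the $\rho$-image of a large union of $B_1$-leaves, and one must show that this union, with its induced metric, contains a subspace of the form $A_1\wr B$ with $B$ of bounded degree and quasi-isometric to $B_1$; the desired embedding is then $\pi_2\circ\rho$ restricted to that subspace. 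Your dichotomy (uniform closeness of the family $\{\beta_f\}$) and your ``twist'' quantity $\tau(s)$ do not produce this coset, nor the sub-wreath-product inside its preimage; you also never verify that $\tau(s)$ is finite, that the generating set $S$ can be chosen finite, or the crucial lower bound $d_{A_1\wr B}\lesssim d_{B_2}(\mu(\cdot),\mu(\cdot))$, which is exactly where the difficulty lives. As written, the second alternative of the theorem is not established.
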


\noindent
Here, given a pointed graph $(X,o)$ and a graph $Y$, the \emph{wreath product $(X,o) \wr Y$} refers to the graph 
\begin{itemize}
	\item whose vertices are the pairs $(\varphi : Y \to X, p \in Y)$ where $\varphi(y)=o$ for all but finitely many $y \in Y$;
	\item whose edges connect $(\varphi_1,p_1)$ and $(\varphi_2,p_2)$ whenever either $\varphi_1=\varphi_2$ and $p_1,p_2$ are adjacent in $Y$ or $\varphi_1,\varphi_2$ differ only at $p_1=p_2$ where they take adjacent values in~$X$. 
\end{itemize}
By abuse of notation, in the statement above, we wrote $A_1 \wr B$ where $A_1$ is a finitely generated group and $B$ a graph. Here, $A_1$ is identified with one of its Cayley graphs (given by a finite generating set) and the neutral element is chosen as the basepoint. 

\medskip \noindent
Theorem~\ref{thm:CoarseAptolic} follows from \cite[Theorem~6.20 and Lemma~6.12]{GTlike}.

\subsubsection{Applications to coarse embeddings}\label{section:AppCoarseEmb}

\noindent
As an application of the aptolicity provided by Theorem~\ref{thm:CoarseAptolic}, let us prove that:

\begin{thm}\label{thm:AppliCoarse}
Let $A_1,A_2$ be two finitely generated groups with subexponential growth and let $B_1,B_2$ be two finitely presented groups from $\mathfrak{M}_{exp}$. Then every coarse (resp.\ quasi-isometric) embedding $\rho : A_1\wr B_1 \to A_2 \wr B_2$ induces a coarse (resp.\ quasi-isometric) embedding $B_1 \hookrightarrow B_2$. Moreover, if $B_2$ does not contain a coarsely (resp.\ quasi-isometrically) embedded copy of $A_1 \wr B$ for some graph of bounded degree $B$ quasi-isometric to $B_1$, then $\rho$ induces a coarse embedding $A_1 \hookrightarrow A_2^k$ for some $k \geq 1$. 
\end{thm}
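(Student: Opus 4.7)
The plan is to apply the Embedding Theorem \ref{thm:FullEmbeddingThm} to the restrictions of $\rho$ to $B_1$-cosets in $A_1 \wr B_1$, rule out the first two cases of the resulting trichotomy using the hypotheses on $A_2$ and $B_1$, and then invoke Theorem~\ref{thm:CoarseAptolic} to extract the desired induced embeddings. First I would fix a $B_1$-coset $H$ in $A_1 \wr B_1$; its induced metric identifies $H$ with a Cayley graph of $B_1$, so $H$ is finitely presented and lies in $\mathfrak{M}_{exp}$. Applying Theorem~\ref{thm:FullEmbeddingThm} to $\rho|_H$ yields one of three conclusions: either (a) $\rho(H)$ is contained in some element of $\mathcal{B}$; or (b) $\rho(H)$ is coarsely separated by $\mathcal{B}_k$ for some $k \geq 1$; or (c) $\rho(H)$ lies in a uniform bounded neighbourhood of some $B_2$-coset.

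The main step is to exclude (a) and (b). Because $A_2$ has subexponential growth, every subset $\mathcal{B}(E,c) \in \mathcal{B}$ is quasi-isometric to a subspace of $A_2^{|E|} \times E$ and thus has subexponential volume growth, and consequently each family $\mathcal{B}_k$ also has subexponential growth. In case (b), the coarse separation of $\rho(H)$ by $\mathcal{B}_k$ pulls back via Lemma~\ref{lem:CoarseSep} (applied to $\rho : H \to \rho(H)$, a coarse equivalence onto its image) to a coarsely separating family of $H \cong B_1$; the subexponential growth of $\mathcal{B}_k$, combined with the bounded geometry of $A_2 \wr B_2$ and the control functions of $\rho$, forces the pull-back family to have subexponential growth, contradicting $B_1 \in \mathfrak{M}_{exp}$ (for quasi-isometric embeddings this transfer is immediate; for general coarse embeddings it requires combining the growth estimate with the fact that $B_1 \in \mathfrak{M}_{exp}$ requires exponential growth of \emph{any} separating family). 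Case (a) reduces to case (b) once one observes that any $\mathcal{B}(E,c)$ is itself coarsely separated, inside $A_2 \wr B_2$, by the subfamilies $\{\mathcal{B}(E \setminus \{e\}, c') \mid e \in E,\ c' \in A^{(B)}\} \subset \mathcal{B}_{|E|-1}$: if $\rho(H) \subset \mathcal{B}(E,c)$ fills up $\mathcal{B}(E,c)$ enough to meet both sides of this separation, case (b) applies, while otherwise $\rho(H)$ already lies in a smaller and smaller neighbourhood of a proper piece, eventually yielding a contradiction with the fact that $B_1$ is unbounded.

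Once (a) and (b) are excluded, case (c) holds with uniform constants for every $B_1$-coset, placing us in the setup of Theorem~\ref{thm:CoarseAptolic}. That theorem then offers the alternative: either $\rho$ is at finite distance from an aptolic coarse (resp.\ quasi-isometric) embedding, or $\rho$ induces a coarse (resp.\ quasi-isometric) embedding $A_1 \wr B \hookrightarrow B_2$ for some bounded-degree graph $B$ quasi-isometric to $B_1$. In either alternative one obtains a coarse (resp.\ quasi-isometric) embedding $B_1 \hookrightarrow B_2$: either as the base component $\beta : B_1 \to B_2$ of an aptolic map $(\alpha,\beta)$, or by restricting $A_1 \wr B \hookrightarrow B_2$ to the copy of $B \simeq B_1$. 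This proves the first assertion. Under the additional hypothesis of the second assertion, the non-aptolic alternative is excluded, so $\rho$ is aptolic; the structural results on aptolic embeddings from \cite{GTbig, GTlike} then show that the lamp component $\alpha$ acts through a common coarse embedding $A_1 \hookrightarrow A_2^k$ for some integer $k \geq 1$ depending only on $A_1$, $A_2$, $B_1$, $B_2$ and the parameters of $\rho$.

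The main obstacle I anticipate is the growth-transfer step in case (b) for general (non-QI) coarse embeddings: coarse embeddings need not preserve subexponential growth under their upper control function, so passing the subexponential bound on $\mathcal{B}_k$ to its pull-back in $B_1$ cannot be done naively. The way through is to exploit bounded geometry and the precise formulation of $\mathfrak{M}_{exp}$, which only requires the existence of a lower exponential bound on coarsely separating families and is therefore sensitive only to the lower control $\rho_-$, not to how badly $\rho$ distorts distances from above.
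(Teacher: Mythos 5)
Your overall architecture is exactly the paper's: apply Theorem~\ref{thm:FullEmbeddingThm} to each $B_1$-coset, rule out the first two alternatives, then invoke Theorem~\ref{thm:CoarseAptolic} and conclude with the structural results on aptolic maps (the paper cites Lemma~\ref{lem:AptolicEmbPowerLamp} for the last step). Your treatment of case (b) is also the intended one: the pull-back family $\{\rho^{-1}(\mathcal{B}(E,c)^{+L})\}$ coarsely separates the coset $H\cong B_1$ with subexponential growth, contradicting $B_1\in\mathfrak{M}_{exp}$. One caveat on your closing remark there: the issue is not that $\mathfrak{M}_{exp}$ is ``sensitive only to $\rho_-$''; the growth of the pull-back at radius $r$ is controlled by $V_{\mathcal{B}_k}(\rho_+(r)+L)$, so you genuinely need $\rho_+$ to be tame. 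What saves you is that $H$ is quasi-geodesic, so the upper control of a coarse embedding can always be taken affine, and an affine change of variable preserves subexponentiality.

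The genuine gap is your exclusion of case (a). Your claim that $\mathcal{B}(E,c)$ is coarsely separated inside $A_2\wr B_2$ by the subfamilies $\mathcal{B}(E\setminus\{e\},c')$ is false in general: $\mathcal{B}(E,c)$ is quasi-isometric to $A_2^{|E|}$ and the proposed separating sets are coordinate slices of the form $\{f: f(e)=a\}\cong A_2^{|E|-1}$; if $A_2$ is one-ended (say $A_2=\mathbb{Z}^2$, which has subexponential growth), the complement of the $L$-neighbourhood of such a slice is $(A_2\setminus B(a,L))\times A_2^{E\setminus\{e\}}$, which is coarsely connected, so no separation occurs. Moreover, even granting the separation, your induction bottoms out at $\rho(H)$ contained in a single copy of $A_2$, and unboundedness of $B_1$ alone yields no contradiction there ($A_2$ is unbounded too). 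The correct exclusion of (a) is a direct volume argument, which is why Corollary~\ref{cor:EmbeddingThm} carries an exponential-growth hypothesis: an infinite group in $\mathfrak{M}_{exp}$ necessarily has exponential growth (thickened spheres $S(e,r)^{+1}$ form a coarsely separating family whose growth is bounded by $\beta_{B_1}$), whereas $\mathcal{B}(E,c)$ has subexponential growth since $A_2$ does; a uniformly proper map with affine upper control from an exponential-growth group cannot land in a subexponential-growth subspace, so $\rho(H)\subset\mathcal{B}(E,c)$ is impossible. With that repair the proof goes through as in the paper.
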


\noindent
In addition to Theorems~\ref{thm:FullEmbeddingThm} and~\ref{thm:CoarseAptolic}, we need the following observation:

\begin{lemma}\label{lem:AptolicEmbPowerLamp}
Let $A_1,A_2,B_1,B_2$ be four finitely generated groups. If $\rho : A_1 \wr B_1 \to A_2 \wr B_2$ is a coarse (resp.\ quasi-isometric) embedding, then it induces a coarse (resp.\ quasi-isometric) embedding $A_1 \to A_2^k$ for some $k \geq 1$.
\end{lemma}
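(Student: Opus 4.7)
\emph{Proof plan.} The plan is to reduce $\rho$ to an aptolic form via Theorem~\ref{thm:CoarseAptolic}, and then extract the induced embedding by restricting the aptolic lamp-map to a single lamp copy of $A_1$ and then projecting onto a finite set of coordinates.

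Observe that $A_1$ embeds isometrically in $A_1 \wr B_1$ as the subgroup $A_1^{(e)} := \{(\sigma_a, e) : a \in A_1\}$, where $\sigma_a \in \bigoplus_{B_1} A_1$ is the element with value $a$ at $e \in B_1$ and identity elsewhere. The composition of this inclusion with $\rho$ is then a coarse (resp.\ quasi-isometric) embedding $A_1 \to A_2 \wr B_2$, and this is the object out of which the desired embedding $A_1 \to A_2^k$ will be constructed.

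\textbf{Step 1.} Reduce to the aptolic setting. Combining Theorem~\ref{thm:FullEmbeddingThm} with Theorem~\ref{thm:CoarseAptolic}, we may assume $\rho$ is aptolic, i.e.\ $\rho(a,b) = (\alpha(a), \beta(b))$ for an injective $\alpha : \bigoplus_{B_1} A_1 \to \bigoplus_{B_2} A_2$ and a coarse (resp.\ quasi-isometric) embedding $\beta : B_1 \to B_2$. If instead Theorem~\ref{thm:CoarseAptolic} produces the alternative conclusion---a coarse embedding $A_1 \wr B \hookrightarrow B_2$ for some graph $B$ quasi-isometric to $B_1$---one concludes directly by restricting this embedding to $A_1^{(e)}$, mapping into $B_2 \hookrightarrow A_2 \wr B_2$, and then projecting onto a suitable single-coordinate $A_2$ (taking $k=1$ suffices).

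\textbf{Step 2.} Extract the embedding in the aptolic case. After translating so that $\alpha(e)=e$ and $\beta(e)=e$, the map $a \mapsto \alpha(\sigma_a)$ is a coarse embedding $A_1 \to \bigoplus_{B_2} A_2$. The key claim is that this embedding factors, up to bounded error, through a finite-support subgroup $\bigoplus_{x \in E} A_2 \cong A_2^{|E|}$ for some finite $E \subset B_2$ whose cardinality $k := |E|$ depends only on $A_1,A_2,B_1,B_2$ and the coarse parameters of $\rho$. Composing with the $1$-Lipschitz projection $\pi_E : \bigoplus_{B_2} A_2 \twoheadrightarrow A_2^E$ then yields the desired coarse embedding. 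Upper control is automatic (composition of Lipschitz maps); lower control follows from the claim that the essential information of $\alpha(\sigma_a)$ is already carried by its restriction to $E$.

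\textbf{Main obstacle.} The critical point is the uniform support bound. The naive length estimate only gives $\mathrm{supp}(\alpha(\sigma_a)) \subset B_{B_2}(e, \rho_+(|a|_{A_1}))$, whose radius grows with $|a|$; this is manifestly insufficient. The refined, uniform bound is forced by the global aptolic structure: the map $\alpha$ is not an arbitrary injection but arises from a coarse embedding of the whole wreath product, and this severely restricts how $\alpha$ can redistribute lamps. Making this rigidity quantitative will be the technical heart of the argument, and is carried out by combining the wallspace/quasi-median analysis developed in Section~\ref{section:EmbeddingTheorem} with the aptolicity machinery from \cite{GTbig} and \cite{GTlike}.
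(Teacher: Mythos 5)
Your proposal correctly identifies the key claim — a uniform finite support bound for $\alpha(\sigma_a)$ — but it does not prove it. The final paragraph explicitly defers the "technical heart of the argument" to an unspecified combination of the wallspace/quasi-median analysis and the aptolicity machinery, so as written there is a genuine gap exactly at the crucial step. Moreover, the heavy machinery you point to is not needed: the paper's argument is short and elementary. The canonical copy $A_1^{(e)}$ is coarsely connected, so its image $\rho(A_1^{(e)})$ is $k$-coarsely connected for some $k$ depending only on the parameters of $\rho$, and \emph{every} point of this image has the same cursor coordinate $\beta(1)$. In $A_2 \wr B_2$, two configurations with the same cursor position $\beta(1)$ that differ at some lamp $x$ are at distance at least comparable to $2\,d_{B_2}(\beta(1),x)$ (the cursor must travel to $x$ and back), so consecutive points of a $k$-path with constant cursor $\beta(1)$ can only differ in lamps within $B(\beta(1),k/2)$. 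Inducting along the path gives $\mathrm{supp}(\alpha(\sigma_a)) \subset \mathrm{supp}(\alpha(1)) \cup B(\beta(1),k/2)$ for all $a$, which is the uniform bound you need; the rest of your Step 2 (projecting to $A_2^E$) then goes through. You should supply this argument rather than a pointer to it.

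Two further remarks on Step 1. First, the lemma is applied in the paper only after aptolicity has already been established (its proof simply starts from the aptolic form), so invoking Theorems~\ref{thm:FullEmbeddingThm} and~\ref{thm:CoarseAptolic} inside the lemma is both unnecessary and unavailable: those theorems require hypotheses ($B_i$ finitely presented and in $\mathfrak{M}_{exp}$, the $D$-neighbourhood condition on $B_1$-cosets) that the bare statement of the lemma does not provide. Second, your handling of the alternative branch of Theorem~\ref{thm:CoarseAptolic} is incorrect: a coarse embedding $A_1 \wr B \hookrightarrow B_2$ restricted to $A_1^{(e)}$ lands in $B_2$, viewed inside $A_2 \wr B_2$ as the subgroup $\{1\} \times B_2$; there is no projection of $B_2$ onto "a single-coordinate $A_2$", so this branch does not yield an embedding $A_1 \to A_2^k$. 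Dropping Step 1 entirely and taking the aptolic form as the hypothesis resolves both issues.
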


\begin{proof}
There exist an injective map $\alpha : \bigoplus_{B_1} A_1 \to \bigoplus_{B_2}A_2$ and a coarse embedding $\beta : B_1 \to B_2$ such that 
$$\rho : (c,b) \mapsto (\alpha(c), \beta(b)), \ (c,b) \in A_1 \wr B_1.$$
In order to conclude that $\rho$ induces a coarse embedding $A_1 \hookrightarrow A_2^k$ for some $k \geq 1$, it suffices to show that there exists some $R \geq 1$ such that
$$\rho \left( \{ (c,1) \mid \mathrm{supp}(c)= \{1\} \} \right) \subset \left\{ (c,p) \mid \mathrm{supp}(c) \cup \{p\} \subset B(\beta(1),R) \right\}.$$
Indeed, the left-hand side is the image under $\rho$ of the canonical copy of $A_1$ in $A_1 \wr B_1$ and the right-hand side is quasi-isometric to $A_2^N$ where $N$ is the cardinality of $B(\beta(1),R)$. Assume for contradiction that there is no such inclusion, i.e.\ for every $R \geq 1$ one can find some $(c,1) \in A_1$ such that $\rho(c,1)=(\alpha(c),\beta(1))$ satisfies $\mathrm{supp}(\alpha(c)) \nsubseteq B(\beta(1),R)$. But $\rho(A_1)$ is coarsely connected, and a coarse path connecting $(\alpha(1),\beta(1))$ to $(\alpha(c),\beta(1))$ where $\alpha(c)$ has a support not contained in a large ball $B(\beta(1),R)$ (compared to the size of the support of $\alpha(1)$) cannot have a second coordinate constant to $\beta(1)$.
\end{proof}

\begin{proof}[Proof of Theorem~\ref{thm:AppliCoarse}.]
Our assumptions on $A_1,A_2$ and $B_1,B_2$ imply that Theorem~\ref{thm:FullEmbeddingThm} applies and shows that, for some constant $D \geq 0$, $\rho$ always sends a $B_1$-coset in the $D$-neighbourhood of some $B_2$-coset. In particular, $\rho$ induces a coarse embedding $B_1 \hookrightarrow B_2$. If moreover $B_2$ does not contain a coarsely (resp.\ quasi-isometrically) embedded copy of $A_1 \wr B$ for some graph of bounded degree $B$ quasi-isometric to $B_1$, then Theorem~\ref{thm:CoarseAptolic} implies that $\rho$ is (up to finite distance) aptolic. Then the desired conclusion follows from Lemma~\ref{lem:AptolicEmbPowerLamp}. 
\end{proof}

\begin{ex}
Let $N_1,N_2$ be two finitely generated groups of subexponential growth and $H_1,H_2$ two uniform lattices in $\mathbb{H}_\mathbb{R}^{n_1}, \mathbb{H}_\mathbb{R}^{n_2}$ respectively with $n_1,n_2 \geq 3$. If there exists a quasi-isometric embedding $N_1 \wr H_1 \to N_2 \wr H_2$, then $n_2 \geq n_1$. Moreover, if $N_2$ is virtually nilpotent, then so is $N_1$. Indeed, there is no quasi-isometric embedding from a wreath product $N_1 \wr H$ to $H_2$ for some graph of bounded degree $H$ quasi-isometric to $H_1$ because the asymptotic cones of $H_2$ have finite topological dimension whereas those of such a $N_1 \wr H$ have infinite topological dimension (see for instance \cite[Proposition~6.25]{GTlike}). Thus, it follows from Theorem~\ref{thm:AppliCoarse} that $N_1$ quasi-isometrically embeds into a power of $N_2$, so it must have polynomial growth. 
\end{ex}

\subsubsection{Applications to quasi-isometric classifications}\label{section:AppQI}

\noindent
As an application of the aptolicity provided by Theorem~\ref{thm:QIaptolic}, let us prove that:

\begin{thm}\label{thm:AppliQI}
Let $A_1,A_2$ be two finitely generated groups with subexponential growth and let $B_1,B_2$ be two finitely presented groups from $\mathfrak{M}_{exp}$.
Then every quasi-isometry $A_1\wr B_1 \to A_2 \wr B_2$ induces a quasi-isometry $B_1 \to B_2$, and there are quasi-isometric embeddings $A_1 \hookrightarrow A_2^p$ and $A_2 \hookrightarrow A_1^q$ for some $p,q \geq 1$. 
\end{thm}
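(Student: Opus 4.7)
The plan is to combine Theorem~\ref{thm:AppliCoarse} with the aptolicity upgrade of Theorem~\ref{thm:QIaptolic}, and then read off the lamp-group embeddings from Lemma~\ref{lem:AptolicEmbPowerLamp}.

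Fix a quasi-isometry $\varphi : A_1 \wr B_1 \to A_2 \wr B_2$ together with a quasi-inverse $\psi : A_2 \wr B_2 \to A_1 \wr B_1$. The key input from the \emph{proof} of Theorem~\ref{thm:AppliCoarse} (rather than just from its statement) is that the Embedding Theorem (Theorem~\ref{thm:FullEmbeddingThm}) forces $\varphi$ to send every $B_1$-coset into a $D$-neighbourhood of some $B_2$-coset, for a uniform $D \geq 0$; applying the same argument to $\psi$ yields the symmetric conclusion. This is exactly the hypothesis of Theorem~\ref{thm:QIaptolic}, so $\varphi$ lies at finite distance from an aptolic quasi-isometry, which we may write as $\tilde\varphi(a,b) = (\alpha(a), \beta(b))$ with $\beta : B_1 \to B_2$ a quasi-isometry and $\alpha : \bigoplus_{B_1} A_1 \to \bigoplus_{B_2} A_2$ a bijection. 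The map $\beta$ is the quasi-isometry $B_1 \to B_2$ we are after.

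To conclude, Lemma~\ref{lem:AptolicEmbPowerLamp} applied to $\tilde\varphi$ produces a quasi-isometric embedding $A_1 \hookrightarrow A_2^p$ for some $p \geq 1$. The inverse $\tilde\varphi^{-1}$ is again aptolic, with components $\alpha^{-1}$ and $\beta^{-1}$, so applying the same lemma to $\tilde\varphi^{-1}$ yields the reverse embedding $A_2 \hookrightarrow A_1^q$ for some $q \geq 1$. The main difficulty is really contained in the first step: once $\varphi$ and $\psi$ are known to preserve the $B$-coset structure up to bounded neighbourhoods, the remainder assembles mechanically. This coset-preservation step is the content of the Embedding Theorem, and relies crucially on the hypotheses that the $A_i$ have subexponential growth (making the sets in $\mathcal{B}_k$ of uniformly subexponential growth) and that $B_i \in \mathfrak{M}_{exp}$ (which forbids any coarsely separating family of subexponential growth).
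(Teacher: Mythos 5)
Your proposal is correct and follows essentially the same route as the paper: apply the Embedding Theorem (Theorem~\ref{thm:FullEmbeddingThm}) to the restrictions to $B_1$- and $B_2$-cosets to verify the hypothesis of Theorem~\ref{thm:QIaptolic}, conclude aptolicity, and extract the lamp embeddings from Lemma~\ref{lem:AptolicEmbPowerLamp} applied to the aptolic map and a quasi-inverse. The only cosmetic point is that $\tilde\varphi^{-1}$ should be read as an (aptolic) quasi-inverse rather than a literal inverse, since $\beta$ need not be bijective.
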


\begin{proof}
Our assumptions on $A_1,A_2$ and $B_1,B_2$ imply that Theorem~\ref{thm:FullEmbeddingThm} can be applied to quasi-isometric embeddings $B_1 \to A_2 \wr B_2$ and $B_2 \to A_1 \wr B_1$, so it follows from Theorem~\ref{thm:QIaptolic} that every quasi-isometry $\varphi : A_1\wr B_1 \to A_2 \wr B_2$ is (up to finite distance) aptolic. The fact that $\varphi$ induces a quasi-isometric embedding $A_1 \hookrightarrow A_2^p$ for some $p \geq 1$ and that a quasi-inverse of $\varphi$ induces a quasi-isometric embedding $A_2 \hookrightarrow A_1^q$ for some $q \geq 1$ follows from Lemma~\ref{lem:AptolicEmbPowerLamp}. 
\end{proof}

\begin{remark}
It is worth noticing that, in Theorem~\ref{thm:AppliQI}, one cannot replace $A_1 \overset{\mathrm{QI}}{\hookrightarrow} A_2^p$ and $A_2 \overset{\mathrm{QI}}{\hookrightarrow} A_1^q$ for some $p,q \geq 1$ with $A_1^p \overset{\mathrm{QI}}{\sim} A_2^q$ for some $p,q \geq 1$. Indeed, as a consequence of Fact~\ref{fact:ExQI} below, if $H$ is a non-amenable finitely generated group, then $(\mathrm{Heis}\times \mathbb{Z}) \wr H$ and $(\mathrm{Heis} \times \mathbb{Z}^2) \wr H$ are quasi-isometric but no powers of $\mathrm{Heis}\times \mathbb{Z}$ and $\mathrm{Heis} \times \mathbb{Z}^2$ are quasi-isometric (since they never have the same asymptotic dimension).

\begin{fact}\label{fact:ExQI}
Let $A,B,L$ be three finitely generated groups with $B$ non-amenable. The groups $(A \times L) \wr B$ and $(A \times L^2) \wr B$ are quasi-isometric. 
\end{fact}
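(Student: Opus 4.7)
The plan is to invoke Whyte's theorem, which asserts that any two quasi-isometric non-amenable bounded-degree graphs are bi-Lipschitz equivalent. Applied to $B$ and to the graph $B \sqcup B$ obtained by gluing two copies of $B$ along a single extra edge (both non-amenable of bounded degree and clearly quasi-isometric), this produces a bi-Lipschitz bijection $\psi : B \to B \sqcup B$. The idea is that $\psi$ lets us encode two $L$-valued lamp assignments on $B$ inside a single $L$-valued lamp assignment, without geometric loss.

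Concretely, writing $B \sqcup B = B \times \{1,2\}$, I would define a bijection
\[
\Phi : (A \times L^2) \wr B \longrightarrow (A \times L) \wr B
\]
by sending $(f, b) = ((f_A, f_1, f_2), b)$ to $(g, b)$, where
\[
g(y) = \begin{cases} (f_A(x),\, f_1(x)) & \text{if } \psi(y) = (x,1), \\ (1_A,\, f_2(x)) & \text{if } \psi(y) = (x,2). \end{cases}
\]
This is clearly a set-theoretic bijection; its inverse simply reads off the three components of $g \circ \psi^{-1}$ according to the factor in $\{1,2\}$.

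To show that $\Phi$ is a quasi-isometry, I would use the standard traveling-salesman description of the word metric on a wreath product: for natural finite generating sets,
\[
d_{M \wr B}\bigl((e, e_B), (f, b)\bigr) \;\asymp\; \mathrm{TSP}_B\bigl(\mathrm{supp}(f) \cup \{b\}\bigr) + \sum_{x \in \mathrm{supp}(f)} |f(x)|_M,
\]
where $\mathrm{TSP}_B(S)$ is the length of a shortest walk in $B$ starting at $e_B$, ending at $b$, and visiting every point of $S$. The lamp contribution is preserved up to a factor of two under $\Phi$, because each $x \in \mathrm{supp}(f)$ gives rise to at most two points of $\mathrm{supp}(g)$ whose $(A \times L)$-lengths add up to $|f(x)|_{A \times L^2}$.

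The main obstacle is the two-sided comparison of the traveling-salesman contribution under the transfer $\mathrm{supp}(f) \subset B \leftrightarrow \mathrm{supp}(g) \subset B$. This will go through the intermediate space $B \sqcup B$: first, $\mathrm{TSP}_{B \sqcup B}$ on a subset $S \subset B \sqcup B$ is comparable to $\mathrm{TSP}_B$ on its projection to $B$, as one can visit both copies at essentially twice the cost of a tour in $B$; second, $\psi$ being bi-Lipschitz translates $\mathrm{TSP}_{B \sqcup B}$ on $S$ into $\mathrm{TSP}_B$ on $\psi^{-1}(S)$ up to a bounded factor. Putting these together in both directions, and running the same argument for $\Phi^{-1}$, yields the required bi-Lipschitz control on distances and completes the proof.
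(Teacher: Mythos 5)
There is a genuine gap: your map $\Phi$ is not a bijection, and its image is not even coarsely dense, so it cannot be a quasi-isometry. By construction, every $(g,b)$ in the image of $\Phi$ has trivial $A$-coordinate at every point $y\in\psi^{-1}(B\times\{2\})$, i.e.\ on "half" of $B$ the $A$-lamp is forced to be $1_A$. Unless $A$ is trivial this is a real restriction, and it is not a coarse one: take $(g_0,e)$ whose only lit lamp is $(a,1_L)$ with $a\neq 1_A$ at a point $y_0\in\psi^{-1}(B\times\{2\})$ with $d(e,y_0)=R$; every point of the image disagrees with $g_0$ at $y_0$, so any path joining them must visit $y_0$ and $(g_0,e)$ lies at distance $\geq R$ from the image. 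Your claimed inverse is only a left inverse --- it silently discards the $A$-coordinate of $g$ on $\psi^{-1}(B\times\{2\})$. The underlying issue is that the doubling trick via a bijection $B\to B\sqcup B$ naturally identifies $\bigoplus_B(A\times L)$ with $\bigoplus_B\big((A\times L)^2\big)=\bigoplus_B(A^2\times L^2)$, so the argument you outline would prove $(A\times L)\wr B\sim_{\mathrm{QI}}(A^2\times L^2)\wr B$, which is not the stated fact.

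The repair is to treat the $A$-factor and the $L^2$-factor asymmetrically, which is what the paper does: Whyte's theorem gives a $2$-to-$1$ map $f:B\to B$ at \emph{finite distance from the identity}; one keeps the $A$-lamp in place, $\pi_1(\bar c(b))=\pi_1(c(b))$, and stores the two $L$-lamps of $\bar c(b)$ at the two preimages $f^{-1}(b)=\{b_1,b_2\}$. This genuinely is a bijection on colourings, and since $f$ moves points a bounded amount, supports move by bounded Hausdorff distance and the travelling-salesman estimate you invoke goes through immediately. Note also a secondary point: even for the part of your construction that does work, an arbitrary bi-Lipschitz bijection $\psi:B\to B\sqcup B$ is not enough, because you keep the marked point $b$ fixed while $\psi$ could translate supports far away from it; you need $\psi$ to be at bounded distance from the natural $2$-to-$1$ projection (which Whyte's theorem does provide if you apply it to that projection rather than quoting only the bi-Lipschitz equivalence of the spaces).
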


\noindent
This fact essentially follows from the proof of \cite[Proposition~3.12]{GTbig}, so we only sketch the argument and refers to \cite{GTbig} for more details.

\medskip \noindent
First, because $B$ is non-amenable, we know from \cite{MR1700742} that there exists a $2$-to-$1$ map $f : B \to B$ at finite distance from the identity. Our goal is to use $f$ in order to associate a colouring $\bar{c} : B \to A \times L^2$ to every finitely supported colouring $A\times L$. We fix an enumeration of $B$, and, for convenience, we denote by $\pi_1$ and $\pi_2$ the projections on the first and second coordinates in both $A \times L$ and $A \times L^2$. Given a $b \in B$,
\begin{itemize}
	\item set $\pi_1(\bar{c}(b)):= \pi_1(c(b))$;
	\item enumerate $f^{-1}(b)$ as $\{b_1, b_2\}$ by following the order induced by our enumeration of $X$, and set $\pi_2( \overline{c}(b)) = (\pi_2(c(b_1)), \pi_2(c(b_2)))$.
\end{itemize}
Then $(c,p) \mapsto (\bar{c},p)$ defines a quasi-isometry $(A \times L) \wr B \to (A \times L^2) \wr B$. 
\end{remark}

\noindent
By a famous result of Pansu, if there exists a quasi-isometric embedding $N_1\to N_2$ between two torsion-free nilpotent groups, then, if $N_2$ is $l$-nilpotent, then so is $N_1$. Indeed, such a quasi-isometry induces a biLipschitz embedding between the asymptotic cones, which by the main result of \cite{Pansu} identify with the associated Carnot graded groups $N_1^c$ and $N_2^c$, equipped with left-invariant subfinsler metrics. Second, by \cite[Theorem 2]{MR0979599}, this implies that $N_1^c$ admits an injective continuous morphism to $N_2^c$.
Since the nilpotent step of a torsion-free nilpotent group equals that of its associated Carnot, our statement follows. This yields the following rigidity statement for quasi-isometries between lamplighters with lamps of polynomial growth.

\begin{cor}
Let $N_1,N_2$ be two finitely generated nilpotent groups and $H_1,H_2$ two finitely presented groups from $\mathfrak{M}_{exp}$. If $N_1 \wr H_1$ and $N_2 \wr H_2$ are quasi-isometric, then $N_1,N_2$ have the same nilpotent step and $H_1,H_2$ are quasi-isometric. 
\end{cor}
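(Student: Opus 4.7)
My plan is to combine Theorem~\ref{thm:AppliQI} with the Pansu-type rigidity statement sketched in the paragraph preceding the corollary. Since every finitely generated nilpotent group has polynomial (in particular subexponential) growth, and since $H_1, H_2$ are finitely presented groups belonging to $\mathfrak{M}_{exp}$, Theorem~\ref{thm:AppliQI} applies directly. It yields a quasi-isometry $H_1 \to H_2$, which already settles the second conclusion of the corollary, together with quasi-isometric embeddings $N_1 \hookrightarrow N_2^p$ and $N_2 \hookrightarrow N_1^q$ for some integers $p, q \geq 1$.

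To reach the equality of nilpotent steps, I would reduce to the torsion-free setting via Malcev's theorem, which provides torsion-free nilpotent subgroups $N_i' \leq N_i$ of finite index, quasi-isometric to $N_i$. Composing the previous embeddings with these finite-index inclusions (and quasi-inverses, up to finite distance) yields quasi-isometric embeddings $N_1' \hookrightarrow (N_2')^p$ and $N_2' \hookrightarrow (N_1')^q$ between torsion-free finitely generated nilpotent groups. Since $(N_i')^k$ is torsion-free of the same nilpotent step as $N_i'$ for every $k \geq 1$, the rigidity statement recalled just before the corollary---a quasi-isometric embedding between torsion-free nilpotent groups lifts, via Pansu's theorem \cite{Pansu} and \cite[Theorem~2]{MR0979599}, to an injective continuous morphism between the associated Carnot graded groups---forces the step of $N_1'$ to be at most that of $N_2'$, and the reverse embedding yields the opposite inequality. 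Interpreting the nilpotent step of $N_i$ as the common step of its torsion-free finite-index subgroups (equivalently, the step of its Malcev completion, which agrees with the step of the associated Carnot graded group realised by the asymptotic cone), the desired equality follows.

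The heart of the argument is already absorbed by the Pansu-based lemma quoted in the preceding paragraph; the proof at hand mostly consists in feeding it with the quasi-isometric embeddings between lamp groups produced by Theorem~\ref{thm:AppliQI}. The only mild subtlety is bookkeeping: a finitely generated nilpotent group with torsion may have strictly greater group-theoretic nilpotency class than its torsion-free finite-index subgroups, so the invariant in the statement must be read as the common Carnot-graded step obtained after passing to such a subgroup, which is what the above reduction actually controls.
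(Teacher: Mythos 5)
Your proof is correct and follows essentially the same route as the paper: apply Theorem~\ref{thm:AppliQI} to get the quasi-isometry $H_1 \to H_2$ and the embeddings $N_1 \hookrightarrow N_2^p$, $N_2 \hookrightarrow N_1^q$, then invoke the Pansu-based monotonicity of the nilpotent step under quasi-isometric embeddings together with its invariance under taking powers. Your extra care with torsion (passing to finite-index torsion-free subgroups via Malcev and reading the ``step'' as that of the Carnot-graded/Malcev completion) is a legitimate refinement that the paper's one-line argument leaves implicit.
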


\begin{proof}
It follows from Theorem~\ref{thm:AppliQI} that $H_1$ and $H_2$ are quasi-isometric and that there are quasi-isometric embeddings $N_1 \to N_2^p$ and $N_2 \to N_1^q$ for some $p,q \geq 1$. Because the nilpotent step is monotone with respect to quasi-isometric embeddings
and that it remains the same by taking powers, we conclude that $N_1$ and $N_2$ must have the same nilpotent step. 
\end{proof}

\section{Open questions}\label{section:questions}
We end this paper with a list of open questions related to the notion of coarse separation.

\subsection{Stability properties of $\mathfrak{M}_{exp}$}
\begin{question}
Given a space  $X\in \mathfrak{M}_{exp}$ and $Y$ an arbitrary metric space, do we have $X\times Y\in \mathfrak{M}_{exp}$? 
\end{question}

\noindent
Of course the same question is relevant for similar conditions, such as belonging to $\mathfrak{M}_{d}$ for some $d$. Note that this is the case of all the examples treated in this article. Among groups, one may ask for an even stronger stability:
\begin{question}
Let $G=H\rtimes K$ be a finitely generated group which splits as a semi-direct product. If $K\in  \mathfrak{M}_{exp}$ (resp.\  $\mathfrak{M}_{d}$ for some $d$), does the same hold for $G$?
\end{question}

\medskip \noindent

\subsection{$\mathfrak{M}_{exp}$ and cuts of balls}

The notion of coarse separation is intimately related to that of cut of finite subsets, as illustrated by the proofs of Theorems  \ref{thmIntro:Symmetric/buildings}, \ref{thmIntro:horocyclic products} and \ref{thmIntro:nilpot}.
In particular, one can define a ``finitary'' version of belonging to $\mathfrak{M}_{exp}$ for a bounded degree graph $X$:  there exists $c>0$ such that 
for all $r\geq 1$, the cut of balls of radius $r$ is at least $ce^{cr}$. Since balls are persistent families (see Definition \ref{persistent}), this readily implies that $X$ belongs to $\mathfrak{M}_{exp}$. We may ask for a converse, at least for vertex-transitive graphs.

\begin{question}
Let  $X\in \mathfrak{M}_{exp}$ be a vertex-transitive graph. Is it true that there exists $c>0$ and $\delta>0$ such that $\cut(B(x,r)\geq ce^{cr}$ for infinitely many $r\in \mathbb N$? 
\end{question}

\noindent
Of course, a similar question makes sense for $\mathfrak{M}_{d}$ (we leave the details to the reader).
This draws a possible connection between coarse separation and separation profile. In particular, a positive answer to the previous question would imply a positive answer to the following one. 
\begin{question}
Let  $X\in \mathfrak{M}_{exp}$ be a vertex-transitive graph. Is there some $\alpha>0$ such that $\textnormal{sep}_X(n)\geq n^{\alpha}$ for infinitely many $n\in \mathbb N$?
\end{question}

\subsection{On groups belonging to $\mathfrak{M}_{exp}$}

In this article, we show that a large class of hyperbolic groups belong to  $\mathfrak{M}_{exp}$. It is tempting to conjecture the following generalisation.

\begin{conj}
A non-elementary hyperbolic group is coarsely separable by subspaces of subexponential growth if and only if it splits over a two-ended group.
\end{conj}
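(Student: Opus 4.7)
\medskip \noindent
\textbf{Proof proposal.} The easy direction is essentially a direct application of the machinery already developed. If $G = \pi_{1}(\Gamma,\mathcal{A})$ is a non-elementary hyperbolic group splitting over a two-ended subgroup $H$, then $H$ has linear (hence subexponential) growth and is undistorted in $G$ (two-ended subgroups of hyperbolic groups are always undistorted), so by Theorem~\ref{sep in graphs of groups general} the family of left cosets of $H$ coarsely separates $G$. Since these cosets all have subexponential relative growth, this shows that $G \notin \mathfrak{M}_{\mathrm{exp}}$. Note the same conclusion when $G$ splits over a finite subgroup using Stallings' theorem, but we can dispose of this case first by reducing the problem to the one-ended situation via the Dunwoody--Stallings accessibility decomposition: any splitting of a one-ended factor will have to be over an infinite subgroup, and it is this case which constitutes the heart of the conjecture.

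\medskip \noindent
For the converse, assume $G$ is one-ended non-elementary hyperbolic, fix a visual metric $\rho$ on the Gromov boundary $\partial G$ (Ahlfors-regular of some dimension $Q>1$), and suppose that some family $\mathcal{S}$ of subspaces of subexponential growth coarsely separates $G$. The first step is to exploit the subexponential growth to show that the \emph{limit set} $\Lambda(S) \subset \partial G$ of each $S \in \mathcal{S}$ has zero Hausdorff dimension with respect to $\rho$. Indeed, a standard covering argument (as in the proof that the critical exponent bounds the Hausdorff dimension of the limit set) shows that an orbital shadow of $B_{S}(s,r)$ has measure comparable to $e^{-Qr}|B_{S}(s,r)|$, so subexponential growth of $S$ forces $\dim_{H}\Lambda(S) = 0$. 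In particular, $\Lambda(S)$ is totally disconnected.

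\medskip \noindent
The second step is to transport coarse separation inside $G$ to a topological separation of $\partial G$. Fix $L\geq 0$ witnessing the coarse separation, and choose $S_{n}\in \mathcal{S}$ and points $x_{n},y_{n}\in G$ with $d(x_{n},S_{n}),d(y_{n},S_{n})\to\infty$, lying in distinct coarsely connected components of $G\setminus S_{n}^{+L}$. After passing to subsequences, $x_{n}\to\xi\in\partial G$ and $y_{n}\to\eta\in\partial G$. By standard hyperbolic geometry, every $(1,C)$-quasi-geodesic from $\xi$ to $\eta$ must pass within bounded distance of $S_{n}$ for infinitely many $n$, which forces the existence of a point $\zeta \in \Lambda(S_{n})$ (for some $n$) separating $\xi$ from $\eta$ in the cut structure of $\partial G$; more precisely, the Cantor-like set $\Lambda(S_{n})$ must disconnect $\partial G$ between $\xi$ and $\eta$. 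Since $\partial G$ is a connected, locally connected continuum (Bestvina--Mess, Bowditch, Swarup), and admits a zero-dimensional separator, a classical continuum-theoretic theorem of Whyburn forces $\partial G$ to contain a \emph{local cut point}.

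\medskip \noindent
The third and final step is to invoke Bowditch's JSJ theorem \cite{bowditch1998cut}: a one-ended non-elementary hyperbolic group $G$ splits over a two-ended subgroup if and only if $\partial G$ contains a local cut point. Together with the previous step, this yields the desired splitting. The main obstacle is the second step: making rigorous the passage from a coarse separation by $S_{n}^{+L}$ to a genuine topological separation of $\partial G$ by $\Lambda(S_{n})$, since \emph{a priori} the two deep components of $G\setminus S_{n}^{+L}$ could have accumulation sets in $\partial G$ which meet or fail to be separated by $\Lambda(S_{n})$ in the naive sense. Controlling this requires a careful shadowing argument, comparing the ``coarse ends'' of the two components with their limit sets at infinity and upgrading coarse separation to the statement that any pair of sequences going to infinity in the two components have limits separated by $\Lambda(S_{n})$. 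Once this link between internal coarse separation and boundary cuts is established, the combination of the Hausdorff dimension estimate, continuum theory, and Bowditch's theorem delivers the conjecture.
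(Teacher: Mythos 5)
This statement is posed in the paper as an open conjecture (Section~\ref{section:questions}); the paper contains no proof of it, so your proposal has to stand entirely on its own. The easy direction is correct and is exactly the paper's mechanism: a two-ended subgroup of a hyperbolic group is quasiconvex, hence undistorted with linear relative growth, and the cosets of an edge-group of a splitting coarsely separate (the observation preceding Theorem~\ref{sep in graphs of groups general}). Even here, though, be careful with the multi-ended case: an infinitely-ended non-elementary hyperbolic group is coarsely separated by bounded sets, but it need not split over a two-ended subgroup (e.g.\ a free product of two closed hyperbolic $3$-manifold groups), so the Stallings--Dunwoody reduction does not ``dispose of'' that case --- it is a genuine issue for the literal statement of the equivalence.

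The hard direction has two fatal gaps. First, the claim that subexponential growth of $S$ forces $\dim_H\Lambda(S)=0$ is false: the limit set does not see the radial density of $S$. In $F_2$, for each $n$ extend every reduced word $w$ of length $n$ to a reduced word $\sigma(w)$ of length $\lceil e^n\rceil$ and set $S=\{\sigma(w) : w \text{ reduced}, |w|=n, n\geq 1\}$. Two distinct points of $S$ at the same depth $\lceil e^n\rceil$ have common prefix of length $<n$, hence are at distance at least $2(\lceil e^n\rceil-n)$, and points at different depths are at distance at least a definite fraction of $e^n$ from a given point at depth $\lceil e^n\rceil$; consequently $V_S(r)\lesssim r^{\log 3}$, yet $\Lambda(S)=\partial F_2$. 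The same construction (one point pushed to depth $e^n$ behind each point of a net on $S(o,n)$) works in any non-elementary hyperbolic group, so a family of \emph{polynomial} growth can have full limit set, and your ``orbital shadow'' estimate cannot be salvaged. Second, even granting a totally disconnected closed separator of $\partial G$, the continuum-theoretic step fails: a connected, locally connected continuum separated by a zero-dimensional set need not have a local cut point. Any one-dimensional boundary (e.g.\ a Menger-curve boundary, for which $G$ has no local cut points and no two-ended splittings) is separated by the zero-dimensional frontiers of small open sets, and already the topological realisation of $K_{3,3}$ is separated by three points none of which is a cut point; the Whyburn-type statement you invoke does not hold in this generality. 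Together with the unproved passage from coarse separation in $G$ to topological separation of $\partial G$ (which you rightly flag), this means the proposal does not prove the conjecture. The paper's positive results in this direction go through Poincar\'e inequalities on spheres, i.e.\ through volume in the interior rather than the topology of limit sets --- my counterexample above suggests this is unavoidable.
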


\noindent
We tend to believe that the class of groups or spaces belonging to $\mathfrak{M}_{exp}$ is huge. In particular, among groups with exponential growth, the only examples we know that do not belong to $\mathfrak{M}_{exp}$ virtually  split over a subgroup with subexponential relative growth. 
It would be interesting to determine to which extend this observation holds in full generality. In this direction, one can ask:  

\begin{question}
Are all finitely generated amenable groups with exponential growth in~$\mathfrak{M}_{exp}$? 
\end{question}

\noindent
We conjecture that this holds for elementary amenable groups.  
A tractable case would be polycyclic groups. Recall that these are virtually uniform lattices in connected solvable Lie groups. Hence the right setting would be to prove this for all unimodular connected Lie groups. More generally, we address the following question.

\begin{question}
Which connected Lie group belong to $\mathfrak{M}_{exp}$? 
\end{question}

\subsection{Coarse separation and median spaces}

\noindent
Here is a question on coarse embeddings to groups acting on median spaces (or equivalently, on CAT(0) cube complexes). 

\begin{question}
Let $G$ be a finitely generated group acting on a median graph $X$. Let $Z$ be a geodesic metric space and $\rho : Z \to G$ a coarse embedding. If $\rho(Z)$ is not coarsely separated by the cosets of hyperplane-stabilisers and is not contained in the neighbourhood of a coset of a hyperplane-stabiliser, is it true that $\rho(Z)$ is contained in a neighbourhood of a coset of a vertex-stabiliser?
\end{question}

\noindent
The question is motivated by the case of trees treated by Theorem~\ref{sep in graphs of groups general} and by the connection between multi-ended pairs of groups (i.e.\ coarsely separating subgroups) and actions on median graphs exhibited in \cite{MR1347406}.

\subsection{The case of right-angled Artin groups}

We now turn to coarse separation properties of right-angled Artin groups. In general, characterizing which ones belong to $\mathfrak{M}_{exp}$ is an interesting and challenging question. Here is a more specific question in this direction:

\begin{question}
Let $\Gamma$ be a finite graph. Is it true that the right-angled Artin group $A(\Gamma)$ is coarsely separable by a subspace of subexponential growth if and only if $\Gamma$ contains a separating clique?
\end{question}

\noindent
Recall that we proved that $\mathbb{H}^3$ does not coarsely embed into a right-angled Artin group over a finite chordal graph. This motivates the following question.

\begin{question}
Let $\Gamma$ be a finite chordal graph. Does there exist a coarse embedding from $\mathbb{H}^2$ to the right-angled Artin group $A(\Gamma)$?
\end{question}

\noindent
It is worth mentioning that \cite{MR2422070} already proves that such right-angled Artin groups do not contain a subgroup isomorphic to the fundamental group of a closed surface of genus $\geq 2$, motivating a positive answer to our question. 

\addcontentsline{toc}{section}{References}

\bibliographystyle{alpha}
{\footnotesize\bibliography{CoarseSepWreath}}

\Address

%

\end{document}